\numberwithin{equation}{section}
\newtheorem{theorem}{Theorem}[section]
\newtheorem{definition}[theorem]{Definition}
\newtheorem{conjecture}[theorem]{Conjecture}
\newtheorem{proposition}[theorem]{Proposition}
\newtheorem{lemma}[theorem]{Lemma}
\newtheorem{remark}[theorem]{Remark}
\newtheorem{corollary}[theorem]{Corollary}
\newtheorem{example}[theorem]{Example}
\newtheorem{Thm}[theorem]{Theorem}
\newtheorem{Claim}[theorem]{Claim}
\newtheorem{Lem}[theorem]{Lemma}
\newtheorem{Cor}[theorem]{Corollary}
\newtheorem{Rem}[theorem]{Remark}
\newtheorem*{remark*}{Remark}
\numberwithin{equation}{section}
\newcommand{\eps}{\varepsilon}
\newcommand{\al}{\alpha}
\newcommand{\bn}{\mathbf{n}}
\newcommand{\mbfd}{\mathbf{d}}
\newcommand{\mbfC}{\mathbf{C}}
\newcommand{\mbfL}{\mathbf{L}}
\newcommand{\bt}{\mathbf{t}}
\newcommand{\bu}{\mathbf{u}}
\newcommand{\bx}{\hat{\mathbf{x}}}
\newcommand{\hy}{\check{\mathbf{y}}}
\newcommand{\by}{\hat{\mathbf{y}}}
\newcommand{\bM}{\mathbf{M}}
\newcommand{\mbfM}{\mathbf{M}}
\newcommand{\bN}{\mathbf{N}}
\newcommand{\bP}{\mathbf{P}}
\newcommand{\bS}{\mathbb{S}}
\newcommand{\bU}{\mathbf{U}}
\newcommand{\cA}{\mathcal{A}}
\newcommand{\cB}{\mathcal{B}}
\newcommand{\cC}{\mathcal{C}}
\newcommand{\cF}{\mathcal{F}}
\newcommand{\cH}{\mathcal{H}}
\newcommand{\cI}{\mathcal{I}}
\newcommand{\cK}{\mathcal{K}}
\newcommand{\cM}{\mathcal{M}}
\newcommand{\cN}{\mathcal{N}}
\newcommand{\cO}{\mathcal{O}}
\newcommand{\cQ}{\mathcal{Q}}
\newcommand{\cU}{\mathcal{U}}
\newcommand{\cW}{\mathcal{W}}
\newcommand{\R}{\mathbb{R}}
\newcommand{\pr}{\partial}
\newcommand{\BB}{\mathbb{B}}
\newcommand{\RR}{\mathbb{R}}
\newcommand{\SSp}{\mathbb{S}}
\newcommand{\ZZ}{\mathbb{Z}}
\newcommand{\rmC}{\mathrm{C}}
\newcommand{\rmD}{\mathrm{D}}
\newcommand{\rmp}{\mathrm{p}}
\DeclareMathOperator{\vol}{Vol}
\DeclareMathOperator{\area}{Area}
\DeclareMathOperator{\spt}{spt}
\DeclareMathOperator{\graph}{Graph}
\DeclareMathOperator{\dist}{dist}
\DeclareMathOperator{\loc}{loc}
\DeclareMathOperator{\tr}{tr}
\DeclareMathOperator{\id}{\text{Id}}
\newcommand{\orig}{\mathbf{0}}
\newcommand{\Div}{\mathrm{div}}
\newcommand{\spine}{\mathrm{spine}}
\newcommand{\odist}{\overline{\dist}}
\newcommand{\dstar}{\operatorname{d}^*}
\newcommand{\Sing}{\operatorname{Sing}}
\newcommand{\Reg}{\operatorname{Reg}}
\newcommand{\Int}{\operatorname{int}}
\newcommand{\Spt}{\operatorname{spt}}
\newcommand{\sgn}{\operatorname{sgn}}
\title{Passing through nondegenerate singularities in mean curvature flows}
\author{Ao Sun}
\address{Lehigh University, Department of Mathematics, Chandler-Ullmann Hall, Bethlehem, PA 18015}
\email{aos223@lehigh.edu}
\author{Zhihan Wang}
\address{Cornell University, Department of Mathematics, 310 Malott Hall, Ithaca, NY 14853}
\email{zw782@cornell.edu}
\author{Jinxin Xue}
\address{New Cornerstone Science Laboratory, Department of Mathematics, Rm A115, Tsinghua University, Haidian District, Beijing, 100084}
\email{jxue@tsinghua.edu.cn}
\date{}
\begin{document}

	\begin{abstract}
		In this paper, we study the properties of nondegenerate cylindrical singularities of mean curvature flow. We prove they are isolated in spacetime and provide a complete description of the geometry and topology change of the flow passing through the singularities. Particularly, the topology change agrees with the level sets change near a critical point of a Morse function, which is the same as performing surgery. The proof is based on a new $L^2$-distance monotonicity formula, which allows us to derive a discrete almost monotonicity of the ``decay order", a discrete mean curvature flow analog to Almgren's frequency function. 
	\end{abstract}
	\maketitle
	\setcounter{tocdepth}{1}
	\tableofcontents

	\section{Introduction}
	This is the first in a series of papers to study the connections between geometry, topology, and dynamics of cylindrical singularities of mean curvature flow. In this paper, we study the behavior of the mean curvature flows passing through \textit{nondegenerate cylindrical singularities}. 
	
	A mean curvature flow is a family of hypersurfaces in $\R^{n+1}$ moving with velocity equal to the mean curvature vector. From the first variational formula, a mean curvature flow is the fastest way to deform the hypersurface to decrease its area. It has potential for applications in geometry and topology, such as producing minimal surfaces, and studying the geometry and topology of space of hypersurfaces, among others. However, just like other nonlinear problems, mean curvature flows can develop singularities, thus a central question is to understand the singularities and how the flow passes through singularities.
	
	To motivate the work, let us start with the prototypical example: the \textit{dumbbell}, which is a thin neck connecting two large spheres. As the middle neck has huge mean curvature, it develops a singularity first, and it is natural to expect that the flow will pinch at the neck and then disconnect into two pieces. Several weak flow formulations were developed to describe this process. For instance, one way to continue the flow when a singularity appears is to perform \textit{surgery}, which, in the context of mean curvature flows, was first studied in \cite{HuiskenSinestrari09_Surgery2convex}. For the example of the dumbbell, one first observes that the singularity is modeled by a cylinder, and the surgery is done by removing a part of the cylinder immediately before the formation of singularity, gluing two caps, and continuing to run the flow for the two resulting spheres. Nevertheless, the surgery is not canonically defined since the size of the part of the cylinder to remove and the time to perform the surgery are not uniquely defined. 
	
	There are also canonical solutions of (weak) mean curvature flows, two of them that are discussed in this paper are: the level set flow \cite{EvansSpruck91, ChenGigaGoto91_LSF, Ilmanen92_LSF, White95_WSF_Top}, and the Brakke flow generated by elliptic regularization \cite{Brakke78, Ilmanen94_EllipReg}. However, the singular sets can be quite complicated and the topological change may be hard to describe. 
	
	Our main result describes the geometry and topology changes in the canonical process for mean curvature flow through \textit{nondegenerate cylindrical singularities}.
	
	Recall that if $p_\circ\in \RR^{n+1}\times \RR$ is a singular point of a mean curvature flow $t\mapsto \bM(t)$, by composing with a space-time translation, without loss of generality $p_\circ = (\orig, 0)$. Then by Huisken's Monotonicity \cite{Huisken90}, when $\lambda\searrow 0$, the parabolic blow up sequence of flows $t\mapsto \lambda^{-1}\cdot \bM(\lambda^2 t)$ (weakly) subconverges to some self similar mean curvature flow $t\mapsto \sqrt{-t}\cdot S$, where $S$ is known as a \textit{shrinker}. An equivalent way introduced by Huisken \cite{Huisken90} to describe this process is via the \textit{rescaled mean curvature flow} \[
	\tau\mapsto \cM(\tau):= e^{\tau/2}\cdot \bM(e^{-\tau}),  \]
	whose subsequential long time limit $\lim_{\tau\to+\infty}\cM(\tau)$ is a shrinker $S$. If the limit shrinker $S$ is a multiplicity $1$ sphere, we say the singularity $p_\circ$ is \textbf{spherical}; if the limit is a rotation $\cC$ of the generalized multiplicity $1$ cylinder \[
	\cC_{n,k} := \SSp^{n-k}(\sqrt{2(n-k)})\times \R^k 
	= \left\{(x,y)\in \R^{n-k+1}\times \R^k\ |\ |x|=\sqrt{2(n-k)} \right\}\subset\R^{n+1} \]
	for some $k\in \{1,\dots,n-1\}$, we say the singularity is {\bf cylindrical \& modeled by $\cC$}.
	
	Of central importance is the notion of \textit{nondegeneracy} for cylindrical singularities. Given a mean curvature flow $t\mapsto \bM(t)$ with a cylindrical singularity at $(\orig, 0)$ modeled on $\cC_{n,k}$, let us use $\theta$ to denote the coordinates on $\SSp^{n-k}$ factor, and we use $y$ to denote the coordinates on $\R^k$ factor. If we write $\cM(\tau)$ as a graph of the function $u(\cdot,\tau)$ over $\cC_{n,k}$ in a large compact region, in \cite{SunXue2022_generic_cylindrical}, it was proved that as $\tau\to\infty$, up to a rotation, there exists a subset (possibly empty) $\cI\subset\{1,2,\cdots, k\}$ such that
	\[
	u(\theta,y,\tau)=\frac{\sqrt{2(n-k)}}{4\tau}\sum_{i\in\cI}(y_i^2-2)+o(1/\tau).
	\]
	
	This asymptotic is called the \textit{normal form} using terminology from dynamical systems. Such an asymptotic was obtained in \cite{AngenentVelazquez97_DegenerateNeckpinches} when the flow is rotationally symmetric (and hence $k=1$), and see also \cite{Gang21_meanconvexity, Gang22_dynamics} for some special cylinders. 
	
	A {\bf nondegenerate} cylindrical singularity is one with $\cI=\{1,2\cdots,k\}$, namely the graph of the rescaled mean curvature flow has the following asymptotic
	$$u(\theta,y,\tau)=\frac{\sqrt{2(n-k)}}{4\tau}\sum_{i=1}^k(y_i^2-2)+o(1/\tau). $$
	
	Given a generalized cylinder $\cC_{n,k}$, we define its \textbf{dual cylinder} with radius $r$ to be the hypersurface $$\cC_{n,k}^*(r) := \RR^{n-k+1}\times \SSp^{k-1}(r) = \left\{(x,y)\in \R^{n-k+1}\times\R^k\ |\ |y|=r \right\}.$$
	
	In the following Theorem \ref{Thm_Isol_Main}, the term ``mean curvature flow'' stands for a weak solution known as a \textit{unit-regular cyclic $\text{mod}$ $2$ Brakke flow}. A detailed definition is presented in Section \ref{Subsec_Brakke and WSF}. 
	
	\begin{Thm}\label{Thm_Isol_Main}
		Let $1\leq k\leq n-1$, $t\mapsto \bM(t)$ be a mean curvature flow in $\RR^{n+1}$ over $(-1, 1)$ with a \textbf{nondegenerate} cylindrical singularity modeled by $\cC_{n,k}$ at $(\orig, 0)$. Let $Q_r:= \BB_{r}^{n-k+1}(0)\times\BB_{r}^k(0)\subset \RR^{n+1}$. 
		Then there exist $r_\circ, t_\circ\in (0, 1)$ such that
		\begin{enumerate}[label={\normalfont(\roman*)}] 
			\item\label{Enum_MainThm_Iso} $($isolatedness$)$ $(\orig,0)$ is the only singularity of $\bM$ in the whole parabolic neighborhood $Q_{r_\circ}\times [-t_\circ, t_\circ]$;
			\item\label{Enum_MainThm_MeanConvex} $($mean convexity$)$ $\bM\llcorner (Q_{r_\circ}\times [-t_\circ, t_\circ])$ is mean convex;
			\item\label{Enum_MainThm_Noncollap} $($noncollapsing$)$ For every $t\in [-t_\circ, t_\circ]$, $\bM(t)\llcorner Q_{r_\circ}$ is noncollapsing (see Definition \ref{Def_App_Noncollap});
			\item\label{Enum_MainThm_BdyEvol} $($smooth evolution on boundary$)$ For every $t\in [-t_\circ, t_\circ]$, $\spt\mbfM(t)\cap \partial Q_{r_\circ} = \partial \cU(t)$ for some tubular neighborhood $\cU(t)$ of $\{\orig\}\times \SSp^{k-1}_{r_\circ}$ in $\partial Q_{r_\circ}\cap\cC_{n,k}^*(r_\circ)$, which varies smoothly in $t$; 
			\item\label{Enum_MainThm_t<0} $($graphical before singular time$)$ When $t\in [-t_\circ, 0)$, $\bM(t)\llcorner Q_{r_\circ}$ is a $C^\infty$ graph over $\cC_{n,k}$.
			\item\label{Enum_MainThm_t=0} $($graphical at singular time$)$ When $t=0$, $\bM(0)\llcorner Q_{r_\circ}$ is a graph of function $u$ over $\cC_{n,k}$, and \[
			u(\theta,y)=\frac{\sqrt{2(n-k)}\ |y|}{2\sqrt{-\log(|y|)}}(1+o_y(1))-\sqrt{2(n-k)},  \] 
			where $\|o_y(1)\|_{C^0}\to 0$ as $y\to 0$\footnote{This means $\bM(0)$ has a cusp singularity at $\orig$.}.
			\item\label{Enum_MainThm_t>0} $($graphical after singular time$)$ When $t\in (0, t_\circ]$, the following projection map \[
			\bP_t: \spt\bM(t)\cap \overline{Q_{r_\circ}} \to \cC_{n,k}^*(r_\circ)\,, \quad (x, y)\mapsto \left(x, {r_\circ}\cdot \frac{y}{|y|}\right)
			\]
			is a smooth diffeomorphism onto its image $\cU(t)$. In particular $\bM(t)\llcorner Q_{r_\circ}$ is a graph over the dual cylinder $\cC_{n,k}^*(r_\circ)$.
			\item\label{Enum_MainThm_Top} $($topology change$)$ As a consequence, for every $t\in(0,t_\circ]$, within $Q_{r_\circ}$, topologically $\bM(t)$ is obtained by an $(n-k)$-surgery on $\bM(-t_\circ)\llcorner Q_{r_\circ}$.
		\end{enumerate}
	\end{Thm}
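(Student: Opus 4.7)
My overall plan is to upgrade the interior normal form
$u(\theta,y,\tau) = \tfrac{\sqrt{2(n-k)}}{4\tau}\sum_{i=1}^k(y_i^2-2) + o(1/\tau)$,
known for the rescaled flow $\cM(\tau)$ on each fixed compact region, to a scale-uniform asymptotic valid out to a definite fraction of the intrinsic scale $\sqrt{\tau}$. Back in unrescaled coordinates this yields uniform $C^k$-control of $\bM(t)$ on a fixed parabolic neighborhood of $(\orig, 0)$ for all $t<0$ small, and each of the conclusions will be read off from this controlled model together with a post-singular continuation argument. The key analytic input, as announced in the abstract, is the new $L^2$-distance monotonicity formula and the resulting discrete almost monotonicity of the decay order --- the MCF analog of Almgren's frequency. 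Once the decay order is pinned to the value forced by the nondegenerate normal form, this monotonicity propagates the quadratic principal part to every scale with a quantitative remainder.

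With the uniform normal form in hand, items (i)--(v) will follow fairly directly. The quadratic principal term $\tfrac{\sqrt{2(n-k)}}{4\tau}(|y|^2-2k)$ is, by nondegeneracy, a strictly inward perturbation of the cylinder radius at $y=\orig$ with positive $y$-Hessian; computing the mean curvature of the graph gives $\rmH>0$ throughout $Q_{r_\circ}\cap \bM(t)$ for $t\in[-t_\circ,0)$, proving mean convexity. Noncollapsing follows either from Haslhofer--Kleiner applied to the mean-convex piece, or directly by reading off osculating balls from the explicit normal form. Graphicality before the singular time is immediate; smooth boundary evolution on $\partial Q_{r_\circ}$ is standard parabolic regularity since $\bM$ is $\eps$-regular away from $(\orig, 0)$; and isolatedness comes from combining pre- and post-singular graphicality, since any accumulating sequence of singular points would lie in a region where the flow is already known to be smooth.

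The cusp profile (vi) and the post-singular graphicality (vii) form the main technical obstacle. The uniform normal form only covers the inner region $|y|\lesssim\sqrt{(-t)\log(1/(-t))}$, whereas (vi) asks for the profile at fixed $y\neq \orig$ as $t\to 0^-$. My plan for (vi) is to match the inner normal form to an outer asymptotic of Angenent--Velazquez travelling-wave type (already worked out in the rotationally symmetric case $k=1$), using the decay-order monotonicity to enforce uniqueness of the matching and to control the error; integrating the matched expansion at fixed $y$ and sending $t\to 0^-$ produces the cusp radius $\tfrac{\sqrt{2(n-k)}|y|}{2\sqrt{-\log|y|}}(1+o_y(1))$, the $\sqrt{-\log|y|}$ tracing back to the $1/\tau$ in the normal form. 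For (vii), I would re-run the $L^2$-distance monotonicity with respect to the dual cylinder $\cC_{n,k}^*(r_\circ)$ rather than $\cC_{n,k}$: propagating past $t=0$ shows that for small $t>0$, $\bM(t)\llcorner Q_{r_\circ}$ is $L^2$-close to $\cC_{n,k}^*(r_\circ)$; Allard-type higher regularity upgrades this to a graphical representation, and strict monotonicity of the radial coordinate in the $\RR^k$-factor then makes $\bP_t$ a smooth diffeomorphism via the inverse function theorem.

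Finally, (viii) is a topological corollary of (v)--(vii): the pre-singular local model $\SSp^{n-k}\times \BB^k$ inside $Q_{r_\circ}$ is replaced after $t=0$ by $\BB^{n-k+1}\times\SSp^{k-1}$, which by definition is an $(n-k)$-surgery. The hardest step I anticipate is establishing the scale-uniform remainder in the normal form that enables both the inner-to-outer matching for (vi) and the construction of the post-singular graphical description for (vii); this is exactly where the new $L^2$-distance monotonicity formula and the discrete decay-order monotonicity are expected to do the essential work, and controlling the error terms uniformly across all scales from $1$ down to $\sqrt{-t}$ is the technical heart of the argument.
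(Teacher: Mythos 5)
Your plan diverges from the paper in one place where I don't think it can be repaired as stated, and it omits the single most important external input.

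The fatal gap is the proposed ``$L^2$-distance monotonicity with respect to the dual cylinder $\cC_{n,k}^*(r_\circ)$'' for item \ref{Enum_MainThm_t>0}. The monotonicity formula in the paper (Theorem \ref{Thm_L^2 Mono_Main}) is built around an arrival-time function $\bu$ solving the level set equation \eqref{Equ_L^2 Mono_Arrv time func-LSF equ}, and the only instance used is $\bu(x,y) = -|x|^2/(2(n-k))$, whose level sets are the \emph{shrinking} cylinders $\sqrt{-t}\,\cC_{n,k}$. The dual cylinder $\cC_{n,k}^*(r_\circ)$ is not a self-shrinker, has no associated arrival-time function in this framework, and there is no analogue of \eqref{Equ_L^2 Mono_Main Est} tied to it. Moreover, even granting an $L^2$-smallness bound, that alone does not give graphicality over $\cC_{n,k}^*$ without a priori curvature bounds, and uniform curvature control right after the singular time is exactly what is at issue. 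In the paper, item \ref{Enum_MainThm_t>0} instead comes from the classification Theorem \ref{Thm_Isol_Blowup Model}: every blow-up of $\bM$ at a sequence of points approaching $(\orig,0)$ from $t\geq 0$ is either a round cylinder (with a sign condition on the normal, via Lemma \ref{Lem_L^2 Mono_nu cdot by <0}) or an element of $\mathscr B_{n,k}$ translating in the $\by$ direction (via Lemma \ref{Lem_Pre_Compare h_1 dominate direction with transl direction}). This classification in turn requires the Du--Zhu classification of noncollapsing ancient asymptotically cylindrical flows, which your sketch never invokes; the role of the decay-order monotonicity is precisely to force exponential convergence to $\cC_{n,k}$ at $\tau\to-\infty$ along the blow-up, which pins $Q=0$ in Theorem \ref{ThmDuZhu22_quantization} and makes Du--Zhu applicable. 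From there, the graph property over $\cC_{n,k}^*$ follows from the sign condition $\nu_t\cdot(0,y)<0$ (Corollary \ref{Cor_Isol_<nu, y><0}) plus the topological Lemma \ref{Lem_Isol_Abstract lemma of diffeo} showing a local diffeomorphism with nice boundary behavior onto a simply connected target is a global diffeomorphism.

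Two secondary points. First, mean convexity and noncollapsing \emph{forward} in time are not consequences of the pre-singular normal form; the paper proves them by elliptic regularization (Proposition \ref{Prop:MCF with boundary}), building a foliation by Hardt--Simon-type minimal hypersurfaces (Lemma \ref{Lem_App_HSFoliation}) and an Andrews-type quantity on the $\cI_\lambda$-minimizers. Your ``osculating balls from the explicit normal form'' argument only works for $t<0$. Noncollapsing must be established for the weak flow past $t=0$, and this requires the elliptic regularization machinery before the Du--Zhu classification can be applied at all — this is the logical dependency you must respect. Second, for \ref{Enum_MainThm_t=0} the paper does not perform an Angenent--Vel\'azquez-style matched asymptotic expansion; it uses the pseudolocality Lemma \ref{lem:PseudoLocalityAnnuli} to compare $\bM(t)$ in annuli of scale $\sqrt{-t_0\log(1/(-t_0))}$ to the homothetically shrinking cylinder $\sqrt{(-t_0/2 - t)}\cC_{n,k}$, then reads off the cusp profile by inverting $W(r)=r\sqrt{-\log(r^2)}$. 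Your matching approach is in the right spirit but is more involved than needed and, more importantly, you would still lack the uniform error control it requires without the pseudolocality input.
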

	Recall that we say an $n$-dimensional manifold $X$ is obtained by an \textbf{$(n-k)$-surgery} on an $n$-dimensional manifold $Y$ if we remove $\SSp^{n-k}\times \BB^k \Subset Y$ from $Y$ to get a manifold with boundary $\SSp^{n-k}\times \SSp^{k-1}$, then glue $\BB^{n-k+1}\times \SSp^{k-1}$ back by identifying the boundary $\pr \BB^{n-k+1}$ with the component $\SSp^{n-k}$, to get $X$.
	For the example of dumbbell, the surgery removes a segment diffeomorphism to $\SSp^{1}\times [-1,1]$ from the manifold immediately before the singular time, then glue $\BB^2\times \{\pm 1\}$ to the two resulting spheres with holes. 
	
	\begin{remark}
		Some of the results in Theorem \ref{Thm_Isol_Main} have been explored in other settings or special cases. In the case that the mean curvature flow is rotationally symmetric and generated by a graph over the rotation axis -- hence the singularities are modeled by $\cC_{n,1}$ -- but the singularities are not necessarily nondegenerate, items \ref{Enum_MainThm_Iso}, \ref{Enum_MainThm_t<0}, \ref{Enum_MainThm_t>0}, \ref{Enum_MainThm_Top} have been discussed by Altschuler-Angenent-Giga \cite{AAG95_RotationMCF}; Angenent-Vel\'azquez has discussed item \ref{Enum_MainThm_t=0} in \cite{AngenentVelazquez97_DegenerateNeckpinches}.
		Item \ref{Enum_MainThm_MeanConvex} without nondegenerate assumption, known as the mean convex neighborhood conjecture proposed by Ilmanen, has been proved by Choi-Haslhofer-Hershkovits \cite{ChoiHaslhoferHershkovits18_MeanConvNeighb} and Choi-Haslhofer-Hershkovits-White \cite{ChoiHaslhoferHershkovitsWhite22_AncientMCF} in the case when $k=1$. Items \ref{Enum_MainThm_Iso}, \ref{Enum_MainThm_MeanConvex}, \ref{Enum_MainThm_t<0} have been proved in \cite{SunXue2022_generic_cylindrical} before the singular time with the nondegenerate assumption; see also \cite{Gang21_meanconvexity, Gang22_dynamics} for the special case of $\cC_{4,3}$. \ref{Enum_MainThm_t=0} is studied by Zhou Gang and Shengwen Wang in \cite{GangWang25_AsymPrecise} for the special case of $\cC_{4,3}$ with the nondegenerate assumption.
	\end{remark}

	While Theorem \ref{Thm_Isol_Main} is local around a nondegenerate cylindrical singularity, using the idea of Morse theory, we can also go from local to global. As a corollary, we have a comprehensive description of the spacetime of mean curvature flow with only nondegenerate singularities. For example, from the Morse theory point of view, we can view this surgery as the transition of the level sets on the spacetime track $\spt\mbfM := \overline{\bigcup_{t\geq 0}\bM(t)\times\{t\}} \subset\R^{n+1}\times\R$. In fact, the $(n-k)$-surgery is exactly what happens in the following Morse theoretic setting: if $N$ is an $(n+1)$-dimensional manifold and $f$ is a Morse function with a critical point $p$ with index $(n-k+1)$, such that $f(p)=0$, then for $\eps>0$ that is small, near $p$, $f^{-1}(\eps)$ is obtained from $f^{-1}(-\eps)$ by an $(n-k)$-surgery. Our main theorem says that near a nondegenerate cylindrical singularity, the level sets of the time function $\mathfrak t:\R^{n+1}\times\R\to\R$ on $\spt\mbfM$ behave like the level sets of a Morse function. In particular, if a nondegenerate singularity $p$ is modeled by $\cC_{n,k}$, the index of $\mathfrak t$ at $p$ is $(n-k+1)$, which is always greater or equal to $2$.
	
	We remark that from \cite{ColdingMinicozzi18_Regularity_LSF, SunXue2022_generic_cylindrical}, the time function is not Morse (in fact, not even $C^2$) near a nondegenerate cylindrical singularity modeled by $\cC_{n,k}$.
	
	\begin{corollary}\label{cor:main}
		Suppose a mean curvature flow $t\mapsto \bM(t)$ of hypersurfaces in $\R^{n+1}$ ($n\geq 2$) has only nondegenerate cylindrical singularities and spherical singularities before extinction. Then 
		\begin{enumerate}[label={\normalfont(\roman*)}] 
			\item \label{cor:main i1} The flow is unique;
			\item \label{cor:main i2} The flow has only finitely many singularities,
			\item \label{cor:main i3} There exists a Morse function on the spacetime track with the same number of critical points of index $(n-k+1)$ as the number of cylindrical singular points modeled by $\cC_{n,k}$\footnote{We view $\cC_{n, 0}$ as the round sphere of radius $\sqrt{2n}$.}. In particular, the indices of the critical points of this Morse function are greater or equal to $2$.
		\end{enumerate}
	\end{corollary}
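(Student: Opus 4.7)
The plan is to combine Theorem~\ref{Thm_Isol_Main} with standard handlebody theory. The isolatedness statements plus compactness give \ref{cor:main i2}; the local uniqueness of the canonical continuation at each singularity gives \ref{cor:main i1}; and the surgery-to-handle dictionary applied to \ref{Enum_MainThm_Top} gives \ref{cor:main i3}.

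For \ref{cor:main i2} and \ref{cor:main i1}, I would first observe that Theorem~\ref{Thm_Isol_Main}\ref{Enum_MainThm_Iso} already gives isolatedness of each nondegenerate cylindrical singularity; spherical singularities are extinction points and hence isolated in spacetime (by Huisken's monotonicity plus the classical fact that a multiplicity-one spherical tangent flow forces local extinction). Since the flow has bounded spatial support and finite extinction time, $\spt\mbfM$ is compact, so the singular set is finite. On its complement the evolution is smooth and therefore uniquely determined by parabolic theory; at each cylindrical singularity, items \ref{Enum_MainThm_MeanConvex}--\ref{Enum_MainThm_t>0} pin down the germ inside $Q_{r_\circ}\times[-t_\circ,t_\circ]$ as the canonical mean convex, noncollapsing, graphical continuation with prescribed asymptotics, so any other unit-regular cyclic mod-$2$ Brakke flow with the same initial data must coincide locally; spherical extinction is unconditionally unique. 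Stitching these local identifications together with the smooth uniqueness between singular times yields global uniqueness.

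For \ref{cor:main i3}, the key input is Theorem~\ref{Thm_Isol_Main}\ref{Enum_MainThm_Top}: the flow passes through each nondegenerate cylindrical singularity modeled on $\cC_{n,k}$ by an $(n-k)$-surgery. In the $(n+1)$-dimensional spacetime $\spt\mbfM$, a $p$-surgery on a time slice is realised by attaching a $(p+1)$-handle, so each such singularity contributes an $(n-k+1)$-handle; similarly every spherical singularity contributes an $(n+1)$-handle, capping off a component $\SSp^{n}$ to a point. Choosing a time $T_{\min}$ earlier than every singularity, one verifies that $\spt\mbfM\cap\{t\geq T_{\min}\}$ is homeomorphic to the smooth compact $(n+1)$-dimensional cobordism built from $\bM(T_{\min})\times[T_{\min},T_{\min}+\delta]$ by attaching these handles in temporal order. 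Standard smooth handlebody theory then produces a smooth Morse function on the model cobordism with exactly one critical point per handle of matching index, and pulling it back gives the desired Morse function on $\spt\mbfM$. Since $k\leq n-1$ for cylindrical singularities and $n\geq 2$, every resulting index is at least $2$.

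The hardest step will be promoting the quantitative graphical descriptions in items \ref{Enum_MainThm_t<0}--\ref{Enum_MainThm_t=0} into a genuine homeomorphism between $\spt\mbfM\cap(Q_{r_\circ}\times[-t_\circ,t_\circ])$ and the standard handle model $\{-|x'|^{2}+|x''|^{2}\leq\eps\}\subset\R^{n-k+1}\times\R^{k}$, and then matching this identification smoothly to the ambient evolution outside $Q_{r_\circ}$ using the boundary control in \ref{Enum_MainThm_BdyEvol}. The cusp at $t=0$ from \ref{Enum_MainThm_t=0} prevents the spacetime from being smooth across the singular time, so the identification must remain purely topological near the singular point while staying smooth near $\partial Q_{r_\circ}$ where both sides are graphical.
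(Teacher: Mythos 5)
Your arguments for \ref{cor:main i2} and \ref{cor:main i3} match the paper's in spirit: finiteness comes from Theorem~\ref{Thm_Isol_Main}\ref{Enum_MainThm_Iso} plus compactness of the spacetime track, and the surgery-to-handle translation together with Morse theory of cobordisms (the paper cites Milnor's $h$-cobordism notes) yields the Morse function with the stated index count. You are also right that the cusp at $t=0$ forces the spacetime identification to be merely topological near the singular point, which is why the paper appeals to a topological cobordism argument rather than a smooth one.

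Your treatment of \ref{cor:main i1}, however, has a real gap. ``Uniqueness'' here means uniqueness of the weak flow, i.e.\ non-fattening of the level set flow and its agreement with the unit-regular cyclic Brakke flow produced by elliptic regularization. Between singular times the flow is smooth and standard parabolic uniqueness applies, but that alone does not rule out fattening: the delicate question is exactly what happens at and after the singular time, and ``pinning down the germ'' by the asymptotics in \ref{Enum_MainThm_t<0}--\ref{Enum_MainThm_t>0} only describes \emph{one} particular weak solution (the one studied throughout the paper). It does not by itself show that some other weak set flow or Brakke flow with the same initial data could not diverge at the singular time. This is precisely what the Hershkovits--White criterion \cite{HershkovitsWhite20_Nonfattening} is for: if every singularity has a mean convex neighborhood (which is Theorem~\ref{Thm_Isol_Main}\ref{Enum_MainThm_MeanConvex}), the level set flow does not fatten, and uniqueness follows. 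The paper's proof is a direct appeal to that theorem; your ``stitching'' step would need to reprove a version of it. As stated, your argument for \ref{cor:main i1} is not complete.
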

	
	\begin{proof}
		Item \ref{cor:main i1} is a consequence of the item \ref{Enum_MainThm_MeanConvex} of Theorem \ref{Thm_Isol_Main} and the criterion on the uniqueness of mean curvature flow whose singularities having mean convex neighborhood by Hershkovits-White \cite{HershkovitsWhite20_Nonfattening}; Item  \ref{cor:main i2} is a consequence of the item \ref{Enum_MainThm_Iso} of Theorem \ref{Thm_Isol_Main}; Item \ref{cor:main i3} is a consequence of the item \ref{Enum_MainThm_Top} of Theorem \ref{Thm_Isol_Main}, which implies that the spacetime track of mean curvature flow with nondegenerate cylindrical singularities and spherical singularities gives a cobordism which describes the surgery process. Then by the Morse theory of cobordism, e.g. Section 3 of Milnor's \cite{Milnor65_h-cobordism}, there exists a Morse function on the spacetime track so that the critical points correspond to the surgery process.
	\end{proof}
	
	The highlight of our theorem is that, although we need a nondegeneracy requirement for the cylindrical singularities, our results hold for mean curvature flow of hypersurfaces in $\R^{n+1}$ for all $n\geq 2$, and the singularities modeled by $\cC_{n,k}$ for all $1\leq k\leq (n-1)$. On the other hand, nondegeneracy seems not to be a very restrictive condition. In fact, motivated by the work in \cite{SunXue2022_generic_cylindrical}, nondegenerate cylindrical singularities should be the most generic type of singularities of mean curvature flow. We believe our results can serve as an important step towards the research and applications of higher dimensional mean curvature flow.

	\subsection{Why nondegenerate cylindrical singularities?}
	
	Although these generalized cylinders might seem to be simple models, there are several key reasons we are particularly interested in them.
	
	\begin{enumerate}[label={\normalfont(\arabic*)}] 
		\item From the pioneering work of Colding-Minicozzi, the generalized cylinders are the only ``linearly stable" shrinkers. Recent progress on generic mean curvature flows also suggested that the cylindrical and spherical singularities are the singularities of generic mean curvature flows in $\R^3$, as well as in $\RR^{n+1\geq 4}$ under low entropy assumptions, see \cite{ColdingMinicozzi12_generic, CCMS20_GenericMCF, CCMS21_GenericMCF_LowEntropy, chodoshchoischulze2023mean, CCS23_LowEntropy_ii, BamlerKleiner23_Multiplicity1, BernsteinWang17_small_entropy, SunXue2021_initial_closed, SunXue2021_initial_conical}.
		
		\item Round spheres and cylinders are the only possible tangent flows at singularities of mean convex mean curvature flow \cite{White97_Stratif, White00, White03, ShengWang09_SingMCF, Andrews12_Noncollapsing, HaslhoferKleiner17}, and rotationally symmetric mean curvature flow. They are also the only mean convex shrinkers \cite{Huisken93_C2regularity, ColdingMinicozzi12_generic}, complete embedded rotationally symmetric shrinkers, genus $0$ shrinkers in $\R^3$ \cite{Brendle16_genus0} and non-planar shrinkers with the smallest entropy in $\R^3$ \cite{CIMW13_EntropyMinmzer, BernsteinWang17_small_entropy}.
		
		\item While the generalized cylinders are simple models, they can model a highly complicated singular set. For example, the cylinder $\SSp^{n-1}\times\R$ can model either a neck pinch at an isolated singularity or a singular point located in a curve (see the examples below).
	\end{enumerate}
	
	The nondegeneracy in Theorem \ref{Thm_Isol_Main} is crucial. Without the nondegeneracy, the topology change passing through the singularity may be much more complicated. Let us list some possibly pathological examples or conjectural pictures.
	
	\begin{itemize} 
		\item {\bf Marriage ring.} This is a thin torus with rotation symmetry. Under the mean curvature flow it preserves rotation symmetry and becomes thiner and thiner. Ultimately it vanishes along a singular set which is a circle, each singularity is a cylindrical singularity. 
		
		\item {\bf Degenerate neckpinch.} Even if a cylindrical singularity is isolated, its influence on the topology of the flow may not match the intuition. For example, Altschuler-Angenent-Giga \cite{AAG95_RotationMCF} constructed an example called ``peanut'', which is a peanut-shaped surface that shrinks to a cylindrical singularity then vanishes under the mean curvature flow. Later, Angenenet-Vel\'azquez \cite{AngenentVelazquez97_DegenerateNeckpinches} constructed a large class of degenerate singularities. Some of those mean curvature flows are topologically spheres at the beginning, and they can generate a cylindrical singularity like a cusp, but the topology of the flows remains exactly the same spheres after the singular time.
		
		\item {\bf Sparkling bubbles.} This is a conjectural example. It has been proved in \cite{White00, White03, White15_SubseqSing_MeanConvex, ShengWang09_SingMCF, HaslhoferKleiner17} that the blow-up limit flow near a singularity of mean convex mean curvature flow must be a convex ancient flow. Among other examples, there is a class of compact convex ancient flows called \textit{ovals}, see \cite{Angenent13_Oval, DuHaslhofer21_UniqOvals}. If the ovals show up when we blow up a cylindrical singularity, we should expect that after passing through the singularity, the mean curvature flow generates very tiny convex bubbles, just like sparkling bubbles. At this moment, there is no explicit evidence of whether such a picture can really show up, and the conjecture is this can not happen, at least in the mean convex case, see \cite{ChoiHaslhoferHershkocits21_SelfSimilarity}.
	\end{itemize}
	
	Among cylindrical singularities, nondegenerate singularities are \textit{locally generic} by the work of the first and the third named authors. In \cite{SunXue2022_generic_cylindrical}, we proved a nondegenerate cylindrical singularity is stable under small perturbations and one can perturb a degenerate singularity to make it nondegenerate. We can say that the nondegenerate cylindrical singularities are {\it the most generic singularities}. In contrast, Altschuler-Angenent-Giga \cite{AAG95_RotationMCF} and Angenent-Vel\'azquez \cite{AngenentVelazquez97_DegenerateNeckpinches} constructed examples of degenerate singularities, which can be perturbed away. It is promising to have a positive answer to the following conjecture:
	\begin{conjecture}\label{conj:genericMCF}
		A mean curvature flow with generic initial data in $\R^{n+1}$ (or more generally, a general complete manifold with bounded geometry), only develops nondegenerate cylindrical singularities or spherical singularities in finite time. 
	\end{conjecture}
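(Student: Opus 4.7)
The plan is a two-stage Baire category argument in the space $\mathrm{Emb}(M,\RR^{n+1})$ of $C^\infty$ embeddings of a fixed closed $n$-manifold $M$, equipped with the $C^\infty$ topology. Let $\cG_1$ denote those initial data whose canonical (unit-regular cyclic $\mathrm{mod}\,2$ Brakke) flow develops only cylindrical and spherical singularities before extinction, and let $\cG_2\subset\cG_1$ be the further subset on which every cylindrical singularity is nondegenerate in the sense introduced in this paper. The conjecture reduces to showing that $\cG_2$ is residual.

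\textbf{Step 1: reduction to $\cG_1$.} In the regimes where generic regularity is already available -- Colding-Minicozzi \cite{ColdingMinicozzi12_generic}, Chodosh-Choi-Mantoulidis-Schulze \cite{CCMS20_GenericMCF} for $n=2$, Chodosh-Choi-Schulze \cite{chodoshchoischulze2023mean} and Bernstein-Wang \cite{BernsteinWang17_small_entropy} under low entropy, and Bamler-Kleiner \cite{BamlerKleiner23_Multiplicity1} for multiplicity one -- residuality of $\cG_1$ is known or follows by direct combination. To cover general $n\geq 2$ one would upgrade these results via the Colding-Minicozzi entropy-decreasing perturbation scheme: every shrinker other than spheres and generalized cylinders possesses a strictly unstable eigenmode of the drift Jacobi operator, which one uses to perturb the initial data so as to strictly decrease the entropy at that tangent flow. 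Combined with Ilmanen-type compactness of candidate singular models through Huisken's monotonicity and a countable spacetime exhaustion by parabolic cubes, this presents the complement of $\cG_1$ as a countable union of closed nowhere dense sets.

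\textbf{Step 2: reduction from $\cG_1$ to $\cG_2$.} Here I would invoke the local perturbation theorem of \cite{SunXue2022_generic_cylindrical}, which shows that any degenerate cylindrical singularity modeled on $\cC_{n,k}$ can be promoted to a nondegenerate one by an arbitrarily small, compactly supported perturbation of the initial data. The key tool for globalization is Theorem \ref{Thm_Isol_Main}: item \ref{Enum_MainThm_Iso} says nondegenerate cylindrical singularities are isolated in spacetime, while items \ref{Enum_MainThm_MeanConvex}-\ref{Enum_MainThm_t>0} provide an open, mean convex, noncollapsing, graphical neighborhood that is stable under small perturbations of the initial data. Hence, for a flow in $\cG_1$ with finite extinction time, the singular set is locally finite in spacetime; enumerating the potentially degenerate singularities and applying the Sun-Xue perturbation to each in turn -- using the openness provided by Theorem \ref{Thm_Isol_Main} to preserve those already corrected -- expresses $\cG_1\setminus\cG_2$ as a countable union of closed nowhere dense sets in $\cG_1$, yielding residuality of $\cG_2$.

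\textbf{Main obstacle.} The dominant difficulty is Step 1 in dimensions $n\geq 3$ without an entropy bound: ruling out higher-multiplicity tangent flows and eliminating stable non-spherical, non-cylindrical shrinkers (for instance the $\mathrm{SO}(k)\times\mathrm{SO}(n{-}k{+}1)$-invariant examples) requires essentially new input beyond the Colding-Minicozzi linear instability mechanism. A secondary but technical issue in Step 2 is that a perturbation designed to repair one degenerate singularity might create new non-cylindrical or new degenerate singularities elsewhere; this must be handled by localizing the perturbation within a small parabolic neighborhood and invoking Huisken monotonicity, pseudolocality, and White's regularity theorem to control the flow outside. Finally, extending from $\RR^{n+1}$ to ambient manifolds with bounded geometry requires porting the $L^2$-distance monotonicity formula of this paper and the Sun-Xue normal form to the curved setting, which should be feasible via the ambient Huisken monotonicity of Magni-Mantegazza but is quantitatively nontrivial.
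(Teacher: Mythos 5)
This statement is Conjecture \ref{conj:genericMCF}, and the paper does not prove it: the authors explicitly present it as an open problem, write that ``a full resolution of Conjecture \ref{conj:genericMCF} seems currently out of our scope,'' and only use it as a hypothesis in the topological corollaries of Section \ref{SS:TopCor}. So there is no proof in the paper to compare against, and your proposal should be judged as a standalone argument --- which it is not. It is a research program with two essential gaps, one of which you candidly flag yourself. Step 1 (generic avoidance of all non-spherical, non-cylindrical tangent flows, at multiplicity one, in all dimensions $n\geq 3$ without an entropy bound) is a major open problem: the Colding--Minicozzi entropy-decreasing mechanism gives a perturbation at \emph{one} singular time but does not by itself control what new singularities the perturbed flow develops afterwards, the compactness of the relevant space of shrinkers is unavailable in general, and higher-multiplicity tangent flows are only excluded in $\RR^3$ (Bamler--Kleiner). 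Writing the complement of $\cG_1$ as a countable union of closed nowhere dense sets is asserted, not established; there is no argument that these sets are closed or that their complements are dense.

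Step 2 also has a concrete gap beyond what you acknowledge. The perturbation result of \cite{SunXue2022_generic_cylindrical} makes a \emph{given} degenerate cylindrical singularity nondegenerate by perturbing shortly before its singular time; to convert this into a statement about perturbing the \emph{initial data} at $t=0$ you need a backward-in-time genericity/transversality mechanism (an inverse-function-type argument along the flow), which is not supplied by Theorem \ref{Thm_Isol_Main}. Moreover, your ``enumerate and correct one at a time'' scheme presumes the singular set of a flow in $\cG_1$ is locally finite and stable under perturbation, but isolatedness is exactly what Theorem \ref{Thm_Isol_Main} gives only \emph{after} nondegeneracy is achieved; for a flow with degenerate cylindrical singularities the singular set can be large (e.g.\ the marriage-ring example has a whole circle of cylindrical singularities), so the enumeration step is circular. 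In short: the proposal is a reasonable sketch of how one might attack the conjecture, but it does not constitute a proof, and the statement remains open.
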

	
	If this conjecture is settled, we can use the result of this paper to study a large class of geometry and topology questions. For example, Corollary \ref{thm:MeanConvexSurgery} holds for any mean convex hypersurfaces, and other topological implications in Section \ref{SS:TopCor} can have assumptions relaxed. 
	
	It would be interesting to compare our results with other geometric flows with surgery. In a parallel realm, the Ricci flow with surgery was used by Perelman \cite{Perelman03_RFSurgery} as a key step to prove
	Poincar\'e Conjecture and the Geometrization Conjecture of Thurston. However, his surgery process depends on surgery parameters and is not canonical as well. It was mentioned by Perelman that
	
	\begin{quote}
		{
			``
			It is likely that ... one would get a canonically defined Ricci flow through singularities, but at the moment I don't have a proof of that... Our approach... is aimed at
			eventually constructing a
			canonical Ricci flow, ... -- a goal that has not been achieved yet in the present work.
			''
		}
	\end{quote}
	Kleiner-Lott \cite{KleinerLott17_weakRF} developed a weak flow theory for $3$-dimensional Ricci flow. The construction was based on Perelman's surgery process, to show that if the surgery scale becomes smaller and smaller, the flow with surgery will converge to a unique weak Ricci flow. For $2$-convex mean curvature flows, the surgery theory was developed by Huisken-Sinestrari \cite{HuiskenSinestrari09_Surgery2convex} in $\R^{n+1}$ with $n\geq 3$, and by Brendle-Huisken \cite{BrendleHuisken16_MCFSurgeryR3} and Haslhofer-Kleiner \cite{HaslhoferKleiner17_MCFsurgery} in $\R^3$ independently. Using the classification result for $2$-convex ancient flows by Choi-Haslhofer-Hershkovits \cite{ChoiHaslhoferHershkovits18_MeanConvNeighb} and Choi-Haslhofer-Hershkovits-White \cite{ChoiHaslhoferHershkovitsWhite22_AncientMCF}, Daniels-Holgate \cite{Daniels-Holgate22_SurgeryApprox} constructed smooth mean curvature flows with surgery that approximate weak mean curvature flows with only spherical and neck-pinch singularities.
	
	While all the results of geometric flows with surgery mentioned above do not require nondegeneracy, they strongly rely on the assumption that the cylindrical singularities are modeled by $\mathbb S^{n-1}\times\R$, namely the Euclidean factor has dimension $1$. In contrast, although our result requires the singularity to be nondegenerate, it allows the cylindrical singularities to have arbitrary dimensions of the Euclidean factor. This would be essential when we study mean curvature flows in $\R^{n+1}$ with $n\geq 3$, as more complicated cylindrical singularities can show up.

	\subsection{Topological Implications}\label{SS:TopCor}
	Geometric flows play a significant role in the study of geometry and topology. Highlights include Perelman's proof of Poincar\'e conjecture and Thurston's geometrization conjecture, Brendle and Schoen's proof of the differentiable sphere theorem \cite{BrendleSchoen09_Pinch}, and Bamler and Kleiner's proof of Smale's conjecture regarding the structure of the space of self-diffeomorphisms of $3$-manifolds \cite{BamlerKleiner23_Smale}. The topology of mean curvature flow has been studied by White \cite{White95_WSF_Top, White13_MeanConvex_Top}, see also some applications in \cite{ChuSun23_genus}. Bernstein and Lu Wang \cite{BernsteinWang22_closedR4}, and Chodosh-Choi-Mantoulidis-Schulze \cite{CCMS21_GenericMCF_LowEntropy} proved the Schoenflies Conjecture in $\R^4$ with low entropy assumption. The low entropy assumption is imposed to rule out possibly complicated singularities, such as those modeled by $\cC_{n,k}$ for $k\geq 2$.
	
	If Conjecture \ref{conj:genericMCF} is true, then our results provide many more potential applications of mean curvature flows, especially in $\mathbb R^{n+1}$ with $n\geq 3$. While a full resolution of Conjecture \ref{conj:genericMCF} seems currently out of our scope, we would like to point out some heuristic implications to topology. In the following, we will focus on flows with the following assumption:
	\begin{align}
		t\mapsto \bM(t)\ \text{ has only nondegenerate cylindrical and spherical singularities.} \tag{$\star$} \label{Equ_Intro_Star}
	\end{align}

	Using the result of our paper, we have the following corollary to describe the topology of the domain with $k$-convex boundary\footnote{Recall that the principal curvatures of a hypersurface are eigenvalues $\kappa_1\leq \kappa_2\leq\cdots\leq \kappa_n$ of the second fundamental form $A$. $\pr\Omega$ is $k$-convex for $m\geq 1$ if the sum of the first $k$ principal curvatures is nonnegative. Note that $1$-convexity is equivalent to convexity and $n$-convexity is equivalent to mean convexity.}. The $k$-convexity condition is preserved under mean curvature flow by Huisken-Sinestrari \cite{HuiskenSinestrari99_Acta} for smooth case and White \cite{White15_SubseqSing_MeanConvex} for elliptic regularizations and level set flows. In particular, it shows that the tangent flow can only be $\cC_{n,m}$ where $m\leq k-1$. 
	
	\begin{corollary}\label{thm:MeanConvexSurgery}
		Suppose $t\mapsto \bM(t)$ is the mean curvature flow starting from a closed hypersurface $M_0 = \partial \Omega$ for some bounded smooth domain $\Omega$ satisfying \eqref{Equ_Intro_Star}. If $M_0$ is $k$-convex, then there exists a Morse function on $\Omega$ with no index $m$ critical points for $m=0,1,2,\cdots,(n-k+1)$. As a consequence,
		\begin{itemize}
			\item $b_1(\Omega,\pr\Omega)=b_2(\Omega,\pr\Omega)=\cdots=b_{n-k+1}(\Omega,\pr\Omega)=0$;
			\item $\Omega$ can be obtained by a finite union of standard balls in $\R^{n+1}$ after attaching finitely many $m$-handles for $m \in \{1,\cdots,k-1\}$. 
		\end{itemize}
	\end{corollary}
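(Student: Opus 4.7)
The plan is to construct a smooth Morse function on $\Omega$ whose critical points coincide with the singularities of the flow $t\mapsto\bM(t)$, with Morse indices determined by the local models in Theorem~\ref{Thm_Isol_Main}. First, by preservation of $k$-convexity under mean curvature flow (Huisken--Sinestrari for the smooth part, White for the weak flow), every tangent flow of $t\mapsto\bM(t)$ is either a round sphere or a cylinder $\cC_{n,m}$ with $1\le m\le k-1$. Combined with assumption \eqref{Equ_Intro_Star} and Corollary~\ref{cor:main}, the flow has only finitely many singularities: some extinction points (spherical) and some nondegenerate cylindrical points $p_1,\ldots,p_N$ of types $\cC_{n,m_i}$ with $m_i\le k-1$.

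Denote by $T$ the extinction time and by $\Omega(t)$ the open region enclosed by $\bM(t)$, so $\Omega(0)=\Omega$ and $\Omega(T)=\emptyset$. Consider the arrival time function $f\colon\Omega\to[0,T]$ defined by $f(x)=\{t:x\in\spt\bM(t)\}$, so that $\{f>c\}=\Omega(c)$. Away from the singular set $f$ is smooth with nonvanishing gradient. Near each spherical singularity the flow is a smoothly shrinking convex surface, so $f$ is $C^\infty$ there with a nondegenerate local maximum of index $n+1$. Near each nondegenerate cylindrical singularity $p_i$ of type $\cC_{n,m_i}$, items~\ref{Enum_MainThm_t<0}, \ref{Enum_MainThm_t>0} and \ref{Enum_MainThm_Top} of Theorem~\ref{Thm_Isol_Main} supply a local model: within the cube $Q_{r_\circ}$ around $p_i$, the pair $\bigl(\Omega(t_-)\cap Q_{r_\circ},\ \Omega(t_+)\cap Q_{r_\circ}\bigr)$ is homeomorphic to the ``ball-to-shell'' pair $\bigl(\BB^{n-m_i+1}\times\BB^{m_i},\ \BB^{n-m_i+1}\times(\BB^{m_i}\setminus\BB^{m_i}_r)\bigr)$. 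Consequently, the region $\{c_-< f\le c_+\}\cap Q_{r_\circ}$ swept out as $f$ crosses the critical value $f(p_i)$ is homeomorphic to the inner ball $\BB^{n-m_i+1}\times\BB^{m_i}_r$, glued to $\{f\le c_-\}$ along the outer sphere $\SSp^{n-m_i}\times\BB^{m_i}_r\subset\bM(t_-)$; this is exactly the trace of the $(n-m_i)$-surgery on the boundary from item~\ref{Enum_MainThm_Top} and realizes the attachment of an $(n+1-m_i)$-handle to the sub-level set. Thus $p_i$ is a topological Morse critical point of $f$ of index $n+1-m_i$.

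Since $f$ is not $C^2$ at the cylindrical singularities (by \cite{ColdingMinicozzi18_Regularity_LSF, SunXue2022_generic_cylindrical}), I would replace $f$ on a small neighborhood of each $p_i$ by the standard quadratic Morse normal form of index $n+1-m_i$, using the local homeomorphism above, and glue smoothly to $f$ outside. The resulting smooth Morse function $F\colon\Omega\to\RR$ has critical set equal to the singular set of the flow, with all indices in $\{n-k+2,\ldots,n+1\}$; this proves the existence of a Morse function with no index $m$ critical points for $m\in\{0,\ldots,n-k+1\}$. The relative Morse inequalities applied to $F$ then give $b_j(\Omega,\partial\Omega)=0$ for $1\le j\le n-k+1$, and dualizing via $-F$ converts each $(n+1-m_i)$-handle into an $m_i$-handle and each $(n+1)$-handle into a $0$-handle, exhibiting $\Omega$ as a finite union of standard $(n{+}1)$-balls with $m$-handles attached for $m\in\{1,\ldots,k-1\}$. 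The main obstacle is the local topological identification in the second paragraph: one must splice the graph-over-cylinder description (item~\ref{Enum_MainThm_t<0}), the singular-time cusp profile (item~\ref{Enum_MainThm_t=0}) and the graph-over-dual-cylinder description (item~\ref{Enum_MainThm_t>0}) into a single homeomorphism from a neighborhood of $p_i$ in $\Omega$ onto the standard $(n+1-m_i)$-handle cobordism model; once this is done, the subsequent smoothing and Morse-theoretic bookkeeping are routine.
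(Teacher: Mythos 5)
Your proof is correct and takes essentially the same route as the paper, which obtains the Morse function from Corollary~\ref{cor:main}~(iii) (Milnor's cobordism theory applied to the surgery picture in Theorem~\ref{Thm_Isol_Main}~\ref{Enum_MainThm_Top}), uses that $\spt\bM\cong\Omega$ for mean convex flows, and bounds the indices via preservation of $k$-convexity restricting tangent flows to $\cC_{n,m}$ with $m\le k-1$; your variant rebuilds that Morse function by hand from the arrival time function with the same local input. Two small points to tighten: the arrival time is only known to be $C^2$ (not $C^\infty$) at a spherical extinction point (Colding--Minicozzi), so replace $f$ there by a quadratic normal form exactly as you do at the cylindrical points; and the splicing of the quadratic model into $f$ across a collar of regular points without creating new critical points is genuine (if standard) bookkeeping that is avoided by citing Corollary~\ref{cor:main}~(iii) directly rather than re-deriving it.
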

	
	In general, the hypersurface may not be $k$-convex, but we can still obtain some topological information of the spacetime:
	
	\begin{corollary}\label{thm:SpacetimeSurgery}
		Suppose $t\mapsto \bM(t)$ is the mean curvature flow starting from a closed hypersurface $M_0\subset\R^{n+1}$ and $\bM$ satisfies \eqref{Equ_Intro_Star}. Then the spacetime track $\spt \bM$ of the flow can be obtained by attaching finitely many $m$-handles for $m \in \{1,\cdots,n-1\}$ to a finite disjoint union of standard balls in $\R^{n+1}$.
	\end{corollary}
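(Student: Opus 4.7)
The plan is to apply Corollary~\ref{cor:main} and then invoke the standard duality for handle decompositions of cobordisms. By the assumption \eqref{Equ_Intro_Star}, Theorem~\ref{Thm_Isol_Main}\ref{Enum_MainThm_Iso}, and Corollary~\ref{cor:main}\ref{cor:main i2}, the flow $\bM$ has only finitely many singular points, and the spacetime track $\cN := \Spt\bM \subset \R^{n+1}\times\R$ is a compact topological $(n+1)$-manifold, smooth away from those finitely many singular points. Its manifold boundary is exactly $M_0\times\{0\}$: at intermediate times $t\in(0,T)$ the flow is a smooth $n$-manifold moving transverse to the time direction, and the spherical extinction points are smooth interior points of $\cN$ (locally modeled on a solid ball with the extinction point as its center). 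Corollary~\ref{cor:main}\ref{cor:main i3} then produces a Morse function $f:\cN\to[0,1]$ realizing the cobordism $(\cN,M_0\times\{0\},\emptyset)$, with $f^{-1}(0)=M_0\times\{0\}$, $\nabla f$ pointing inward along $\pr\cN$, and interior critical points coinciding with the singularities of $\bM$: an index $n+1$ critical point at each spherical singularity and an index $n-k+1$ critical point at each cylindrical singularity modeled on $\cC_{n,k}$, $k\in\{1,\ldots,n-1\}$.

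Next I would dualize. Set $g:=1-f$. This swaps the boundary roles in the cobordism: the new ``bottom'' $g^{-1}(0)=f^{-1}(1)$ is empty and the new ``top'' $g^{-1}(1)=M_0\times\{0\}$. By the Morse lemma, each critical point of $f$ of index $p$ becomes a critical point of $g$ of index $n+1-p$, so spherical singularities turn into index-$0$ critical points of $g$, while cylindrical singularities modeled on $\cC_{n,k}$ turn into index-$k$ critical points of $g$, with $k\in\{1,\ldots,n-1\}$. Now build $\cN$ along $g$ starting from the empty bottom boundary: the sublevel set $\{g\le a\}$ just above the minimum values of $g$ is a disjoint union of closed $(n+1)$-balls, one around each spherical extinction point, by the standard local model at a nondegenerate minimum. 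As $a$ crosses each of the remaining critical values of $g$ in turn, an $m$-handle with $m=k\in\{1,\ldots,n-1\}$ is attached in the standard way, see for instance Section~3 of \cite{Milnor65_h-cobordism}. After all critical values of $g$ are exhausted, the resulting handlebody is all of $\cN$, whose remaining free boundary $\{g=1\}$ is exactly $M_0\times\{0\}$.

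This delivers exactly the conclusion of Corollary~\ref{thm:SpacetimeSurgery}: $\cN$ is diffeomorphic to a handlebody obtained from a finite disjoint union of standard $(n+1)$-balls by attaching finitely many $m$-handles with $m\in\{1,\ldots,n-1\}$. The main subtlety to verify will be that the Morse function supplied by Corollary~\ref{cor:main} can be put in genuine cobordism form, with $\pr\cN=M_0\times\{0\}$ as a regular level set and no critical point on $\pr\cN$. This reduces to combining the local handle model at each nondegenerate singularity provided by Theorem~\ref{Thm_Isol_Main}\ref{Enum_MainThm_Top} (and its spherical analogue) with a standard collaring argument along the initial boundary $M_0\times\{0\}$, which is already implicit in the proof of Corollary~\ref{cor:main}.
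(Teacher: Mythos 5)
Your proof is correct, and it fills in the argument that the paper leaves implicit (the paper states Corollary \ref{thm:SpacetimeSurgery} without proof, relying on the reader to combine Corollary \ref{cor:main}\ref{cor:main i3} with standard handle theory). Your route is exactly the intended one: $\Spt\bM$ is a cobordism from $M_0$ to $\emptyset$, Corollary \ref{cor:main}\ref{cor:main i3} supplies a Morse function with indices in $\{2,\dots,n+1\}$ ($n-k+1$ for each $\cC_{n,k}$, including $k=0$), and dualizing via $g = 1-f$ converts these into indices $\{0\}\cup\{1,\dots,n-1\}$, so building the sublevel sets of $g$ yields first a disjoint union of $(n+1)$-balls (from the index-$0$ critical points, i.e.\ the spherical extinctions) and then $m$-handles with $m\in\{1,\dots,n-1\}$, precisely the ``missing $n$- and $(n+1)$-handle'' statement the paper emphasizes in the discussion following the corollary.
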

	
	If Conjecture \ref{conj:genericMCF} is true, Corollary \ref{thm:MeanConvexSurgery} and \ref{thm:SpacetimeSurgery} can be viewed as a ``missing handle'' property. From the Morse theory point of view, the handle decomposition of a manifold can imply the homology and homotopy information of the manifold. The converse question of whether the homology and homotopy information of the manifold can determine the handle decomposition is widely open. For example, an open question proposed by Kirby \cite{Kirby89_4Mfd} is whether a closed simply connected $4$-manifold admits a handle decomposition with no $3$-handles. Our main theorem suggested that $n$-handles and $(n+1)$-handles\footnote{By the basic Morse theory, a compact $(n+1)$-manifold with boundary always has a handle decomposition without $(n+1)$-handle.} can be missed in the handle decomposition of the spacetime track of an embedded hypersurface in $\R^{n+1}\times\R$ with only nondegenerate cylindrical and spherical singularities under the mean curvature flow. In particular, assuming Conjecture \ref{conj:genericMCF}, if the embedded hypersurface is the boundary of a mean convex domain, then this domain has a handle decomposition missing $n$-handles and $(n+1)$-handles.

	Corollary \ref{thm:MeanConvexSurgery} can be also viewed as the counterpart of the surgery theory of manifolds with positive scalar curvatures. 
	In \cite{Gromov19_MeanCurScalarCur}, Gromov observed that mean convex domains in $\R^{n+1}$ and manifolds with positive scalar curvature have similar properties. He also proposed to use the properties of one of them to study the other. 
	Schoen-Yau \cite{SchoenYau79_StructurePSC} and Gromov-Lawson \cite{GromovLawson80_ClassificationPSC} showed that given a closed manifold $M$ of dimension $n\geq 3$ with positive scalar curvature, after performing $0,1,\cdots,(n-3)$-surgeries, the resulting manifold still has positive scalar curvature. Conversely, it is not clear what are the building blocks from which any positive scalar curved manifold can be obtained through those surgeries. When $n=3$, Perelman proved that the building blocks are spherical space forms, and when $n=4$, Bamler-Li-Mantoulidis \cite{BamlerLiMantoulidis23_4mfdRF} provided some reductions. Corollary \ref{thm:MeanConvexSurgery} and Conjecture \ref{conj:genericMCF} suggest that for mean convex domains in $\R^{n+1}$, the building blocks are standard balls.
	
	In summary, we expect the result of this paper can illustrate the topological structure of closed embedded hypersurfaces in $\R^{n+1}$ or in an $(n+1)$-dimensional manifold.
	
	\medskip

	Another perspective of topological implications is a lower bound on the number of singular points of mean curvature flow with only nondegenerate cylindrical singularities and spherical singularities in terms of the topology of the initial data.

	In \cite{White13_MeanConvex_Top}, White showed that certain types of cylindrical singularities of mean convex mean curvature flow must occur according to the topology of the complement of the initial hypersurface. Because the topological change of the mean curvature flow passing through nondegenerate singularities can be characterized comprehensively, we can prove that certain types of cylindrical singularities must occur in mean curvature flow with only nondegenerate singularities. For simplicity, we state the theorem for homology groups with coefficients $\R$, but similar theorems hold for other coefficients.
	
	\begin{corollary}[Lower bound on numbers of singularities]
		Suppose $\bM$ is the mean curvature flow starting from a closed hypersurface $M_0$ and $\bM$ satisfies \eqref{Equ_Intro_Star}. For $1\leq k\leq n-1$, let $b_k(M_0)$ be the Betti number of the $k$-the homology with $\R$-coefficient. Then 
		\begin{itemize}
			\item when $n\neq 2k$, $\bM$ has at least $b_k(M_0)$ number of nondegenerate singularities modeled by $\cC_{n,k}$ or $\cC_{n,n-k}$;
			\item when $n=2k$, $\bM$ has at least $\frac{b_{n/2}(M_0)}{2}$ number of nondegenerate singularities modeled by $\cC_{n,n/2}$.
		\end{itemize}
	\end{corollary}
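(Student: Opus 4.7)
My plan is to deduce this lower bound from Morse theory on the spacetime track $W := \spt\mbfM$, viewed as a compact oriented $(n+1)$-dimensional manifold with $\partial W = M_0$ after the canonical smoothing near each nondegenerate singularity provided by Theorem \ref{Thm_Isol_Main}. Corollary \ref{cor:main}, item (iii) (extended by treating a spherical singularity as $\cC_{n,0}$ per the footnote there) supplies a Morse function $f: W \to \R$ with $f|_{M_0}$ minimal and critical points in bijection with the singularities of $\mbfM$: exactly one index-$(n-k+1)$ critical point per $\cC_{n,k}$-singularity, $k\in\{0,1,\dots,n-1\}$. Writing $c_j$ for the number of index-$j$ critical points, so that $c_{n-k+1}$ counts $\cC_{n,k}$-singularities, the goal reduces to proving $c_{n-k+1}+c_{k+1}\geq b_k(M_0)$ when $n\neq 2k$ and $c_{k+1}\geq b_k(M_0)/2$ when $n=2k$.

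The first step is to extract global topological consequences from the fact that all critical indices of $f$ lie in $\{2,\dots,n+1\}$. Reversing the Morse function turns index $j$ into $n+1-j$, so all reverse indices lie in $\{0,1,\dots,n-1\}$. Hence $W$ is homotopy equivalent to a CW complex of dimension at most $n-1$, so $H_i(W;\R)=0$ for every $i\geq n$. Lefschetz duality for the compact oriented manifold-with-boundary $(W,M_0)$ then yields $b_i(W,M_0)=b_{n+1-i}(W)$ for all $i$; in particular $b_{n-k+1}(W,M_0)=b_k(W)$ and $b_{k+1}(W,M_0)=b_{n-k}(W)$.

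The second step combines the relative Morse inequalities $c_j\geq b_j(W,M_0)$, which give $c_{n-k+1}\geq b_k(W)$ and $c_{k+1}\geq b_{n-k}(W)$, with the long exact sequence of $(W,M_0)$:
\[
H_{k+1}(W,M_0)\xrightarrow{\partial} H_k(M_0)\xrightarrow{i_*} H_k(W).
\]
Exactness yields $b_k(M_0)=\dim\operatorname{im}\partial+\dim\operatorname{im} i_*\leq b_{k+1}(W,M_0)+b_k(W)=b_{n-k}(W)+b_k(W)$. Adding the two Morse inequalities then gives $c_{n-k+1}+c_{k+1}\geq b_k(W)+b_{n-k}(W)\geq b_k(M_0)$. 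For $n\neq 2k$ the indices $n-k+1$ and $k+1$ are distinct, so $c_{n-k+1}$ and $c_{k+1}$ count disjoint singularity types $\cC_{n,k}$ and $\cC_{n,n-k}$, producing the claimed bound. For $n=2k$ the indices coincide and the identical calculation (now with $b_k(W)=b_{n-k}(W)$ tautologically) yields $2c_{k+1}\geq b_k(M_0)$.

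The main obstacle is verifying that the smoothed spacetime track really is a compact oriented smooth $(n+1)$-manifold with $\partial W=M_0$ on which the Morse function realizes exactly the claimed index data, with no extra critical points of indices $0$ or $1$ introduced by the smoothing. Both the smoothing procedure and the local index count follow directly from the explicit surgery description in Theorem \ref{Thm_Isol_Main}\ref{Enum_MainThm_Top}; orientability of $W$ is inherited from the outward unit-normal orientation of $M_0\subset\R^{n+1}$ combined with the time direction. All remaining ingredients --- Morse inequalities on a cobordism with nonempty lower boundary, Lefschetz duality, and the five-term exact sequence --- are standard homological algebra.
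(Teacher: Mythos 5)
Your proof is correct and follows essentially the same route as the paper's: combine Lefschetz duality, the long exact sequence of the pair $(W,M_0)$ at degree $k$, and the relative Morse inequalities $c_j\geq b_j(W,M_0)$ to obtain $c_{n-k+1}+c_{k+1}\geq b_k(M_0)$. The only stylistic difference is that you prove the (unused) observation $H_i(W;\R)=0$ for $i\geq n$ by reversing the Morse function, and you phrase the duality via $b_i(W,M_0)=b_{n+1-i}(W)$ before comparing to the Morse counts; the paper states duality as $H_k(\bM)\cong(H_{n-k+1}(\bM,M_0))^*$ and compares directly, but the final inequality and the index bookkeeping for $n\neq 2k$ versus $n=2k$ are identical.
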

	
	\begin{proof}
		Let us view the spacetime track (still denoted by $\bM$ for simplicity) as a manifold with $\pr\bM=M_0$, and let $b_{k}(\bM,M_0)$ be the relative Betti number with $\R$-coefficient. By the Poincare-Lefschetz duality, we have
		\[
		H_k(\bM) \cong H^{n-k+1}(\bM, M_0) \cong (H_{n-k+1}(\bM, M_0))^* 
		\]
		Then the long exact sequence of relative homology gives the following exact sequence
		\[
		\cdots \to H_{k+1}(\bM,M_0)\to H_k(M_0)\to H_k(\bM)\cong (H_{n-k+1}(\bM, M_0))^* \to \cdots
		\]
		This implies the Betti number inequality
		\begin{equation}
			b_{k+1}(\bM,M_0)+b_{n-k+1}(\bM,M_0)\geq b_k(M_0).
		\end{equation}
		Now apply Corollary \ref{cor:main} and notice that for every $j\geq 0$, $b_j(\bM, M_0)$ is less or equal than the number of index $j$ critical point of the Morse function therein. We thus obtain the estimated number of singularities.
	\end{proof}
	
	If $M_0=\pr\Omega$ is mean convex, then time slices of $\bM$ sweep out the whole region $\Omega$, and $\bM$ is homeomorphic to $\Omega$. Therefore we have a more precise estimate of the number of nondegenerate singularities.
	
	\begin{corollary}
		Suppose $\bM$ is the mean curvature flow starting from a closed hypersurface $M_0$ and $\bM$ satisfies \eqref{Equ_Intro_Star}. If $M_0=\pr\Omega$ is mean convex, then $\bM$ has at least $b_{n-k+1}(\Omega,\pr\Omega)$ number of nondegenerate singularities modeled by $\cC_{n,k}$.
	\end{corollary}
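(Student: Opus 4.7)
The plan is to run the same Morse-theoretic argument as in the preceding corollary, but in the mean convex setting, where the spacetime track is homeomorphic to $\Omega$, so one does not need to invoke Poincar\'e--Lefschetz duality and can read the bound directly off the relative homology of $(\Omega,\partial\Omega)$.

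First, I would use mean convexity to identify the topology of the spacetime track. Since $M_0 = \partial \Omega$ is mean convex, the evolving hypersurfaces move strictly inward and monotonically sweep out $\Omega$ up to the extinction time, so the arrival-time function $t_{\mathrm{arr}} \colon \Omega \to [0,T]$ is well defined and continuous. The map $x\mapsto (x,t_{\mathrm{arr}}(x))$ gives a homeomorphism $\Omega \to \spt\bM$ sending $\partial \Omega$ to $M_0$; in particular there is an identification of pairs
\[
(\spt\bM,\ \partial \spt\bM)\ \cong\ (\Omega,\ \partial \Omega).
\]

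Second, I would invoke Corollary \ref{cor:main}\ref{cor:main i3}: under $(\star)$ there is a Morse function $f$ on $\spt\bM$ whose critical points are exactly the singularities of the flow, with each nondegenerate cylindrical singularity modeled by $\cC_{n,j}$ contributing a critical point of index $(n-j+1)$, and each spherical singularity contributing a critical point of top index. Viewed as a Morse function on the cobordism from $M_0$ (at the bottom) to $\emptyset$ (at extinction), the standard Morse inequalities for manifolds with boundary (e.g., Milnor \cite{Milnor65_h-cobordism}, Section 3) yield, for every $j$,
\[
\#\{\text{critical points of } f \text{ of index } j\}\ \geq\ b_j(\spt\bM,\ \partial \spt\bM).
\]

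Third, I would specialize to $j = n-k+1$. By Corollary \ref{cor:main}\ref{cor:main i3}, the index-$(n-k+1)$ critical points of $f$ correspond bijectively to the nondegenerate cylindrical singularities modeled on $\cC_{n,k}$ (note $1\leq k\leq n-1$ means the index is strictly less than $n+1$, so no spherical singularities are counted). Combining with the homeomorphism above gives
\[
\#\{\cC_{n,k}\text{-singularities of }\bM\}\ \geq\ b_{n-k+1}(\spt\bM,\partial\spt\bM)\ =\ b_{n-k+1}(\Omega,\partial \Omega),
\]
which is the desired bound.

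The only nontrivial point is the identification of $\spt\bM$ with $\Omega$ as a topological pair; once this is in hand the argument is purely Morse-theoretic and avoids the Poincar\'e--Lefschetz detour used in the general case. Continuity of the arrival-time function for mean convex flows with only the singularities allowed under $(\star)$ is standard (and is implicit in the uniqueness statement Corollary \ref{cor:main}\ref{cor:main i1} together with mean convexity), so I do not expect a real obstacle here.
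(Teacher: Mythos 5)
Your proposal is correct and takes the same approach as the paper, which gives essentially no proof beyond the one-sentence observation that in the mean convex case the spacetime track is homeomorphic to $\Omega$; your write-up simply fills in the details of how that identification, combined with the Morse function from Corollary \ref{cor:main}\ref{cor:main i3} and the Morse inequalities for cobordisms, yields the bound directly without the Poincar\'e--Lefschetz detour needed in the general case.
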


	\subsection{Main Idea of the Proof}
	Let $t\mapsto \bM(t)$ be a mean curvature flow with a non-degenerate singularity modeled on $\cC_{n,k}$ at $(\orig, 0)$.

	\ref{Enum_MainThm_BdyEvol} (boundary evolution), \ref{Enum_MainThm_t<0} (graphical when $t<0$) and \ref{Enum_MainThm_t=0} (graphical when $t=0$) of Theorem \ref{Thm_Isol_Main} follows from \cite{SunXue2022_generic_cylindrical} with a pseudo-locality argument. Then \ref{Enum_MainThm_MeanConvex} (mean convexity) of Theorem \ref{Thm_Isol_Main} follows from avoidance principle by a time translation, and \ref{Enum_MainThm_Noncollap} (noncollapsing) is proved via elliptic regularization. These will be discussed in Section \ref{SSSec_PseudoLoc} and \ref{SSSec_Ellip Reg}. The bulk of this paper is devoted to proving \ref{Enum_MainThm_Iso} (isolatedness) and \ref{Enum_MainThm_t>0} (graphical when $t>0$) of Theorem \ref{Thm_Isol_Main}.
	
	\subsubsection{Exponential growing modes out of nondegeneracy}
	By definition of non-degeneracy, for a fixed $t_\circ<0$ such that $\tau_\circ:= -\log(-t_\circ)\gg 1$, $\sqrt{-t_\circ}^{-1}\cdot\bM(t_\circ)$ can be written as a graph over $\cC_{n,k}$ of \[
	\frac{\sqrt{2(n-k)}}{4\tau_\circ}\cdot (|y|^2 - 2) + o(1/\tau_\circ)
	\]
	within any ball of radius $\sim \mbfL \gg 1$. 
	
	If we translate $\bM$ in $\{0\}\times\RR^k$ direction by $\sqrt{-t_\circ}\,\by\in \RR^k$ and then in time direction by $\bt$, the resulting flow $\bM'$ will have the rescaled $t_\circ$-time slice $\sqrt{-t_\circ}^{-1}\cdot\bM'(t_\circ)$ to be approximately a graph over $\cC_{n,k}$ of \[
	\frac{\sqrt{2(n-k)}}{4\tau_\circ}\cdot (|y-\by|^2 - 2k + \bt') = \frac{\sqrt{2(n-k)}}{4\tau_\circ}\cdot \Big((|y|^2 - 2k) - 2y\cdot \by + (|\by|^2 + \bt')\Big)
	\] 
	for some constant $\bt'\sim e^{\tau_\circ}\bt$, as long as $|\by|\ll \mbfL$, $|\bt|\ll e^{-\tau_\circ}$. 
	When $|\by|\gg 1$, such a function is dominated by either the constant term $|\by|^2 + \bt'$ or the linear term $y\cdot \by$, both of which are unstable modes of the rescaled mean curvature flow near $\cC_{n,k}$. This means, if we start the flow from $\bM'(t_\circ)$ at time $t_\circ$, then after some time, $\sqrt{-t}^{-1}\bM'(t)$ must leave a neighborhood of $\cC_{n,k}$ with a strict drop of Gaussian area and hence never comes back by Huisken's monotonicity formula. That at least forces $(\orig, 0)$ not to be a singularity of $\bM'$ modeled on $\cC_{n,k}$, or equivalently, $(0, -\sqrt{-t_\circ}\by, -\bt)$ not to be  a singularity of $\bM$ modeled on $\cC_{n,k}$. Further analysis can also take into account of translations in $\RR^{n-k+1}$ directions.
	
	To carry out this process and rule out \textit{every} singular point in a neighborhood of $(\orig, 0)$, there are two main difficulties we need to overcome, discussed in the next two paragraphs. 
	\subsubsection{Nonconcentration at infinity}
	(i) We need to rule out the possible effect of infinity to the non-linear evolution of (rescaled) mean curvature flow near $\cC_{n,k}$. To do that,   we shall focus  on the \textbf{$L^2$-distance} to the round cylinder: for any $X\in\R^{n+1}$, let $\odist_{n,k}(X)=\min\{\dist(X,\cC_{n,k}),1\}$\footnote{In real application, we use a regularized version of this, see \eqref{Equ_L^2 Mono_odist to C_(n,k)}}. Then we define the $L^2$-distance of a hypersurface $\Sigma\subset\R^{n+1}$ to $\cC_{n,k}$ by
	\[
	\mathbf{d}_{n,k}(\Sigma)^2 := \int_\Sigma \odist_{n,k}(X)^2e^{-\frac{|X|^2}{4}}\ dX.
	\]
	We prove the following \textit{non-concentration near infinity} of $L^2$-distance for rescaled mean curvature flow $\tau\mapsto \cM(\tau)$ (see Corollary \ref{Cor_L^2 Mono_Weighted L^2 Mono and Est for RMCF}): $\forall\, \tau>0$, 
	\begin{align}
		\int_{\cM(\tau)} \odist_{n,k}(X)^2(1+\tau|X|^2)e^{-\frac{|X|^2}{4}}\ dX \leq C_ne^{K_n\tau}\cdot\mbfd_{n,k}(\cM(0))^2\,.  \label{Equ_Intro_Noncon near infty}        
	\end{align}
	An analogs non-concentration estimate has been proved in \cite{AngenentDaskalopoulosSesum19_AncientConvexMCF} for ancient mean curvature flow asymptotic to $\cC_{n, k}$.
	
	Another key quantity we introduced in this paper is the {\bf decay order}. Suppose $\tau\mapsto \cM(\tau)$ is a rescaled mean curvature flow, we define the {\bf decay order} of $\cM$ at time $\tau$ to be \[
	\cN_{n,k}(\tau;\cM):=\log\left( \frac{\mbfd_{n,k}(\cM(\tau))}{\mbfd_{n,k}(\cM(\tau+1))}\right).
	\]
	It is a discrete parabolic analog of Almgren's frequency function and doubling constant in the elliptic problems. Roughly speaking, it characterizes the asymptotic rate of the rescaled mean curvature flow converging to the limiting cylinder. Particularly, if $\cM(\tau)$ is a graph of function $u(X,\tau)\approx e^{-d\tau}w(X)+\text{errors}$ as $\tau\to\infty$, then $\cN_{n,k}(\tau;\cM)\approx d$ when $\tau\gg 1$. For some other parabolic analogs of Almgren's frequency, see \cite{Lin90_UniqueParabolic, Poon96_ParaFrequency, ColdingMinicozzi22_ParaFrequency, BaldaufHoLee24_ParaFrequency}.
	
	The $L^2$-distance and decay order have three essential features. 
	
	First, it can be defined for weak flows, such as a Brakke flow, and the non-concentration estimate \eqref{Equ_Intro_Noncon near infty} still holds. 
	
	Second, the decay order can indeed capture the dynamical information of a cylindrical singularity. For example, near a non-degenerate singularity $(\orig, 0)$, the corresponding rescaled mean curvature flow $\cM$ has $\lim_{\tau\to+\infty}\cN_{n,k}(\tau;\cM)=0$, see Example \ref{Eg_Nondegen sing has cN = 0}. On the other hand, if $\cN_{n,k}(0;\cM)$ is uniformly bounded from above, then combined with the non-concentration estimate \eqref{Equ_Intro_Noncon near infty}, we have that $\mbfd_{n,k}(\cM(\tau)\cap \BB_R)$ dominates $\mbfd_{n,k}(\cM(\tau)\setminus \BB_R)$ when $\tau\in (0, 1]$ and $\sqrt\tau R\gg 1$. This enables us to capture the dynamics of  $\cM(\tau)$ in $\BB_R$, which is well modeled by (parabolic) Jacobi fields on $\cC_{n,k}$ when $\cM$ is close to $\cC_{n,k}$. In this way, we prove a \textit{discrete almost monotonicity for $\cN_{n,k}(\tau, \cM)$} in Corollary \ref{Cor_L^2 Mono_Discrete Growth Mono}, which is an analogue of the frequency monotonicity for solutions to linear parabolic equations on $\cC_{n,k}$, see Appendix \ref{app_Jacobi}.     
	
	Third, the decay order provides a practical way to study the flow after small spacetime translations and dilation. This feature also help us to characterize the dynamical information of degenerate singularities, which we will discuss in the forthcoming paper.

	\subsubsection{Topology of the flow after passing through a nondegenerate singularity}
	
	(ii) Near a non-degenerate singularity, we need to rule out not only singularities modeled on $\cC_{n,k}$, but also other possible singularities. To achieve this, we prove a Classification Theorem, see Theorem \ref{Thm_Isol_Blowup Model}, of every blow-up model $\tilde\bM_\infty$ (i.e. limit flows) of $\bM$ near $(\orig, 0)$, asserting that with appropriate choice of blow-up rates, $\tilde\bM_\infty(0)$ must be either a translation and dilation of $\cC_{n,k}$, or a translation, dilation and rotation of an $(n-k+1)$-dimensional \textit{bowl soliton} times $\RR^{k-1}$. Note that \ref{Enum_MainThm_Iso} of Theorem \ref{Thm_Isol_Main} follows directly from this classification and Brakke-White's epsilon regularity \cite{White05_MCFReg}. 
	
	This classification is achieved by exploiting our analysis of decay order for limiting flows blown up near a nondegenerate singularity, combined with the classification result of non-collapsing ancient mean curvature flows by Wenkui Du and Jingze Zhu \cite[Theorem 1.10]{DuZhu22_quantization}. 
	More precisely, for an arbitrary sequence $p_j=(x_j, y_j, t_j)\in \spt\bM$ approaching $(\orig, 0)$, we basically show that when $j\gg 1$, some appropriate parabolic dilation $\tilde\bM_j$ of the translated flow $\bM - p_j$ has its associated rescaled mean curvature flow $\tau\mapsto\tilde\cM_j(\tau)$ graphical over large subdomains in $\cC_{n,k}$ for all $\tau\leq 0$, but $\mbfd_{n,k}(\tilde\cM_j(2))$ has a uniform positive lower bound, and the decay order satisfies $\cN_{n,k}(\tau; \tilde\cM_j)\leq -1/4$ for $\tau\leq 0$. In particular, $\tilde\cM_j$ subconverges to some limit flow $\tilde\cM_\infty$ which does not coincide with the round cylinder $\cC_{n,k}$, but $\mbfd_{n,k}(\tilde\cM_\infty(\tau))$ decays exponentially when $\tau\searrow -\infty$. A more careful analysis via avoidance principle and pseudo-locality proves that $\tilde\cM_\infty$ is a convex non-collapsing flow. 
	This enables us to apply \cite[Theorem 1.10]{DuZhu22_quantization} to complete the proof.
	
	To extract topological information and prove \ref{Enum_MainThm_t>0} of Theorem \ref{Thm_Isol_Main}, we also extract refined information in this Classification Theorem \ref{Thm_Isol_Blowup Model}. More precisely, using the notations above and let $\tilde\bM_\infty$ be the subsequential limit of $\tilde\bM_j$, we show that if the base points $p_j$ satisfy $y_j/|y_j|\to \by$ for some unit vector $\by\in \RR^k$, then, 
	\begin{enumerate}[label={\normalfont(\alph*)}] 
		\item if $\tilde\bM_\infty(0)$ is a bowl soliton times $\RR^{k-1}$, then it must translate in $(0, \by)$-direction;
		\item if $\tilde\bM_\infty(0)$ is a round cylinder, then for $j\gg 1$, the outward normal vector $\tilde\nu_j(\orig, 0)$ of $\tilde\bM_j(0)$ at $\orig$ satisfies $\tilde\nu_j(\orig,0)\cdot (0, \by)<0$.
	\end{enumerate}
	These two refined blow-up information follow both from the fact that once a linear mode $y\cdot \by$ dominates the graphical function of a rescaled mean curvature flow $\cM$ near $\cC_{n,k}$ at time $\tau = a$, then \textbf{this} mode remains domination until $\cM(\tau)$ leaves a small neighborhood of $\cC_{n,k}$.
	
	Finally, (a), (b) together with a topological argument allow us to conclude that the projection map $\bP_t$ in Theorem \ref{Thm_Isol_Main} \ref{Enum_MainThm_t>0} is a diffeomorphism when $r_\circ, t_\circ\ll 1$.

	\subsection{Organization of the paper}
	In Section \ref{S:preliminary}, we discuss preliminary concepts and results. In particular, we will recall the notions of weak flows, non-collapsing, and previous results of nondegenerate singularities in \cite{SunXue2022_generic_cylindrical}. In Section \ref{Sec_L^2 Mono}, we introduce the central analytic tool of this paper, the $L^2$-monotonicity formula and decay order. In Section \ref{S:GandTProperty}, we prove the geometric and topological properties of flow passing through nondegenerate singularities. In Section \ref{S:Application}, we complete the proof of Theorem \ref{Thm_Isol_Main}. Finally, we have three Appendices with some technical details.
	
	\subsection*{Acknowledgment}
	We thank Boyu Zhang for the helpful discussion about topology. A.S. is supported by the AMS-Simons Travel Grant. J. X. is supported by NSFC grants (No. 12271285) in China, the New Cornerstone investigator program and the Xiaomi endowed professorship of Tsinghua University.

	\section{Preliminary}
	\label{S:preliminary}
	In this section, we provide some preliminaries that will be used in later proofs. These include:
	\begin{enumerate}
		\item The Jacobi operator $L_{n,k}$ on the generalized cylinder as well as its eigenvalues and eigenfunctions;
		\item The notions of weak mean curvature flows, including Brakke flow and weak set flow;
		\item Nondegenerate cylindrical singularities;
		\item Partial classification of noncollapsing ancient mean curvature flows.
	\end{enumerate}
	\subsection{Geometry of generalized cylinders as shrinkers}\label{SS:PreCylinder}
	Given $0<k<n$, let 
	\begin{itemize}
		\item $\R^{n+1}=\R^{n-k+1}\times\R^k$, parametrized by $X = (x, y)$. Let $\orig$ be the origin in $\RR^{n+1}$. For every $R>0$, we shall also work with $Q_R:= \BB^{n-k+1}_R\times \BB^k_R$.
		\item For every subset $E\subset \RR^{n+1}$, $X_\circ\in \RR^{n+1}$ and $\lambda>0$, we use $\lambda\cdot (E+X)$ to denote the image of $E$ under the translation and dilation map $\eta_{X_\circ, \lambda}: X\mapsto \lambda(X+X_\circ)$; 
		\item $\cC_{n,k}:= \SSp^{n-k}(\sqrt{2(n-k)})\times\R^k$ be the round cylinder in $\RR^{n+1}$, parametrized by $X=(\theta, y)$. 
		The radius of the spherical part, usually denoted by $\varrho:=\sqrt{2(n-k)}$, is chosen such that $t\mapsto \sqrt{-t}\,\cC_{n,k}$ is a mean curvature flow on $t\leq 0$, or equivalently, $\cC_{n,k}$ satisfies the shrinker equation $\vec H+\frac{X^\perp}{2}=0$. 
		We shall write $\spine(\cC_{n,k}):= \{0\}\times \RR^k$, which is the linear subspace of $\RR^{n+1}$ in which $\cC_{n,k}$ is translation invariant.
	\end{itemize}
	
	Throughout this paper, we use $L^2=L^2(e^{-|X|^2/4})$ to denote the weighted $L^2$ space, which is the completion of compactly support smooth function space $C_c^\infty(\cC_{n,k})$ with respect to the weighted norm \[
	\|u\|_{L^2}=\left(\int_{\cC_{n,k}}|u|^2e^{-\frac{|X|^2}{4}}d\cH^n(X)\right)^{1/2}. 
	\]
	Similarly, if $\Omega\subset \cC_{n,k}$ is a measurable subset, we let $\|u\|_{L^2(\Omega)}:= \|u\cdot\chi_\Omega\|_{L^2}$, where $\chi_\Omega$ is the characteristic function of $\Omega$.
	
	For every function $u>-\varrho$ defined on a subdomain $\Omega\subset \cC_{n,k}$, we define its graph to be \[
	\graph_{\cC_{n,k}}(u):= \{(\theta, y)+ u(\theta, y)\theta/|\theta|: (\theta, y)\in \Omega\}\,.
	\]
	Geometric properties of graphs used in this paper are collected in Appendix \ref{Append_Graph over Cylind}. For a smooth hypersurface $M\subset\RR^{n+1}$, we call the following $R$ \textbf{graphical radius} of $M$: \[
	R:= \sup\{R'>\varrho: M\cap Q_{R'} = \graph_{\cC_{n,k}}(u), \text{ with } \|u\|_{C^1}\leq \min\{\kappa_n, \kappa_n', \kappa_n''\}\}
	\]
	where $\kappa_n, \kappa_n', \kappa_n''\in (0, 1/2)$ are determined by Lemma \ref{Lem_App_Graph over Cylinder} (iii), (iv) and \ref{Lem_App_RMCF equ}. $Q_R$ is called the \textbf{graphical domain}, and the corresponding $u\in C^1(\cC_{n,k}\cap Q_R)$, extended outside $Q_R$ by $0$, is called the \textbf{graphical function of $M$ over $\cC_{n,k}$}. If the set on the right hand side is empty, we just ask the graphical radius and graphical function to be both $0$. Similar notion can be defined when $M$ is a Radon measure, in which case we replace ``$M\cap Q_{R'} = \graph_{\cC_{n,k}}(u)$" by ``$M\llcorner Q_{R'}$ is the assocaited Radon measure of $\graph_{\cC_{n,k}}(u)$".
	
	We are interested in the following linear operator on $\cC_{n,k}$, known as the \textit{Jacobi operator}:
	\begin{equation}
		L_{n,k} u=\Delta u-\frac{1}{2}\langle y,\nabla_y u\rangle +u.
	\end{equation}
	It is self-adjoint with respect to the weighted $L^2$ space.  
	
	In the following, we use the notation that $\lambda$ is an \textit{eigenvalue} of an elliptic operator $-\mathscr L$ if $\mathscr Lf+\lambda f=0$ has a nonzero $L^2$ solution. In \cite[Section 5.2]{SunWangZhou20_MinmaxShrinker}, it was proved that the eigenvalues of $-L_{n,k}$ are given by
	\begin{align}
		\sigma(\cC_{n,k}) := \{\mu_i+j/2-1\}_{i,j=0}^\infty  \,, \label{Equ_Pre_sigma(C_n,k)}
	\end{align}
	with corresponding eigenfunctions spanned by $\{\phi_i(\theta) h_j(y)\}_{i,j=0}^\infty$, where $\mu_i = \frac{i(i-1+n-k)}{2(n-k)}$ and $\phi_i$ are eigenvalues and eigenfunctions of $-\Delta_{\SSp^{n-k}(\varrho)}$, and $h_j$ is a degree $j$ Hermitian polynomial on $\R^{k}$.
	
	The first several eigenvalues and eigenfunctions of $-\Delta_{\SSp^{n-k}(\varrho)}$ are listed as follows: 
	\begin{itemize}
		\item constant functions for eigenvalue $0$;
		\item $\theta_i$, the restriction of linear functions in $\R^{n-k+1}$ to $\bS^{n-k}(\varrho)$, for eigenvalue $1/2$;
		\item $\theta_{i_1}^2-\theta^2_{i_2},\cdots$ for eigenvalue $\frac{n-k+1}{n-k}$.
	\end{itemize}
	The Hermite polynomials are eigenfunctions of $-(\Delta_{\R^k}-\frac{1}{2}\langle y, \nabla_{\R^k} \cdot\rangle)$ on $\R^{k}$, and degree $j$ Hermite polynomial has eigenvalue $j/2$.  We summarize the first several eigenvalues and eigenfunctions of $-L_{n,k}$ as follows (see \cite[Section 2]{SunXue2022_generic_cylindrical})
	\begin{table}[H]
		\begin{tabular}{|l|l|}
			\hline
			eigenvalues of $-L_{n,k}$ & corresponding eigenfunctions \\ \hline
			$-1$ & $1$ \\ \hline
			$-1/2$ & $\theta_i,y_j,\ i=1,2,\ldots,n-k+1,\ j=1,2,\ldots,k$ \\ \hline
			$0$ & $\theta_iy_j,\ h_2(y_j)=(y_j^2-2),\ y_{j_1}y_{j_2}$ \\ \hline
			$\min\{1/(n-k),1/2\}$ & $\ldots$ 
			\\ \hline
		\end{tabular}
		\caption{Eigenvalues and eigenfunctions of $-L_{n,k}$.}
		\label{TableEigen}
	\end{table}

	\subsection{Brakke flow and weak set flow} \label{Subsec_Brakke and WSF}
	
	We first recall some basic notions for mean curvature flows. Suppose $M$ is a compact $n$-dimensional manifold with or without boundary, $\Int(M)$ is the interior of $M$. Let $F: M\times[a,b]\to \R^{n+1}$ be a continuous one-one map that is smooth on $\Int(M)\times(a,b]$ such that $f(\cdot,t)$ smoothly embeds $\Int(M)$ for each $t\in (a,b]$. If $F$ satisfies the equation 
	\[
	(\pr_t F)^\perp(x,t) =\vec{H}(x,t)
	\]
	for all $(x,t)\in \Int(M)\times(a,b]$, then 
	\begin{equation}
		\cK:=\{(F(x,t),t):x\in M, t\in[a,b]\}
	\end{equation}
	is called the spacetime of a \textbf{classic mean curvature flow}, or just a classic flow for short. For $t\in\R$, $\cK(t)$ denotes the time-slice $\{X\in \RR^{n+1}: (X, t)\in\cK\}$, and for an interval $(a,b)\subset\R$, we use $\cK(a,b)$ to denote $\cK\cap(\R^{n+1}\times(a,b))$. We use $\pr\cK$ to denote the \textbf{heat boundary} of $\cK$, defined by
	\[\{(F(x,t),t):\text{either $t=a$ or $x\in\pr M$}\}.\]

	In this paper, we need the following two notions of weak mean curvature flows. The first one is a measure-theoretic weak solution called \textit{Brakke flow}. 
	
	\begin{definition}
		An $n$-dimensional \textbf{(integral) Brakke flow} in $\RR^{n+1}$ over an interval $I\subset \RR$ is a one-parameter family of Radon measures $t\mapsto\Sigma_t$, such that for almost every $t\in I$, $\Sigma_t$ is a Radon measure associated to an $n$-dimensional integral varifold with mean curvature $\vec H_t\in L^2(\Sigma_t)$, and for every non-negative function $\Phi\in C^2(\R^{n+1}\times\R_{\geq 0})$, we have
		\begin{equation}
			\frac{d}{dt}\int \Phi\ d\Sigma_t
			\leq 
			\int \left(
			\frac{\partial \Phi}{\partial t}
			+\nabla \Phi\cdot \vec{H}_t
			-\Phi |\vec H_t|^2
			\right)
			\ d\Sigma_t, \label{Equ_Pre_DefBrakke}
		\end{equation}
		in the distribution sense. 
		
		Note that for almost every $t$, by \cite[$\S\, 5.8$]{Brakke78}, $\vec H_t$ is perpendicular to the varifold tangent $\Sigma_t$-almost everywhere. The support of the Brakke flow is defined to be $\overline{\bigcup_t \Spt \Sigma_t\times\{t\}}$, where the closure is taken in the spacetime.
	\end{definition}
	Recall that a point $p_\circ = (X_\circ, t_\circ)\in \RR^{n+1}\times \RR$ in the support of a Brakke flow $\bM: t\mapsto \bM(t)$ is {\bf regular} if in a spacetime neighborhood of $p_\circ$, $\bM$ is the varifold associated to a classic mean curvature flow. Otherwise, we say $p_\circ$ is a {\bf singularity}.
	
	Given a Brakke flow $\bM: t\mapsto \bM(t)$, a point $p_\circ = (X_\circ, t_\circ)\in \RR^{n+1}\times \RR$ and a constant $\lambda>0$, we use the notation $\lambda\cdot(\bM - p_\circ)$ to denote the Brakke flow $t\mapsto \bM'(t)$ given by the space-time translation and parabolic dilation of $\bM$: \[
	\bM'(t):= \lambda\cdot (\bM(\lambda^{-2}t+t_\circ) -X_\circ) \,.
	\]
	Here we use the convention that for an integral varifold $M$, $\lambda (M - X_\circ)$ is the push forward of $M$ by the translation-rescaling map $X\mapsto \lambda(X-X_\circ)$. 
	
	The Gaussian density plays an important role in the study of mean curvature flow. Recall that given a hypersurface $M\subset\R^{n+1}$, the $n$-dimensional {\bf Gaussian area} is defined by \[
	\cF[M]:=\int_{\RR^{n+1}} (4\pi)^{-n/2}e^{-\frac{|X|^2}{4}}d\|M\|(X).
	\]
	Here $\|M\|$ denotes the Radon measure associated to $M$, for a hypersurface, this is just the $n$-dimensional volume measure. The same notion can be defined for a Radon measure $\mu$, where we replace $d\|M\|$ by $d\mu$.
	
	Colding-Minicozzi \cite{ColdingMinicozzi12_generic} introduced a quantity called {\bf entropy} that is the supremum of the Gaussian area of all possible translations and dilations of a hypersurface (or an integral $n$-varifold). \[
	\lambda[M]:=\sup_{(X_\circ,t_\circ)\in\R^{n+1}\times\R_{>0}} \cF[t_\circ^{-1}(M-X_\circ)].
	\]
	For a Brakke motion $t\mapsto \bM(t)$ over $I$, we define its entropy as \[
	\lambda[\bM]:= \sup_{t\in I} \lambda[\bM(t)] \,.
	\]
	
	Given a Brakke flow $t\mapsto \Sigma_t$ over $I$ and a spacetime point $p_\circ = (X_\circ,t_\circ)\in\R^{n+1}\times\R$, we let \[
	\Theta_{p_\circ}(\tau) := \int (4\pi\tau)^{-n/2}e^{-\frac{|X-X_\circ|^2}{4\tau}}\ d\Sigma_{t_\circ-\tau},
	\]
	and the {\bf Gaussian density} of $(\Sigma_t)_{t\in I}$ at $p_\circ$ is defined by \[
	\Theta((\Sigma_t)_{t\in I}, p_\circ)=\lim_{\tau\searrow 0} \Theta_{p_\circ}(\tau).
	\]
	By Huisken's monotonicity formula \cite{Huisken90}, $\Theta_{p_\circ}(\tau)$ is monotone nondecreasing in $\tau$, thus this limit always exists. 
	
	Following \cite{White09_CurrentsVarifolds}, an integral Brakke flow $t\mapsto \Sigma_t$ over $I$ is called {\bf unit-regular} if for any $p_\circ\in \spt \bM$ with $\Theta((\Sigma_t)_{t\in I},p_\circ)=1$ the Brakke flow is regular in a parabolic neighborhood of $p_\circ$. It is called {\bf cyclic} (mod-$2$) if for a.e. $t\in I$, $\Sigma_t$ is the Radon measure associated to an integral varifold $V(t)$, whose associated rectifiable mod-$2$ flat chain $[V(t)]$ satisfies $\pr[V(t)]=0$.  
	White \cite{White09_CurrentsVarifolds} proved that the unit-regular cyclic Brakke flows can be obtained by Ilmanen's elliptic regularization, which we will briefly review in section \ref{SSSec_Ellip Reg}. 
	
	Based on his monotonicity formula, Huisken introduced a blow-up scheme, defined as follows. Given a Brakke flow $t\mapsto \mbfM(t)$ and a spacetime point $p_\circ=(X_\circ,t_\circ)$, one can define a new flow $\cM: \tau\mapsto \cM(\tau)$ associated to $\mbfM$, called \textit{rescaled mean curvature flow (RMCF)} based at $p_\circ$, where its time slice $\cM(\tau)$ is defined by
	\begin{equation}
		\cM(\tau)=e^{\tau/2}(\mbfM(t_\circ-e^{-\tau})-X_\circ).
	\end{equation}
	Huisken proved that the RMCF is the gradient flow of the Gaussian area, and the limit is a shrinker. Hence the RMCF is a central tool to study singularities.
	
	RMCF associated to a given mean curvature flow $\mbfM$ rely on the choice of the base point. In fact, if $\cM$ is the RMCF associated to $\bM$ (i.e. based at $(\orig, 0)$), and $\cM^{p_\circ}$ is the RMCF of $\bM$ based at $p_\circ$, then we have 
	\begin{equation}\label{eq:RMCF change of base point}
		\cM^{p_\circ}(\tau)=\sqrt{1-t_\circ e^{\tau}}\cdot
		\cM(\tau-\log(1-t_\circ e^\tau))-e^{\tau/2}X_\circ
		\,.
	\end{equation}

	Another notion of weak flow is motivated by the maximum principle, called \textbf{weak set flow}. 
	
	\begin{definition}[Weak set flow defined by White \cite{White95_WSF_Top}]\label{def:WSF}
		Given a closed set $\Gamma\subset\R^{n+1}\times\R_{\geq 0}$. A \emph{weak set flow generated by $\Gamma$} is a closed subset $\cK\subset \R^{n+1}\times\R_{\geq0}$ with the following significances
		\begin{itemize}
			\item $\cK$ and $\Gamma$ coincide at time $0$.
			\item If $\cK'$ is the spacetime of a mean curvature flow of smoothly embedded hypersurfaces, such that the heat boundary $\pr\cK'$ is disjoint from $\cK$ and $\cK'$ is disjoint from $\Gamma$, then $\cK'$ is disjoint from $\cK$.
		\end{itemize}
	\end{definition}
	
	\begin{remark}\label{Rem:LSF}
		There exists a ``biggest flow'', namely a weak set flow that contains all the weak set flows generated by $\Gamma$. Such a special flow is called the {\bf level set flow}. The existence of such a flow was proved in the pioneering work of level set flow by Evans-Spruck \cite{EvansSpruck91}, and its relation to the weak set flow was discovered by Ilmanen \cite{Ilmanen92_LSF} and White \cite{White95_WSF_Top}. 
		
		A particularly interesting class of the weak set flow is the \textbf{mean convex} weak set flows, namely the flows whose different time-slices are disjoint. The name follows from the fact that if the flow is the boundary of some domain, then all the regular point of the flow has positive mean curvature with respect to the outward unit normal vector field. 
	\end{remark}
	
	These two definitions of weak flows have the following relations.
	
	\begin{theorem}[\cite{Ilmanen94_EllipReg, HershkovitsWhite23_Avoidance}]
		The closure of the support of a unit regular Brakke flow in $\R^{n+1}$ is a weak set flow.
	\end{theorem}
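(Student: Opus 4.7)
The plan is to verify the two defining properties of Definition \ref{def:WSF} for the set $\cK := \overline{\bigcup_t \spt \Sigma_t \times \{t\}}$, taking $\Gamma = \cK \cap (\R^{n+1}\times\{0\})$. The first condition, coincidence at time $0$, is immediate from this choice of $\Gamma$; all content lies in the avoidance property.

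For the avoidance property, let $\cK'$ be the spacetime of a smooth compact classical MCF with $\pr \cK' \cap \cK = \emptyset$ and $\cK'(0) \cap \spt \Sigma_0 = \emptyset$. I would argue by first contact: set $t^* := \inf\{t : \cK'(t) \cap \spt \Sigma_t \neq \emptyset\}$, and derive a contradiction at any touching point $p \in \cK'(t^*) \cap \spt \Sigma_{t^*}$, which by the hypothesis on $\pr\cK'$ must lie in the smooth interior of $\cK'$. In a spacetime tube $U$ around $\cK'$ near $p$, the signed distance $d(x,t) = \sgn \cdot \dist(x, \cK'(t))$ is smooth, and the classical MCF identity gives $(\pr_t - \Delta) d = 0$ on $\cK'$ with a controlled $O(d)$ error nearby. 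Fix a bump $\eta \ge 0$ supported in $(0,\epsilon)$ on the side to which $\Sigma_t$ is approaching, and a spatial cutoff $\chi$ compactly supported in $U$; plug $\phi(x,t) := \eta(d(x,t))\chi(x)$ into Brakke's inequality \eqref{Equ_Pre_DefBrakke}. Using $|\nabla d| \equiv 1$ off $\cK'$, the almost-everywhere orthogonality $\vec H \perp T\Sigma_t$, and Cauchy--Schwarz to absorb the cross term $\nabla\phi\cdot \vec H$ into $-\phi|\vec H|^2$ up to a zeroth-order error, one obtains $\frac{d}{dt}\int \phi\, d\Sigma_t \leq C\int \phi\, d\Sigma_t$ in the distribution sense. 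Since $\int \phi(\cdot, 0)\, d\Sigma_0 = 0$ by the initial separation, Gronwall forces $\int \phi(\cdot, t)\, d\Sigma_t \equiv 0$ for all $t \le t^*$, contradicting that a positive-$\phi$ region of $\spt \Sigma_{t^*}$ accumulates at $p$.

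The main obstacle I anticipate is the codimension-one subtlety: $\cK'(t)$ is a hypersurface rather than the boundary of a preferred region, so the signed distance has no global side, and one must run the argument on each local side of $\cK'$. This requires that the Brakke flow not ``leak through'' the barrier via hidden cancellations or density drops, which is exactly what the unit-regular (and cyclic mod $2$) hypotheses prevent: Brakke's inequality can be localized cleanly on either side of $\cK'$ without losing mass through it. This localization, and the promotion of the heuristic barrier argument above into a rigorous proof for all unit-regular integral Brakke flows, is the content of Hershkovits--White \cite{HershkovitsWhite23_Avoidance}, extending Ilmanen's original avoidance result \cite{Ilmanen94_EllipReg} for flows arising from elliptic regularization.
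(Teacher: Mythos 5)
The paper records this statement as a cited result of Ilmanen and Hershkovits--White and gives no proof of its own, so there is no internal argument to compare against; your concluding paragraph, which defers the rigorous version to those references, is the appropriate scope. Your high-level strategy (first contact with the compact classical barrier $\cK'$, a one-sided distance function near the touching point, Brakke's inequality, Gronwall) is indeed the standard avoidance framework and is the right picture to have in mind.

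Two places in the sketch do not close as written. First, the Gronwall step: after completing the square, $\nabla\phi\cdot\vec H_t - \phi|\vec H_t|^2 \le |\nabla^\perp\phi|^2/(4\phi)$, and with $\phi = \eta(d)\chi$ this residual scales like $(\eta')^2/\eta$ near the edge of the bump's support, which is not $O(\phi)$ for any smooth compactly supported bump; the transport term $\partial_t\phi = \eta'(d)\chi\,\partial_t d$ involves $\eta'$ and is similarly unabsorbed. So the claimed inequality $\tfrac{d}{dt}\int\phi\,d\Sigma_t \le C\int\phi\,d\Sigma_t$ does not follow from a naive Cauchy--Schwarz; the actual constructions in the literature use more structured test functions (powers of a truncated affine function of $d$ and $t$, shrinking balls, or a sup-convolution device) precisely to keep these quotients bounded. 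Second, the role you ascribe to unit-regularity and cyclicity ("localized cleanly on either side of $\cK'$ without losing mass through it") is speculative: the avoidance of an integral Brakke flow by a compact smooth classical MCF barrier does not obviously hinge on unit-regularity, and in this paper unit-regularity is invoked elsewhere (non-fattening, mean-convex neighborhoods) rather than as the mechanism that keeps the flow from crossing a barrier. The one-sided/codimension-one subtlety you flagged is real, but the remedy is a local choice of side near the interior first-contact point together with the compactness of $\cK'$, not the unit-regular hypothesis per se.
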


	The weak set flow can be very different from the spacetime of a mean curvature flow. For example, the weak set flow can generate interior, and such a phenomenon is known as {\bf fattening}. Such a phenomenon is proved to exist by Ilmanen-White \cite{IlmanenWhitw24_Fattening} (also see another approach by \cite{LeeZhao24_MCFconical} using the results of \cite{AngenentIlmanenVelazquez17_fattening} and \cite{Ketover24_Fattening}). Ilmanen \cite{Ilmanen94_EllipReg} (see also \cite{White09_CurrentsVarifolds}) proved that if the level set flow does not fatten, then the level set flow is the support of a unit regular cyclic Brakke flow that is constructed via the elliptic regularization. Namely, the two notions of weak flows can be identified.
	
	Evans-Spruck \cite{EvansSpruck91} and Ilmanen \cite{Ilmanen94_EllipReg} proved that the level set flow generated by a mean convex hypersurface in $\R^{n+1}$ will not fatten. Moreover, Evans-Spruck proved the following: if $\Omega$ is a mean convex domain in $\R^{n+1}$, then the spacetime track of mean curvature flow starting from $\Omega$ can be written as a function $\mathbf{f}:\overline\Omega\to \R$, such that $\mathbf{f}(x)=0$ for $x\in\pr\Omega$ and $\{(x,t):\mathbf{f}(x)=t\}\subset\R^{n+1}\times\R$ is the level set flow generated by $\pr\Omega$. $\mathbf{f}$ is called the {\bf arrival time function} because $\{(x,t):\mathbf{f}(x)=t\}$ is the time slice of the level set flow. For example, the arrival time function of a shrinking sphere starting with radius $r_0$ in $\R^{n+1}$ is given by $(r_0^2-|x|^2)/2n$.
	
	In general, it is hard to determine if a level set flow will fatten or not. Hershkovits-White \cite{HershkovitsWhite20_Nonfattening} proved that if the singularities of a level set flow have mean convex neighborhood, then the flow does not fatten. This is in fact a property of nondegenerate cylindrical singularities. Hence, throughout this paper, the flows that we study will not fatten (see Proposition \ref{Prop:MCF with boundary}, especially conclusion \ref{Item_EllReg_LSFnonFat}). Thus, we do not specify which flow or weak flow that we are referring to. We also remark that the definition of rescaled mean curvature flow can be naturally extended to all types of weak flows.

	\subsection{Nondegenerate singularity and its property before singular time.}
	A spacetime singularity of a mean curvature flow is said to be {\bf cylindrical} if the rescaled mean curvature flow with the singularity as the based point $C_{\loc}^\infty$-converges to a generalized cylinder $\cC_{n,k}$ as $\tau\to\infty$. Colding-Ilmanen-Minicozzi \cite{ColdingIlmanenMinicozzi15} and Colding-Minicozzi \cite{CM15_Lojasiewicz} proved that if a rescaled mean curvature flow $\cM(\tau)$ subsequentially converges to $\cC_{n,k}$, then it converges to $\cC_{n,k}$ smoothly in any compact subset of $\R^{n+1}$. In particular, when $\tau$ is getting larger and larger, $\cM(\tau)$ can be written as a smooth graph of a function $u(\cdot,\tau)$ over a larger and larger domain in $\cC_{n,k}$.
	
	In \cite{SunXue2022_generic_cylindrical}, the first and third named authors proved a normal form theorem on $u$ as follows.
	\begin{theorem}[Theorem 1.3 and 1.4 in \cite{SunXue2022_generic_cylindrical}]\label{thm:NormalForm}
		Let $\cM=\{\cM(\tau)\}_{\tau\geq 0}$ be a RMCF such that $\cM(\tau)$ $C^\infty_{loc}$-converges to $\cC_{n,k}$ as $\tau\to \infty$, and suppose $\lambda[\cM]<+\infty$. Then there exist $T_0>0$, $\cI\subset\{1,2,\cdots,k\}$ and $u\in C^\infty(\cC_{n,k}\times \R_{\geq T_0})$ such that $\cM(\tau)=\graph_{\cC_{n,k}}(u(\cdot, \tau))$ in $Q_{\sqrt\tau}$\footnote{In \cite{SunXue2022_generic_cylindrical}, it was stated that there exists $K>0$ such that the normal form theorems hold inside $Q_{K\sqrt{\tau}}$, but in fact for any $K>0$, the normal form theorems hold where $T_0$ depend on $K$.}, with
		\begin{align}
			\left\|u(\theta,y,\tau)-\varrho\cdot\left(\sqrt{1+\tau^{-1}\sum_{i\in \cI}\frac{y_i^2-2}{2}}-1\right)\right\|_{C^1(\SSp^{n-k}(\varrho)\times \BB_{\sqrt{\tau}}^k)} \to 0,\quad \tau \to\infty. \label{Equ_Pre_C^1 asymp in B_sqrt(tau)}
		\end{align}
		and 
		\begin{equation}
			\left\|u(\theta,y,\tau)-\sum_{i\in \mathcal I}\frac{\varrho }{4\tau}(y_i^2-2)\right\|_{H^1(\SSp^{n-k}(\varrho)\times \BB_{\sqrt{\tau}}^k)}=O(1/\tau^2), \quad \tau \to\infty. \label{Equ_Pre_H^1 asymp in B_sqrt(tau)}
		\end{equation}
	\end{theorem}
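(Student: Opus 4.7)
The plan is to study the linearization of the rescaled mean curvature flow equation near $\cC_{n,k}$ via spectral decomposition against the Jacobi operator $L_{n,k}$, followed by a Merle--Zaag center-manifold argument to isolate the neutral modes and a matrix ODE analysis to pin down their $1/\tau$ decay. First I would upgrade the $C^\infty_{\loc}$ convergence $\cM(\tau) \to \cC_{n,k}$ to graphicality on $Q_{\sqrt\tau}$: the finite entropy assumption $\lambda[\cM] < \infty$ gives a uniform density bound, so Brakke--White $\varepsilon$-regularity along the parabolic rescaling yields graphicality on any fixed $Q_R$ for $\tau \geq T_0(R)$, and a pseudo-locality/parabolic persistence argument then propagates graphicality out to the maximal scale $Q_{\sqrt\tau}$.

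On the graphical region $u(\cdot,\tau)$ satisfies a quasilinear parabolic equation of the form
\begin{equation*}
\partial_\tau u = L_{n,k} u + Q(u, \nabla u, \nabla^2 u),
\end{equation*}
where $Q$ is quadratic to leading order. I would decompose $u = u_+ + u_0 + u_-$ along the spectral subspaces of $-L_{n,k}$ in the weighted $L^2(e^{-|X|^2/4})$ inner product: by Table~\ref{TableEigen}, $u_+$ lies in the finite-dimensional span of $\{1, \theta_i, y_j\}$ (eigenvalues $-1, -1/2$), $u_0$ in the neutral span of $\{\theta_i y_j,\, y_i^2 - 2,\, y_{i_1} y_{i_2}\}$, and $u_-$ in the stable part. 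A Merle--Zaag dichotomy on $(\|u_+\|, \|u_0\|, \|u_-\|)$ rules out dominance of $u_+$: otherwise the unstable modes would drive $\cM(\tau)$ away from $\cC_{n,k}$, contradicting the assumed convergence. The stable part decays exponentially, so $\|u_+\|_{L^2} + \|u_-\|_{L^2} = o(\|u_0\|_{L^2})$.

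The leading-order dynamics for $u_0$ come from projecting $Q(u_0, \nabla u_0, \nabla^2 u_0)$ onto the neutral subspace. Rotational symmetry of $\SSp^{n-k}(\varrho)$ kills the $\theta_i y_j$ projection to leading order (and a separate analysis shows these components decay faster than $1/\tau$), leaving a matrix ODE for the coefficients $A(\tau) = (a_{ij}(\tau))$ of the pure $y$-quadratic part $u_0 \approx \sum_{ij} a_{ij}(y_i y_j - 2\delta_{ij})$, of the form $\dot A = -c_n A^2 + \text{l.o.t.}$ with explicit $c_n > 0$ arising from the mean-curvature nonlinearity on the sphere factor. Rotating the $y$-coordinates so that $A(\tau_0)$ is diagonal at a large $\tau_0$, and checking that $[A, \dot A]$ is negligible so this diagonalization persists, each eigenvalue $\lambda_i(\tau)$ satisfies $\dot\lambda_i = -c_n \lambda_i^2 + \text{l.o.t.}$, giving the trichotomy $\lambda_i(\tau) \sim \varrho/(4\tau)$ or $\lambda_i(\tau) = o(1/\tau)$; the set of indices with nonvanishing eigenvalue is defined to be $\cI$. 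This delivers~\eqref{Equ_Pre_H^1 asymp in B_sqrt(tau)}.

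To upgrade to the $C^1$ normal form~\eqref{Equ_Pre_C^1 asymp in B_sqrt(tau)}, observe that $\varrho\bigl(\sqrt{1 + \tau^{-1}\sum_{i \in \cI}(y_i^2 - 2)/2} - 1\bigr)$ is exactly the graphical function over $\cC_{n,k}$ of the rescaled shrinking cylinder of codimension $|\cI|+1$, so the difference $u$ minus this profile satisfies a linear parabolic equation with forcing $o(1/\tau)$; interior parabolic Schauder estimates applied on unit balls and iterated over $Q_{\sqrt\tau}$ yield the $C^1$ bound. The hard parts are the matrix ODE analysis (handling the coupling of the $\theta y$ and $yy$ neutral modes and the non-commuting perturbations when diagonalizing $A$), and executing Merle--Zaag on a \emph{growing} domain rather than a fixed ball: the weighted $L^2$ norm picks up substantial contribution near $\partial Q_{\sqrt\tau}$, and a cutoff argument coupled with Gaussian tail estimates on the graphical function is needed to control those contributions uniformly in $\tau$.
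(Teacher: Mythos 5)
This theorem is not proved in the present paper: it is quoted directly from Theorems~1.3 and 1.4 of \cite{SunXue2022_generic_cylindrical} and used here as a black box (the footnote only records a mild restatement of the graphical domain). There is therefore no in-paper proof to compare against, and your sketch can only be measured against the published argument of that reference. At the level of strategy your outline does track it: graphical radius growth via $\eps$-regularity and pseudo-locality, spectral decomposition against $L_{n,k}$ in the Gaussian $L^2$, a Merle--Zaag ODE lemma to isolate the neutral modes, and a matrix Riccati analysis for the coefficient matrix of the $y$-quadratic part. That is the correct skeleton.

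There are, however, two genuine gaps in the reasoning as written. First, the Merle--Zaag lemma does not leave only one surviving branch after unstable dominance is excluded: it leaves \emph{two}, neutral dominance or stable dominance. In the stable-dominant branch $u_0 = o(\|u_-\|)$ and your asserted inequality $\|u_+\|+\|u_-\| = o(\|u_0\|)$ is simply false; the flow converges exponentially, which is consistent with the theorem with $\cI=\emptyset$, so this branch must be treated as a separate (and benign) case rather than implicitly assumed away. Relatedly, ``rotational symmetry kills the $\theta_iy_j$ projection'' is only the first half of the needed argument; one still has to show the $\theta_iy_j$ component, once decoupled, decays strictly faster than $1/\tau$, and that requires its own ODE estimate. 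Second, \eqref{Equ_Pre_H^1 asymp in B_sqrt(tau)} asserts an $O(1/\tau^2)$ \emph{rate}, not merely $o(1/\tau)$. The leading-order Riccati balance $\dot A = -cA^2 + \text{l.o.t.}$ gives $A\sim\mathrm{const}/\tau$ but says nothing quantitative about the next order; obtaining $O(1/\tau^2)$ requires either a {\L}ojasiewicz--Simon type inequality (as in \cite{CM15_Lojasiewicz}) or an explicit two-term expansion of the Riccati iteration with uniform control of the lower-order terms on the growing domain $Q_{\sqrt\tau}$. Your ``l.o.t.'' bookkeeping does not reach this rate, and without it the $H^1$ estimate in \eqref{Equ_Pre_H^1 asymp in B_sqrt(tau)} is not established.
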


	\begin{definition}\label{DefNondegenerate}
		A cylindrical singularity is called {\bf nondegenerate} if the associated rescaled mean curvature flow with the base point at this singularity satisfies the condition of Theorem \ref{thm:NormalForm} with the index set $\cI=\{1,2,\ldots, k\}$. 
	\end{definition}

	Notice that when $\tau$ is very large, for $|y|$ bounded by a constant, we have 
	\[
	\varrho\left(\sqrt{1+\tau^{-1}\sum_{i\in \cI}\frac{y_i^2-2}{2}}-1\right)\approx \frac{\varrho}{4\tau}\sum_{i\in\cI}(y_i^2-2).
	\]
	So nondegeneracy can also be understood as the leading order asymptotic of the graph function $u$ is given by $\frac{\varrho}{4\tau}\sum_{i=1}^k (y_i^2-2)$ in $L^2$-sense.
	
	A key feature of a nondegenerate singularity is that the associated rescaled mean curvature flow is almost a generalized cylinder near the boundary of $\SSp^{n-k}(\varrho)\times \BB_{\sqrt{\tau}}^k$, with slightly larger radius compared with the shrinking cylinder. For any $R_0>0$ and sufficiently large $\tau$, within $\SSp^{n-k}(\varrho)\times( \BB_{\sqrt{\tau}}^k\backslash \BB_{\sqrt{\tau}-R_0}^k)$, $u(\cdot,\tau)\approx \varrho(\sqrt{1+1/2}-1)=:\varrho'-\varrho>0$, and hence $\cM(\tau)$ is very close to a cylinder with radius $\varrho'$ inside this annulus region.

	\subsection{Classification of noncollapsing ancient solutions} \label{Subsec_Classif by Du-Zhu}
	\textit{Noncollapsing} is a central feature of mean convex mean curvature flow. In a series of papers, White \cite{White00, White03, White15_SubseqSing_MeanConvex} proved that the mean convex mean curvature flows do not admit ``collapsing''(i.e. multiplicity $\geq 2$) blow-up limit. Later Weimin Sheng and Xu-Jia Wang \cite{ShengWang09_SingMCF} introduced a quantitative version of the concept of noncollapsing. Sheng and Wang \cite{ShengWang09_SingMCF} and Andrews \cite{Andrews12_Noncollapsing} proved that this quantity is preserved under the mean convex mean curvature flow, giving an alternative proof of White's result.
	
	\begin{definition} \label{Def_App_Noncollap}
		Given $\alpha>0$. A smooth mean convex hypersurface $M = \partial \Omega$ is called {\bf $\alpha$-noncollapsing} if
		\begin{align}
			\frac{1}{2\alpha} H(x)|x-y|^2  \geq \big|\langle x-y,\bn(x)\rangle \big|\,.      
		\end{align} 
		holds for all $x,y\in M$. Here $\bn$ is the unit outward normal vector field. In particular, the largest $\alpha$ is called the \textbf{Andrews constant}. 
		
		If $M$ is not smooth, it is $\alpha$-noncollapsing if the above inequalities hold for all regular points $x\in M$ respectively.
	\end{definition} 
	
	Geometrically, this means that there is a ball of radius $\alpha H^{-1}(x)$ which lies inside/outside the region bounded by $M$ which touches $M$ at $x$. $\alpha$-noncollapsing is scaling invariant and can be passed to limit flows.
	
	In this paper, the ``noncollapsing" is used to apply a classification theorem of Wenkui Du and Jingze Zhu \cite{DuZhu22_quantization} in every dimension (see also the early work \cite{AngenentDaskalopoulosSesum19_AncientConvexMCF, BrendleChoi19_AncientMCFR3, BrendleChoi19_AncientMCFRn, ChoiHaslhoferHershkovits18_MeanConvNeighb, ChoiHaslhoferHershkovitsWhite22_AncientMCF, ChoiHaslhoferHershkovits21_TranslR4, ChoiDaskalopoulosDuHaslhoferSesum22_BubblesheetR4, DuHaslhofer24_Hearing} in other settings).
	
	To state their main theorem that we need, we first discuss some background on bowl soliton.
	For $m = n-k+1$, a {\bf translator} in $\R^{m+1}$ is a hypersurface $S$ satisfying the equation $\vec{H}=\vec{V}^\perp$, for some non-zero vector $\vec V\in \RR^{m+1}$. The name follows from the fact that for such $S$, $t\mapsto S+t\vec V$ is a mean curvature flow over $\R$.
	
	For $m\geq 2$, there exists a translator $\cB^{m}\subset\R^{m+1}$ called the {\bf bowl soliton}, first discovered by Altschuler-Wu \cite{AltschulerWu94_Transl}. $\cB^{m}$ is the boundary of a convex set and we let $\nu$ be the unit normal vector field pointing outwards from it. If we use $(x_1,\cdots,x_{m}, z)$ as the coordinates of $\R^{m+1}$, then $\cB^m$ is constructed as a graph of a convex function $U(x)$ over $\{z=0\}$ with the asymptotic 
	\[
	U(x)=\frac{|x|^2}{2(m-1)}-\log |x|+O(|x|^{-1}),\quad x\to\infty,
	\]
	And $t\mapsto \cB^m+t\partial_z$ is a mean curvature flow, where $\pr_z := (0, \dots, 0, 1)$. 
	
	Using $\cB^m$, we can construct a family of translators in $\RR^{n+1} = \RR^{m+1}\times \RR^{k-1}$: For every orthogonal matrix $\Omega\in O(n+1)$, $X_\circ\in \RR^{n+1}$ and $\lambda>0$, $\cB':=\lambda\cdot \Omega(\cB^m\times \RR^{k-1})+X_\circ$ is a translator in $\RR^{n+1}$. We call $\Omega(\partial_z\oplus 0)$ the \textbf{translating direction} of $\cB'$ \footnote{Note that $t\mapsto (\cB^m\times\R^k)+t(\pr_z\oplus\mathbf e')$ is a mean curvature flow for any $\mathbf e'\in\R^k$.}, 
	and denote by $\spine(\cB'):= \Omega(0\oplus\RR^{k-1})$. 
	For later reference, we also let 
	\begin{align*}
		\mathscr{B}_{n,k} & := \left\{\lambda\cdot \Omega(\cB^m\times \RR^{k-1})+X_\circ: X_\circ\in \RR^{n+1},\ \lambda>0,\ \Omega\in O(n+1)\right\}\,;
		\\
		\overline{\mathscr{B}_{n,k}} & := \mathscr{B}_{n,k}\cup \left\{\lambda\cdot \Omega(\cC_{n,k})+X_\circ: X_\circ\in \RR^{n+1},\ \lambda>0,\ \Omega\in O(n+1)\right\} \,.
	\end{align*}
	The following Lemma can be proved by a direct compactness argument which we skip here.
	\begin{Lem} \label{Lem_Pre_Compare h_1 dominate direction with transl direction}
		There exists $\delta_0(n)\in (0, 1/4)$ and $R_0(n)>2n$ such that if $\hy, \by\in \{\orig\}\times\RR^k$ are unit vectors, $R\geq R_0$, $\cB\in \mathscr{B}_{n,k}$ has translating direction $\hy$ and $\spine = \hy^\perp\cap \{\orig\}\times\RR^k$ such that \[
		\mbfd_{n,k}(\cB)\leq \delta_0\,,
		\]
		where $\mbfd_{n,k}$ will be defined in \eqref{Equ_L^2 Mono_Def L^2 dist to C_(n,k)}, and that $\cB$ is graphical over $\cC_{n,k}$ in $Q_R$ with graphical function $v$ ($0$-extended outside $Q_R$). Then \[
		\inf_{c>0, c'\in \R} \|c^{-1}v - c' - \by\|_{L^2} \geq \delta_0|\by-\hy|\,.
		\]
		And the outward normal vector field $\nu$ of $\cB$ satisfies $\nu\cdot (0, \hy) <0$ on $\cB$. 
	\end{Lem}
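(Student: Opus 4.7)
The plan is to combine an $L^2$-orthogonality decomposition with the strict monotonicity of the bowl's graphical function over $\cC_{n,k}$, and to derive the normal-direction statement directly from the convex geometry of $\cB^m$.

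\textbf{Setup.} After reducing by translation along the spine, every $\cB\in\mathscr{B}_{n,k}$ with translating direction $\hy$ and the prescribed spine is translation-invariant along $\hy^\perp\cap\{\orig\}\times\RR^k$ and rotationally symmetric about a line $X'_\circ + \RR\hy$ for some $X'_\circ\in\RR^{n-k+1}$. Consequently the graphical function $v(\theta,y)$ depends only on $(\theta, y\cdot\hy)$ and is, for each fixed $\theta$, strictly monotone increasing in $y\cdot\hy$ (inherited from the strict monotonicity of the bowl profile's inverse $U^{-1}$). Writing $\by = \alpha\hy + \by_\perp$ with $\alpha = \by\cdot\hy$ and $\by_\perp\perp\hy$, the linear function $y\cdot\by_\perp$ is $L^2$-orthogonal to $v$, to constants, and to $y\cdot\hy$, each by the $y_j\mapsto -y_j$ Gaussian symmetry (which the cube $Q_R$ respects). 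This yields the key identity
\begin{equation*}
\|c^{-1}v - c' - y\cdot\by\|_{L^2}^2 = \|c^{-1}v - c' - \alpha(y\cdot\hy)\|_{L^2}^2 + (1-\alpha^2)\,M_1,
\end{equation*}
for all $c>0$, $c'\in\RR$, where $M_1 := \|y\cdot\hy\|_{L^2(\cC_{n,k})}^2 > 0$.

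\textbf{The $L^2$-estimate.} For $\alpha\geq 0$, the identity immediately gives $\inf_{c>0,c'}\|c^{-1}v - c' - y\cdot\by\|_{L^2}^2 \geq (1-\alpha^2)M_1 \geq (1-\alpha)M_1 = (M_1/2)|\by-\hy|^2$, as desired. For $\alpha<0$, the constraint $c>0$ is crucial: strict monotonicity of $v$ in $y\cdot\hy$ combined with Gaussian symmetry forces $b_1 := M_1^{-1}\langle v, y\cdot\hy\rangle_{L^2} > 0$, so the $(y\cdot\hy)$-coefficient $b_1/c$ of $c^{-1}v$ is strictly positive and cannot match a negative $\alpha$. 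Projecting the first summand onto the $(y\cdot\hy)$-mode yields $(b_1/c - \alpha)^2\,M_1 \geq \alpha^2 M_1$ (since $b_1/c > 0 > \alpha$), so the total is at least $[\alpha^2 + (1-\alpha^2)]M_1 = M_1$; combined with $|\by-\hy|^2\leq 4$, this gives $\inf\geq (M_1/4)|\by-\hy|^2$. In either case, the first assertion of the lemma holds with any $\delta_0\leq\frac{1}{2}\sqrt{M_1}$.

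\textbf{Normal direction.} $\cB^m$ bounds the convex epigraph $\{z>U(|x|)\}$ into which it translates; the translator equation $\vec H = \partial_z^\perp$ then forces $\vec H$ to point into this convex region, hence the outward unit normal satisfies $\nu\cdot\partial_z = -(1+|\nabla U|^2)^{-1/2}<0$ everywhere on $\cB^m$. This sign is preserved under translation, dilation, and product with $\RR^{k-1}$, and is covariant under the rotation $\Omega$ sending $\partial_z$ to $\hy$, yielding $\nu\cdot(0,\hy)<0$ throughout $\cB$.

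The main technical subtlety is preserving the sign of $b_1>0$ when $v$ is $0$-extended outside $Q_R$ and is possibly contaminated by a large $\theta$-dependent contribution from a nonzero axis shift $X'_\circ$; the former is handled by the Gaussian tails being exponentially small for $R\geq R_0$, and the latter by the elementary fact that $\theta$-dependent functions on $\cC_{n,k}$ are $L^2$-orthogonal to $y\cdot\hy$ and therefore do not disturb $b_1$. These are the routine compactness steps alluded to in the statement: a sequence of would-be violators with $\mbfd_{n,k}(\cB_i)\to 0$ and $R_i\to\infty$, after rescaling, produces a nonzero $L^2_{\mathrm{loc}}$-limit still monotone in $y\cdot\hy_\infty$ with vanishing $(y\cdot\hy_\infty)$-integral, contradicting Gaussian symmetry.
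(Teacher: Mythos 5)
Your proof is correct, and it takes a genuinely different route from the one the paper alludes to: the paper states only "can be proved by a direct compactness argument which we skip here," whereas you give an explicit $L^2$ computation. The core of your argument is a clean orthogonal decomposition $\by = \alpha\hy + \by_\perp$, where (i) the $\by_\perp$-contribution splits off as $(1-\alpha^2)M_1$ because $y\cdot\by_\perp$ is $L^2$-orthogonal to $v$, to constants, and to $y\cdot\hy$ by the reflection symmetry of $Q_R$ and the Gaussian weight in the $\by_\perp$-direction (using that $v$ depends only on $(\theta, y\cdot\hy)$); and (ii) for $\alpha<0$ you exploit the constraint $c>0$ together with strict positivity of $b_1 = M_1^{-1}\langle v, y\cdot\hy\rangle$, itself a consequence of the strict $s$-monotonicity of the bowl's graphical function (which requires $\delta_0$ small to keep the axis shift $|x_\circ|<\varrho-\kappa_n$, ensuring $\partial_v|(\varrho+v)\hat\theta-x_\circ|>0$), plus Gaussian symmetrization in $s$. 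The normal-sign statement from the convexity of $\{z>U(|x|)\}$ is standard and correct. What your version buys over a pure compactness argument is an explicit constant $\delta_0 \leq \min\{1/4, \tfrac12\sqrt{M_1}\}$ and transparent identification of where $c>0$ is used; a compactness argument would be shorter to state but opaque about both. The only place I'd tighten: you assert monotonicity of $v$ in $y\cdot\hy$ and the resulting $b_1>0$ before noting they require $|x_\circ|$ small (hence $\delta_0$ small), which you only acknowledge implicitly; it's worth one sentence to observe that $\mbfd_{n,k}(\cB)\leq\delta_0$ forces $|x_\circ|=\Psi(\delta_0|n)$, which then makes $\partial_s v>0$ on the graphical domain.
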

	
	Now we summarize the result by Du and Zhu. Recall that an \textit{ancient flow} is a mean curvature flow that is defined for time $(-\infty,T)$. First, Du and Zhu proved a normal form theorem for the ancient mean curvature flows whose blow-down limit at $-\infty$ time is a generalized cylinder. Such an ancient flow is called an \textit{ancient asymptotically cylindrical flow}.
	
	\begin{theorem}[Theorem 1.2 of \cite{DuZhu22_quantization}]\label{ThmDuZhu22_quantization}
		For any ancient asymptotically cylindrical flow in $\R^{n+1}$ whose tangent flow at $-\infty$ is given by $\SSp^{n-k}(\sqrt{2(n-k)|t|})\times \R^k$ for some $1\leq k\leq n-1$, the flow can be written as a graph of the cylindrical profile function $u$ satisfies the following sharp asymptotic
		\begin{equation}
			\lim_{\tau\to-\infty} \left\|
			|\tau|u(y,\theta,\tau)-y^\top Qy+2\tr(Q)
			\right\|_{C^p(\BB_R)}=0
		\end{equation}
		for all $R>0$ and all integer $p$, where $Q$ is a $k\times k$-matrix whose eigenvalues are quantized to be either $0$ or $-\frac{\sqrt{2(n-k)}}{4}$.
	\end{theorem}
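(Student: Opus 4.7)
The plan is to combine a Colding-Minicozzi style spectral decomposition of the graph function $u(\cdot,\tau)$ of the rescaled mean curvature flow $\cM(\tau) = \graph_{\cC_{n,k}}(u(\cdot,\tau))$, a Merle-Zaag dichotomy on the three spectral blocks of $L_{n,k}$, and a matrix Riccati analysis on the neutral eigenspace. Writing the RMCF graph equation as
\[
\partial_\tau u = L_{n,k} u + \mathcal{Q}(u,\nabla u,\nabla^2 u),
\]
with $\mathcal{Q}$ analytic, vanishing to second order at $0$, and $\mathcal{Q}(u) = -\tfrac{1}{2\varrho} u^2 + O(u^3 + u|\nabla u|^2)$ at leading order (as computed by expanding the mean curvature of $\graph_{\cC_{n,k}}(u)$ and invoking the shrinker equation), I would decompose $u = u_+ + u_0 + u_-$ along the unstable ($\lambda<0$: $1,\theta_i,y_j$), neutral ($\lambda=0$: $\theta_i y_j$ and $y_i y_j - 2\delta_{ij}$), and stable ($\lambda>0$) eigenspaces of $-L_{n,k}$ from Table \ref{TableEigen}. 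The hypothesis that $\cM(\tau)\to\cC_{n,k}$ in $C^p_{\loc}$ as $\tau\to-\infty$ gives $\|u(\cdot,\tau)\|_{C^p(\BB_R)}\to 0$ for every $R,p$.

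Following the Colding-Ilmanen-Minicozzi uniqueness argument \cite{ColdingIlmanenMinicozzi15}, any nonzero $\langle u,1\rangle_{L^2}$, $\langle u,\theta_i\rangle_{L^2}$, or $\langle u,y_j\rangle_{L^2}$ can be absorbed into a spacetime translation of the basepoint of the ancient flow, after which the Merle-Zaag ODE dichotomy applied to the spectral norms $(\|u_+\|,\|u_0\|,\|u_-\|)$ yields $\|u_\pm(\tau)\|_{L^2} = o(\|u_0(\tau)\|_{L^2})$ as $\tau \to -\infty$, so the dynamics reduces to the finite-dimensional neutral subspace. Writing
\[
u_0(\theta,y,\tau) = y^T Q(\tau)y - 2\tr Q(\tau) + \sum_{i,j} c_{ij}(\tau)\theta_i y_j,
\]
with $Q(\tau)\in\mathrm{Sym}_{k\times k}(\RR)$, the antipodal involution $\theta \mapsto -\theta$ on $\SSp^{n-k}(\varrho)$ (which preserves $\cC_{n,k}$ and the RMCF equation) makes $\theta_i y_j$ odd while $-\tfrac{1}{2\varrho}u_0^2$ is even; hence the nonlinear source vanishes on the $\theta_i y_j$-subspace, and combined with neutrality this forces $c_{ij}(\tau) = o(|Q(\tau)|)$.

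It remains to derive and analyze the ODE for $Q(\tau)$. Projecting $\partial_\tau u = L_{n,k}u + \mathcal{Q}(u)$ onto the even neutral subspace $\{y^T A y - 2\tr A : A\in\mathrm{Sym}_{k\times k}\}$ via the weighted $L^2$ inner product, and using the Hermite-expansion identity
\[
\mathrm{Proj}_{\ker L_{n,k}}\bigl[(y^T Qy - 2\tr Q)^2\bigr] = 8\bigl(y^T Q^2 y - 2\tr Q^2\bigr),
\]
which is verified from the decompositions $y_i^4 = H_4(y_i) + 12H_2(y_i) + 12$ and $y_i^2 y_j^2 = H_2(y_i)H_2(y_j) + 2H_2(y_i) + 2H_2(y_j) + 4$, one obtains the matrix Riccati ODE
\[
\dot Q(\tau) = -\tfrac{4}{\varrho}\, Q(\tau)^2 + o(|Q(\tau)|^2),\qquad \tau\to -\infty.
\]
Diagonalizing, each scalar equation $\dot q = -\tfrac{4}{\varrho}q^2$ integrates to $q(\tau)\sim \varrho/(4\tau)$ as $\tau\to-\infty$ for nonzero solutions, so $|\tau|q(\tau)\to -\varrho/4$, while zero eigenvalues remain $o(|\tau|^{-1})$. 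Thus the renormalized matrix $\tilde Q(\tau) := |\tau|Q(\tau)$ converges subsequentially to a limit $Q_\infty$ satisfying the algebraic fixed-point equation $Q_\infty + \tfrac{4}{\varrho}Q_\infty^2 = 0$, whose eigenvalues are quantized to $\{0,-\varrho/4\}$, precisely as stated.

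The main technical obstacles are: (i) promoting the $L^2$-decay of the remainder $w := u - u_0$ and of higher-degree neutral and stable modes to the $C^p(\BB_R)$-decay appearing in the conclusion, which requires weighted parabolic Schauder estimates for $L_{n,k}$ combined with Brakke-White interior regularity in order to handle the nonlinear feedback from $y^4$-type terms in $\mathcal{Q}(u_0)$ onto the quadratic projection; and (ii) upgrading the subsequential convergence of $\tilde Q(\tau)$ to convergence to a single fixed point $Q_\infty$, handled either by a \L ojasiewicz-Simon inequality for the Gaussian area restricted to the invariant finite-dimensional neutral subspace, or by a direct linearization argument around each of the (finitely many) isolated fixed points of the Riccati ODE.
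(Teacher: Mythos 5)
The paper does not prove this statement itself: it is quoted verbatim as Theorem~1.2 of Du--Zhu \cite{DuZhu22_quantization}, so there is no internal argument for you to be compared against, only the external one. That said, your outline is essentially the Angenent--Daskalopoulos--Sesum / Du--Zhu strategy: spectral splitting of the cylindrical profile against $L_{n,k}$, a Merle--Zaag dichotomy forcing the neutral block to dominate, and a matrix Riccati ODE on the span of $\{y_iy_j-2\delta_{ij}\}$ whose renormalized fixed points give the quantization. Your Hermite identities and the projection formula $\mathrm{Proj}_{\ker L_{n,k}}\bigl[(y^TQy-2\tr Q)^2\bigr]=8(y^TQ^2y-2\tr Q^2)$ are correct, and the coefficient $-\tfrac{1}{2\varrho}$ of the $u^2$-nonlinearity matches the expansion of the graphical shrinker operator; the fixed-point equation $\tilde Q+\tfrac{4}{\varrho}\tilde Q^2=0$ indeed has spectrum $\{0,-\varrho/4\}$, consistent with the normal form of a nondegenerate singularity used elsewhere in this paper.

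There are, however, two concrete gaps. First, the parity argument for killing the $\theta_iy_j$ block is not sound as stated: if $u_0$ contains both the even piece $y^TQy-2\tr Q$ and the odd piece $\sum c_{ij}\theta_iy_j$, then $u_0^2$ has the odd cross term $2(y^TQy-2\tr Q)\sum c_{ij}\theta_iy_j$, so it is \emph{not} even in $\theta$ and does source the $\theta_iy_j$ modes. What is true is that the $O(|Q|^2)$ source for $c$ vanishes by parity (the source starts at order $Qc$), and one must then run a bootstrap on the coupled system $\dot Q=-\tfrac{4}{\varrho}Q^2+O(|c|^2)+\dots$, $\dot c = O(|Q||c|)+\dots$ to show $c=o(|Q|)$; this needs the sign of the $Qc$-coefficient and is not automatic. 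Second, and more seriously, the heart of Du--Zhu's proof is precisely what you defer to ``technical obstacles'': the graphical radius grows only like $\sqrt{|\tau|}$, so the spectral decomposition must be carried out with cutoffs and an $L^2$ non-concentration estimate (an analogue of the weighted monotonicity proved in Section~\ref{Subsec_L^2 dist Mono} of this very paper), and one must then promote $L^2$ asymptotics to the stated $C^p(\BB_R)$ convergence; absorbing nonzero unstable modes requires a genuine re-centering (choice of spacetime base point, time-translation, and dilation) of the ancient flow, not merely an observation. Without these, the finite-dimensional ODE reduction is heuristic rather than a proof.
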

	
	Next, among other things, they classify all such noncollapsing ancient asymptotically cylindrical flow with $\text{rk}(Q)=0$.
	
	\begin{theorem}[Theorem 1.10 in \cite{DuZhu22_quantization}]
		For an ancient noncollapsed mean curvature flow in $\R^{n+1}$ whose tangent flow at $-\infty$ is given by $\SSp^{n-k}(\sqrt{2(n-k)|t|})\times \R^k$ for some $1\leq k\leq n-1$, and $\text{rk}(Q)=0$. Then it is either a round shrinking cylinder $\SSp^{n-k}(\sqrt{2(n-k)|t|})\times \R^k$ or $(n-k+1)$ dimensional bowl soliton times $\R^{k-1}$ where the bowl soliton has translating direction orthogonal to $\R^{k-1}$ factor.
	\end{theorem}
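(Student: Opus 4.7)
The plan is to read off the second-order asymptotic of the flow from Theorem~\ref{ThmDuZhu22_quantization}, identify which unstable mode of the Jacobi operator $L_{n,k}$ dominates the graphical profile, normalize by the symmetries of MCF, and then invoke noncollapsing to classify the resulting backward limit as either the round cylinder or a bowl soliton.

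First, since $\text{rk}(Q)=0$ forces $Q=0$, the normal form of Theorem~\ref{ThmDuZhu22_quantization} gives $|\tau|\cdot u(\cdot,\tau) \to 0$ in $C^p_{\loc}$, so the cylindrical graphical function $u(\theta,y,\tau)$ decays strictly faster than $1/|\tau|$ on compact regions of $\cC_{n,k}$. Expanding $u$ in the weighted $L^2$ basis of eigenfunctions of $-L_{n,k}$ from Section~\ref{SS:PreCylinder}, the absence of the neutral quadratic-in-$y$ Hermite mode forces the leading backward behavior of $u$ to come from a strictly unstable eigenfunction: the constant mode (eigenvalue $-1$) or a linear mode in $\theta$ or $y$ (eigenvalue $-1/2$). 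A Merle-Zaag-type ODE trichotomy applied to the $L^2$ components then singles out one dominant mode as $\tau\to-\infty$.

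Second, I would use the symmetries of MCF to normalize away inessential modes: a dominant constant mode is a parabolic dilation about the origin, removable by a time translation; a dominant $y_j$ mode is a translation along the spine $\{\orig\}\times\R^k$, removable by an ambient translation. After these normalizations, either $u\equiv 0$ and the flow is the round shrinking cylinder, giving conclusion (a), or (up to rotation) the dominant mode is a $\theta_1$-linear mode. In the latter case, an analog of Lemma~\ref{Lem_Pre_Compare h_1 dominate direction with transl direction} shows the outward normal has a definite sign along the distinguished direction, which together with the noncollapsing hypothesis forces the flow to be convex in backward time.

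Third, I would extract a backward ancient limit carrying the translating structure. If a single $\theta_1$-linear mode dominates, then after re-centering and rescaling by $\|u(\cdot,\tau)\|_{L^2}$ one expects subconvergence to a convex noncollapsed ancient flow invariant under $\R^{k-1}$-translations in the spine directions not distinguished by the dominant mode. Quotienting gives a convex noncollapsed ancient flow in $\R^{n-k+2}$ with $\SSp^{n-k}\times\R$ tangent at $-\infty$, which by the Brendle-Choi type classification \cite{BrendleChoi19_AncientMCFR3, BrendleChoi19_AncientMCFRn} must be the $(n-k+1)$-dimensional bowl soliton. Lifting back, the original flow is $\cB^{n-k+1}\times\R^{k-1}$ with translating direction the one picked out by the $\theta_1$ mode, which is orthogonal to the $\R^{k-1}$ spine; this is conclusion (b). The main obstacle, I expect, is this third step: producing a genuine translator as a backward blowup and upgrading ``linear mode dominates in $L^2$'' to ``exact product structure $\cB^{n-k+1}\times\R^{k-1}$''. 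One must uniformly control all subleading components of $u$ to preserve convexity and noncollapsing in the limit, and rule out more exotic noncollapsed ancient solutions (ancient ovals times $\R^{k-1}$, translators with larger cross-section, etc.). This is the content of Du-Zhu's quantitative rigidity argument, combining Andrews' noncollapsing, the spectral analysis near the cylindrical end, and Brendle-Choi type uniqueness theorems.
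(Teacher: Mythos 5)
This statement is a theorem cited from Du--Zhu \cite{DuZhu22_quantization}; the paper does not prove it, so there is no internal proof to compare against. That said, your sketch contains a concrete error in the mode analysis that would derail the argument.

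You have the roles of the $\theta_i$ and $y_j$ eigenmodes reversed. The $\theta_i$ modes (eigenvalue $-1/2$ of $-L_{n,k}$, arising from $\nu\cdot v$ for translations $v$ orthogonal to the spine) are exactly the ones removable by recentering the base point: translating the shrinking cylinder by $v\perp\spine(\cC_{n,k})$ produces the rescaled graphical perturbation $-e^{\tau/2}\hat\theta\cdot v$, a $\theta_i$ mode decaying as $\tau\to-\infty$. Translations \emph{along} the spine $\{\orig\}\times\R^k$ are trivial for $\cC_{n,k}$ and produce no $y_j$ mode at all. The $y_j$ modes (the degree-one Hermite polynomials, also eigenvalue $-1/2$) are not generated by any ambient isometry, dilation, or time translation of the cylinder, and hence cannot be normalized away; they are precisely the modes whose dominance characterizes the bowl soliton. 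Indeed, a direct expansion of the rescaled flow of $\cB^{n-k+1}\times\R^{k-1}$ translating in the $\by$-direction over $\cC_{n,k}$ gives a leading graphical term proportional to $e^{\tau/2}\,y\cdot\by$ as $\tau\to-\infty$. This is consistent with the paper's own usage: Lemma \ref{Lem_Pre_Compare h_1 dominate direction with transl direction} and the proof of Theorem \ref{Thm_Isol_Blowup Model} are phrased throughout in terms of $h_1 = y\cdot\by$ domination matching the translating direction, and your citation of that lemma for a ``dominant $\theta_1$-linear mode'' is inconsistent with its hypotheses. So in your step two, the correct normalization is: constant modes and $\theta_i$ modes are removable by time translation and base-point translation respectively, $\theta_iy_j$ modes by rotation, and what remains after normalization is either $u\equiv 0$ (cylinder) or a dominant $y\cdot\by$ mode (which must then be shown to be a bowl soliton).

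Beyond this, you correctly flag that the substantive work is the rigidity step; but note that the ``quotient by $\R^{k-1}$'' reduction cannot be performed directly, since $\R^{k-1}$-invariance of the ancient solution is a conclusion, not a hypothesis. Establishing the product structure $\cB^{n-k+1}\times\R^{k-1}$ from $y\cdot\by$-domination requires a symmetry-improvement/neck-improvement argument together with the spectral analysis, which is the technical core of Du--Zhu's proof and cannot simply be reduced to the Brendle--Choi classification in lower dimension by quotienting.
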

	\begin{remark} \label{Rem_Pre_exp decay near -infty implies bowl or cylinder}
		The way that we shall apply the last theorem is as follows. We shall construct an ancient solution from the flow after passing through a nondegenerate singularity by controlling the decay order carefully to make sure that the solution converges to a cylinder exponentially fast as $t\to-\infty$, which   by Theorem \ref{ThmDuZhu22_quantization} forces $Q=0$ then by the last theorem can only be a shrinking cylinder or a bowl soliton, that is, a mean curvature flow with time slices in $\overline{\mathscr B_{n,k}}$. 
	\end{remark}

	\subsection{Use of constants.} 
	Throughout this paper we shall use the letter $C$ to denote a constant that is allowed to vary from line to line (or even within the same line); we shall stress the functional dependence of any such constant on geometric quantities by including them in brackets, writings things like $C=C(n,\eps)$.
	We shall also use $\Psi(\eps|C_1, C_2, \dots, C_l)$ to denote a constant depending on $\eps, C_1, \dots, C_l$ and tending to $0$ when $C_1, \dots, C_l$ are fixed and $\eps\to 0$.
	
	\section{An $L^2$-distance monotonicity and applications}  \label{Sec_L^2 Mono}
	The goal of this section is to prove the following monotonicity of $L^2$-distance function and discuss its applications.
	We begin with the following set-up. Let $\nabla = \nabla_{\RR^{n+1}}$, $\Delta:= \Delta_{\RR^{n+1}}$. 
	\begin{enumerate}[label={}] 
		\item [$\mathbf{(S1)}$] Let $\eta\in C^2(\RR^{n+1}\times \RR)$ be a non-negative function. 
		\item [$\mathbf{(S2)}$] Let $T_\circ>0$, $f_\circ\in C^\infty(\RR)$ satisfies $f_\circ(-s)=-f_\circ(s)$ and
		\begin{align*}
			f_\circ(s) = \begin{cases}
				s \,, & \text{ if } |s|\leq T_\circ\,, \\
				3T_\circ/2\,, & \text{ if }s>2T_\circ\,, 
			\end{cases}  & & f_\circ''(s)\leq 0 \, \text{ on }\RR_{\geq 0}\,, & & |f_\circ'(s)|+|f_\circ(s)f_\circ''(s)| \leq 2024\, \text{ on }\RR\,.
		\end{align*}
		\item [$\mathbf{(S3)}$] Let $\Omega\subset \RR^{n+1}$ be an open subset containing the origin $\orig$, and $\bu:\Omega\to \RR$ be a twice differentiable arrival-time-function to the level set equation, i.e. 
		\begin{align}
			|\nabla \bu|^2(\Delta \bu + 1) = \nabla^2\bu (\nabla \bu, \nabla \bu)\,. \label{Equ_L^2 Mono_Arrv time func-LSF equ}
		\end{align}
		Let $\Omega_{\bu, T_\circ, \eta}\subset \Omega$ be the orthogonal projection of $\cO_{\bu,T_\circ}\cap \spt(\eta)$ onto $\RR^{n+1}$, where
		\begin{align*}
			\cO_{\bu, T_\circ}:= \{(X, t)\in \Omega\times [-T_\circ, {T_\circ}]: |\bu(X)-t|\leq 2T_\circ\}\subset \RR^{n+1}\times \RR \,,         
		\end{align*}
		
		Suppose $\bu$ satisfies $\bu(\orig) = |\nabla \bu(\orig)| = 0$ and the following for some $\beta\in (0, 1)$, 
		\begin{align}
			\overline{\cO_{\bu, T_\circ}\cap \spt(\eta)}\subset \Omega\times \RR\,; & &  |\nabla \bu|^2 \geq -2\beta \bu \; \text{ and } \; \frac{|\nabla \bu|}{\sqrt{T_\circ}}+|\nabla^2 \bu| \leq \beta^{-1} \;\;\; \text{ on }\Omega_{\bu, T_\circ, \eta}\,. \label{Equ_L^2 Mono_Restric of time arriv func u}
		\end{align}
		
		Also suppose that $F\eta$ extends to a $C^{1,1}$ function on $\RR^{n+1}\times [-T_\circ, 0)$, which equals to $\pm 3T_\circ\eta/2$ on $\RR^{n+1}\times [-T_\circ, 0)\setminus \cO_{\bu,T_\circ}$, where \[
		F(X, t):= f_\circ(\bu(X)-t) \,.
		\]  
		\item [$\mathbf{(S4)}$] Let $\rho = e^{2\phi}\in C^\infty(\RR^{n+1}\times [-T_\circ, 0))$ be satisfying that for any hyperplane $L\subset\R^{n+1}$,
		\begin{align}
			\partial_t \phi + \Div_{L}(\nabla\phi) + 2|\nabla\phi|^2 \leq 0\,, & & |t\nabla^2 \phi|\leq \beta^{-1}\,. \label{Equ_L^2 Mono_Backward Heat Equ along MCF}
		\end{align}
	\end{enumerate}
	\[ \]
	
	\begin{Thm} \label{Thm_L^2 Mono_Main}
		Let $T_\circ>0$, $\beta\in (0, 1)$; $\eta$, $f_\circ$, $\bu: \Omega\to \RR$, $F: \RR^{n+1}\times [-T_\circ, 0)\to \RR$ be satisfying $\mathbf{(S1)}$-$\mathbf{(S4)}$. 
		Then there are $K(n, \beta),\ c(n, \beta)>0$ with the following significance.
		
		Suppose $\{\Sigma_t\}_{t\in I}$ is a Brakke motion in $\RR^{n+1}$ with interval $I\subset [-T_\circ, 0)$.
		Then,
		\begin{align}
			\begin{split}
				& \frac{d}{dt} \left[ (-t)^{2K}\int_{\RR^{n+1}} F^2\eta^2 \rho\ d\Sigma_t \right] + (-t)^{2K}\int_{\RR^{n+1}} c(n, \beta) F^2\eta^2|\nabla \phi|^2 \rho\ d\Sigma_t \\
				&\;\; \leq (-t)^{2K}\int_{\RR^{n+1}}\left(\partial_t(\eta^2)+\Div_{\Sigma_t}\nabla(\eta^2) +4\nabla\phi\cdot \nabla (\eta^2) + 8|\nabla\eta|^2 \right)F^2\rho\ d\Sigma_t \,.         
			\end{split} \label{Equ_L^2 Mono_Main Est}
		\end{align} 
	\end{Thm}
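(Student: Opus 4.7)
The plan is to apply Brakke's inequality to the $C^{1,1}$ test function $\Phi := F^2\eta^2\rho$ (admissible since $F\eta\in C^{1,1}$ by $\mathbf{(S3)}$ and $\eta,\rho\in C^2$), yielding
\[
\frac{d}{dt}\int\Phi\, d\Sigma_t \leq \int\bigl(\partial_t\Phi + \vec H\cdot\nabla\Phi - \Phi|\vec H|^2\bigr)\, d\Sigma_t.
\]
Expanding via the product rule, the central algebraic input I would extract is the cancellation identity coming from the arrival-time equation \eqref{Equ_L^2 Mono_Arrv time func-LSF equ}: this equation expresses exactly that the level set $\{\bu = t\}$ evolves by mean curvature flow with ambient mean curvature vector $\vec H^\ast = \nabla \bu / |\nabla \bu|^2$, and combined with $\partial_t F = -f_\circ'(\bu-t)$ and $\nabla F = f_\circ'(\bu-t)\nabla \bu$ one directly verifies
\[
\partial_t(F^2) + \vec H \cdot \nabla(F^2) = 2F f_\circ'(\bu-t)\bigl(\vec H\cdot\nabla\bu - 1\bigr),
\]
which vanishes when $\Sigma_t$ coincides with the level set $\{\bu = t\}$. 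Thus $F$ plays the role of a signed distance to the model, and the factor $\vec H\cdot\nabla\bu - 1$ quantifies the geometric discrepancy between $\Sigma_t$ and the level set.

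For the $\rho$-factor I would use $\nabla\rho = 2\rho\nabla\phi$ and apply $\mathbf{(S4)}$ with $L = T_X\Sigma_t$, which gives $\partial_t\phi \leq -\Div_L(\nabla\phi) - 2|\nabla\phi|^2$ and, after substituting $\Div_L(\nabla\phi) = \Delta_{\Sigma_t}\phi - \vec H\cdot\nabla\phi$ and completing the square in $\vec H$, yields the pointwise bound
\[
\partial_t\rho + \vec H\cdot\nabla\rho - \rho|\vec H|^2 \leq -2\rho\Delta_{\Sigma_t}\phi - \rho|\vec H - 2\nabla^\perp\phi|^2 - 4\rho|\nabla^T\phi|^2.
\]
Multiplying by $F^2\eta^2$ and integrating, the tangential Laplacian term $-2\int F^2\eta^2\rho\Delta_{\Sigma_t}\phi\, d\Sigma_t$ is handled by integration by parts on $\Sigma_t$, producing the cross terms $4\int F\eta^2\rho\,\nabla F\cdot\nabla^T\phi$ and $4\int F^2\eta\rho\,\nabla\eta\cdot\nabla^T\phi$ plus a term $4\int F^2\eta^2\rho|\nabla^T\phi|^2$ that precisely cancels the $-4\rho|\nabla^T\phi|^2$ above. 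Combining with the cancellation identity, the Brakke right-hand side reduces to controlling three bad cross terms, namely $2F\eta^2\rho f_\circ'(\vec H\cdot\nabla\bu - 1)$ and the two IBP cross terms, against the favorable $-F^2\eta^2\rho|\vec H - 2\nabla^\perp\phi|^2$ together with $F^2\rho$-multiplied $\eta$-derivatives.

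Finally, I would bound the bad terms by weighted Cauchy--Schwarz, using the structural estimates of $\mathbf{(S3)}$--$\mathbf{(S4)}$. Splitting $\vec H\cdot\nabla\bu - 1 = (\vec H - 2\nabla^\perp\phi)\cdot\nabla\bu + (2\nabla^\perp\phi\cdot\nabla\bu - 1)$, the first piece is absorbed into $-F^2\eta^2\rho|\vec H - 2\nabla^\perp\phi|^2$ via a small-constant Cauchy--Schwarz (losing a harmless factor bounded by $|\nabla\bu|^2 \leq \beta^{-2} T_\circ$); the second piece together with $\nabla F\cdot\nabla^T\phi = f_\circ'\nabla\bu\cdot\nabla^T\phi$ is absorbed into a small multiple of $F^2\eta^2|\nabla\phi|^2\rho$ (producing the $c(n,\beta)$ coefficient on the left-hand side) plus a residual error of size $C(n,\beta)|t|^{-1}F^2\eta^2\rho$, exploiting $|\nabla^2\bu|\leq\beta^{-1}$, $|t\nabla^2\phi|\leq\beta^{-1}$, and $|\nabla\bu|^2\geq -2\beta\bu$ to control the products. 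The $\eta$-derivative cross terms collect into the boundary expression $F^2\rho(\partial_t\eta^2 + \Div_{\Sigma_t}\nabla\eta^2 + 4\nabla\phi\cdot\nabla\eta^2 + 8|\nabla\eta|^2)$, with the specific constants matching through the chosen Cauchy--Schwarz weights. The hardest step will be choosing $K = K(n,\beta)$ sufficiently large so that the term $-2K(-t)^{2K-1}\int F^2\eta^2\rho\, d\Sigma_t$ arising from $\tfrac{d}{dt}(-t)^{2K}$ dominates the residual $C(n,\beta)(-t)^{2K-1}\int F^2\eta^2\rho\, d\Sigma_t$ error, yielding \eqref{Equ_L^2 Mono_Main Est}.
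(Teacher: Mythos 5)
Your plan diverges from the paper at the treatment of the cross term coming from $\vec H\cdot\nabla(F^2)$, and the proposed absorption fails. After your cancellation identity the problematic term is $2Ff_\circ'(\vec H\cdot\nabla\bu - 1)\eta^2\rho$, which is linear in $F$. Whichever way you apply weighted Cauchy--Schwarz against the quadratic good term $-F^2\eta^2\rho|\vec H - 2\nabla^\perp\phi|^2$, you are left with a residual of the form $\epsilon^{-1}(f_\circ')^2|\nabla\bu|^2\eta^2\rho$ carrying no factor of $F^2$. Calling this ``harmless'' because $|\nabla\bu|^2\leq\beta^{-2}T_\circ$ misses the point: the only mechanism for absorbing error terms is the weight derivative $\tfrac{d}{dt}(-t)^{2K}$, which supplies a term proportional to $-t^{-1}F^2\eta^2\rho$, and there is no pointwise bound $|\nabla\bu|^2\leq C(n,\beta)|t|^{-1}F^2$. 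Indeed $F = f_\circ(\bu - t)$ vanishes on the level set $\{\bu = t\}$ while $|\nabla\bu|^2$ need not (on the model, $|\nabla\bu|^2 = -2\beta\bu = 2\beta|t|>0$ there). So the estimate breaks down precisely where $\Sigma_t$ is close to the model. Your IBP cross term $4\int F\eta^2\rho\,\nabla_{\Sigma_t}F\cdot\nabla^T\phi\,d\Sigma_t$ has the same defect, since its Cauchy--Schwarz residual $\epsilon^{-1}|\nabla_{\Sigma_t}F|^2\eta^2\rho$ has nothing favorable to absorb it in your bookkeeping.

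The paper avoids this by performing a ``pre-integration by parts'' on the $F$-part before estimating: the first variation formula converts $\int(\rho\nabla G - G\nabla\rho)\cdot\vec H_t\,d\Sigma_t$ into $\int\left(-\rho\Div_{\Sigma_t}(\nabla G) + G\Div_{\Sigma_t}(\nabla\rho)\right)d\Sigma_t$ for $G = F^2\eta^2$, so $\vec H$ is eliminated from the $F$-side of the computation entirely. What remains, $\partial_t(F^2) - \Div_{\Sigma_t}(\nabla(F^2))$, involves only the ambient Hessian $\nabla^2\bu$ (bounded by $\beta^{-1}$ via $\mathbf{(S3)}$) and the surface gradient $|\nabla_{\Sigma_t}\bu|^2$, and the key pointwise estimate $\partial_t(F^2) - \Div_{\Sigma_t}(\nabla(F^2))\leq -\tfrac{2K}{t}F^2 - c|\nabla_{\Sigma_t}F|^2$ is then proved as a discriminant condition for a quadratic in $\bu$ with $t$ as a parameter, exploiting $|\nabla\bu|^2\geq -2\beta\bu$ together with a linear-algebra inequality comparing $\nabla^2\bu(\zeta,\zeta) - \nabla^2\bu(\nu_t,\nu_t)$ to $1-(\zeta\cdot\nu_t)^2$. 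Crucially, this step also produces the \emph{favorable} term $-c|\nabla_{\Sigma_t}F|^2$, which is what absorbs the $\eta$- and $\phi$-cross terms, and an Ecker-type Sobolev inequality then trades $|\nabla_{\Sigma_t}(F\eta)|^2$ for the $|\nabla\phi|^2$ coefficient on the left of \eqref{Equ_L^2 Mono_Main Est}. Your accounting of the $\rho$-factor and the complete-the-square producing $-|\vec H - 2\nabla^\perp\phi|^2$ is correct; the missing idea is the first variation step for the $F^2\eta^2$-part and the quadratic-discriminant argument that determines $K$.
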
  
	
	The present section is organized as follows. We start with the proof of Theorem \ref{Thm_L^2 Mono_Main} in section \ref{Subsec_Pf L^2 Mono Thm} by choosing appropriate testing function in the definition of Brakke flow. The proof is a bit technical and we encourage the first time reader to skip it.
	Then we present an application of Theorem \ref{Thm_L^2 Mono_Main} in section \ref{Subsec_L^2 dist Mono} in a special case, i.e. when $\bu$ is the arrival-time-function of a round cylinder. Under this setting, we obtain an $L^2$-nonconcentration estimate near infinity (see Corollary \ref{Cor_L^2 Mono_Weighted L^2 Mono and Est for RMCF}), which allows us to derive an almost monotonicity of the \textit{decay order} in section \ref{Subsec_Doub Const, AR of RMCF}. 
	
	Further applications of Theorem \ref{Thm_L^2 Mono_Main} with different choices of $\bu$ will be discussed in the future work. 
	
	\subsection{Proof of Theorem \ref{Thm_L^2 Mono_Main}.}\label{Subsec_Pf L^2 Mono Thm}
	The goal of this subsection is to prove Theorem \ref{Thm_L^2 Mono_Main}. 
	Recall that $\rho=e^{2\phi}$. For any non-negative function $G\in C^2(\RR^{n+1}\times \RR)$, taking $\Phi:= G\rho$ in \eqref{Equ_Pre_DefBrakke} gives,
	\begin{align*}
		\frac{d}{dt} \int_{\RR^{n+1}} G\rho\ d\Sigma_t \leq \int_{\RR^{n+1}} (\partial_t G + \nabla G\cdot \vec H_t)\rho + (\partial_t\rho+\nabla \rho\cdot \vec H_t - \rho|\vec H_t|^2)G\ d\Sigma_t
	\end{align*}
	Also by the first variation formula, whenever $\Sigma_t$ has generalized mean curvature $\vec H_t$, we have 
	\begin{align*}
		\int_{\RR^{n+1}} (\rho\nabla G - G\nabla \rho)\cdot \vec H_t\ d\Sigma_t 
		& = \int_{\RR^{n+1}} -\Div_{\Sigma_t}(\rho\nabla G - G\nabla \rho)\ d\Sigma_t \\
		& = \int_{\RR^{n+1}} -\rho\, \Div_{\Sigma_t} (\nabla G) + G\,\Div_{\Sigma_t}(\nabla\rho)\ d\Sigma_t\,.
	\end{align*}
	By combining them with the first inequality of \eqref{Equ_L^2 Mono_Backward Heat Equ along MCF}, we obtain,
	\begin{align}
		\frac{d}{dt} \int_{\RR^{n+1}} G \rho\ d\Sigma_t \leq & \int_{\RR^{n+1}} 
		\Big(\partial_t G - \Div_{\Sigma_t}(\nabla G)\Big)\rho   - |\vec{H} - 2\nabla^\perp \phi|^2 G\rho\ d\Sigma_t  \label{Equ_L^2 Mono_Ecker-Huisken's Variation Ineq}
	\end{align}
	
	We let $G:=F^2\eta^2$ and, to save notation, we denote $d\mu_t = \rho(\cdot, t)\ d\Sigma_t$. Note that,
	\begin{align*}
		& \partial_t G - \Div_{\Sigma_t}(\nabla G) \\
		& \quad = \Big(\partial_t (F^2) - \Div_{\Sigma_t}(\nabla (F^2))\Big)\eta^2 + \Big(\partial_t (\eta^2) - \Div_{\Sigma_t}(\nabla (\eta^2))\Big)F^2- 2\nabla(F^2)\cdot \nabla_{\Sigma_t}(\eta^2)
	\end{align*}
	While again by first variation formula,
	\begin{align*}
		\int_{\RR^{n+1}} F^2\nabla(\eta^2)\cdot \vec H_t\ d\mu_t 
		& = \int_{\RR^{n+1}} -\Div_{\Sigma_t}(F^2\nabla(\eta^2)\rho)\ d\Sigma_t \\
		& = \int_{\RR^{n+1}} -\nabla (F^2)\cdot \nabla_{\Sigma_t}(\eta^2) - F^2\Div_{\Sigma_t}\nabla(\eta^2) - 2F^2\nabla(\eta^2)\cdot \nabla_{\Sigma_t}\phi\ d\mu_t
	\end{align*}
	Combining them gives,
	\begin{align*}
		\int_{\Sigma_t} \Big(\partial_t G - \Div_{\Sigma_t}(\nabla G)\Big)\ d\mu_t 
		& = \int_{\Sigma_t} \Big(\partial_t (F^2) - \Div_{\Sigma_t}(\nabla (F^2))\Big)\eta^2 + \Big(\partial_t (\eta^2) + \Div_{\Sigma_t}(\nabla (\eta^2))\Big)F^2\\
		& \qquad +\, 2F^2\nabla^\perp(\eta^2)\cdot \vec{H}_t  + 4F^2 \nabla(\eta^2)\cdot\nabla_{\Sigma_t}\phi\ d\mu_t 
	\end{align*}
	Plugging this back to \eqref{Equ_L^2 Mono_Ecker-Huisken's Variation Ineq} we get,
	\begin{align*}
		\frac{d}{dt} \int_{\RR^{n+1}} G\ d\mu_t 
		&\leq  \int_{\RR^{n+1}} \Big(\partial_t (F^2) - \Div_{\Sigma_t}(\nabla (F^2))\Big)\eta^2 + \Big(\partial_t (\eta^2) + \Div_{\Sigma_t}(\nabla (\eta^2))\Big)F^2 \\
		& \qquad\;\;\, +\left(4\nabla^\perp\eta\cdot \eta\vec{H}_t + 4\nabla(\eta^2)\cdot \nabla_{\Sigma_t}\phi - \left|\eta\vec{H}_t - 2\eta\nabla^\perp \phi\right|^2\right) F^2\ d\mu_t        
	\end{align*}
	Notice that 
	\begin{align*}
		& 4\nabla^\perp\eta\cdot \eta\vec{H}_t + 4\nabla(\eta^2)\cdot \nabla_{\Sigma_t}\phi - \frac12\left|\eta\vec{H}_t - 2\eta\nabla^\perp \phi\right|^2 \\
		=\; & 4 \left(\nabla(\eta^2)\cdot \nabla\phi + 2|\nabla^\perp \eta|^2\right) - \frac12\left|\eta\vec{H}_t - 2\eta\nabla^\perp \phi - 4\nabla^\perp \eta \right|^2 \leq 4 \left(\nabla(\eta^2)\cdot \nabla\phi + 2|\nabla^\perp \eta|^2\right)\,.
	\end{align*}
	We then deduce,
	\begin{align}
		\begin{split}
			\frac{d}{dt} \int_{\RR^{n+1}} G\ d\mu_t 
			& \leq  \int_{\Sigma_t} \Big(\partial_t (F^2) - \Div_{\Sigma_t}(\nabla (F^2))\Big)\eta^2 + \Big(\partial_t (\eta^2) + \Div_{\Sigma_t}(\nabla (\eta^2))\Big)F^2 \\
			& \qquad + 4 \left(\nabla(\eta^2)\cdot \nabla\phi + 2|\nabla^\perp \eta|^2\right)F^2 - \frac12\left|\vec{H}_t - 2\nabla^\perp \phi \right|^2\eta^2F^2\ d\mu_t\,.        
		\end{split} \label{Equ_L^2 Mono_App of Variation Ineq}
	\end{align}
	
	\textbf{Claim 1.} There exists $K(\beta, n)>0,\ c(\beta, n)\in(0,1)$ such that on $\{|\nabla \bu|>0\}\cap \{-T_\circ\leq t<0\}$, we have \[
	\Big(\partial_t (F^2) - \Div_{\Sigma_t}(\nabla (F^2))\Big)(X, t) \leq -\frac{2K}{t}F^2 - c(n, \beta)|\nabla_{\Sigma_t} F|^2 \,.
	\]
	
	\textit{Proof of Claim 1.} Let $\zeta:= \nabla \bu/|\nabla \bu|$; $\nu_t$ be a unit normal field of $\Sigma_t$, then by assumption (\ref{Equ_L^2 Mono_Arrv time func-LSF equ}),
	\begin{align*}
		\Big(\partial_t (F^2) & - \Div_{\Sigma_t}(\nabla (F^2))\Big)(X, t)
		= -2f_\circ f_\circ' - \Div_{\Sigma_t}(2f_\circ f_\circ'\nabla \bu) \\
		&\;\; = -2f_\circ f_\circ' - 2f_\circ f_\circ'\cdot(\Delta \bu - \nabla^2 \bu(\nu_t, \nu_t)) - 2\left(f_\circ f_\circ''+(f_\circ')^2\right)|\nabla_{\Sigma_t}\bu|^2 \\
		&\;\; = -2f_\circ f_\circ'\cdot\underbrace{\big(\nabla^2\bu(\zeta, \zeta) - \nabla^2\bu(\nu_t, \nu_t)\big)}_{\cA} - 2\left(f_\circ f_\circ''+(f_\circ')^2\right)|\nabla \bu|^2\underbrace{(1-(\zeta\cdot \nu_t)^2)}_{\cB} \,.
	\end{align*}

	\textbf{Case I.} When $|s|:=|\bu(X)-t|< T_\circ$, $f_\circ(s)=s$. By the assumption \eqref{Equ_L^2 Mono_Restric of time arriv func u}, it suffices to show that \[
	-(\bu(X)-t)\cA + \beta\cdot \bu(X)\cB + \frac{K}{t}(\bu(X)-t)^2 \leq 0\,,
	\]
	or equivalently (since $t<0$), \[
	K \bu(X)^2 - (\cA+2K-\beta\cB)\cdot t\bu(X)+ (K+\cA)t^2 \geq 0\,.
	\]
	So it suffices to find $K(\beta, n)>1$ such that \[
	0\leq 4K(K+\cA) - (\cA +2K - \beta \cB)^2 = 4K\beta\cdot \cB - (\cA-\beta\cB)^2 \,.
	\]
	Since $0\leq \cB\leq 1$ and $|\nabla^2 \bu|\leq \beta^{-1}$ by the assumption \eqref{Equ_L^2 Mono_Restric of time arriv func u}, the existence of such $K(\beta, n)$ follows immediately from the Claim below. \\
	\textbf{Claim 2.} For every symmetric bilinear form $S$ on $\RR^n$ and every unit vectors $v, w\in \RR^n$, we have \[
	(S(v, v) - S(w, w))^2 \leq 8\|S\|_{l^2}^2(1-(v\cdot w)^2) \,.
	\]
	\textit{Proof of Claim 2.} WLOG $S$ is diagonal with eigenvalues $\lambda_1, \dots, \lambda_n$. Set $v = (v_1, \dots, v_n)$, $w=(w_1, \dots, w_n)$. By possibly replacing $w$ by $-w$, WLOG $v\cdot w\geq 0$. Then 
	\begin{align*}
		(S(v, v) - S(w, w))^2 & = \left( \sum_{j=1}^n \lambda_j(v_j-w_j)(v_j+w_j) \right)^2 \\ 
		& \leq \left( \sum_{j=1}^n 4\lambda_j^2 \right)\left( \sum_{j=1}^n (v_j-w_j)^2\right) \\
		& = 4\|S\|_{l^2}^2 |v-w|^2 = 8\|S\|^2_{l^2}(1-v\cdot w) \leq 8\|S\|^2_{l^2}(1-(v\cdot w)^2) \,.
	\end{align*}
	\textbf{Case II.} When $|s|:=|\bu(X)-t|\geq T_\circ$, $F(s)^2\geq T_\circ^2$, so by assumption $\mathbf{(S2)}$ and \eqref{Equ_L^2 Mono_Restric of time arriv func u} we have, for every $-T_\circ\leq t<0$,
	\begin{align*}
		-f_\circ(s)f_\circ'(s)\cA - \left(f_\circ f_\circ''(s)+f_\circ'(s)^2 \right)|\nabla \bu|^2\cB \leq C(\beta)T_\circ - |\nabla_{\Sigma_t}F|^2 \leq -\frac{K(n, \beta)}{t} F^2(s) - |\nabla_{\Sigma_t}F|^2 \,.
	\end{align*}
	if we take $K(\beta, n)\gg 1$. This finishes the proof of Claim 1. \\
	
	Now using the estimate in Claim 1, (\ref{Equ_L^2 Mono_App of Variation Ineq}) implies,
	\begin{align}
		\begin{split}
			\frac{d}{dt} \int_{\RR^{n+1}} F^2\eta^2\ d\mu_t & \leq \int_{\RR^{n+1}} -\frac{2K}{t}F^2\eta^2 - c(n, \beta)|\nabla_{\Sigma_t} F|^2\eta^2 - \frac12\left|\vec{H}_t - 2\nabla^\perp \phi\right|^2 F^2\eta^2  \\
			& \qquad\ + \Big(\partial_t (\eta^2) + \Div_{\Sigma_t}(\nabla (\eta^2))+ 4\nabla(\eta^2)\cdot\nabla\phi + 8|\nabla^\perp \eta|^2 \Big)F^2 \ d\mu_t \,.
		\end{split} \label{Equ_L^2 Mono_App of Claim 1}
	\end{align}
	While by Cauchy-Schwarz inequality, 
	\begin{align*}
		\int_{\RR^{n+1}} |\nabla_{\Sigma_t} F|^2\eta^2\ d\mu_t & = \int_{\RR^{n+1}} |\nabla_{\Sigma_t}(F\eta)|^2 + |\nabla_{\Sigma_t} \eta|^2F^2 - 2\nabla_{\Sigma_t}(F\eta)\cdot (F\nabla_{\Sigma_t}\eta)\ d\mu_t \\
		& \geq \int_{\RR^{n+1}} \frac12 |\nabla_{\Sigma_t}(F\eta)|^2 - |\nabla_{\Sigma_t}\eta|^2F^2\ d\mu_t \,.
	\end{align*}
	Recall that $|t\nabla^2 \phi|\leq \beta^{-1}$.  Combining this with \eqref{Equ_L^2 Mono_App of Claim 1} and the following Ecker-type Sobolev inequality Lemma \ref{Lem_L^2 Mono_Ecker's Sobolev Ineq} proves \eqref{Equ_L^2 Mono_Main Est} with a slightly larger $K$. 
	
	\begin{Lem} \label{Lem_L^2 Mono_Ecker's Sobolev Ineq}Let $\Sigma$ be the associated Radon measure to an integral $n$-varifold in $\RR^{n+1}$ with generalized mean curvature $\vec H_\Sigma\in L^2(\Sigma)$, $\rho = e^{2\phi}$ be a $C^2$ positive function on $\RR^{n+1}$; $\mathrm F\in C^1_c(\RR^{n+1})$. Then, 
		\begin{align}
			\int_{\RR^{n+1}} \left(|\nabla_{\Sigma}  \mathrm F|^2 + \frac{\mathrm F^2}4 \left|\vec{H}_\Sigma - 2\nabla^\perp \phi \right|^2 \right)\rho\ d\Sigma \geq \int_{\RR^{n+1}} \left(|\nabla \phi|^2 + tr_\Sigma(\nabla^2 \phi) \right)\mathrm F^2\rho\ d\Sigma \,. \label{Equ_L^2 Mono_Ecker type Sobolev Ineq}
		\end{align}
	\end{Lem}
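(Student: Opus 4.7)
The plan is to apply the first variation formula for varifolds to a carefully chosen test vector field and then complete squares. The natural candidate, motivated by the weights appearing on both sides of the target inequality, is $X := \mathrm F^2 \rho \nabla \phi$. This choice is designed to produce the term $\tr_\Sigma(\nabla^2 \phi)$ upon taking $\Div_\Sigma$, while also generating a $|\nabla_\Sigma \phi|^2$ contribution via differentiation of the weight $\rho = e^{2\phi}$, which will ultimately combine with $|\nabla\phi|^2$ through the orthogonal decomposition.

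Concretely, I would first compute, using the product rule together with $\nabla \rho = 2\rho \nabla \phi$,
\[
\Div_\Sigma(\mathrm F^2 \rho \nabla \phi) = 2\mathrm F \rho\, \nabla_\Sigma \mathrm F \cdot \nabla_\Sigma \phi + 2\mathrm F^2 \rho\, |\nabla_\Sigma \phi|^2 + \mathrm F^2 \rho\, \tr_\Sigma(\nabla^2 \phi).
\]
The first variation formula for the integral $n$-varifold $\Sigma$ applied to $X$, together with the fact that $\vec H_\Sigma \perp T_x\Sigma$ for $\Sigma$-a.e. $x$, gives
\[
\int_{\RR^{n+1}} \Div_\Sigma(\mathrm F^2 \rho \nabla \phi)\, d\Sigma = -\int_{\RR^{n+1}} \mathrm F^2 \rho\, \vec H_\Sigma \cdot \nabla^\perp \phi\, d\Sigma.
\]
I would then isolate the $\tr_\Sigma(\nabla^2 \phi)$ term and add $\int \mathrm F^2 \rho |\nabla\phi|^2\, d\Sigma$ to both sides, rewriting $|\nabla\phi|^2 = |\nabla_\Sigma \phi|^2 + |\nabla^\perp \phi|^2$ so that all tangential and normal pieces appear explicitly.

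The conclusion follows from two elementary estimates applied to the resulting identity: the Cauchy-Schwarz bound
\[
-2\mathrm F \rho\, \nabla_\Sigma \mathrm F \cdot \nabla_\Sigma \phi \;\leq\; |\nabla_\Sigma \mathrm F|^2 \rho + \mathrm F^2 \rho\, |\nabla_\Sigma \phi|^2,
\]
and the algebraic identity
\[
-\mathrm F^2 \rho\, \vec H_\Sigma \cdot \nabla^\perp \phi \;=\; \tfrac{\mathrm F^2 \rho}{4}\bigl|\vec H_\Sigma - 2\nabla^\perp \phi\bigr|^2 - \tfrac{\mathrm F^2 \rho}{4}|\vec H_\Sigma|^2 - \mathrm F^2 \rho\, |\nabla^\perp \phi|^2.
\]
After substitution, the $|\nabla_\Sigma \phi|^2$ contributions from the Cauchy-Schwarz bound cancel with the tangential piece from the divergence computation, and the $|\nabla^\perp \phi|^2$ contributions cancel between the orthogonal decomposition of $|\nabla\phi|^2$ and the algebraic identity. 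What remains is exactly $\int(|\nabla_\Sigma \mathrm F|^2 + \tfrac{\mathrm F^2}{4}|\vec H_\Sigma - 2\nabla^\perp \phi|^2)\rho\, d\Sigma$ minus the manifestly non-positive term $\int \tfrac{\mathrm F^2 \rho}{4}|\vec H_\Sigma|^2\, d\Sigma$, which may be discarded to give \eqref{Equ_L^2 Mono_Ecker type Sobolev Ineq}.

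I do not expect a genuine obstacle here: once the test vector field $\mathrm F^2 \rho \nabla \phi$ is identified, the entire argument is an exercise in first variation and completing squares. The compact support of $\mathrm F$ together with the $L^2$ hypothesis on $\vec H_\Sigma$ ensures all integrals are finite and that there are no boundary contributions in the first variation identity; the $C^2$ regularity of $\phi$ suffices for the pointwise divergence computation.
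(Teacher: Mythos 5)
Your proof is correct and is essentially the paper's argument reordered: both hinge on the first variation formula applied to the vector field $\mathrm F^2\rho\nabla\phi$, the same Cauchy-Schwarz/completing-the-square step, and discarding the non-positive term $-\tfrac14\mathrm F^2\rho|\vec H_\Sigma|^2$. The paper packages the Cauchy-Schwarz at the outset by starting from $0\leq\int|\nabla_\Sigma(\mathrm F e^\phi)|^2\,d\Sigma$ and then integrates by parts, whereas you integrate by parts first and then invoke Cauchy-Schwarz, but the computations are identical.
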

	\begin{proof}
		Observe that (again using the first variation formula), 
		\begin{align*}
			0 & \leq \int_{\RR^{n+1}} |\nabla_\Sigma(\mathrm Fe^\phi)|^2\ d\Sigma = \int_{\RR^{n+1}} |\nabla_\Sigma \mathrm F|^2\rho + |\nabla_\Sigma \phi|^2\mathrm F^2\rho + \nabla_\Sigma (\mathrm F^2)\cdot  \rho\nabla_\Sigma \phi\ d\Sigma \\
			& = \int_{\RR^{n+1}} |\nabla_\Sigma \mathrm F|^2 \rho - \left(tr_\Sigma(\nabla^2\phi) + |\nabla \phi|^2 + \vec{H}_\Sigma\cdot \nabla^\perp \phi - |\nabla^\perp \phi|^2 \right) \mathrm F^2\rho \ d\Sigma \\
			& \leq \int_{\RR^{n+1}} |\nabla_\Sigma \mathrm F|^2 \rho - \left(tr_\Sigma(\nabla^2\phi) + |\nabla \phi|^2 - \frac14 \left|\vec{H}_\Sigma - 2\nabla^\perp \phi\right|^2 \right) \mathrm F^2\rho \ d\Sigma \,.
		\end{align*}
	\end{proof}

	\subsection{$L^2$ Non-concentration near Infinity} \label{Subsec_L^2 dist Mono}
	
	We shall focus on a model example that satisfies $\mathbf{(S1)}$-$\mathbf{(S4)}$: Let $\eta\geq 0$ on $\RR^{n+1}\times \RR$ such that $\Spt \eta = \RR^{n+1}\times I$ for some interval $I\subset [0, 1]$; $T_\circ=1$, $\Omega= \BB^{n-k+1}_{\sqrt{8(n-k)}}\times \RR^k$, $\bu=\bU_{n,k}: \Omega\to \RR$ given by 
	\begin{align}
		\bU_k(x, y) = -\frac{|x|^2}{2(n-k)} \,, \label{Equ_L^2 Mono_Arriv time func for k-cylind}      
	\end{align}
	which describes the generalized shrinking cylinder: \[
	\mbfC_{n, k} := \coprod_{t\leq 0}\{(x, y)\in \RR^{n+1} : \bU_{n,k}(x, y)=t\}\times \{t\} = \coprod_{t\leq 0} (\sqrt{-t}\,\cC_{n, k})\times \{t\} \subset \RR^{n+1}\times \RR\,.
	\]
	Let $f_\circ$ be satisfying $\mathbf{(S2)}$, $F(X, t) = f_\circ(\bu(X)-t)$ be as in $\mathbf{(S3)}$, and  \[
	\rho(X, t) := \sqrt{-4\pi t}^{-n}e^{\frac{|X|^2}{4t}} 
	\]
	be the Gaussian density. It's easy to check that with the choice of $(\eta, \Omega, \bu, \rho)$ as above, $\mathbf{(S3)},\,\mathbf{(S4)}$ are satisfied with some dimensional constant $\beta=\beta(n)$.
	
	Moreover, we fix a non-decreasing odd function $\chi\in C^\infty(\RR)$ such that 
	\begin{itemize}
		\item $\chi''\leq 0$ on $[0, +\infty)$;
		\item $\chi(s) = s$ for $|s|\leq 1/2$, $\chi(s) = \sgn(s)$ for $|s|\geq \sqrt2$.
	\end{itemize}
	If we denote by 
	\begin{align}
		\odist_{n, k}(X) := \chi\left(|x|-\sqrt{2(n-k)} \right) \,, \label{Equ_L^2 Mono_odist to C_(n,k)}      
	\end{align}
	which is a cut-off and regularization of signed distance function to $\cC_{n,k}$, and \[
	\rmD_{n,k}(\tilde X, \tau) = \rmD_{n,k}(\tilde x,\tilde y, \tau) := F(e^{-\tau/2}\tilde X, -e^{-\tau}) = f_\circ\left(e^{-\tau}\cdot\frac{2(n-k)-|\tilde x|^2}{2(n-k)}\right) \,.
	\] 
	Then it's easy to check that for every $(\tilde X, \tau)\in \RR^{n+1}\times \RR_{\geq 0}$,
	\begin{align}
		C(n)^{-1}e^{-2\tau} \overline{\dist}_{n, k}(\tilde X)^2 \leq \rmD_{n,k}(\tilde X,\tau)^2 \leq C(n) \overline{\dist}_{n, k}(\tilde X)^2 \,.  \label{Equ_L^2 Mono_D_(n,k) approx dist_(n,k)}      
	\end{align}
	This lead to the following non-concentration near infinity for rescaled mean curvature flow. 
	\begin{Cor}\label{Cor_L^2 Mono_Weighted L^2 Mono and Est for RMCF}
		There exist dimensional constants $K_n, C_n>0$ with the following property. Let $\tau_0>0$, $\tau\mapsto \cM(\tau)$ be a rescaled mean curvature flow in $\RR^{n+1}$ over $[0, \tau_0]$. Then for every $0<\tau \leq \tau_0$, 
		\begin{align*}
			\int_{\RR^{n+1}} \overline{\dist}_{n,k}(X)^2 (1 + \tau|X|^2) e^{-\frac{|X|^2}{4}}\ d\cM(\tau) \leq C_n\, e^{K_n\tau}\int_{\RR^{n+1}} \overline{\dist}_{n,k}(X)^2 e^{-\frac{|X|^2}{4}}\ d\cM(0) \,.
		\end{align*}
	\end{Cor}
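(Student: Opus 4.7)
The plan is to apply Theorem \ref{Thm_L^2 Mono_Main} to the associated MCF $\bM(s):=\sqrt{-s}\cdot\cM(-\log(-s))$ on $s\in[-1,-e^{-\tau_0}]$, under the model choices spelled out in the paragraph just above the statement: $\bu=\bU_{n,k}$, $\rho$ the standard backward Gaussian heat kernel centered at $\orig$, $T_\circ=1$, and $F(X,s)=f_\circ(\bu(X)-s)$. I take $\eta=\eta(s)$ depending only on time, smoothly approximating $\mathbf{1}_{[-1,t_\star]}$ with $t_\star:=-e^{-\tau}$; then $\nabla\eta\equiv 0$ and the right-hand side of \eqref{Equ_L^2 Mono_Main Est} reduces to its $\partial_s\eta^2$ contribution. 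Integrating in $s$ from $-1$ to $t_\star$ and choosing endpoints at Brakke mass-continuity times, one obtains the two-term estimate
\[
(-t_\star)^{2K}\int F^2\rho\,d\bM(t_\star)\;+\;c\int_{-1}^{t_\star}(-s)^{2K}\int F^2|\nabla\phi|^2\rho\,d\bM(s)\,ds\;\leq\;\int F^2\rho\,d\bM(-1).
\]

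Translating to RMCF via $s=-e^{-\sigma}$, $X=e^{-\sigma/2}\tilde X$ is routine: parabolic invariance of the Gaussian yields $\rho\,d\bM(s)=(4\pi)^{-n/2}e^{-|\tilde X|^2/4}\,d\cM(\sigma)$, by definition $F(X,s)=\rmD_{n,k}(\tilde X,\sigma)$, and direct calculation gives $|\nabla\phi|^2=e^\sigma|\tilde X|^2/16$ together with $ds=e^{-\sigma}d\sigma$. Combined with the comparison \eqref{Equ_L^2 Mono_D_(n,k) approx dist_(n,k)} between $\rmD_{n,k}$ and $\odist_{n,k}$, the pointwise term at $t_\star$ already produces the basic bound $\int\odist_{n,k}^2\,e^{-|\tilde X|^2/4}\,d\cM(\tau)\leq C_n e^{K_n\tau}I_0$, where $I_0$ denotes the right-hand side of the target inequality; this handles the ``$1$'' part of the weight $(1+\tau|\tilde X|^2)$.

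For the $\tau|\tilde X|^2$ enhancement, I would iterate: for each integer $j\in\{0,1,\dots,\lceil\tau\rceil-1\}$, apply Theorem \ref{Thm_L^2 Mono_Main} to the parabolically rescaled MCF $\bM^{(j)}(s):=e^{j/2}\bM(e^{-j}s)$ on $s\in[-1,-e^{-1}]$. The $j$-th application corresponds to $\cM$ on the RMCF interval $[j,j+1]$, and because the rescaling factor $e^{(j-\sigma)/2}$ stays in $[e^{-1/2},1]$ for $\sigma\in[j,j+1]$, the associated $F$ remains comparable to $\odist_{n,k}$ with dimensional constants. Feeding the basic bound at $\cM(j)$ as initial data at each step and summing the resulting integrated $|\nabla\phi|^2\sim|\tilde X|^2$ contributions over $j$, one obtains the time-averaged estimate
\[
\int_0^\tau\int|\tilde X|^2\,\odist_{n,k}^2\,e^{-|\tilde X|^2/4}\,d\cM(\sigma)\,d\sigma\;\leq\;C_n e^{K_n\tau}I_0.
\]

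The main obstacle is upgrading this time-averaged bound to a pointwise-at-$\sigma=\tau$ bound with the $\tau$ multiplier, $\tau\int|\tilde X|^2\odist_{n,k}^2\,e^{-|\tilde X|^2/4}\,d\cM(\tau)\leq C_n e^{K_n\tau}I_0$. I would address this through an almost-monotonicity property of $\sigma\mapsto\int|\tilde X|^2\odist_{n,k}^2\,e^{-|\tilde X|^2/4}\,d\cM(\sigma)$ across unit intervals: the value at $\sigma=\tau$ should be controlled, up to a dimensional factor, by the average over $[\tau-1,\tau]$. Such almost-monotonicity can be extracted from a further application of Theorem \ref{Thm_L^2 Mono_Main} with the backward heat kernel $\rho$ perturbed by its derivative with respect to the spacetime center; via the identity $4s^2\partial_{s'}\rho_{s'}|_{s'=0}=|X|^2\rho+2sn\rho$, this perturbation effectively installs the $|X|^2$ factor into the Brakke test function modulo controllable lower-order terms. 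Combined with the time-averaged estimate above, this yields the sought pointwise bound and completes the proof.
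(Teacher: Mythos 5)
Your setup (the model choices $\bu=\bU_{n,k}$, $\rho$ Gaussian, $F=\rmD_{n,k}$, then pass to RMCF coordinates) and your handling of the ``$1$'' part of the weight match the paper. However, for the $\tau|\tilde X|^2$ enhancement, your route diverges, and the key upgrading step is not justified. The paper gets the weighted estimate in a single shot by making a much more targeted choice of the cut-off $\eta$: it takes the \emph{space-dependent} function $\eta(X,t)=\sqrt{1-|X|^2/t}\,\xi(t)$ (nonnegative since $t<0$), which in RMCF coordinates produces the weight $(1+|\tilde X|^2)\tilde\xi(\tau)^2$ on the left side of \eqref{Equ_L^2 Mono_Main Est}. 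Choosing $\tilde\xi(s)=s/\tau$ kills the boundary contribution at $\sigma=0$ and puts the desired weighted quantity at $\sigma=\tau$ on the left; the residual $|\tilde X|^2$ term on the right, on the sub-interval where it has the wrong sign, is absorbed by the time-integrated dissipation term coming from the $\eta\equiv 1$ case. This is exactly the mechanism the theorem's cut-off was built for.

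Your proposal instead (i) iterates the $\eta\equiv 1$ estimate over unit intervals to get the time-averaged dissipation bound $\int_0^\tau\int|\tilde X|^2\odist_{n,k}^2\,e^{-|\tilde X|^2/4}\,d\cM(\sigma)\,d\sigma\leq C_n e^{K_n\tau}I_0$ --- which is already available directly from a single integration of \eqref{Equ_L^2 Mono_Model Eg with eta = 1}, so the iteration buys nothing --- and then (ii) attempts to upgrade this to a pointwise-in-$\sigma$ bound via an ``almost-monotonicity'' of $\sigma\mapsto\int|\tilde X|^2\odist_{n,k}^2\,e^{-|\tilde X|^2/4}\,d\cM(\sigma)$, which you propose to establish by perturbing the backward Gaussian with its derivative in the spacetime center. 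This step is the genuine gap: $\partial_{s'}\rho_{s'}|_{s'=0}=\bigl(\tfrac{n}{2s}+\tfrac{|X|^2}{4s^2}\bigr)\rho$ is not sign-definite (the bracket is negative near $X=0$), so the resulting test function fails the nonnegativity required by the Brakke inequality \eqref{Equ_Pre_DefBrakke}, and it also does not satisfy the backward-heat structural hypothesis $\mathbf{(S4)}$ for $\rho$. You would have to add back a multiple of $\rho$ to restore positivity --- at which point you have recovered precisely the weight $(1-|X|^2/t)\rho=\eta^2\rho$ the paper uses, but obtained indirectly. Moreover the asserted almost-monotonicity across unit intervals is stated without proof; nothing in your argument shows the value at $\sigma=\tau$ is controlled by the average over $[\tau-1,\tau]$. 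So the core idea (install $|X|^2$ in the test function) is right, but its execution is missing; the correct and simple implementation is to put $|X|^2$ into $\eta^2$, not into $\rho$.
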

	\begin{proof}
		Let $\Omega, \bu, f_\circ, F, \rho$ be specified as above, $K=K(n)$ be specified in Theorem \ref{Thm_L^2 Mono_Main}. Let $t\mapsto \bM(t):= \sqrt{-t}\cM(-\log(-t))$ be the integral Brakke flow associated to $\cM$.  
		
		When $\eta\equiv 1$, \eqref{Equ_L^2 Mono_Main Est} becomes 
		\begin{align}
			\frac{d}{dt} \left[ (-t)^{2K}\int_{\RR^{n+1}} F^2\rho\ d\bM(t) \right] + (-t)^{2K}\int_{\RR^{n+1}} \frac{c(n)|X|^2}{t^2} F^2 \rho\ d\bM(t) \leq 0 \label{Equ_L^2 Mono_Model Eg with eta = 1}.
		\end{align}
		
		When $\eta(X, t) = \sqrt{1-|X|^2/t\, } \xi(t)$, where $\xi\in C^2(-1, 0)$ with $\Spt\xi$ a subinterval of $(-1, 0)$, \eqref{Equ_L^2 Mono_Main Est} implies,
		\begin{align}
			\begin{split}
				\frac{d}{dt} & \left[  (-t)^{2K}\int_{\RR^{n+1}} F^2\cdot(1-|X|^2/t)\,\xi(t)^2\rho\ d\bM(t) \right] \\ 
				\leq \; & (-t)^{2K}\int_{\RR^{n+1}} \left[\left(-\frac{|X|^2}{t^2} - \frac{2n+8}{t} \right)\xi^2 + \left(1-\frac{|X|^2}{t}\right)|\partial_t(\xi^2)| \right] F^2\rho\ d\bM(t).
			\end{split} \label{Equ_L^2 Mono_Model Eg with eta = |X|}
		\end{align}
		
		We now write everything under rescaled mean curvature flow $\tau\mapsto \cM(\tau) = e^{\tau/2}\bM(-e^{-\tau})$ parametrized by $(\tilde X, \tau)$ using the change of variable $(X, t) = (e^{-\tau/2}\tilde X, -e^{-\tau})$. Denote for simplicity $d\tilde\mu_\tau = \rho(X, -1)\ d\cM(\tau)$.
		Then \eqref{Equ_L^2 Mono_Model Eg with eta = 1} is equivalent to, 
		\begin{align}
			\begin{split}
				\frac{d}{d\tau} \left[e^{-2K\tau}\int_{\RR^{n+1}} \rmD_{n,k}(\tilde X,\tau)^2\ d\tilde\mu_\tau \right] 
				+ e^{-2K\tau}\int_{\RR^{n+1}} c(n)\rmD_{n,k}(\tilde X,\tau)^2|\tilde X|^2 \ d\tilde\mu_\tau \leq 0 \,.
			\end{split} \label{Equ_L^2 Mono_Mono under RMCF param w eta=1}
		\end{align}
		And let $\tilde{\xi}(\tau):= \xi(-e^{-\tau})$, then \eqref{Equ_L^2 Mono_Model Eg with eta = |X|} is equivalent to,
		\begin{align}
			\begin{split}
				\frac{d}{d\tau} & \left[ \, e^{-2K\tau}\int_{\RR^{n+1}} \rmD_{n,k}(\tilde X,\tau)^2(1+|\tilde X|^2)\tilde{\xi}(\tau)^2 \ d\tilde\mu_\tau \right] \\
				\leq & \; e^{-2K\tau}\int_{\RR^{n+1}} \rmD_{n,k}(\tilde X,\tau)^2\left[ (2n+8-|\tilde X|^2)\tilde{\xi}(\tau)^2 + (1+|\tilde X|^2)
				\left|(\tilde{\xi}(\tau)^2)'\right| \right] \ d\tilde\mu_\tau\,.
			\end{split} \label{Equ_L^2 Mono_Mono under RMCF param w eta=|X|}
		\end{align}  
		
		Integrate (\ref{Equ_L^2 Mono_Mono under RMCF param w eta=1}) and (\ref{Equ_L^2 Mono_Mono under RMCF param w eta=|X|}) over $[0,\tau]$, choose $\tilde{\xi}(s)=s/\tau$ and use (\ref{Equ_L^2 Mono_D_(n,k) approx dist_(n,k)}), we then complete the proof of the corollary. 
	\end{proof}
	
	\subsection{Decay order and asymptotic Rate} \label{Subsec_Doub Const, AR of RMCF}  
	
	We define the $L^2$-distance of a Radon measure $\Sigma$ on $\RR^{n+1}$ to the cylinder $\cC_{n,k}$ by,
	\begin{align}
		\mbfd_{n, k}(\Sigma)^2 := \int_{\RR^{n+1}}  \overline{\dist}_{n,k}(X)^2 e^{-\frac{|X|^2}{4}}\ d\Sigma\, .  \label{Equ_L^2 Mono_Def L^2 dist to C_(n,k)}
	\end{align} 
	When $M\subset \RR^{n+1}$ is a hypersurface, we denote for simplicity $\mbfd_{n,k}(M):= \mbfd_{n,k}(\|M\|)$, where $\|M\|$ is the volume measure of $M$.
	
	We call a rescaled mean curvature flow $\tau\mapsto  \cM(\tau)$ over interval $I$ \textbf{$\delta$-$L^2$ close to $\cC_{n,k}$} if $\forall\, s\in I$, 
	\begin{align}
		\cF[\cM(s)] \leq \frac32 \cF[\cC_{n,k}] \,, & &
		\mbfd_{n,k}(\cM(s)) \leq \delta \,, \label{Equ_L^2 Mono_delta L^2 close to cylinder C_(n,k)}
	\end{align}
	where $\cF$ denotes the Gaussian area functional.  By White's regularity \cite{White05_MCFReg}, if $I$ is compact and $\cM$ is $\delta$ close to $\cC_{n,k}$ in the Brakke sense, then $\cM$ is $\Psi(\delta|n, I)$-$L^2$ close to $\cC_{n,k}$, and vice versa. 
	
	Let $\tau\mapsto  \cM(\tau)$ be a rescaled mean curvature flow in $\RR^{n+1}$ over interval $I$. For $\tau, \tau+1\in I$, we define the \textbf{decay order}   of $\cM$ at time $\tau$ relative to $\cC_{n,k}$ by 
	\begin{align}
		\cN_{n,k}(\tau; \cM) := \log \left(\frac{\mbfd_{n,k}(\cM(\tau))}{\mbfd_{n,k}(\cM(\tau+1))} \right).  \label{Equ_L^2 Mono_Def Doubl Const}
	\end{align}
	As an example, if $\cM(\tau)$ is the volume measure of $\graph_{\cC_{n,k}}(u(\cdot, \tau))$, where \[
	u(X, \tau) = e^{-d\tau} w(X) + \text{ errors}\,,
	\]
	for some small $C^0$ function $w(X)$ on $\cC_{n,k}$.  Then heuristically we have, \[
	\cN_{n,k}(\tau; \cM) \approx d \,.  \]
	
	Also note that by Corollary \ref{Cor_L^2 Mono_Weighted L^2 Mono and Est for RMCF}, for every rescaled mean curvature flow $\cM$, we always have the dimensional lower bound 
	\begin{align}
		\cN_{n,k}(\tau; \cM) \geq -C_n \,. \label{Equ_L^2 Mono_Dim Lower Bd for cN}
	\end{align}
	We may omit the subscript $n,k$ if there's no confusion of which cylinder we are taking the distance relative to.
	
	For $R>0$, recall $Q_R:= \BB^{n-k+1}_R\times \BB_R^k$. We also define the decay order of a rescaled mean curvature flow $\tau\mapsto \cM(\tau)$ restricted in $Q_R$: 
	
	\begin{align*}
		\cN_{n,k}(\tau; \cM\llcorner Q_R) := \log \left(\frac{\mbfd_{n,k}(\cM(\tau)\llcorner Q_R)}{\mbfd_{n,k}(\cM(\tau+1)\llcorner Q_R)} \right).
	\end{align*}  
	An immediate consequence of Corollary \ref{Cor_L^2 Mono_Weighted L^2 Mono and Est for RMCF} is,
	\begin{Cor}\label{Cor_L^2 Mono_Compare cN(M) and cN(M in Q_R)}
		For every $\epsilon\in (0, 1)$, there exists $R_0(\epsilon, n)\gg1$ such that the following hold.  If $\cM$ is a rescaled mean curvature flow in $\RR^{n+1}$ over $[0, 2]$ such that 
		\begin{align*}
			\cN_{n,k}(0, \cM), \;\; \cN_{n,k}(1, \cM) \leq \epsilon^{-1}\,.
		\end{align*}
		Then for every $\tau \in (0, 1]$ and every $R\geq \tau^{-1/2}R_0$, we have $\cM(\tau)\llcorner Q_R,\ \cM(\tau+1)\llcorner Q_R\neq 0$ and \[
		\left| \cN_{n,k}(\tau, \cM\llcorner Q_R) - \cN_{n,k}(\tau, \cM) \right| \leq \frac{R_0}{\tau R^2} \,.
		\]
	\end{Cor}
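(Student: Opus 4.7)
The plan is to control the ``tail'' $\mbfd_{n,k}(\cM(s)\llcorner(\RR^{n+1}\setminus Q_R))$ at the two times $s=\tau$ and $s=\tau+1$ using the weighted nonconcentration estimate from Corollary \ref{Cor_L^2 Mono_Weighted L^2 Mono and Est for RMCF}, and then exploit the elementary identity
\begin{align*}
\cN_{n,k}(\tau,\cM\llcorner Q_R)-\cN_{n,k}(\tau,\cM) \,=\, \tfrac{1}{2}\log\frac{1-\delta_\tau}{1-\delta_{\tau+1}},
\end{align*}
where $\delta_s := \mbfd_{n,k}(\cM(s)\llcorner(\RR^{n+1}\setminus Q_R))^2/\mbfd_{n,k}(\cM(s))^2$. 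Once $\delta_\tau,\delta_{\tau+1}$ are shown to be $O((\tau R^2)^{-1})$ the result follows from $|\log(1-\delta)|\leq 2\delta$.

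For the tail itself, since $|X|^2\geq R^2$ on $\RR^{n+1}\setminus Q_R$, the trivial pointwise bound gives
\begin{align*}
\mbfd_{n,k}(\cM(s)\llcorner(\RR^{n+1}\setminus Q_R))^2 \,\leq\, \frac{1}{sR^2}\int_{\RR^{n+1}} \odist_{n,k}(X)^2\,(1+s|X|^2)\,e^{-|X|^2/4}\,d\cM(s),
\end{align*}
and Corollary \ref{Cor_L^2 Mono_Weighted L^2 Mono and Est for RMCF} bounds the right side by $C_n e^{K_n s}(sR^2)^{-1}\mbfd_{n,k}(\cM(0))^2$.

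To convert this into a bound on $\delta_s$ I need to control $\mbfd_{n,k}(\cM(0))$ by a constant multiple of $\mbfd_{n,k}(\cM(s))$ for $s\in\{\tau,\tau+1\}$. This is where the decay order hypotheses enter: unwinding definitions gives $\mbfd_{n,k}(\cM(0))\leq e^{2\epsilon^{-1}}\mbfd_{n,k}(\cM(2))$, while the underlying $L^2$-monotonicity \eqref{Equ_L^2 Mono_Mono under RMCF param w eta=1}---the fact that $e^{-2K\tau}\mbfd_{n,k}(\cM(\tau))^2$ is non-increasing---controls $\mbfd_{n,k}(\cM(1))\leq C(n)\mbfd_{n,k}(\cM(\tau))$ and $\mbfd_{n,k}(\cM(2))\leq C(n)\mbfd_{n,k}(\cM(\tau+1))$ on the relevant range $\tau\in(0,1]$. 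Chaining these inequalities yields $\delta_s \leq C(n,\epsilon)/(\tau R^2)$ for $s\in\{\tau,\tau+1\}$.

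Finally, choose $R_0=R_0(n,\epsilon)$ large enough so that on the regime $R\geq \tau^{-1/2}R_0$ one has $\delta_\tau,\delta_{\tau+1}\leq 1/2$ (which in particular forces $\cM(s)\llcorner Q_R\neq 0$ via $\mbfd_{n,k}(\cM(s)\llcorner Q_R)^2=(1-\delta_s)\mbfd_{n,k}(\cM(s))^2$) and that the constant $C(n,\epsilon)$ is absorbed into $R_0$; then the displayed identity plus $|\log(1-\delta)|\leq 2\delta$ closes the estimate. No serious obstacle is anticipated; the only slightly delicate point is the two-sided comparison of $\mbfd_{n,k}(\cM(\cdot))$ between the endpoints $\{0,1,2\}$ and the interior times $\{\tau,\tau+1\}$, where the monotonicity supplies inequalities in one direction while the decay-order hypotheses must be invoked for the reverse bound.
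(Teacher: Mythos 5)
Your proof is correct and follows essentially the same route as the paper's: both bound the relative contribution of $\RR^{n+1}\setminus Q_R$ to $\mbfd_{n,k}(\cM(s))$ at the two time slices $s=\tau,\tau+1$ via Corollary~\ref{Cor_L^2 Mono_Weighted L^2 Mono and Est for RMCF}, chain the decay-order hypotheses and the monotonicity of $e^{-2K\tau}\mbfd_{n,k}(\cM(\tau))^2$ to compare $\mbfd_{n,k}(\cM(0))$ with $\mbfd_{n,k}(\cM(s))$, and then pass to a log/exponential estimate on the difference of decay orders. The only cosmetic difference is that you work with squared ratios and the identity $\cN_{n,k}(\tau,\cM\llcorner Q_R)-\cN_{n,k}(\tau,\cM)=\tfrac12\log\tfrac{1-\delta_\tau}{1-\delta_{\tau+1}}$, whereas the paper works with first powers and $|e^{\cN(\tau;\cM\llcorner Q_R)-\cN(\tau;\cM)}-1|$.
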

	\begin{proof} 
		By Corollary \ref{Cor_L^2 Mono_Weighted L^2 Mono and Est for RMCF}, for every $\tau\in (0, 2]$ and $R>0$, 
		\begin{align*}
			0\leq 1-\frac{\mbfd_{n,k}(\cM(\tau)\llcorner Q_R)}{\mbfd_{n,k}(\cM(\tau))} \leq \frac{C(n)}{\tau R^2}\cdot \frac{\mbfd_{n,k}(\cM(0))}{\mbfd_{n,k}(\cM(\tau))} \leq \frac{C(n,\epsilon)}{\tau R^2}\cdot \frac{\mbfd_{n,k}(\cM(2))}{\mbfd_{n,k}(\cM(\tau))} \leq \frac{C(n,\epsilon)}{\tau R^2}.
		\end{align*}
		Therefore, when $\tau R^2\geq R_0(n,\epsilon)^2\gg1$, $\cM(\tau)\llcorner Q_R\neq 0$ and \[
		\left| e^{\cN(\tau; \cM\llcorner Q_R) - \cN(\tau; \cM)} - 1 \right| = \left| \frac{\mbfd_{n,k}(\cM(\tau)\llcorner Q_R)}{\mbfd_{n,k}(\cM(\tau))} \cdot \left(\frac{\mbfd_{n,k}(\cM(\tau+1)\llcorner Q_R)}{\mbfd_{n,k}(\cM(\tau+1))}\right)^{-1} - 1\right| \leq \frac{C(n, \epsilon)}{\tau R^2}\,.
		\]
		Then \[
		|\cN_{n,k}(\tau; \cM\llcorner Q_R) - \cN_{n,k}(\tau; \cM)| \leq \frac{C(n, \epsilon)}{\tau R^2} \,.
		\]
	\end{proof}
	
	Another application of Corollary $\ref{Cor_L^2 Mono_Weighted L^2 Mono and Est for RMCF}$ is that, the decay order upper bound allows us to take normalized limit of graphical function of rescaled mean curvature flow over round cylinders.
	\begin{Lem} \label{Lem_L^2 Mono_Induced Parab Jac}
		Let $\cM_j$ be a sequence of rescaled mean curvature flow in $\RR^{n+1}$ over $[0, T]$ converging to $\cC_{n,k}$ in the Brakke sense, where $T\geq 1$. Suppose \[
		\limsup_{j\to \infty} \cN_{n,k}(0, \cM_j) <+\infty \,.
		\]
		Let $u_j(\cdot, \tau)$ be the graphical function of $\cM_j(\tau)$ over $\cC_{n,k}$, defined on a larger and larger domain as $j\to \infty$\footnote{Recall that as defined in Section \ref{SS:PreCylinder}, $u_j$ is set to be zero outside the graphical domain}. Then after passing to a subsequence, $\hat{u}_j:= \mbfd_{n,k}(\cM_j(1))^{-1}u_j$ converges to some non-zero $\hat{u}$ in $C^\infty_{loc}(\cC_{n,k}\times (0, T])$ solving \[
		\partial_\tau \hat u - L_{n,k} \hat u = 0\,.
		\]
		Moreover, there exists $\bar c_{n,k}>0$ such that for every $\tau\in (0, T]$, we have \[
		\|\hat{u}(\cdot, \tau)\|_{L^2(\cC_{n,k})} = \bar c_{n,k}\lim_{j\to \infty} \mbfd_{n,k}(\cM_j(1))^{-1}\mbfd_{n,k}(\cM_j(\tau)) < +\infty\,.     \]
	\end{Lem}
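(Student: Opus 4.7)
The plan is to combine smooth local convergence $\cM_j\to\cC_{n,k}$, provided by Brakke--White $\varepsilon$-regularity \cite{White05_MCFReg}, with the $L^2$-nonconcentration near infinity in Corollary \ref{Cor_L^2 Mono_Weighted L^2 Mono and Est for RMCF}, and standard interior parabolic Schauder theory for the linearized operator $\partial_\tau-L_{n,k}$ on $\cC_{n,k}$.

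First I would establish uniform estimates for $\mbfd_{n,k}(\cM_j(\tau))$. The hypothesis $\limsup_j\cN_{n,k}(0;\cM_j)<\infty$ gives $\mbfd_{n,k}(\cM_j(0))\le C\,\mbfd_{n,k}(\cM_j(1))$. Applying Corollary \ref{Cor_L^2 Mono_Weighted L^2 Mono and Est for RMCF} to $\cM_j$ and to suitable time-shifts of $\cM_j$ yields
\[
\mbfd_{n,k}(\cM_j(\tau))\le C(T,n)\,\mbfd_{n,k}(\cM_j(1))\quad\text{for all }\tau\in[0,T],
\]
together with $c(T,n)\mbfd_{n,k}(\cM_j(1))\le\mbfd_{n,k}(\cM_j(\tau))$ for $\tau\in[0,1]$; moreover the same corollary gives the tail control
\[
\int_{\cM_j(\tau)\setminus Q_R}\overline{\dist}_{n,k}(X)^2e^{-|X|^2/4}\,d\cH^n\le C(T,n)R^{-2}\mbfd_{n,k}(\cM_j(1))^2
\]
for every $\tau\in(0,T]$ and $R\ge 1$. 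Since $\cM_j\to\cC_{n,k}$ as Brakke flows and $\cC_{n,k}$ has unit density, Brakke--White regularity yields that for every $R>0$ and $\delta>0$, $\cM_j$ is smoothly graphical over $\cC_{n,k}$ in $Q_R\times[\delta,T]$ for $j$ large, with $u_j\to 0$ in $C^k_{loc}(\cC_{n,k}\times(0,T])$ for every $k$.

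Next I would pass to the linearized equation. By Lemma \ref{Lem_App_RMCF equ}, the graphical function $u_j$ satisfies
\[
\partial_\tau u_j-L_{n,k}u_j=Q(u_j,\nabla u_j,\nabla^2 u_j),
\]
with $Q$ at least quadratic in its arguments. Dividing by $\mbfd_{n,k}(\cM_j(1))$ gives
\[
\partial_\tau\hat{u}_j-L_{n,k}\hat{u}_j=\mbfd_{n,k}(\cM_j(1))^{-1}Q(u_j,\nabla u_j,\nabla^2 u_j),
\]
whose right-hand side tends to $0$ in $C^k_{loc}$ because $u_j\to 0$ smoothly while $\hat u_j$ stays locally bounded in weighted $L^2$. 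Interior parabolic Schauder estimates for $\partial_\tau-L_{n,k}$ then upgrade the local $L^2$ bound to uniform $C^k_{loc}(\cC_{n,k}\times(0,T])$ bounds on $\hat u_j$, so after extracting a diagonal subsequence $\hat u_j\to\hat u$ smoothly on compact subsets of $\cC_{n,k}\times(0,T]$, with the limit solving $\partial_\tau\hat u-L_{n,k}\hat u=0$.

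The main obstacle is verifying that $\hat u$ is non-zero and that the claimed $L^2$ identity holds; this is where tail control plays the decisive role. On $Q_R$, a direct Taylor expansion of the cutoff $\chi$, of the graph Jacobian, and of the Gaussian weight $e^{-|X|^2/4}$ gives
\[
\mbfd_{n,k}(\cM_j(\tau)\llcorner Q_R)^2=\bar{c}_{n,k}^{-2}\,\|u_j(\cdot,\tau)\|_{L^2(\cC_{n,k}\cap Q_R)}^2\bigl(1+o(1)\bigr)\quad(j\to\infty),
\]
for a dimensional constant $\bar c_{n,k}>0$ arising from the relation between the signed distance, the induced area element, and the weighted $L^2$ norm on $\cC_{n,k}$. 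Combined with the tail estimate, letting first $j\to\infty$ and then $R\to\infty$ yields
\[
\|\hat u(\cdot,\tau)\|_{L^2(\cC_{n,k})}=\bar c_{n,k}\lim_{j\to\infty}\mbfd_{n,k}(\cM_j(1))^{-1}\mbfd_{n,k}(\cM_j(\tau)),
\]
and in particular $\|\hat u(\cdot,1)\|_{L^2}=\bar c_{n,k}>0$, so $\hat u\neq 0$. The delicate point is exactly the exchange of limits in passing squared $L^2$ norms to the limit: the uniform tail control supplied by Corollary \ref{Cor_L^2 Mono_Weighted L^2 Mono and Est for RMCF} is what prevents mass from escaping to infinity along the sequence.
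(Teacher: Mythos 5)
Your proposal is correct and follows essentially the same approach as the paper: Brakke--White regularity for smooth local convergence, Corollary \ref{Cor_L^2 Mono_Weighted L^2 Mono and Est for RMCF} for nonconcentration near infinity, Lemmas \ref{Lem_App_Graph over Cylinder} and \ref{Lem_App_RMCF equ} for the graphical identification and the renormalized parabolic equation, and interior parabolic estimates to extract the smooth limit. One small slip worth noting: the tail bound you derive from Corollary \ref{Cor_L^2 Mono_Weighted L^2 Mono and Est for RMCF} should carry a factor $\tau^{-1}$, as in the paper's $\tfrac{C(n,\epsilon)}{\tau R^2}\,\mbfd_{n,k}(\cM_j(1)\llcorner Q_R)$, but this does not affect the argument since you only need the tail to vanish as $R\to\infty$ for each fixed $\tau>0$.
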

	We shall call such non-zero renormalized limit $\hat{u}$ a \textbf{induced (parabolic) Jacobi field} from the sequence $\{\cM_j\}$.
	\begin{proof}
		First note that by Brakke-White regularity \cite{White05_MCFReg} of mean curvature flow and interior parabolic estimate, $u_j$ is defined on a larger and larger domain exhausting $\cC_{n,k}$ and $u_j\to 0$ in $C^\infty_{loc}(\cC_{n,k}\times (0, T])$ as $j\to \infty$.  
		By Lemma \ref{Lem_App_Graph over Cylinder} (i), (iii), Corollary \ref{Cor_L^2 Mono_Weighted L^2 Mono and Est for RMCF} and the upper bound of decay order (denoted by $\epsilon^{-1}$), there exists $R_1(n,\epsilon)\gg1$ such that for every $\tau\in (0, T]$, $R\geq \tau^{-1}R_1$ and $j\gg1$, we have \[
		\|u_j(\tau, \cdot)\|_{L^2(\cC_{n,k}\setminus Q_R)} + \mbfd_{n,k}(\cM_j(\tau)\llcorner (Q_R)^c) \leq \frac{C(n,\epsilon)}{\tau R^2}\cdot \mbfd_{n,k}(\cM_j(1)\llcorner Q_R).
		\]
		This is a uniform $L^2$ non-concentration property near infinity. Therefore, combined with Lemma \ref{Lem_App_Graph over Cylinder}, \ref{Lem_App_RMCF equ} and the classical parabolic regularity estimates, $\hat{u}_j:= \mbfd_{n,k}(\cM_j(1))^{-1}\, u_j$ subconverges to some non-zero $\hat{u}\in C^\infty(\cC_{n,k}\times (0, T])$, and such that for every $\tau\in (0, T]$, \[
		\|\hat{u}(\cdot,\tau)\|_{L^2(\cC_{n,k})} = \bar c_{n,k} \lim_{j\to \infty} \mbfd_{n,k}(\cM_j(1))^{-1}\cdot \mbfd_{n,k}(\cM_j(\tau)) <+\infty \,, 
		\] 
		here $\bar c_{n,k}$ is determined by Lemma \ref{Lem_App_Graph over Cylinder} (iii).
	\end{proof}
	
	In the following, for $\sim \in \{\geq, >, =, <, \leq\}$ and $\gamma\in \RR$, we denote by 
	\begin{align}
		\Pi_{\sim \gamma}: L^2(\cC_{n,k})\to L^2(\cC_{n,k})  \label{Equ_Pre_Proj onto sum of eigenspace}      
	\end{align}
	to be the orthogonal projection onto the direct sum of eigensubspaces of $-L_{n,k}$ with eigenvalue $\sim \gamma$. Also recall $\sigma(\cC_{n,k})$ is defined in \eqref{Equ_Pre_sigma(C_n,k)}.
	Note that when $\gamma<-1$, $\Pi_{\leq \gamma} = 0$; and when $\gamma\notin \sigma(\cC_{n,k})$, $\Pi_{=\gamma} = 0$. 
	\begin{Cor} \label{Cor_L^2 Mono_RMCF w graphical eigenfunc has cN = spectrum}
		For every $\epsilon\in (0, 1/2)$, there exists a $\delta_1(n,\epsilon)\in (0, \epsilon)$ with the following significance. 
		Let $\cM$ be a rescaled mean curvature flow in $\RR^{n+1}$ $\delta_1$-close to $\cC_{n,k}$ over $[0, T]$ with $1\leq T \leq \epsilon^{-1}$ such that \[
		\cN_{n,k}(0, \cM) \leq \epsilon^{-1}\,.
		\] 
		Let $\gamma\in [-\epsilon^{-1}, \epsilon^{-1}]$, $\tau_0\in [\epsilon, T]$, $\sim\in \{\geq, =, \leq\}$, and $u$ be the graphical function of $\cM$ over $\cC_{n,k}$. Also suppose that \[
		\|\Pi_{\sim\gamma}(u(\cdot, \tau_0))\|_{L^2} \geq (1-\delta_1)\|u(\cdot, \tau_0)\|_{L^2} \,.  \]
		Then for every $\tau\in [\epsilon, T]$, 
		\begin{align*}
			\cN_{n,k}(\tau; \cM) - \gamma \begin{cases}
				\leq  \epsilon , & \text{ if }\sim \text{ is }\leq\,; \\
				\geq -\epsilon, & \text{ if }\sim \text{ is }\geq\,; \\
				\in [-\epsilon, \epsilon], & \text{ if }\sim \text{ is }=\,.  \\
			\end{cases}
		\end{align*}
	\end{Cor}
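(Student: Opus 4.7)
[Proof plan for Corollary \ref{Cor_L^2 Mono_RMCF w graphical eigenfunc has cN = spectrum}]

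The plan is to argue by contradiction and reduce the statement to a pure linear-algebraic fact about solutions of the heat equation $\partial_\tau \hat{u} = L_{n,k}\hat{u}$ on $\cC_{n,k}$. Suppose the corollary fails. Then we get a sequence of rescaled mean curvature flows $\cM_j$ in $\RR^{n+1}$ over $[0,T_j]$ with $1\leq T_j\leq \epsilon^{-1}$, a sequence of times $\tau_0^{(j)}\in[\epsilon,T_j]$, $\gamma_j\in[-\epsilon^{-1},\epsilon^{-1}]$, and $\tau_j\in[\epsilon,T_j]$, together with graphical functions $u_j$, such that $\cM_j$ is $j^{-1}$-$L^2$-close to $\cC_{n,k}$, $\cN_{n,k}(0,\cM_j)\leq\epsilon^{-1}$,
\[
\|\Pi_{\sim \gamma_j}u_j(\cdot,\tau_0^{(j)})\|_{L^2}\geq (1-j^{-1})\|u_j(\cdot,\tau_0^{(j)})\|_{L^2},
\]
but $\cN_{n,k}(\tau_j,\cM_j) - \gamma_j$ violates the prescribed sign bound by more than $\epsilon$. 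After passing to subsequences, $T_j\to T_\infty\in[1,\epsilon^{-1}]$, $\tau_j\to\tau_\infty$, $\tau_0^{(j)}\to \tau_0^\infty\in[\epsilon,T_\infty]$, $\gamma_j\to\gamma_\infty\in[-\epsilon^{-1},\epsilon^{-1}]$.

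Next I would apply Lemma \ref{Lem_L^2 Mono_Induced Parab Jac} to extract an induced parabolic Jacobi field: after normalizing by $\mbfd_{n,k}(\cM_j(1))$, the functions $\hat u_j:=\mbfd_{n,k}(\cM_j(1))^{-1}u_j$ subconverge in $C^\infty_{loc}(\cC_{n,k}\times(0,T_\infty])$ to a nonzero $\hat u$ solving $\partial_\tau\hat u = L_{n,k}\hat u$, and moreover the key identity
\[
\|\hat u(\cdot,\tau)\|_{L^2(\cC_{n,k})} = \bar c_{n,k}\,\lim_{j\to\infty}\mbfd_{n,k}(\cM_j(1))^{-1}\mbfd_{n,k}(\cM_j(\tau))
\]
holds for every $\tau\in(0,T_\infty]$. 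In particular, the decay order of $\cM_j$ passes to the limit as
\[
\lim_{j\to\infty}\cN_{n,k}(\tau,\cM_j) = \log\!\left(\frac{\|\hat u(\cdot,\tau)\|_{L^2}}{\|\hat u(\cdot,\tau+1)\|_{L^2}}\right),
\]
for each $\tau\in[\epsilon,T_\infty-1]$ (with an analogous statement after slightly extending $\cM_j$; the bounds \eqref{Equ_L^2 Mono_Dim Lower Bd for cN} and Corollary \ref{Cor_L^2 Mono_Weighted L^2 Mono and Est for RMCF} keep $\mbfd_{n,k}(\cM_j(\tau+1))$ from degenerating). The dominance hypothesis at $\tau_0^{(j)}$ also passes to the limit by the $C^\infty_{loc}$ and $L^2$-non-concentration control from Lemma \ref{Lem_L^2 Mono_Induced Parab Jac}, giving
\[
\hat u(\cdot,\tau_0^\infty) \in \operatorname{Image}(\Pi_{\sim \gamma_\infty}).
\]

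Because the $\sim\gamma_\infty$ eigenspace is invariant under the linear flow $\partial_\tau = L_{n,k}$, this property propagates: $\hat u(\cdot,\tau)\in \operatorname{Image}(\Pi_{\sim\gamma_\infty})$ for every $\tau\in(0,T_\infty]$. Expand $\hat u(\cdot,\tau) = \sum_{\lambda\in\sigma(\cC_{n,k}),\;\lambda\sim\gamma_\infty} e^{-\lambda\tau}\varphi_\lambda$ with $\varphi_\lambda$ in the $\lambda$-eigenspace of $-L_{n,k}$, so that
\[
\|\hat u(\cdot,\tau)\|_{L^2}^2 = \sum_{\lambda\sim\gamma_\infty} e^{-2\lambda\tau}\|\varphi_\lambda\|_{L^2}^2.
\]
A direct computation shows that the ratio $\|\hat u(\cdot,\tau+1)\|_{L^2}^2/\|\hat u(\cdot,\tau)\|_{L^2}^2$ is a convex combination of the values $e^{-2\lambda}$ with $\lambda\sim\gamma_\infty$. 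Hence the limiting decay order satisfies the one-sided bound corresponding to $\sim$: it is $\leq\gamma_\infty$ when $\sim=\leq$, $\geq\gamma_\infty$ when $\sim=\geq$, and equal to $\gamma_\infty$ when $\sim=\,=$. Combining this with the convergence of $\cN_{n,k}(\tau_j,\cM_j)$ and $\gamma_j\to\gamma_\infty$ contradicts the assumed $\epsilon$-violation for $j\gg 1$, which proves the corollary.

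The main obstacle is the third step: justifying that the dominance hypothesis at $\tau_0^{(j)}$ actually passes to the limit $\hat u$, since $\hat u_j$ converges only in $C^\infty_{loc}$, while $\Pi_{\sim\gamma}$ is an $L^2$-projection involving the behavior of $u_j$ in the non-compact $y$-direction of $\cC_{n,k}$. Resolving this requires the uniform $L^2$-non-concentration estimate at infinity built into Corollary \ref{Cor_L^2 Mono_Weighted L^2 Mono and Est for RMCF}, together with the decay-order upper bound $\cN_{n,k}(0,\cM_j)\leq\epsilon^{-1}$, to exchange $C^\infty_{loc}$-convergence for $L^2$-convergence of $\hat u_j(\cdot,\tau_0^{(j)})$ to $\hat u(\cdot,\tau_0^\infty)$, after which the continuity of $\Pi_{\sim\gamma_j}$ in $\gamma_j$ (using that only finitely many eigenvalues of $-L_{n,k}$ lie in $[-\epsilon^{-1},\epsilon^{-1}]$) yields the desired limiting dominance.
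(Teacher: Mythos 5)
Your proof is correct and follows the paper's route: a compactness/contradiction argument that extracts a parabolic Jacobi field via Lemma \ref{Lem_L^2 Mono_Induced Parab Jac} and then reads off the decay-order bound from the spectral decomposition of the Jacobi field. The paper's own one-line proof simply cites Lemma \ref{Lem_L^2 Mono_Induced Parab Jac} together with Lemma \ref{Lem_App_Analysis of Parab Jacob field}(ii); your convex-combination computation reproduces Lemma \ref{Lem_App_Analysis of Parab Jacob field}(ii) inline, and the obstacle you flag about upgrading $C^\infty_{loc}$-convergence of $\hat u_j$ to $L^2$-convergence (so that the $\Pi_{\sim\gamma}$-dominance passes to the limit) is indeed resolved by the uniform $L^2$-non-concentration built into the proof of Lemma \ref{Lem_L^2 Mono_Induced Parab Jac} via Corollary \ref{Cor_L^2 Mono_Weighted L^2 Mono and Est for RMCF}.
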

	\begin{proof}
		The corollary follows by a direct contradiction argument combing Lemma \ref{Lem_L^2 Mono_Induced Parab Jac} and \ref{Lem_App_Analysis of Parab Jacob field} (ii).
	\end{proof}

	Conversely, an analogue of Lemma \ref{Lem_App_Analysis of Parab Jacob field} (iii) holds for the nonlinear decay order when $\cM$ is sufficiently close to $\cC_{n,k}$.
	\begin{Cor}[Discrete Monotonicity of the decay order] \label{Cor_L^2 Mono_Discrete Growth Mono}
		For every $\epsilon\in (0, 1/2)$, there exists $\delta_2(\epsilon, n)\in (0, \epsilon)$ such that the following hold.  If $\cM$ is a rescaled mean curvature flow in $\RR^{n+1}$ over $[0, 2]$ $\delta_2$-$L^2$ close to $\cC_{n,k}$, and satisfy the decay order bound, 
		\begin{align}
			\cN_{n,k}(0; \cM) \leq \epsilon^{-1} \,. \label{Equ_L^2 Mono_Entropy and Doubl Const Bd}
		\end{align}
		Then at least one of the following holds,
		\begin{align}
			&\text{either}  & -1-\epsilon \leq \cN_{n,k}(1; \cM) & \leq \cN_{n,k}(0; \cM) - \delta_2\,;  \label{Equ_L^2 Mono_Strict Doubl Const drop}  \\
			&\text{or}  & \sup_{\tau\in [\epsilon, 1]}|\cN_{n,k}(\tau; \cM) - \gamma| & \leq \epsilon\,, \;\;\;\;\; \text{ for some }\gamma\in \sigma(\cC_{n,k})\,.  \label{Equ_L^2 Mono_Doubl Const close to spectrum}
		\end{align}
		Moreover, if (\ref{Equ_L^2 Mono_Strict Doubl Const drop}) fails and $\gamma$ is given by (\ref{Equ_L^2 Mono_Doubl Const close to spectrum}), then the graphical function $u$ of $\cM$ over $\cC_{n,k}$ satisfies 
		\begin{align}
			\|\Pi_{=\gamma} (u(\cdot, \tau))\|_{L^2} \geq (1-\epsilon)\|u(\cdot, \tau)\|_{L^2}\,,  \quad \forall\, \tau\in [\epsilon, 2] \,. \label{Equ_L^2 Mono_Jac field concentra in eigenspace}        
		\end{align}
	\end{Cor}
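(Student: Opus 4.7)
The plan is to argue by contradiction and compactness, reducing the nonlinear dichotomy to the corresponding linear dichotomy for solutions of $\partial_\tau \hat u = L_{n,k}\hat u$ on $\cC_{n,k}$, which is a standard consequence of spectral decomposition and has been set up as Lemma \ref{Lem_App_Analysis of Parab Jacob field}.

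First, suppose the corollary fails for some fixed $\epsilon\in(0,1/2)$. Then there exists a sequence of rescaled mean curvature flows $\cM_j$ on $[0,2]$ satisfying $\mbfd_{n,k}(\cM_j(s))\to 0$ uniformly on $[0,2]$ and $\cF[\cM_j(s)]\le \tfrac32\cF[\cC_{n,k}]$, together with the decay-order bound $\cN_{n,k}(0;\cM_j)\le \epsilon^{-1}$, but such that for every $j$, both \eqref{Equ_L^2 Mono_Strict Doubl Const drop} and \eqref{Equ_L^2 Mono_Doubl Const close to spectrum} fail (with $\delta_2$ replaced by $1/j$). Since the negation of \eqref{Equ_L^2 Mono_Strict Doubl Const drop} also uses the dimensional lower bound \eqref{Equ_L^2 Mono_Dim Lower Bd for cN}, this forces in particular $\cN_{n,k}(1;\cM_j)\ge \cN_{n,k}(0;\cM_j)-1/j$.

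Next, apply Lemma \ref{Lem_L^2 Mono_Induced Parab Jac}. Since $\mbfd_{n,k}(\cM_j(1))\to 0$, the normalized graphical functions $\hat u_j := \mbfd_{n,k}(\cM_j(1))^{-1}u_j$ subconverge in $C^\infty_{\loc}(\cC_{n,k}\times(0,2])$ to a nonzero solution $\hat u$ of $\partial_\tau \hat u = L_{n,k}\hat u$, and
\[
\|\hat u(\cdot,\tau)\|_{L^2(\cC_{n,k})} = \bar c_{n,k}\lim_{j\to\infty}\mbfd_{n,k}(\cM_j(1))^{-1}\mbfd_{n,k}(\cM_j(\tau))<+\infty.
\]
Taking logarithms of ratios at $\tau$ and $\tau+1$, one obtains
\[
\lim_{j\to\infty}\cN_{n,k}(\tau;\cM_j) = \log\frac{\|\hat u(\cdot,\tau)\|_{L^2}}{\|\hat u(\cdot,\tau+1)\|_{L^2}} =: \cN^{\mathrm{lin}}(\tau;\hat u)
\]
for every $\tau\in(0,1]$. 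The uniform $L^2$ non-concentration from Corollary \ref{Cor_L^2 Mono_Weighted L^2 Mono and Est for RMCF} guarantees that no mass is lost at infinity when passing to the limit, so $\hat u$ has finite weighted $L^2$ norm and admits the spectral decomposition $\hat u(\cdot,\tau)=\sum_{\lambda\in\sigma(\cC_{n,k})} e^{\lambda\tau}f_\lambda$ with $f_\lambda = \Pi_{=\lambda}\hat u(\cdot,0)$, and only finitely many $\lambda\le \epsilon^{-1}+1$ contribute nontrivially (the higher modes being exponentially subdominant after time $\epsilon$).

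Now apply the linear dichotomy (Lemma \ref{Lem_App_Analysis of Parab Jacob field} (iii)): for the parabolic Jacobi field $\hat u$, either $\cN^{\mathrm{lin}}(\tau;\hat u)$ is constant equal to some $\gamma\in\sigma(\cC_{n,k})$ on $[\epsilon,1]$ up to error $\epsilon/2$ (which happens exactly when a single eigen-component dominates, i.e.\ $\|\Pi_{=\gamma}\hat u(\cdot,\tau)\|_{L^2}\ge (1-\epsilon/2)\|\hat u(\cdot,\tau)\|_{L^2}$ for $\tau\in[\epsilon,2]$), or $\cN^{\mathrm{lin}}(1;\hat u)$ is strictly smaller than $\cN^{\mathrm{lin}}(0;\hat u)$ by a definite amount $\sigma(\epsilon,n)>0$ (from frequency monotonicity of the linear flow, where the gap between consecutive eigenvalues in the relevant range is bounded below). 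Pulling this back through the convergence of $\cN_{n,k}(\tau;\cM_j)$, the first alternative yields \eqref{Equ_L^2 Mono_Doubl Const close to spectrum} (with $\epsilon$ in place of $\epsilon/2$) for all large $j$, while the second yields the drop \eqref{Equ_L^2 Mono_Strict Doubl Const drop} with $\delta_2 := \sigma/2$ for all large $j$. Either way we contradict the assumption, which produces the required $\delta_2(\epsilon,n)$. The concentration statement \eqref{Equ_L^2 Mono_Jac field concentra in eigenspace} follows from the first alternative above together with Corollary \ref{Cor_L^2 Mono_RMCF w graphical eigenfunc has cN = spectrum}, or equivalently from $C^\infty_{\loc}$ convergence and finite-dimensional projection estimates on $\hat u_j$.

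The main subtlety I expect is twofold: (a) ensuring the compactness argument indeed yields a \emph{nonzero} limit $\hat u$ with finite weighted $L^2$ norm, which requires the non-concentration at infinity from Corollary \ref{Cor_L^2 Mono_Weighted L^2 Mono and Est for RMCF} applied to both $\tau$ and $\tau+1$ to control the normalizing denominator; and (b) translating the graphical $L^2$ norm $\|\hat u(\cdot,\tau)\|_{L^2}$ back to the varifold distance $\mbfd_{n,k}(\cM_j(\tau))$ with a common constant $\bar c_{n,k}$, which is exactly what Lemma \ref{Lem_App_Graph over Cylinder} (iii) provides. Apart from these, the argument is a routine compactness-plus-linearization that reduces the discrete nonlinear monotonicity to the spectral dichotomy for the linear parabolic operator $\partial_\tau - L_{n,k}$.
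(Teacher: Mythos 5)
Your proof follows essentially the same route as the paper's: argue by contradiction, invoke Lemma \ref{Lem_L^2 Mono_Induced Parab Jac} to extract a normalized limit that is a nonzero parabolic Jacobi field on $\cC_{n,k}$, and then appeal to the spectral monotonicity and rigidity of the linear decay order in Lemma \ref{Lem_App_Analysis of Parab Jacob field}. The subtleties you flag at the end (non-concentration at infinity to get a nonzero $L^2$ limit, and the constant $\bar c_{n,k}$ relating $\|\hat u\|_{L^2}$ to $\mbfd_{n,k}$) are exactly the ingredients the paper relies on.

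There is, however, one concrete slip in the way you dispose of the alternative $\cN_{n,k}(1;\cM_j)<-1-\epsilon$. You assert that the dimensional lower bound \eqref{Equ_L^2 Mono_Dim Lower Bd for cN} ``forces in particular $\cN_{n,k}(1;\cM_j)\ge\cN_{n,k}(0;\cM_j)-1/j$.'' But \eqref{Equ_L^2 Mono_Dim Lower Bd for cN} only yields $\cN_{n,k}(\tau;\cM)\ge -C_n$ for a dimensional constant coming from Corollary \ref{Cor_L^2 Mono_Weighted L^2 Mono and Est for RMCF}, and there is no reason for $C_n\le 1+\epsilon$; so the possibility $\cN_{n,k}(1;\cM_j)<-1-\epsilon$ is not ruled out a priori by that lower bound. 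The correct way to kill this case (and what the paper does) is to keep both branches of the negation of \eqref{Equ_L^2 Mono_Strict Doubl Const drop}, pass to the induced Jacobi field $\hat u$ via Lemma \ref{Lem_L^2 Mono_Induced Parab Jac}, and then use the sharp linear lower bound $N_{n,k}(\tau;\hat u)\ge -1$ from Lemma \ref{Lem_App_Analysis of Parab Jacob field}(i): since $N_{n,k}(1;\hat u)=\lim_j\cN_{n,k}(1;\cM_j)$, the inequality $\cN_{n,k}(1;\cM_j)<-1-\epsilon$ can hold for at most finitely many $j$, and after discarding those one is reduced to the case $\cN_{n,k}(1;\cM_j)\ge \cN_{n,k}(0;\cM_j)-1/j$, at which point Lemma \ref{Lem_App_Analysis of Parab Jacob field}(iii) forces $\hat u$ to be a pure $\gamma$-eigensolution and the remainder of your argument goes through. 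A secondary, softer issue: your citation of Corollary \ref{Cor_L^2 Mono_RMCF w graphical eigenfunc has cN = spectrum} to deduce \eqref{Equ_L^2 Mono_Jac field concentra in eigenspace} points in the wrong direction -- that corollary goes from eigenspace concentration to a decay-order bound; the concentration statement here comes directly from the identification of $\hat u$ as $e^{-\gamma\tau}w$ together with the $C^\infty_{\loc}$-convergence and the $L^2$ non-concentration, as you note in your ``equivalently'' clause.
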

	\begin{Rem} \label{Rem_L^2 Mono_cN<gamma pass to next scale}
		A useful consequence of at least one of (\ref{Equ_L^2 Mono_Strict Doubl Const drop}) and (\ref{Equ_L^2 Mono_Doubl Const close to spectrum}) being true is that, if $\gamma\in \RR$ with $\dist_\RR(\gamma, \sigma(\cC_{n,k}))\geq \epsilon$, then $\cN_{n,k}(0; \cM)\leq \gamma$ implies $\cN_{n,k}(1; \cM)\leq \gamma$. 
	\end{Rem}
	\begin{proof}
		In view of Corollary \ref{Cor_L^2 Mono_Weighted L^2 Mono and Est for RMCF}, it suffices to show that for every $\epsilon>0$ and sequence of rescaled mean curvature flow $\cM_j=\{M_j(\tau)\}_{\tau\in [0,2]}$ satisfying (\ref{Equ_L^2 Mono_Entropy and Doubl Const Bd}) and converging to the multiplicity $1$ static flow $\cC_{n,k}$ in the Brakke sense as $j\to \infty$, if \eqref{Equ_L^2 Mono_Strict Doubl Const drop} fails, i.e.  
		\begin{align*}
			\text{ either }\;\; \cN_{n,k}(1, \cM_j) \geq \cN_{n,k}(0, \cM_j) - \frac1{j} \,, & &
			\text{ or }\;\; \cN_{n,k}(1; \cM_j) < -1-\epsilon = \inf \sigma(\cC_{n,k}) - \epsilon \,. 
		\end{align*}
		Then there exists $\gamma\in \sigma(\cC_{n,k})$ such that
		\begin{align*}
			\limsup_{j\to \infty} \sup_{\tau\in [\epsilon, 1]} |\cN_{n,k}(\tau, \cM_j)-\gamma| = 0 \,. 
		\end{align*}
		and that the induced Jacobi fields from $\cM_j$ exist and are all given by $e^{-\gamma\tau}w$ for some $\gamma$-eigenfunction $w$ of $-L_{n,k}$.
		These follow directly from Lemma \ref{Lem_L^2 Mono_Induced Parab Jac} and \ref{Lem_App_Analysis of Parab Jacob field} (iii).
	\end{proof}
	
	\begin{remark}
		Using this discrete monotonicity, it's not hard to show that for every rescaled mean curvature flow $\tau\mapsto \cM(\tau)$ over $(0, +\infty)$ in $\RR^{n+1}$ with finite entropy such that $\cM(\tau)$ $C^\infty_{loc}$-converges to $\cC_{n,k}$ when $\tau\to +\infty$, the following limit exists \[
		\lim_{\tau\to +\infty} \cN_{n,k}(\tau, \cM) \; \in \left(\sigma(\cC_{n,k})\cap \RR_{\geq 0}\right) \cup \{+\infty\}\,.
		\] 
		However, since this fact is not used in the current paper, we shall not dive into its proof here, but postpone it to some slightly generalized statement in a subsequent manuscript. Instead, we would like to mention the following example:
	\end{remark}
	
	\begin{example}\label{Eg_Nondegen sing has cN = 0}
		Let $\bM$ be a Brakke flow in $\RR^{n+1}$ over some interval $I\ni 0$ of finite entropy, with a non-degenerate (or more generally, not-fully-degenerate) singularity at $(\orig, 0)$ modeled on $\cC_{n,k}$. Let $\tau\mapsto \cM(\tau)$ be the associated rescaled mean curvature flow at $(\orig, 0)$. Then 
		\begin{align}
			\lim_{\tau\to +\infty} \cN_{n,k}(\tau; \cM) = 0 \,. \label{Equ_Eg_Nondegen sing has cN = 0}        
		\end{align}
		
		To see this, let $u(\cdot, \tau)$ be the graphical function of $\cM(\tau)$ over $\cC_{n,k}$. By definition of non-degeneracy, \eqref{Equ_Pre_C^1 asymp in B_sqrt(tau)}, \eqref{Equ_Pre_H^1 asymp in B_sqrt(tau)} and Lemma \ref{Lem_App_Graph over Cylinder}, we have,
		\begin{align*}
			\mbfd_{n,k}(\cM(\tau))^2 & = c_{n,k}(1+o(1))\|u(\cdot, \tau)\|_{L^2(\cC_{n,k}\setminus Q_{\sqrt{\tau}})}^2 + \mbfd_{n,k}\left(\cM(\tau)\llcorner (\RR^{n+1}\setminus Q_{\sqrt{\tau}})\right)^2 \\
			& = c_{n,k}(1+o(1))\tau^{-2} + O(e^{-\tau/4})\cdot\lambda[\bM] \,.
		\end{align*}
		\eqref{Equ_Eg_Nondegen sing has cN = 0} then follows directly from this and definition. 
	\end{example}

	In later applications, it is also convenient if we can bound the decay order of a slightly translated flow. Recall that if $p_\circ:= (X_\circ, t_\circ)\in \RR^{n+1}\times \RR$, then $\cM^{p_\circ} = \tau \mapsto \cM^{p_\circ}(\tau)$ is the rescaled mean curvature flow given by \eqref{eq:RMCF change of base point}. We first derive a general bound on $L^2$ distance of a translation and dilation of hypersurface.
	\begin{Lem} \label{Lem_L^2 Mono_Bd d(aS+X) by d(S)}
		Let $\Sigma$ be a Radon measure on $\RR^{n+1}$ with finite $n$-dimensional entropy, $a>0$, $X_\circ=(x_\circ, y_\circ)\in \RR^{n-k+1}\times \RR^k$, $R\geq 0$, $a^\star:= \max\{a, 1\}$. Then 
		\begin{align}
			\begin{split}
				\mbfd_{n,k}\big(a\cdot(\Sigma+X_\circ)\big)^2 
				& \leq a^n e^{(2a^2|X_\circ|R+|a^2-1|R^2)/4} \cdot\big( a^\star\, \mbfd_{n,k}(\Sigma) + C_n(a|X_\circ|+|a-1|)\big)^2 \\
				& + C_n a^2 e^{-a^2(R^2-4|X_\circ|^2)/16}(1+|X_\circ|^2)\cdot \lambda[\Sigma]\,. 
			\end{split} \label{Equ_L^2 Mono_Bd d(aS+X) by d(S)}
		\end{align}
		In particular, for every $\eps\in (0, 1)$, there exists $\delta_3(n, \eps)\in (0, 1)$ such that if 
		\begin{align*}
			\lambda[\Sigma] \leq \eps^{-1}\,, & &
			\mbfd_{n,k}(\Sigma)\leq 1\,, & &
			|a-1| + |X_\circ| \leq \epsilon\, \mbfd_{n,k}(\Sigma) \leq \delta\mbfd_{n,k}(\Sigma)\,,        
		\end{align*}
		for some $\delta\in (0, \delta_3)$, then 
		\begin{align}
			\left| \frac{\mbfd_{n,k}(a\cdot (\Sigma+X_\circ))}{\mbfd_{n,k}(\Sigma)} - 1\right| \leq C(n, \eps)\sqrt{\delta}\,. \label{Equ_L^2 Mono_d(aS+X) approx d(S) when |a-1|, |X| ll 1}
		\end{align}
	\end{Lem}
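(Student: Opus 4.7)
The plan is to prove \eqref{Equ_L^2 Mono_Bd d(aS+X) by d(S)} by substituting $Y=a(X+X_\circ)$ in the definition of $\mbfd_{n,k}$ and splitting the resulting integral at $|X|=R$; the closeness bound \eqref{Equ_L^2 Mono_d(aS+X) approx d(S) when |a-1|, |X| ll 1} then follows by optimizing $R$ and applying \eqref{Equ_L^2 Mono_Bd d(aS+X) by d(S)} in both directions. The change of variables, with the $n$-dimensional Jacobian for pushforward of an $n$-varifold, gives
\begin{equation*}
    \mbfd_{n,k}(a\cdot(\Sigma+X_\circ))^2 \;=\; a^n \int_{\RR^{n+1}} \odist_{n,k}(a(X+X_\circ))^2\, e^{-|a(X+X_\circ)|^2/4}\, d\Sigma(X).
\end{equation*}

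For the weight, expand $|a(X+X_\circ)|^2 - |X|^2 = (a^2-1)|X|^2 + 2a^2 X\cdot X_\circ + a^2|X_\circ|^2$. On $\{|X|\leq R\}$, this produces $e^{-|a(X+X_\circ)|^2/4} \leq e^{(|a^2-1|R^2 + 2a^2 R|X_\circ|)/4}\,e^{-|X|^2/4}$, matching the exponential prefactor in \eqref{Equ_L^2 Mono_Bd d(aS+X) by d(S)}. Globally, the bound $2X\cdot X_\circ \geq -\tfrac12|X|^2 - 2|X_\circ|^2$ yields $|a(X+X_\circ)|^2 \geq \tfrac12 a^2|X|^2 - a^2|X_\circ|^2$, so on $\{|X|>R\}$ one has $e^{-|a(X+X_\circ)|^2/4}\leq e^{a^2|X_\circ|^2/4}e^{-a^2R^2/16}e^{-a^2|X|^2/16}$. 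Using the entropy definition with test scale $t_\circ=4/a^2$ gives $\int e^{-a^2|X|^2/16}\,d\Sigma \leq C_n a^{-n}\lambda[\Sigma]$, which combines with the factor $e^{a^2|X_\circ|^2/4}$ to produce the tail term in \eqref{Equ_L^2 Mono_Bd d(aS+X) by d(S)}.

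For the distance comparison, write $d := |x|-\varrho$ and $d' := a|x+x_\circ|-\varrho$; the triangle inequality gives $|d'-ad|\leq a|x_\circ|+|a-1|\varrho$. Since $\chi$ is $1$-Lipschitz, concave on $[0,\infty)$, linear on $[-1/2,1/2]$, and globally bounded by $1$, a short case analysis according to whether $|d|\leq 1/2$ or $|d|>1/2$ yields the pointwise bound
\begin{equation*}
    |\odist_{n,k}(a(X+X_\circ))| \;\leq\; a^\star\, |\odist_{n,k}(X)| + C_n(a|X_\circ|+|a-1|)\,,
\end{equation*}
where the appearance of $a^\star=\max\{a,1\}$ rather than $a$ reflects the saturation $|\chi|\leq 1$ when $|d|$ is large. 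Combining this with the weight estimate on $\{|X|\leq R\}$ and applying Minkowski's inequality in the weighted $L^2$-space on $\{|X|\leq R\}$, together with $\int e^{-|X|^2/4}d\Sigma \leq C_n\lambda[\Sigma]$ to absorb the constant contribution into $C_n$, produces the first summand on the right-hand side of \eqref{Equ_L^2 Mono_Bd d(aS+X) by d(S)}; on $\{|X|>R\}$, the bound $|\odist_{n,k}|\leq 1$ together with the tail weight estimate accounts for the second summand.

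For \eqref{Equ_L^2 Mono_d(aS+X) approx d(S) when |a-1|, |X| ll 1}, I would pick $R$ in the intermediate range $\log(\lambda[\Sigma]/(\delta\,\mbfd_{n,k}(\Sigma)^2)) \lesssim R^2 \lesssim \sqrt{\delta}/|a^2-1|$, feasible for $\delta$ small relative to $\eps$ because $|a-1|,|X_\circ|\leq \delta\,\mbfd_{n,k}(\Sigma)$ and $\lambda[\Sigma]\leq\eps^{-1}$. With such $R$, the exponential prefactor in \eqref{Equ_L^2 Mono_Bd d(aS+X) by d(S)} is $1+O(\sqrt{\delta})$ and the tail term is $\leq\delta\,\mbfd_{n,k}(\Sigma)^2$, giving $\mbfd_{n,k}(\Sigma')\leq (1+C(n,\eps)\sqrt{\delta})\,\mbfd_{n,k}(\Sigma)$. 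The reverse inequality follows by applying \eqref{Equ_L^2 Mono_Bd d(aS+X) by d(S)} with parameters $(1/a,-aX_\circ)$, which recovers $\Sigma$ from $\Sigma'$ and satisfies the same hypotheses thanks to the translation/dilation invariance of entropy together with the first-direction comparison $\mbfd_{n,k}(\Sigma')\approx\mbfd_{n,k}(\Sigma)$. The main technical point is the $\odist$ comparison, since $\chi$ is genuinely nonlinear once $|d|>1/2$; but its concavity and boundedness yield the required estimate with only an absolute multiplicative loss, which is absorbed into $C_n$.
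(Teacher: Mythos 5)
Your proposal follows essentially the same route as the paper: change of variables, splitting the integral at $|X|=R$, the two weight estimates (on $\{|X|\le R\}$ bounding the weight ratio by $e^{(|a^2-1|R^2+2a^2R|X_\circ|)/4}$, on $\{|X|>R\}$ peeling off $e^{-a^2(R^2-4|X_\circ|^2)/16}$ from a Gaussian tail), the pointwise $\odist$ comparison, and a good choice of $R$ (intermediate in $\delta$) for the ``in particular'' conclusion. Two small remarks. First, the paper proves the pointwise bound $|\odist_{n,k}(a(X+X_\circ))|\le a^\star|\odist_{n,k}(X)|+a|X_\circ|+\sqrt{2n}|a-1|$ directly from the sub-additivity $\chi(|s+t|)\le\chi(|s|)+\chi(|t|)$ and the scaling property $\chi(a|s|)\le a^\star\chi(|s|)$ (both consequences of $\chi$ odd, increasing, concave on $\RR_{\ge0}$ with $\chi(0)=0$), so no case split on $|d|\le1/2$ versus $|d|>1/2$ is required; your appeal to $1$-Lipschitzness of $\chi$ plus the same scaling property already gives it cleanly. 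Second, you correctly notice that Minkowski on $\{|X|\le R\}$ produces a factor $\bigl(\int e^{-|X|^2/4}d\Sigma\bigr)^{1/2}\lesssim\sqrt{\lambda[\Sigma]}$ multiplying the additive constant $C_n(a|X_\circ|+|a-1|)$, which cannot literally be ``absorbed into $C_n$'' in the statement of \eqref{Equ_L^2 Mono_Bd d(aS+X) by d(S)} since no entropy bound is assumed there; the paper's own proof has the same leap, and it is harmless because the only use of the lemma is under the hypothesis $\lambda[\Sigma]\le\eps^{-1}$, where the factor is absorbed into $C(n,\eps)$ in \eqref{Equ_L^2 Mono_d(aS+X) approx d(S) when |a-1|, |X| ll 1}. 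So the argument is sound; it is the same argument as the paper's.
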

	\begin{proof}
		By taking $R=\delta^{-1/4}\mbfd_{n,k}(\Sigma)^{-1/2}$, it's easy to see that (\ref{Equ_L^2 Mono_Bd d(aS+X) by d(S)}) implies (\ref{Equ_L^2 Mono_d(aS+X) approx d(S) when |a-1|, |X| ll 1}). 
		
		To prove (\ref{Equ_L^2 Mono_Bd d(aS+X) by d(S)}), notice that for every $X\in \RR^{n+1}$,
		\begin{align*}
			|X+X_\circ|^2 & \geq \frac{|X|^2+R^2}{4} - |X_\circ|^2\,, & & \text{ if }|X|\geq R\,; \\
			a^2|X+X_\circ|^2 & \geq |X|^2 - 2a^2 |X_\circ|R-|a^2-1|R^2\,, & & \text{ if } |X|\leq R\,; 
		\end{align*}
		Also, recall the definition of $\odist$ in \eqref{Equ_L^2 Mono_odist to C_(n,k)} and the specified $\chi$ above it, since $\chi$ is non-decreasing and concave on $[0, +\infty)$, we have for every $s, t\in \RR$, 
		\begin{align*}
			\chi(|s|) \leq |s|\,, & & \chi(|s+t|) \leq \chi(|s|) + \chi(|t|)\,, & & \chi(a|s|) \leq a^\star\chi(|s|)\,.
		\end{align*}
		Therefore its easy to check that 
		\begin{align*}
			|\odist_{n,k}(a(X+X_\circ))| & \leq a^\star\cdot |\odist_{n,k}(X)| + a|X_\circ| + \sqrt{2n}|a-1|\,.         
		\end{align*}

		Now let $B_1:=(2a^2|X_\circ|R+|a^2-1|R^2)/4$, $B_2:= a^2(R^2-4|X_\circ|)/16$, we get,
		\begin{align*}
			\mbfd_{n,k}\big(a\cdot(\Sigma+X_\circ)\big)^2 
			& = \int_{\RR^{n+1}} \odist_{n,k}(a(X + X_\circ))^2 \cdot a^n e^{-a^2|X+X_\circ|^2/4}\ d\Sigma \\
			& \leq \int_{\BB_R} \left( a^\star\cdot |\odist_{n,k}(X)| + a|X_\circ| + \sqrt{2n}|a-1| \right)^2 \cdot a^n e^{-|X|^2/4 + B_1}\ d\Sigma \\
			& \; + \int_{\RR^{n+1}\setminus \BB_R} C_n a^2(1+|X_\circ|)^2\cdot a^n e^{-a^2|X|^2/16 - B_2} \ d\Sigma \\
			& \leq a^n e^{B_1}\big( a^\star\, \mbfd_{n,k}(\Sigma) + C_n(a|X_\circ|+|a-1|)\big)^2  \\
			& \; + C_n a^2 e^{-B_2}(1+|X_\circ|)^2 \int_{\RR^{n+1}}  e^{-|X|^2/4} \ d(\frac{a}{2}\cdot\Sigma) \,.
		\end{align*}
	\end{proof}
	
	\begin{Cor} \label{Cor_L^2 Mono_cN(cM^X) bd}
		For every $\epsilon\in (0, 1/2)$, there exists $\delta_4(n, \epsilon)\in (0, \epsilon)$ with the following significance.
		Let $\tau_\circ\in \RR$, $I\supset [\tau_\circ, \tau_\circ + 2]$ be an interval, $\tau\mapsto \cM(\tau)$ be a rescaled mean curvature flow in $\RR^{n+1}$ $\delta_4$-$L^2$ close to $\cC_{n,k}$ over $I$. Suppose 
		\begin{align}
			\lambda[\cM] \leq \epsilon^{-1}\,, & &
			\sup_{\tau_\circ \leq \tau\leq \tau_\circ + 1}\cN_{n,k}(\tau; \cM) \leq \epsilon^{-1}\,;  \label{Equ_L^2 Mono_Assump I on off-center RMCF}
		\end{align}
		And $\tau \in I_{\geq \tau_\circ + 1}$, $p_\circ:= (X_\circ, t_\circ)\in \RR^{n+2}$ so that,
		\begin{align}
			t_\circ e^\tau<1\,, & & 
			e^{\tau_\circ}|t_\circ| + e^{\tau_\circ/2}|X_\circ| \leq \delta_4\cdot \mbfd_{n,k}(\cM(\tau_\circ))\,, & &
			\mbfd_{n,k}(\cM^{p_\circ}(\tau)) \leq \delta_4\,. \label{Equ_L^2 Mono_Assump II on off-center RMCF}
		\end{align}
		Then we have \[
		\cN_{n,k}(\tau, \cM^{p_\circ}) \leq \sup_{\tau_\circ \leq \tau'\leq \tau_\circ+1}\cN_{n,k}(\tau'; \cM) + C(n, \epsilon)\,.
		\]
	\end{Cor}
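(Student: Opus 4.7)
The plan is to combine the change-of-base-point formula \eqref{eq:RMCF change of base point} with the $L^2$-distance comparison Lemma \ref{Lem_L^2 Mono_Bd d(aS+X) by d(S)} and the propagation of decay-order upper bounds for $\cM$ coming from Remark \ref{Rem_L^2 Mono_cN<gamma pass to next scale}. For $s\in\{\tau,\tau+1\}$, set $a_s:=\sqrt{1-t_\circ e^s}$, $\sigma_s:=s-2\log a_s$, and $Y_s:=-e^{s/2}X_\circ/a_s$, so that \eqref{eq:RMCF change of base point} reads $\cM^{p_\circ}(s)=a_s\cdot(\cM(\sigma_s)+Y_s)$. The hypothesis $\mbfd_{n,k}(\cM^{p_\circ}(\tau))\leq\delta_4$ together with Corollary \ref{Cor_L^2 Mono_Weighted L^2 Mono and Est for RMCF} applied to $\cM^{p_\circ}$ itself gives $\mbfd_{n,k}(\cM^{p_\circ}(\tau+1))\leq C_n\delta_4 e^{K_n/2}$. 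Since $\cM(\sigma_s)$ is likewise $L^2$-close to $\cC_{n,k}$ (by the global $\delta_4$-closeness of $\cM$, assuming $\sigma_s\in I$), and because a dilated/translated round cylinder is $L^2$-close to $\cC_{n,k}$ only when the dilation factor is close to $1$ and the spherical component of the translation is small, one deduces $a_\tau,a_{\tau+1}\in[1/2,2]$ once $\delta_4=\delta_4(n,\epsilon)$ is chosen small enough; consequently $|\sigma_{\tau+1}-\sigma_\tau-1|=|\log(a_\tau^2/a_{\tau+1}^2)|\leq C(n)$, and the ratios $|a_{\tau+1}-1|/|a_\tau-1|$, $|Y_{\tau+1}|/|Y_\tau|$ are bounded by dimensional constants.

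Next, applying Lemma \ref{Lem_L^2 Mono_Bd d(aS+X) by d(S)} in both directions (forward for $\cM^{p_\circ}(s)=a_s\cdot(\cM(\sigma_s)+Y_s)$, and backward for $\cM(\sigma_s)=a_s^{-1}\cdot(\cM^{p_\circ}(s)+e^{s/2}X_\circ)$), with $R=R(n,\epsilon)\gg1$ chosen to absorb the Gaussian tail into the entropy bound $\lambda[\cM]\leq\epsilon^{-1}$, yields a two-sided sandwich of the form $\mbfd_{n,k}(\cM^{p_\circ}(s))^2 \asymp_{C(n,\epsilon)} \mbfd_{n,k}(\cM(\sigma_s))^2 + E_s^2$ for $s\in\{\tau,\tau+1\}$, where $E_s:=|a_s-1|+|Y_s|$. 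Separately, the hypothesis $\cN_{n,k}(\cdot;\cM)\leq\epsilon^{-1}$ on $[\tau_\circ,\tau_\circ+1]$ together with Remark \ref{Rem_L^2 Mono_cN<gamma pass to next scale} applied scale-by-scale propagates to $\cN_{n,k}(\sigma;\cM)\leq N+C(n,\epsilon)$ for every $\sigma\geq\tau_\circ$ with $\sigma+1\in I$, where $N:=\sup_{\tau'\in[\tau_\circ,\tau_\circ+1]}\cN_{n,k}(\tau';\cM)$. Combined with the bounded time-shift from Step 1 and Corollary \ref{Cor_L^2 Mono_Weighted L^2 Mono and Est for RMCF} to absorb the $O(1)$ discrepancy between $\sigma_\tau+1$ and $\sigma_{\tau+1}$, this gives $\mbfd_{n,k}(\cM(\sigma_\tau))\leq C(n,\epsilon)e^{N}\,\mbfd_{n,k}(\cM(\sigma_{\tau+1}))$.

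To conclude, dichotomize using the sandwich. If $\mbfd_{n,k}(\cM(\sigma_\tau))\geq E_\tau$, the $\mbfd$-term dominates and the ratio $\mbfd_{n,k}(\cM^{p_\circ}(\tau))/\mbfd_{n,k}(\cM^{p_\circ}(\tau+1))\leq C(n,\epsilon)e^{N}$. Otherwise $\mbfd_{n,k}(\cM^{p_\circ}(s))\asymp E_s$, and since $E_\tau/E_{\tau+1}$ is bounded by a dimensional constant (from Step 1), the ratio is $\leq C(n)$. Taking logarithms in either case gives $\cN_{n,k}(\tau;\cM^{p_\circ})\leq N+C(n,\epsilon)$. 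The main technical obstacle is the parameter-control step: quantitatively extracting $a_s\in[1/2,2]$ from the $L^2$-closeness hypotheses by comparing translated/dilated cylinders to $\cC_{n,k}$, and verifying that the axis-direction component of $Y_s$ remains controlled enough for the sandwich constants to depend only on $n$ and $\epsilon$.
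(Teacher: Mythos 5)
Your proposal has a genuine gap at the ``sandwich'' step, and more fundamentally it tries to transfer the decay-order estimate between $\cM$ and $\cM^{p_\circ}$ at the wrong time.

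Lemma~\ref{Lem_L^2 Mono_Bd d(aS+X) by d(S)} only gives one-sided \emph{upper} bounds; applying it in both directions yields $\mbfd_{n,k}(\cM^{p_\circ}(s)) \lesssim \mbfd_{n,k}(\cM(\sigma_s)) + E_s$ and $\mbfd_{n,k}(\cM(\sigma_s)) \lesssim \mbfd_{n,k}(\cM^{p_\circ}(s)) + E_s$, but never the lower bound $\mbfd_{n,k}(\cM^{p_\circ}(s)) \gtrsim E_s$ that you invoke in Case~2. That lower bound is in fact false: if $Y_s$ lies along the spine $\{0\}\times\RR^k$, then $\cC_{n,k}+Y_s = \cC_{n,k}$, so $\mbfd_{n,k}(\cM^{p_\circ}(s))$ can be arbitrarily small while $E_s$ is arbitrarily large. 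Case~1 doesn't close either: from $\mbfd_{n,k}(\cM(\sigma_\tau))\geq E_\tau$ and the propagated decay order for $\cM$, the best you reach is $\mbfd_{n,k}(\cM^{p_\circ}(\tau))\lesssim e^{N}\bigl(\mbfd_{n,k}(\cM^{p_\circ}(\tau+1)) + E_{\tau+1}\bigr)$, and nothing in the hypotheses prevents $E_{\tau+1}$ from dominating $\mbfd_{n,k}(\cM^{p_\circ}(\tau+1))$.

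The root cause is that $|a_s-1|\sim e^{s-\tau_\circ}\,e^{\tau_\circ}|t_\circ|$ and $|Y_s|\sim e^{(s-\tau_\circ)/2}\,e^{\tau_\circ/2}|X_\circ|$ grow exponentially in $s-\tau_\circ$, while the smallness hypothesis \eqref{Equ_L^2 Mono_Assump II on off-center RMCF} controls them only relative to $\mbfd_{n,k}(\cM(\tau_\circ))$ at time $\tau_\circ$. The paper therefore compares $\cM^{p_\circ}$ and $\cM$ only on $[\tau_\circ+1,\tau_\circ+2]$, where \eqref{Equ_L^2 Mono_d(aS+X) approx d(S) when |a-1|, |X| ll 1} applies, to get $\cN_{n,k}(\tau';\cM^{p_\circ})\leq\sup_{[\tau_\circ,\tau_\circ+2]}\cN_{n,k}(\cdot;\cM)+C(n,\epsilon)$ there; it first establishes, via a compactness argument and \cite[Section~6]{CM15_Lojasiewicz}, that $\cM^{p_\circ}$ remains $\Psi(\delta_4)$-$L^2$ close to $\cC_{n,k}$ on all of $[\tau_\circ,\tau+2]$; and it then propagates the bound forward \emph{for $\cM^{p_\circ}$ itself} using Corollary~\ref{Cor_L^2 Mono_Discrete Growth Mono} and Remark~\ref{Rem_L^2 Mono_cN<gamma pass to next scale}. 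Propagating the decay order of $\cM$, as you do, is legitimate, but it cannot be converted back into a statement about $\cM^{p_\circ}$ at the final time $\tau$; you should instead propagate the bound for $\cM^{p_\circ}$ from the initial time.
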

	\begin{proof}
		By a time translation, WLOG $\tau_\circ = 0$. First note that by \eqref{Equ_L^2 Mono_Assump II on off-center RMCF} and a compactness argument, $\cM^{p_\circ}$ is $\Psi(\delta_4|n, \epsilon)$ close to $\cC_{n,k}$ in the Brakke sense on $[0, 2]$ and $[\tau, \tau+2]$. Then by \cite[Section 6]{CM15_Lojasiewicz}, when $\delta_4(n,\epsilon)\ll 1$, $\cM^{p_\circ}$ is $\Psi(\delta_4|n, \epsilon)$-$L^2$ close to $\cC_{n,k}$ on the whole interval $[0, \tau+2]$. Hence, to prove the corollary, by Corollary \ref{Cor_L^2 Mono_Discrete Growth Mono} and Remark \ref{Rem_L^2 Mono_cN<gamma pass to next scale}, it suffices to show that when $\delta(n,\epsilon)\ll 1$, for every $\tau'\in [1, 2]$, 
		\begin{align}
			\cN_{n,k}(\tau', \cM^{p_\circ}) \leq \sup_{\tau''\in [0, 2]}\cN_{n,k}(\tau''; \cM) + C(n, \epsilon)\,.  \label{Equ_L^2 Mono_cN(0, cM^p)<cN(0, cM)} 
		\end{align}
		While this follows directly from Corollary \ref{Cor_L^2 Mono_Weighted L^2 Mono and Est for RMCF} and Lemma \ref{Lem_L^2 Mono_Bd d(aS+X) by d(S)}.
		
	\end{proof}

	\subsection{Preservation of $h_1$ domination.}
	The goal of this subsection is to prove that, for every unit vector $\by\in \RR^k$ and a rescaled mean curvature flow $\cM$ over interval $[a, b]$ sufficiently close to $\cC_{n,k}$ with graphical function $u$ over $\cC_{n,k}$, modulo constant mode, if linear mode $y\cdot\by$ dominates $u(\cdot, a)$, then so it does for $u(\cdot, b)$. 
	In particular, neither is there an extra spherical mode suddenly appearing, nor is the direction $\by$ of the dominated linear mode changing much along the flow, no matter how long the time interval $[a, b]$ is. More precisely, we shall prove the following.
	\begin{Lem} \label{Lem_L^2 Mono_h_1 dominate in all scale}
		For every $\eps\in (0, 1/4)$, there exists $\delta_5(n, \eps)\in (0, \eps)$ and $R_1(n, \eps)\gg 1$ with the following significance.
		
		Let $a+2< b$, $\cM$ be a rescaled mean curvature flow $\delta_5$-$L^2$ close to $\cC_{n,k}$ over $[a-1, b+2]$, $\by\in \spine(\cC_{n,k}) =\RR^k$ be a unit vector. For every $\tau\in [a, b+1]$, let $u(\cdot, \tau)$ be the graphical function of $\cM(\tau)$ over $\cC_{n,k}\cap Q_{R}$, where $R\geq R_1(n, \eps)$, and $0$-extend it to an $L^\infty$ function on $\cC_{n,k}$. Suppose, 
		\begin{align}
			\lambda[\cM] \leq \eps^{-1}\,, & &
			\sup_{|s|\leq \eps e^{-b}} \cN_{n,k}(-\log(e^{1-a}+s); \cM^{(\orig, s)}) \leq \eps^{-1}\,; \label{Equ_L^2 Mono_Time transl doubl const upper bd}
		\end{align}
		\begin{align}
			\inf_{c>0, c'\in \RR}\|c^{-1}u(\cdot, a) - c' - y\cdot\by\|_{L^2} \leq \delta_5\,, \label{Equ_L^2 Mono_h_1 dominate graphical func of cM} 
		\end{align}
		Then we have 
		\begin{align}
			\inf_{c>0, c'\in \RR}\|c^{-1}u(\cdot, b) - c' - y\cdot\by\|_{L^2} \leq \eps\,. \label{Equ_L^2 Mono_h_1 dominate conclusion}   
		\end{align}
	\end{Lem}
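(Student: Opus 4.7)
The plan is to argue by contradiction and compactness, exploiting the spectral theory of the Jacobi operator $L_{n,k}$. The key observation is that the constant function $1$ and the linear function $y\cdot\by_\infty$ are eigenfunctions of $-L_{n,k}$ with distinct eigenvalues $-1$ and $-1/2$; the quantity in the infimum vanishes precisely when the argument lies in $\{A + B\, y\cdot\by_\infty : A\in\RR,\ B>0\}$, which is invariant under the linearized RMCF. Hence if $u$ starts close to this subspace it should stay close, since modes orthogonal to $\{1,\,y\cdot\by_\infty\}$ within the $-1/2$ eigenspace evolve with the same exponential factor $e^{\tau/2}$ as $y\cdot\by_\infty$ (so their ratio is preserved), modes of eigenvalue $>-1/2$ decay forward, and the constant mode is absorbed by the parameter $c'$ in the infimum.

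Suppose the lemma fails: there exist $\eps_0\in(0,1/4)$ and sequences $\cM_j$, $a_j<b_j$, $\by_j\to\by_\infty$, $R_j\to\infty$, $\delta_{5,j}\to 0$ satisfying the hypotheses but with $E_j(b_j)>\eps_0$, where $E_j(\tau):=\inf_{c>0,c'\in\RR}\|c^{-1}u_j(\cdot,\tau)-c'-y\cdot\by_j\|_{L^2}$. Define the first failure time $\tau_j^*:=\inf\{\tau\in[a_j,b_j]:E_j(\tau)\geq\eps_0/2\}$, so by continuity $E_j(\tau_j^*)=\eps_0/2$. Normalize $\tilde u_j(\cdot,\tau):=C_j^{-1}u_j(\cdot,\tau+\tau_j^*)$ with $C_j:=\|u_j(\cdot,\tau_j^*)\|_{L^2}$, giving $\|\tilde u_j(\cdot,0)\|_{L^2}=1$. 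Using the non-concentration estimate of Corollary \ref{Cor_L^2 Mono_Weighted L^2 Mono and Est for RMCF}, the decay-order bound \eqref{Equ_L^2 Mono_Time transl doubl const upper bd} to control backward growth of $\|\tilde u_j\|_{L^2}$, and parabolic regularity as in Lemma \ref{Lem_L^2 Mono_Induced Parab Jac}, after a subsequence $\tilde u_j$ converges in $C^\infty_{loc}$ and pointwise-in-time in $L^2$ to a solution $\tilde u$ of $\partial_\tau\tilde u=L_{n,k}\tilde u$ on either $[-T^*,2]$ (if $\tau_j^*-a_j\to T^*<\infty$) or $(-\infty,2]$ (if $\tau_j^*-a_j\to\infty$), with $\|\tilde u(\cdot,0)\|_{L^2}=1$ and $E(\tilde u(\cdot,0))\geq\eps_0/2$.

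The contradiction will come by showing $\tilde u(\cdot,0)\in\{A+B\,y\cdot\by_\infty:B>0\}$, hence $E(\tilde u(\cdot,0))=0$. Expanding $\tilde u=\sum_i\alpha_i e^{-\lambda_i\tau}\phi_i$ in eigenmodes: (i) the initial assumption $E_j(a_j)\to 0$ implies the $-1/2$-projection of $u_j(\cdot,a_j)$ is a positive scalar multiple of $y\cdot\by_j$ modulo $O(\delta_{5,j})$, and this alignment passes through the linear flow to the limit; (ii) the neutral $\lambda=0$ (quadratic) modes are time-invariant and start with amplitude $\leq\delta_{5,j}\cdot c_j$, which after normalization by $C_j$ vanishes, since $C_j$ grows at least as fast as the dominant unstable modes $c_j e^{(\tau_j^*-a_j)/2}\cdot\|y\cdot\by_j\|_{L^2}$; and (iii) eigenmodes with $\lambda>0$ decay forward and are negligible at $\tau=0$. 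The main obstacle is the case $\tau_j^*-a_j\to\infty$: we must verify that no spurious spectral components appear in the limit and that the alignment $(-1/2)$-projection $\parallel y\cdot\by_\infty$ survives. This is handled by combining the backward $L^2$-growth bound from \eqref{Equ_L^2 Mono_Time transl doubl const upper bd} (restricting which eigenmodes can support $\tilde u$ on $(-\infty,2]$) with the non-concentration at infinity from Corollary \ref{Cor_L^2 Mono_Weighted L^2 Mono and Est for RMCF}; the nonlinear terms in the RMCF equation, being quadratic in $\|u_j\|_{C^1}=O(\delta_{5,j})$, vanish in the normalized limit so that $\tilde u$ indeed satisfies the linear Jacobi equation.
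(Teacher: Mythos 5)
Your high-level intuition is right---$h_1$-alignment should be preserved because all eigenvalue $-1/2$ modes evolve with the same factor $e^{\tau/2}$ while the error terms are quadratic---but there is a genuine gap: you do not control the constant ($\lambda=-1$) mode, and this is the whole difficulty of the lemma. Your enumeration (i)--(iii) treats the $-1/2$ linear modes, the $0$ neutral modes, and the $\lambda>0$ stable modes, and the only mention of the constant mode is that it is ``absorbed by the parameter $c'$ in the infimum.'' That observation makes the \emph{functional} $E_j$ insensitive to the constant component, but it does not help the normalization: you normalize by $C_j:=\|u_j(\cdot,\tau_j^*)\|_{L^2}$, and when the constant mode is present it grows like $e^{\tau}$ while the $y\cdot\by$ mode grows only like $e^{\tau/2}$, so over an unbounded interval $[a_j,\tau_j^*]$ the constant component can dominate $C_j$ by an arbitrarily large factor. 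In that case the normalized limit $\tilde u(\cdot,0)$ is just a constant, which is \emph{not} in $\{A+B\,y\cdot\by_\infty : B>0\}$, so your intended contradiction does not materialize. (Moreover $E$ itself is discontinuous at the constants, so even computing $E(\tilde u(\cdot,0))$ from $E_j(\tau_j^*)=\eps_0/2$ breaks down there.) Your claim ``$C_j$ grows at least as fast as $c_je^{(\tau_j^*-a_j)/2}$'' is a lower bound that handles the neutral modes but says nothing about the constant mode dominating from above. This is precisely why the paper's proof performs the small time translation of Lemma \ref{Lem_L^2 Mono_Time transl to make cN > -1/2} before anything else: the hypothesis \eqref{Equ_L^2 Mono_Time transl doubl const upper bd} allows choosing $s_j$ of size $\eps e^{-b_j}$ so that after translating, $\cN_{n,k}(\bar b_j;\bar\cM_j)\geq -\tfrac12-\eps$, which, combined with the initial $-1/2$-domination and Corollary \ref{Cor_L^2 Mono_Discrete Growth Mono}, forces $\cN_{n,k}\approx -1/2$ and $\Pi_{=-1/2}$-domination over the whole interval. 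Without this step the constant mode is uncontrolled.

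A secondary weakness: when $\tau_j^*-a_j\to\infty$ your compactness limit lives on $(-\infty,2]$ and ``the alignment passes through the linear flow to the limit'' is asserted, not proved. The $h_1$-domination at $a_j$ must be transmitted through the \emph{nonlinear} flow to $\tau_j^*$, accumulating quadratic errors over an unbounded time. The paper makes this precise by solving a forced linear parabolic problem $(\partial_\tau-L_{n,k})w_{j,R}=\Pi_{=-1/2}(\cQ_{j,R})$ with zero initial data and showing, via the growth estimate $\|\cQ_{j,R}(\cdot,\tau)\|\leq Ce^{3(\tau-\bar b_j)/4}\mbfd_{n,k}(\bar\cM_j(\bar b_j))^2$, that $W_{j,R}(\bar b_j)\leq C\,\mbfd_{n,k}(\bar\cM_j(\bar b_j))^2$ stays quadratic. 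Your remark that the nonlinear terms are $O(\delta_{5,j}^2)$ only addresses the size at a fixed time, not the long-time accumulation.
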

	
	Before diving into its proof, we need the following lemma which helps to modulo the effect of constant mode.  
	\begin{Lem}\label{Lem_L^2 Mono_Time transl to make cN > -1/2}
		For every $\eps\in (0, 1)$, there exists $\delta_6(n, \eps)\in (0, \eps)$ with the following significance.
		
		Let $a\in \RR$, $\cM$ be a rescaled mean curvature flow $\delta_6$-$L^2$ close to $\cC_{n,k}$ on $[a-1, a+2]$ with 
		\begin{align*}
			\lambda[\cM]\leq \eps^{-1}\,, & & 
			\sup_{|s|\leq \eps e^{-a}} \cN_{n,k}(-\log(e^{1-a}+s); \cM^{(\orig, s)})\leq \eps^{-1}\,.
		\end{align*}
		Then there exists $s_\circ\in [-\eps e^{-a}, \eps e^{-a}]$ such that \[
		\cN_{n,k}(-\log (e^{-a}+s_\circ); \cM^{(\orig, s_\circ)}) \geq -\frac12 - \eps\,.
		\]
	\end{Lem}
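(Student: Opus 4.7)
The plan is to make the constant-mode projection of the graphical function vanish by choosing the right time-translation parameter $s_\circ$, and then apply Corollary \ref{Cor_L^2 Mono_RMCF w graphical eigenfunc has cN = spectrum} with $\sim = \geq$ and $\gamma = -1/2$. The key structural fact is that $-1$ is the only eigenvalue of $-L_{n,k}$ strictly below $-1/2$ (with eigenspace spanned by constants), so once the constant mode is killed, the decay order is automatically $\geq -1/2 - \eps$.

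\textbf{Reduction to $a=0$.} Time-shifting the RMCF by $a$ corresponds to a parabolic dilation of the underlying mean curvature flow, which preserves entropy, $\delta$-$L^2$ closeness to $\cC_{n,k}$, and maps the base-point change $(\orig, s)$ to $(\orig, e^as)$ (so the range $|s|\leq \eps e^{-a}$ is preserved). We may therefore assume $a=0$; i.e., $\cM$ lies on $[-1,2]$, and our task is to find $s_\circ\in[-\eps,\eps]$ with $\cN_{n,k}(-\log(1+s_\circ);\cM^{(\orig,s_\circ)})\geq -1/2-\eps$.

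\textbf{Linear analysis of the constant mode.} Let $u^{(\orig,s)}(\cdot,\tau)$ denote the graphical function of $\cM^{(\orig,s)}(\tau)$ over $\cC_{n,k}$. Specializing the change-of-basepoint formula \eqref{eq:RMCF change of base point} at $\tau=-\log(1+s)$ gives
\[
\cM^{(\orig,s)}(-\log(1+s))=(1+s)^{-1/2}\cdot\cM(0),
\]
a dilation of $\cM(0)$. A direct graphical computation (Lemma \ref{Lem_App_Graph over Cylinder} and Lemma \ref{Lem_L^2 Mono_Bd d(aS+X) by d(S)}) then yields the first-order expansion
\[
u^{(\orig,s)}(\cdot,-\log(1+s)) = (1+s)^{-1/2} u(\cdot,0) + \bigl((1+s)^{-1/2}-1\bigr)\sqrt{2(n-k)} + \mathrm{err}(s),
\]
with $\|\mathrm{err}(s)\|_{L^2}=O(|s|\,\mbfd_{n,k}(\cM(0))+s^2)$. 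Define
\[
f(s) := \bigl\langle u^{(\orig,s)}(\cdot,-\log(1+s)),\,1\bigr\rangle_{L^2(\cC_{n,k})}.
\]
Then $f$ is continuous, $|f(0)|\leq C_n\delta_6$, and $f$ has leading $s$-dependence $-\tfrac{s}{2}\sqrt{2(n-k)}\,\|1\|_{L^2}^2$, a nonzero dimensional slope that dominates the $O(|s|\,\delta_6+s^2)$ error whenever $|s|\geq C(n)\delta_6$. Taking $\delta_6\ll\eps$, the intermediate value theorem produces $s_\circ\in(-\eps,\eps)$ (in fact $|s_\circ|\leq C(n)\delta_6\ll\eps$) with $f(s_\circ)=0$.

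\textbf{Application of Corollary \ref{Cor_L^2 Mono_RMCF w graphical eigenfunc has cN = spectrum}.} By the construction of $s_\circ$, $u^{(\orig,s_\circ)}(\cdot,-\log(1+s_\circ))$ is $L^2$-orthogonal to constants; since constants span the unique eigenspace of $-L_{n,k}$ with eigenvalue $<-1/2$, this means $\Pi_{\geq -1/2}u^{(\orig,s_\circ)}(\cdot,-\log(1+s_\circ))$ equals $u^{(\orig,s_\circ)}(\cdot,-\log(1+s_\circ))$ itself. For $\delta_6\ll 1$, Lemma \ref{Lem_L^2 Mono_Bd d(aS+X) by d(S)} together with the smallness of $s_\circ$ shows $\cM^{(\orig,s_\circ)}$ is $\Psi(\delta_6|n,\eps)$-$L^2$ close to $\cC_{n,k}$ on $[-1+\tau_\circ^*, 2+\tau_\circ^*]$ where $\tau_\circ^*:=-\log(1+s_\circ)$; the hypothesis at $s=s_\circ$ combined with the discrete almost-monotonicity (Corollary \ref{Cor_L^2 Mono_Discrete Growth Mono} and Remark \ref{Rem_L^2 Mono_cN<gamma pass to next scale}) propagates the decay-order bound to $\cN_{n,k}(\tau_\circ^*;\cM^{(\orig,s_\circ)})\leq C(n,\eps)$. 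We may now apply Corollary \ref{Cor_L^2 Mono_RMCF w graphical eigenfunc has cN = spectrum} to a suitable time shift of $\cM^{(\orig,s_\circ)}$ with $\sim=\geq$, $\gamma=-1/2$, and $\tau_0=\tau_\circ^*$ to conclude $\cN_{n,k}(\tau_\circ^*;\cM^{(\orig,s_\circ)})\geq -1/2-\eps$.

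The main technical obstacle is Step 2: making the first-order expansion of $u^{(\orig,s)}(\cdot,-\log(1+s))$ precise enough to guarantee that the $s$-linear term dominates the error whenever $|s|\gtrsim \delta_6$, so that the IVT produces $s_\circ$ in the allowed window. The secondary bookkeeping obstacle is verifying that the small time/space shift $\tau_\circ^*$ does not break the closeness or decay-order hypotheses required by Corollary \ref{Cor_L^2 Mono_RMCF w graphical eigenfunc has cN = spectrum}.
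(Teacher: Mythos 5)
Your proposal is correct in spirit and reaches the same conclusion, but via a genuinely different implementation than the paper. The paper proceeds by contradiction and compactness: it assumes the decay order stays $<-1/2-\eps$ for all admissible $s$, picks $s_j$ to \emph{minimize} $\mbfd_{n,k}(\cM_j^{(\orig,s)}(-\log(e^{-1}+s)))^2$, differentiates the $L^2$-distance in $s$ at the interior minimizer (equation \eqref{Equ_L^2 Mono_scal to cancel base spectrum}), and then shows in the linearized limit that this first-order criticality forces the induced parabolic Jacobi field $\hat v_\infty$ to be $L^2$-orthogonal to constants; Lemma \ref{Lem_App_Analysis of Parab Jacob field}(ii) then gives linear decay order $\geq -1/2$, a contradiction. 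You instead work directly: you observe that $\cM^{(\orig,s)}(-\log(1+s)) = (1+s)^{-1/2}\cdot\cM(0)$, compute the explicit constant-mode projection $f(s)$ of the graphical function under this dilation, and use the intermediate value theorem to find $s_\circ$ with $f(s_\circ)=0$ exactly, then feed this into Corollary \ref{Cor_L^2 Mono_RMCF w graphical eigenfunc has cN = spectrum}. Both proofs hinge on the same structural observation---the dilation attached to a time-translation of the basepoint acts on the constant mode with nonzero ``dimensional slope'' $\approx -\tfrac{1}{2}\sqrt{2(n-k)}$, and $-1$ is the unique eigenvalue of $-L_{n,k}$ below $-1/2$---but the paper's variational/compactness argument sidesteps all error-bookkeeping (the criticality identity holds \emph{exactly} for the nonlinear $\mbfd_{n,k}$, and only needs to converge in the blow-up limit), whereas your route requires quantitatively dominating the expansion error $O(|s|\delta_6+s^2)$ by the linear term, plus handling continuity of $f(s)$ (which is not automatic because the graphical radius is only lower semicontinuous in $s$---one should replace $f$ by a version with a fixed cutoff $\zeta_R$ and absorb the exponentially small tail via non-concentration). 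You flag these as the main obstacles, and I agree they are manageable; but note that the continuity/cutoff point is a genuine gap as written, and is precisely the issue that the paper's minimization argument dodges automatically since $\mbfd_{n,k}$ is continuous and the minimizer therefore exists and lies in the open interval for $j\gg 1$. Your approach is more constructive and quantitative; the paper's is softer but cleaner. Also worth saying explicitly: the time-shifted flow $\hat\cM(\tau):=\cM^{(\orig,s_\circ)}(\tau+c)$ you need to feed into Corollary \ref{Cor_L^2 Mono_RMCF w graphical eigenfunc has cN = spectrum} is again a rescaled mean curvature flow (of a parabolically dilated $\bM$), so applying the corollary after shifting is legitimate---this is implicit in your sketch but deserves a sentence.
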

	\begin{proof}
		By a time translation, it suffices to prove the case when $a=0$. Suppose for contradiction that there exists a sequence of rescaled mean curvature flow $\tau\mapsto\cM_j(\tau)$ over $[-1, 2]$ converging to $\cC_{n,k}$ in the Brakke sense such that 
		\begin{align*}
			\sup_{|s|\leq\eps}\cN_{n,k}(-\log (e+s); \cM_j^{(\orig, s)}) \leq \eps^{-1}\,, & &
			\sup_{|s|\leq\eps}\cN_{n,k}(-\log (1+s); \cM_j^{(\orig, s)}) < -\frac12 - \eps\,.        
		\end{align*}
		We recall that $\cM_j^{(\orig, s)}(\tau) = \sqrt{1-se^{\tau}}\cdot \cM_j(- \log(e^{-\tau} - s))$. Let $s_j\in [-\eps, \eps]$ be a minimizer of 
		\begin{align*}
			s\mapsto \mbfd_{n,k}(\cM_j^{(\orig, s)}(-\log(e^{-1}+s)))^2 & = \int_{\RR^{n+1}} \odist^2_{n,k}(X)\ e^{-\frac{|X|^2}{4}} d(\lambda(s)\cdot \cM_j(1)) \\
			& = \int_{\RR^{n+1}} \odist^2_{n,k}(\lambda(s)X')\ \lambda(s)^n e^{-\frac{\lambda(s)^2|X'|^2}{4}}\ d(M_j(1))\,,
		\end{align*}
		where $\lambda(s):= \sqrt{1-s(e^{-1}+s)^{-1}}$. Note that since $\cM_j$ is approaching $\cC_{n, k}$, $s_j$ should also tend to $0$ and hence is attained in the open interval when $j\gg1$. Thus by taking derivative in $s$ and recalling the definition of $\odist_{n,k}$ in \eqref{Equ_L^2 Mono_odist to C_(n,k)}, we find, 
		\begin{align}
			\begin{split}
				0 & = \int_{M_j(1)} \left[ 2\dstar_{n,k}(\lambda_j|x'|)|x'| + \odist_{n,k}(\lambda_jX')^2(\frac{n}{\lambda_j}-\frac{\lambda_j|x'|^2}{2}) \right]\ \lambda_j^n e^{-\frac{\lambda_j^2|X'|^2}{4}}\ dX' \\
				& = \int_{\cM_j'(-\log(e^{-1}+s_j))} \left[ 2\dstar_{n,k}(|x|)\frac{|x|}{\lambda_j} + \odist_{n,k}(X)^2(\frac{n}{\lambda_j}-\frac{|x|^2}{2\lambda_j}) \right]\  e^{-\frac{|X|^2}{4}}\ dX \,,
			\end{split} \label{Equ_L^2 Mono_scal to cancel base spectrum}
		\end{align}
		where we denote for simplicity $2\dstar_{n,k}(a):= (\chi^2)'(a-\sqrt{2(n-k)})$, $\lambda_j = \lambda(s_j)$, $\cM_j':= \cM_j^{(\orig, s_j)}$. 
		
		By the contradiction assumption, 
		\begin{align*}
			\cN_{n,k}(-\log (e+s_j); \cM_j') \leq \eps^{-1}\,, & &
			\cN_{n,k}(-\log (1+s_j); \cM_j') < -\frac12 - \eps\,.        
		\end{align*}
		By Lemma \ref{Lem_L^2 Mono_Induced Parab Jac}, $\{\cM_j'\}$ induces a nonzero Jacobi field $\hat{v}_\infty\in C^\infty_{loc}(\cC_{n,k}\times (-1, 2])$ with finite $L^2$ norm on each time slice, and satisfies
		\begin{align*}
			(\partial_\tau - L_{n,k})\hat{v}_\infty = 0\,, & &
			\log\left( \frac{\|\hat{v}_\infty(\cdot, 0)\|_{L^2}}{\|\hat{v}_\infty(\cdot, 1)\|_{L^2}}\right) \leq -\frac12-\eps \,.
		\end{align*}
		
		On the other hand, notice that by the choice of $\chi$, when $||x|-\sqrt{2(n-k)}|\leq 1/2$, $X\mapsto \dstar_{n,k}(|x|)$ is the signed distance function to $\cC_{n,k}$, and hence coincides with the graphical function of $\cM_j'(1)$ over $\cC_{n,k}$ after restricting to $\cM_j'(1)$ and projecting to $\cC_{n,k}$ in a larger and larger domain as $j\to \infty$. Hence by (\ref{Equ_L^2 Mono_scal to cancel base spectrum}) and the nonconcentration near infinity, we have \[
		\int_{\cC_{n,k}} \hat{v}_\infty(X)\ e^{-\frac{|X|^2}{4}}\ dX = 0 \,.
		\]
		By Lemma \ref{Lem_App_Analysis of Parab Jacob field} (ii), \[
		\log\left( \frac{\|\hat{v}_\infty(\cdot, 0)\|_{L^2}}{\|\hat{v}_\infty(\cdot, 1)\|_{L^2}}\right) = \mathcal N_{n,k}(0; \hat v_\infty) \geq -\frac12 \,.
		\]
		This is a contradiction.
	\end{proof}
	
	\begin{proof}[Proof of Lemma \ref{Lem_L^2 Mono_h_1 dominate in all scale}.]
		Suppose for contradiction there exist $\eps\in (0, 1/4)$, $a_j+2<b_j$, unit vectors $\by_j\in \RR^k$ and a sequence of rescaled mean curvature flow $\cM_j$ over $[a_j-1, b_j+2]$ $1/j$-$L^2$ close to $\cC_{n,k}$, with graphical function $u_j(\cdot, \tau)$ defined over $\cC_{n,k}\cap Q_{R_j}$ for some $R_j\to +\infty$ (and zero extended outside $Q_{R_j}$), satisfying \eqref{Equ_L^2 Mono_Time transl doubl const upper bd} and \eqref{Equ_L^2 Mono_h_1 dominate graphical func of cM} with $(a_j, b_j, \by_j, \cM_j, u_j, 1/j)$ in place of $(a, b, \by, \cM, u, \delta_5)$.
		But 
		\begin{align}
			\inf_{c>0, c'\in \RR}\|c^{-1}u_j(\cdot, b_j) - c' - y\cdot\by_j\|_{L^2} > \eps\,, \quad \forall\, j\geq 1\,.   \label{Equ_L^2 Mono_In Pf_Contrad assump}
		\end{align}
		
		By Lemma \ref{Lem_L^2 Mono_Time transl to make cN > -1/2}, there exists $s_j\to 0$ such that  
		\begin{align}
			\liminf_{j\to \infty} \cN_{n,k}(\bar{b}_j, \bar{\cM}_j)\geq -\frac12 \,, \label{Equ_L^2 Mono_In Pf_bar(cM) doubl at b > -1/2}
		\end{align}
		where we let $\bar{\cM}_j:= \cM_j^{(\orig, s_j)}$, $\bar{a}_j':= 1-\log(e^{1-a_j}+s_j)$, $\bar{b}_j:= -\log(e^{-b_j}+s_j)$. Note that $\bar{\cM}_j$ is also $o(1)$-$L^2$ close to $\cC_{n,k}$ over $[\bar{a}_j'-1, \bar{b}_j]$.  By \eqref{Equ_L^2 Mono_Time transl doubl const upper bd} and Corollary \ref{Cor_L^2 Mono_Discrete Growth Mono}, when $j\gg 1$,
		\begin{align}
			\cN_{n,k}(\bar{a}_j'-1, \bar{\cM}_j),\;\; \cN_{n,k}(\bar{a}'_j, \bar{\cM}_j) \leq \eps^{-1}+1\,.  \label{Equ_L^2 Mono_In Pf_bar(cM) doubl upper bd}
		\end{align}
		While by \eqref{Equ_L^2 Mono_h_1 dominate graphical func of cM} and Lemma \ref{Lem_App_Graph over Cylinder} (iv), the graphical function $\bar{u}_j(\cdot, \tau)$ of $\bar{\cM}_j(\tau)$ over $\cC_{n,k}$ satisfies 
		\begin{align}
			\lim_{j\to \infty}\|\bar{u}_j\|_{L^2}^{-1}\cdot\|\bar{u}_j(\cdot, \bar{a}_j) - \bar{c}_j' - \bar{c}_jy\cdot \by_j\|_{L^2} = 0 \,.  \label{Equ_L^2 Mono_In Pf_h_1 and h_2 dominate graphical func of bar(cM)} 
		\end{align}
		for some $\bar{c}_j'\in \RR, \bar{c}_j>0$. where $\bar{a}_j:= -\log(e^{-a_j}+s_j) \in [\bar{a}'_j-1/4, \bar{a}'_j+1/4]$ when $j\gg 1$. Then by \eqref{Equ_L^2 Mono_In Pf_bar(cM) doubl upper bd} and Corollary \ref{Cor_L^2 Mono_RMCF w graphical eigenfunc has cN = spectrum}, 
		\begin{align}
			\limsup_{j\to \infty} \cN_{n,k}(\bar{a}_j', \bar{\cM}_j) \leq -\frac12 \,.  \label{Equ_L^2 Mono_In Pf_bar(cM) doubl at a < -1/2}
		\end{align}
		Combine this with \eqref{Equ_L^2 Mono_In Pf_bar(cM) doubl at b > -1/2} and Corollary \ref{Cor_L^2 Mono_Discrete Growth Mono}, we find 
		\begin{align}
			\lim_{j\to \infty}\sup_{\tau\in [\bar{a}_j'+1, \bar{b}_j]}\left|\cN_{n,k}(\tau, \bar{\cM}_j) + \frac12\right| = 0\,.  \label{Equ_L^2 Mono_In Pf_bar(cM) approx -1/2}
		\end{align}
		\begin{align}
			\lim_{j\to \infty} \inf_{\tau \in [\bar{a}'_j-1/2, \bar{b}_j]}\|\bar{u}_j(\cdot, \tau)\|_{L^2}^{-1}\cdot \|\Pi_{=-1/2}\bar{u}_j(\cdot, \tau)\|_{L^2} = 1\,.  \label{Equ_L^2 Mono_In Pf_h_2 dominate graphical func of bar(cM)}
		\end{align}
		
		Let $\zeta\in C^\infty(\RR)$ be a non-increasing function with $\zeta = 1$ on $\RR_{\leq 0}$, $\zeta = 0$ on $\RR_{\geq 1}$ and $|\zeta'|, |\zeta''|\leq 20$. We define a cut-off function on $\cC_{n,k}$: $\zeta_R(\theta, y) = \zeta(|y|-R+1)$.  
		By Corollary \ref{Cor_L^2 Mono_Weighted L^2 Mono and Est for RMCF}, whenever $R\geq R(n, \eps)\gg1$ and $j\gg1$, we have $R_j\geq R$ and for every $\tau\in [\bar{a}_j'-1/2, \bar{b}_j]$, 
		\begin{align}
			\mbfd_{n,k}(\tau, \bar{\cM}_j) \leq 2\|\bar{u}_j(\cdot, \tau)\|_{L^2(\cC_{n,k}\cap Q_{R-1})}\,, \;\; & &
			\|\bar{u}_j(\cdot, \tau)(1-\zeta_R)\|_{L^2} \leq \Psi(R^{-1}|n) \|\bar{u}_j(\cdot, \tau)\zeta_R\|_{L^2}.   \label{Equ_L^2 Mono_In Pf_Small error from cut off}
		\end{align}
		
		By Lemma \ref{Lem_App_RMCF equ}, 
		\begin{align}
			(\partial_\tau - L_{n,k})(\bar{u}_j\zeta_R) = \cQ_{j,R} \quad \text{ on }\ \cC_{n,k}\cap Q_R\times [\bar{a}_j, \bar{b}_j+1]\,,  \label{Equ_L^2 Mono_In Pf_RMCF equ for graphical func}        
		\end{align}
		where by interior parabolic estimate,
		\begin{align*}
			\|\cQ_{j,R}(\cdot, \tau)\|_{L^\infty} \leq C(n)\|\bar{u}_j(\cdot, \tau)\|_{C^2(\cC_{n,k}\cap Q_R)}^2 \leq C(n, R)\mbfd_{n,k}(\bar{\cM}_j(\tau-1))^2 \,.
		\end{align*}
		Combining \eqref{Equ_L^2 Mono_In Pf_bar(cM) doubl upper bd}, \eqref{Equ_L^2 Mono_In Pf_bar(cM) approx -1/2} and Corollary \ref{Cor_L^2 Mono_Weighted L^2 Mono and Est for RMCF} with the definition of decay order, this implies for $j\gg 1$ and $\tau\in [\bar{a}_j, \bar{b}_j]$,
		\begin{align}
			\|\cQ_{j,R}(\cdot, \tau)\|_{L^\infty} \leq C(n, R, \eps)e^{3(\tau-\bar b_j)/4}\mbfd_{n,k}(\bar{\cM}_j(\bar{b}_j))^2 \,.  \label{Equ_L^2 Mono_In Pf_Growth Bd for error Q}
		\end{align}
		
		Let $w_{j, R}(\cdot, \tau)$ be the solution of,
		\begin{align*}
			\begin{cases}
				(\partial_\tau - L_{n,k})w_{j, R} = \Pi_{=-1/2}(\cQ_{j, R}) \,, & \text{ on }\cC_{n,k}\times [\bar{a}_j, \bar{b_j}+1] \,,\\
				w_{j, R}(\cdot, \bar{a}_j) = 0 \,.
			\end{cases}
		\end{align*}    
		
		By multiplying this with $w_{j, R}(\cdot, \tau)$ and integrating over $\cC_{n,k}$, we get (denote $d\mu:= e^{-\frac{|X|^2}{4}}dX$)
		\begin{align*}
			\frac{d}{d\tau}\int_{\cC_{n,k}} \frac{w_{j, R}^2}{2} \ d\mu 
			& = \int_{\cC_{n,k}} \left( -|\nabla w_{j, R}|^2 + w_{j, R}^2 + w_{j, R}\Pi_{=-1/2}(\cQ_{j, R})\right)\ d\mu \\
			& \leq \int_{\cC_{n,k}} \frac{w_{j, R}^2}{2}\ d\mu + \left(\int_{\cC_{n,k}} w_{j, R}^2\ d\mu \right)^{1/2}\left(\int_{\cC_{n,k}} \cQ_{j, R}^2\ d\mu \right)^{1/2}
		\end{align*}
		where the inequality follows from that $w_{j, R}(\cdot, \tau)\perp 1$ in $L^2(\cC_{n,k})$. Define $W_{j, R}(\tau):= \|w_{j, R}(\cdot, \tau)\|_{L^2}$, by \eqref{Equ_L^2 Mono_In Pf_Growth Bd for error Q} and the inequality above, we find
		\begin{align*}
			W_{j, R}'(\tau) - \frac12 W_{j, R}(\tau) \leq \|\cQ_{j, R}(\cdot, \tau)\|_{L^2} \leq C(n, R, \eps)e^{3(\tau-\bar b_j)/4}\mbfd_{n,k}(\bar{\cM}_j(\bar{b}_j))^2 \,.
		\end{align*}
		And then, 
		\begin{align}
			W_{j, R}(\bar{b}_j) & \leq e^{\bar{b}_j/2} \int_{\bar{a}_j}^{\bar{b}_j} C(n, R, \eps)e^{-3\bar{b}_j/4 + \tau/4}\mbfd_{n,k}(\bar{\cM}_j(\bar{b}_j))^2\ d\tau \leq C(n, R, \eps)\mbfd_{n,k}(\bar{\cM}_j(\bar{b}_j))^2 \,.  \label{Equ_L^2 Mono_In Pf_Upper bd error W}
		\end{align}
		
		On the other hand, by \eqref{Equ_L^2 Mono_In Pf_RMCF equ for graphical func} and the definition of $w_{j, R}$, $(\partial_\tau-L_{n,k})(\Pi_{=-1/2}(\bar{u}_j\zeta_R) - w_{j, R}) = 0$. To save notation, let $\tilde{u}_{j, R}:= \bar{u}_j(\cdot, \bar{b}_j)\zeta_R$. Then by Lemma \ref{Lem_App_Analysis of Parab Jacob field} (ii), \eqref{Equ_L^2 Mono_In Pf_Small error from cut off} and \eqref{Equ_L^2 Mono_In Pf_Upper bd error W}, 
		\begin{align*}
			\lim_{j\to \infty}\|\tilde{u}_{j, R}\|_{L^2}^{-1}\cdot \|\Pi_{=-1/2}(\tilde{u}_{j, R}) - \Pi_{=-1/2}\left(\bar{u}_{j, R}(\cdot, \bar{a}_j)\zeta_R \right)e^{(\bar{b}_j - \bar{a}_j)/2}\|_{L^2} = 0 
		\end{align*}
		Combine this with \eqref{Equ_L^2 Mono_In Pf_h_1 and h_2 dominate graphical func of bar(cM)}, \eqref{Equ_L^2 Mono_In Pf_h_2 dominate graphical func of bar(cM)} and \eqref{Equ_L^2 Mono_In Pf_Small error from cut off}, we obtain 
		\begin{align}
			\limsup_{j\to \infty}\|\bar{u}_{j}(\cdot, \bar b_j)\|_{L^2}^{-1}\cdot \|\bar{u}_j(\cdot, \bar b_j) - \bar{c}_j e^{(\bar{b}_j-\bar{a}_j)/2}y\cdot \by_j\|_{L^2}  \leq \Psi(R^{-1}|n, \eps) \,.  \label{Equ_L^2 Mono_In Pf_h_1 dominate at b time slice}
		\end{align}
		
		Recall that $\bar\cM_j$ is given by a scaling of $\cM_j$, hence by Lemma \ref{Lem_App_Graph over Cylinder} (iv), there exists some constant $\lambda_j\to 1$ such that \[
		\bar u_j(\theta, y, \bar b_j) = \lambda_j u_j(\theta, \lambda_j^{-1}y) + \sqrt{2(n-k)}(\lambda_j-1)\,. 
		\]
		Hence \eqref{Equ_L^2 Mono_In Pf_h_1 dominate at b time slice} implies \[
		\limsup_{j\to \infty} \inf_{c>0, c'\in \RR}\|c^{-1}u_j(\cdot, b_j) - c' - y\cdot \by_j\|_{L^2} \leq \Psi(R^{-1}|n, \eps)\,.
		\]
		By taking $R\gg 1$, this contradicts to \eqref{Equ_L^2 Mono_In Pf_Contrad assump}.    
	\end{proof}
	
	We close this section by the following consequence of $h_1$-domination condition \eqref{Equ_L^2 Mono_h_1 dominate conclusion}, which will be used in Section \ref{S:GandTProperty}.
	\begin{Lem} \label{Lem_L^2 Mono_nu cdot by <0}
		For every $\eps\in (0, 1)$, there exists $\delta_7(n, \eps)\in (0, \eps)$ with the following significance. 
		Let $\by\in \RR^k$ be a unit vector, $T>1$, $\cM$ be a unit regular cyclic mod $2$ rescaled mean curvature flow over $[-1, T]$ $\delta_7$-$L^2$ close to $\cC_{n,k}$ over $[-1, 2]$, such that
		\begin{align}
			\lambda[\cM] < \eps^{-1}\,, & &
			\sup_{|s|\leq \eps}\cN_{n,k}(-1, \cM^{(\orig, s)}) < \eps^{-1} \,.  \label{Equ_L^2 Mono_Time transl Doubl Const Upper Bd for seq}
		\end{align}
		Suppose the graphical function $u(\cdot, \tau)$ of $\cM(\tau)$ over $\cC_{n,k}$ satisfies \[
		\inf_{c>0, c'\in \RR} \|c^{-1}u(\cdot, 0) - c' - y\cdot \by\|_{L^2} \leq \delta_7\,.
		\]
		Then for every $\tau\in [0, \min\{T, \eps^{-2}\}]$ and every $X\in \Spt\cM(\tau) \cap Q_{\eps^{-1}}$, the unit normal vector $\nu(X, \tau)$ of $\Spt\cM(\tau)$ at $X$ pointing away from $\spine(\cC_{n,k})$ satisfies \[
		\nu(X, \tau)\cdot (0, \by) < 0 \,.
		\] 
	\end{Lem}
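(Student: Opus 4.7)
The plan is to propagate the $h_1$-domination of the graphical function from time $0$ forward via Lemma \ref{Lem_L^2 Mono_h_1 dominate in all scale}, upgrade the resulting $L^2$-closeness to $C^1$-closeness by interior parabolic regularity, and then read off the sign of $\nu\cdot(0,\by)$ from the standard formula for the outward unit normal of a graph over $\cC_{n,k}$.

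First, for a fixed $\tau\in [0,\min\{T,\eps^{-2}\}]$ with $\tau>2$, I would apply Lemma \ref{Lem_L^2 Mono_h_1 dominate in all scale} with $a=0$, $b=\tau$, and threshold $\eps_1\ll\eps$ to conclude
\[
\inf_{c>0,\; c'\in\R}\;\|c^{-1}u(\cdot,\tau)-c'-y\cdot\by\|_{L^2}\;\leq\;\eps_1,
\]
where $u(\cdot,\tau)$ is the graphical function of $\cM(\tau)$ over $\cC_{n,k}$. The decay-order hypothesis \eqref{Equ_L^2 Mono_Time transl doubl const upper bd} required by that lemma follows from the given time-translated bound \eqref{Equ_L^2 Mono_Time transl Doubl Const Upper Bd for seq} together with Remark \ref{Rem_L^2 Mono_cN<gamma pass to next scale}, which allows the decay-order control at scale $-1$ to be transported forward past any spectral gap. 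The more delicate issue is the required $\delta_5$-$L^2$-closeness on the extended interval $[-1,\tau+2]$: only closeness on $[-1,2]$ is assumed. For this, Corollary \ref{Cor_L^2 Mono_Weighted L^2 Mono and Est for RMCF} gives an at-most exponential growth $\mbfd_{n,k}(\cM(\tau))\leq C_n e^{K_n\tau/2}\mbfd_{n,k}(\cM(0))$, so choosing $\delta_7\lesssim e^{-K_n\eps^{-2}}\cdot\delta_5(n,\eps_1)$ ensures that $\cM$ remains $\delta_5$-$L^2$-close to $\cC_{n,k}$ on the needed time window. The remaining short-time case $\tau\in[0,2]$ is handled directly by the initial $h_1$-domination hypothesis together with short-time parabolic continuity of the graphical function.

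Next, by interior parabolic Schauder estimates for the rescaled mean curvature flow equation (Lemma \ref{Lem_App_RMCF equ}), the $L^2$-closeness on the parabolic slab $[\tau-\tfrac12,\tau+\tfrac12]\times Q_{2\eps^{-1}}$ upgrades to $C^2$-closeness on $\{\tau\}\times Q_{\eps^{-1}}$; a routine normalized-limit/compactness argument in the spirit of Lemma \ref{Lem_L^2 Mono_Induced Parab Jac} then yields a decomposition $u(\cdot,\tau)=c(\tau)(c'(\tau)+y\cdot\by)+r(\cdot,\tau)$ with $c(\tau)>0$ and $\|r(\cdot,\tau)\|_{C^1(Q_{\eps^{-1}})}\leq\Psi(\delta_7|n,\eps)\cdot c(\tau)$. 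For any regular point $X=((1+u/\varrho)\theta,y)\in\Spt\cM(\tau)\cap Q_{\eps^{-1}}$, the outward unit normal of $\graph_{\cC_{n,k}}(u(\cdot,\tau))$ pointing away from $\spine(\cC_{n,k})$ is given by the standard formula
\[
\nu(X,\tau)\;=\;\frac{1}{\sqrt{1+|\nabla_{\cC_{n,k}}u|^{2}}}\Bigl(\theta/\varrho,\;-\nabla_y u\Bigr)\;+\;O\bigl(|u|/\varrho\bigr),
\]
so
\[
\nu(X,\tau)\cdot(0,\by)\;=\;-\frac{c(\tau)+O(\Psi(\delta_7|n,\eps))}{\sqrt{1+c(\tau)^2}}\;<\;0,
\]
provided $\delta_7$ is chosen sufficiently small depending on $n$ and $\eps$.

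The main obstacle lies in the first step: keeping the hypotheses of Lemma \ref{Lem_L^2 Mono_h_1 dominate in all scale} valid throughout the long time window $[0,\eps^{-2}]$, during which the $h_1$-eigenmode (with eigenvalue $-1/2$ for $-L_{n,k}$) amplifies by the potentially very large factor $e^{\eps^{-2}/4}$ and threatens to push the flow out of the $\delta_5$-neighborhood of $\cC_{n,k}$. Since the lemma's threshold $\delta_7$ is allowed to depend on $\eps$, this is quantitatively tolerable by choosing $\delta_7$ exponentially small in $\eps^{-2}$; but care is needed to keep the $L^2$-closeness, the decay-order bounds at every intermediate scale, and the normalization of the leading $h_1$-coefficient $c(\tau)$ simultaneously under control.
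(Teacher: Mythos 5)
Your overall strategy (propagate the $h_1$-domination forward, upgrade to $C^1$, read off the sign of $\nu\cdot(0,\by)$) is plausible, but it diverges from the paper's route in a way that leaves a genuine gap. The paper does not propagate $h_1$-domination via Lemma \ref{Lem_L^2 Mono_h_1 dominate in all scale} at all; it sets up a single contradiction/compactness argument, and the crucial ingredient in that argument is the time-translation trick of Lemma \ref{Lem_L^2 Mono_Time transl to make cN > -1/2}. After translating in time by a suitable $s_j\to 0$ the decay order of the translated flow $\cM_j^{(\orig,s_j)}$ at time $\approx 0$ is forced to be $\geq -\tfrac12$; this is exactly what kills the constant ($1$-eigenmode, eigenvalue $-1$) contribution in the normalized $C^\infty_{\loc}$-limit of Lemma \ref{Lem_L^2 Mono_Induced Parab Jac}. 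Combined with the $h_1$-domination at time $0$ and Lemma \ref{Lem_App_Analysis of Parab Jacob field}(ii), the limiting Jacobi field is exactly $\hat c\,e^{\tau/2}\,y\cdot\by_\infty$ with $\hat c>0$, from which the sign of $\partial_y\tilde u_j\cdot\by_j$ (hence of $\nu_j\cdot(0,\by_j)$) on the \emph{fixed} compact region $Q_{2\eps^{-1}}\times[0,\min\{2\eps^{-2},\ldots\}]$ follows by $C^\infty_{\loc}$-convergence.

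The gap in your proposal is in the final ``routine normalized-limit/compactness argument.'' The $h_1$-domination assertion
$\inf_{c>0,\,c'}\|c^{-1}u(\cdot,\tau)-c'-y\cdot\by\|_{L^2}\leq\eps_1$ only controls the error $r$ \emph{modulo the constant mode}: it says $\|r\|_{L^2}\lesssim\eps_1\,c(\tau)$, but it places no constraint on $c'(\tau)$, and in particular the radial/constant component $c(\tau)c'(\tau)$ of $u$ may be far larger than the $h_1$ coefficient $c(\tau)$. In that regime the naive normalization by $\|u(\cdot,\tau)\|_{L^2}$ yields a limit dominated by the constant (whose $y$-gradient vanishes), while normalizing by $c(\tau)$ yields a sequence $u/c(\tau)$ which is unbounded. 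To carry out a direct $C^1$ upgrade one must show that the projection of the nonlinearity onto the $\{1,\,y\cdot\by\}^\perp$ modes is quadratically small relative to $c(\tau)$; this uses the symmetry fact that $\cQ(\alpha,0,0)$ is constant on $\cC_{n,k}$ (so its projection to $\{1\}^\perp$ vanishes) and a quantitative Schauder bootstrap in which the residual coupling terms $\alpha\|w\|_{C^2}$, $\|w\|_{C^2}^2$, etc. are absorbed. That estimate can be made to close only after further shrinking $\delta_5$ relative to powers of $\eps$, and none of it is ``routine'' in the spirit of Lemma \ref{Lem_L^2 Mono_Induced Parab Jac}, which presumes a bounded renormalized sequence. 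The paper sidesteps all of this with the time translation.

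Two secondary issues worth flagging. First, Remark \ref{Rem_L^2 Mono_cN<gamma pass to next scale} does not do what you want for verifying \eqref{Equ_L^2 Mono_Time transl doubl const upper bd}: it is about passing an upper bound on the decay order from time $0$ to time $1$, not about comparing the decay order at $\tau=-1$ to the slightly shifted evaluation time $\tau=-\log(e^{1-a}+s)$ appearing in the hypothesis of Lemma \ref{Lem_L^2 Mono_h_1 dominate in all scale}. That comparison, while likely true via Corollary \ref{Cor_L^2 Mono_Weighted L^2 Mono and Est for RMCF} or \ref{Cor_L^2 Mono_cN(cM^X) bd}, is a separate step. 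Second, once the compactness argument is done correctly it must be run at the putative bad point $(X_j,\tau_j)$, and then the propagation step (Lemma \ref{Lem_L^2 Mono_h_1 dominate in all scale}) is not actually needed: the paper gets by with $h_1$-domination only at time $0$, with all forward-in-time control coming from the $C^\infty_{\loc}$-convergence on the bounded time window $[0,\eps^{-2}]$. So your first step, while not incorrect, is more work than required.
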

	\begin{proof}
		Suppose for contradiction that there exist a sequence of unit vectors $\by_j\in \RR^k$, $T_j>1$, unit regular cyclic mod $2$ rescaled mean curvature flow $\cM_j$ over $[-1, T_j]$ converging to $\cC_{n,k}$ in the Brakke sense satisfying \eqref{Equ_L^2 Mono_Time transl Doubl Const Upper Bd for seq} with $\cM_j$ in place of $\cM$, and $(X_j, \tau_j)\in \Spt\cM_j\cap (Q_{\eps^{-1}}\times [0, \eps^{-2}])$ such that 
		\begin{enumerate}[label={\normalfont(\alph*)}] 
			\item The graphical functions $u_j$ of $\cM_j$ over $\cC_{n,k}$ satisfy, \[
			\lim_{j\to \infty} \inf_{c>0, c'\in \RR} \|c^{-1}u_j(\cdot, 0) - c' - y\cdot \by\|_{L^2} = 0\,;
			\]
			\item $\nu_j(X_j, \tau_j)\cdot (0, \by_j)\geq 0$.
		\end{enumerate} 
		
		After passing to a subsequence, set $\by_j\to \by_\infty$. By \eqref{Equ_L^2 Mono_Time transl Doubl Const Upper Bd for seq} and Lemma \ref{Lem_L^2 Mono_Time transl to make cN > -1/2}, there exists $s_j\to 0$ such that $\tilde\cM_j:= \cM_j^{(0, s_j)}$ satisfies 
		\begin{align*}
			\limsup_{j\to \infty}\cN_{n,k}(-\log(e+s_j), \tilde\cM_j) <+\infty \,, & &
			\liminf_{j\to \infty}\cN_{n,k}(-\log(1+s_j), \tilde\cM_j) \geq -\frac 12 \,.
		\end{align*}
		Let $\tilde u_j(\cdot, \tau)$ be the graphical function of $\tilde\cM_j(\tau)$ over $\cC_{n,k}$. 
		Then by Lemma \ref{Lem_L^2 Mono_Induced Parab Jac}, the subsequential $C^\infty_{loc}$-limit $\hat u_\infty$ of $\hat u_j:=\|\tilde u_j(\cdot, -\log(1+s_j))\|_{L^2}^{-1}\cdot\tilde u_j$ is a nonzero parabolic Jacobi field on $\cC_{n,k}\times (-1, +\infty)$ satisfying 
		\begin{align}
			\log\left( \frac{\|\hat{u}_\infty(\cdot, 0)\|_{L^2}}{\|\hat{u}_\infty(\cdot, 1)\|_{L^2}}\right) \geq -\frac12 \,. \label{Equ_L^2 Mono_In Pf_Linear Doubl const geq -1/2}        
		\end{align}
		
		On the other hand, by (a) and Lemma \ref{Lem_App_Graph over Cylinder}, 
		\begin{align*}
			\lim_{j\to \infty}\inf_{c>0, c'\in \RR} \|c^{-1}\tilde u_j(\cdot, -\log(1+s_j)) - c' - y\cdot \by_j\|_{L^2} = 0\,.
		\end{align*}
		Hence $\hat u_\infty(\theta, y, 0) = \hat c y\cdot \by_\infty + \hat c'$ for some constants $\hat c\geq 0$ and $\hat c'\in \RR$. While by \eqref{Equ_L^2 Mono_In Pf_Linear Doubl const geq -1/2} and Lemma \ref{Lem_App_Analysis of Parab Jacob field}, we must have $\hat c' = 0$ and $\hat c>0$. Since the convergence of $\hat u_j$ to $\hat u_\infty$ is in $C^\infty_{loc}$, we must have $(0, \by_j)\cdot \partial_y \tilde u_j>0$ in $P:= Q_{2\eps^{-1}}\times [0, \min\{2\eps^{-2}, -\log(e^{-T_j}+s_j)\}]$ when $j\gg 1$. By Lemma \ref{Lem_App_Graph over Cylinder} (ii), this means the unit normal vector of $\Spt\tilde\cM_j(\tau)$ pointing away from $\spine(\cC_{n,k})$ satisfies $\tilde \nu_j\cdot (0, \by_j)<0$ in $P$, contradicts to (b) since time slices of $\cM_j$ are just rescalings of time slices of $\tilde\cM_j$, thus have the same unit normal.
	\end{proof}

	\section{Geometric and topological properties of flow passing nondegenerate singularities}\label{S:GandTProperty}
	The goal of this section is to prove the Theorem \ref{Thm_Isol_Main}.  
	
	Since the behavior backward in time has been proved in \cite{SunXue2022_generic_cylindrical}, the bulk of this section is focused on the case forward in time.
	
	We start with the following clearing-out lemma, which is essentially Theorem 6.1 in \cite{ColdingMinicozzi16_SingularSet},  and can be proved directly by a blow-up argument. This clearing-out lemma does not require the cylindrical singularity to be nondegenerate. 
	\begin{Lem}[Theorem 6.1 in \cite{ColdingMinicozzi16_SingularSet}]
		\label{Lem_Isol_spt(M) subset cW}
		Assume the assumptions in Theorem \ref{Thm_Isol_Main}. There exist $t_1\in (0, 1)$ and an increasing function $\zeta: [0, t_1]\to \RR_{\geq 0}$ (both depending on $\mbfM$) such that $\lim_{r\to 0}r^{-1}\zeta(r) = +\infty$ and \[
		\spt(\mbfM) \cap (Q_{\zeta(t_1)}\times [0, t_1]) \subset \cW:= \{(x, y, t)\in \RR^{n-k+1}\times \RR^k\times [0, t_1]: \zeta(|x|+\sqrt{t}) \leq |y|\} \,.
		\]
	\end{Lem}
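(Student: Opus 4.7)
The plan is to prove the lemma by a parabolic blow-up argument combined with a direct Huisken density computation at the limit point. The existence of $\zeta$ with the stated properties is equivalent, via a routine monotone-envelope construction $\zeta(r):= r\cdot g(r)$ applied to a suitable increasing minorant $g$ of the ratio, to the pointwise assertion
\[
\lim_{\substack{(x,y,t)\to\orig,\;(x,y,t)\in\spt\bM\\ t\in[0,1],\,|x|+\sqrt{t}>0}}\frac{|y|}{|x|+\sqrt{t}}=+\infty.
\]

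Suppose this limit fails. Then there exist $C>0$ and points $(x_j,y_j,t_j)\in\spt\bM$ with $t_j\geq 0$, $(x_j,y_j,t_j)\to\orig$, $r_j:=|x_j|+\sqrt{t_j}>0$, and $|y_j|\leq C r_j$. Parabolically rescale by $r_j$ to form the unit-regular cyclic mod $2$ Brakke flow $\tilde\bM_j:=r_j^{-1}\cdot\bM(r_j^2\,\cdot\,)$, noting that $\tilde p_j:=(x_j/r_j,\,y_j/r_j,\,t_j/r_j^2)\in\spt\tilde\bM_j$ with $|x_j/r_j|+\sqrt{t_j/r_j^2}=1$ and $|y_j/r_j|\leq C$. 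After extracting a subsequence, $\tilde p_j\to\tilde p_\infty=(X_\infty,s_\infty)$ with $s_\infty\in[0,1]$ and $|x_\infty|+\sqrt{s_\infty}=1$ (hence $|x_\infty|=1$ when $s_\infty=0$). The uniform entropy bound on $\bM$ is scale-invariant, so the Brakke compactness theorem yields $\tilde\bM_j\to\tilde\bM_\infty$ subsequentially as Brakke flows, and Colding-Ilmanen-Minicozzi / Colding-Minicozzi uniqueness of tangent flow at cylindrical singularities identifies $\tilde\bM_\infty(\tau)=\sqrt{-\tau}\cdot\cC_{n,k}$ for every $\tau<0$.

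The crucial computation is that $\Theta(\tilde\bM_\infty,\tilde p_\infty)=0$. For $\tau<0$, substituting the cylinder data into Huisken's density integral, changing variables $X=\sqrt{-\tau}X'$, and absorbing the $\RR^k$-spine integration of the Gaussian (which contributes a factor $(4\pi/\alpha^2)^{k/2}$) yields
\[
F(\tau):=(4\pi(s_\infty-\tau))^{-n/2}\int e^{-|X-X_\infty|^2/(4(s_\infty-\tau))}\,d\tilde\bM_\infty(\tau)\leq C_n\,\alpha^{n-k}\,e^{-\alpha^2(|x_\infty|/\sqrt{-\tau}-\varrho)^2/4},
\]
where $\alpha^2:=-\tau/(s_\infty-\tau)\in(0,1]$ and $\varrho=\sqrt{2(n-k)}$. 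As $\tau\nearrow 0^-$: either $s_\infty>0$ and $\alpha\to 0$, so the prefactor $\alpha^{n-k}$ vanishes (this is where $k\leq n-1$ enters); or $s_\infty=0$, $\alpha\equiv 1$, and $|x_\infty|=1$ drives the Gaussian tail to kill $F$. In either case $F(\tau)\to 0$ as $\tau\nearrow 0^-$. Huisken's monotonicity formula then asserts $F$ is non-increasing in $\tau<s_\infty$, so one further deduces $F(\tau)=0$ for all $\tau\in(0,s_\infty)$ even though the forward-in-time slice of $\tilde\bM_\infty$ is a priori unknown, and hence $\Theta(\tilde\bM_\infty,\tilde p_\infty)=\lim_{\tau\nearrow s_\infty^-}F(\tau)=0$.

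The contradiction then follows from upper semicontinuity of Gaussian density under Brakke convergence: $\limsup_j\Theta(\tilde\bM_j,\tilde p_j)\leq\Theta(\tilde\bM_\infty,\tilde p_\infty)=0$, while the unit-regularity of $\tilde\bM_j$ together with $\tilde p_j\in\spt\tilde\bM_j$ forces $\Theta(\tilde\bM_j,\tilde p_j)\geq 1$. I expect the main technical point in the write-up to be the density-vanishing at $\tilde p_\infty$ in the case $s_\infty>0$: the forward slice of $\tilde\bM_\infty$ is not directly accessible from the tangent flow data, and one must propagate the explicit $\tau<0$ decay of $F$ across the singular time $\tau=0$ using monotonicity alone. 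The dimensional hypothesis $1\leq k\leq n-1$ enters precisely in producing the vanishing prefactor $\alpha^{n-k}$ in the regime $s_\infty>0$; nondegeneracy of the singularity is not needed for this step, only the cylindrical form of the tangent flow.
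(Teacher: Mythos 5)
Your proof is correct and implements exactly the ``blow-up argument'' the paper points to (the paper itself only cites Colding--Minicozzi and does not write out a proof, so the content must be supplied). The reduction to the pointwise statement $|y|/(|x|+\sqrt{t})\to\infty$, the parabolic rescaling by $r_j=|x_j|+\sqrt{t_j}$ at the fixed base point $(\orig,0)$, the identification of $\tilde\bM_\infty(\tau)=\sqrt{-\tau}\,\cC_{n,k}$ for $\tau<0$ via Colding--Ilmanen--Minicozzi uniqueness (which, as the paper stresses, does not need nondegeneracy), and the contradiction via $\Theta(\tilde\bM_\infty,\tilde p_\infty)=0$ versus $\Theta(\tilde\bM_j,\tilde p_j)\geq 1$ are all sound. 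I checked the density bound: after the change of variables and the spine integration one indeed gets $F(\tau)\leq C_n\,\alpha^{n-k}\exp\bigl(-\tfrac{\alpha^2}{4}(|x_\infty|/\sqrt{-\tau}-\varrho)^2\bigr)$ with $\alpha^2=-\tau/(s_\infty-\tau)$, and either the prefactor $\alpha^{n-k}$ (when $s_\infty>0$, using $k\leq n-1$) or the Gaussian tail (when $s_\infty=0$, using $|x_\infty|=1$) drives $F\to 0$; monotonicity of $F$ past $\tau=0$ then forces $\Theta(\tilde\bM_\infty,\tilde p_\infty)=0$ without needing any control on the forward slices of the limit.

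Two places where you assert ``routine'' facts deserve a sentence more in a final write-up. First, $\tilde p_j\in\spt\tilde\bM_j\Rightarrow\Theta(\tilde\bM_j,\tilde p_j)\geq 1$: this is not a definitional triviality, but follows from Brakke--White regularity (if $\Theta<1$, then $\Theta_p(\sigma)<1+\varepsilon_0$ for small $\sigma$ and the flow is smooth --- i.e.\ multiplicity-one or empty --- near $p$; either way one contradicts $p\in\spt\bM$ with $\Theta<1$). Here unit-regularity is exactly what's needed, and it is also scale-invariant, so $\Theta(\tilde\bM_j,\tilde p_j)=\Theta(\bM,p_j)$; this would be cleaner to cite. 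Second, the monotone-envelope step: as stated, passing from the pointwise limit to an \emph{increasing} $\zeta$ with $\zeta(r)/r\to\infty$ that works uniformly on a cube is genuinely a couple of lines of bookkeeping (the naive infimum $G(r)=\inf\{|y|:\ |x|+\sqrt{t}\geq r,\dots\}$ is non-decreasing, but one must arrange that the infimizers are forced near the origin, and one must choose $t_1$ so that competitors with $|y|$ of order $\zeta(t_1)$ cannot arise inside the cube). None of this undermines the proof, but it is worth writing out since it is where the actual form of the conclusion (increasing $\zeta$ with $\zeta(r)/r\to\infty$) enters.
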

	
	The major effort of this section is devoted to the following characterization of the blow up models:
	\begin{Thm} \label{Thm_Isol_Blowup Model}
		Let $\bM$ be the same as Theorem \ref{Thm_Isol_Main}. We further assume for the moment \footnote{These assumptions are always true and are proved a posteriori in Proposition \ref{Prop:MCF with boundary} without using Theorem \ref{Thm_Isol_Blowup Model}. The second bullet point is obtained via elliptic regularization.} that 
		\begin{itemize}
			\item the weak set flow $\spt\bM\cap \overline{Q_1}\times(-1, 1)$ is mean convex in the sense of Remark \ref{Rem:LSF};
			\item there exist $\al>0$, $r_\circ, t_\circ\in (0, 1)$ such that $t\mapsto \bM(t)\times \RR$ is a limit of a sequence of smooth $\al$-noncollapsing flows in $Q_{r_\circ}\times \RR$ over $ (-t_\circ, t_\circ)$.
		\end{itemize}
		
		Suppose $p_j=(x_j, y_j, t_j)\in \spt\mbfM$, $\by\in \RR^{k}$ be a unit vector such that 
		\begin{align*}
			p_j \to (\orig, 0)\,, & & \frac{y_j}{|y_j|}\to \by\,, & & t_j\geq 0\,,
		\end{align*}
		as $j\to \infty$. Then there exist $\lambda_j\to 0$ such that $\tilde\mbfM_j:= \lambda_j^{-1}(\mbfM-p_j)$ locally smoothly subconverges to some ancient mean curvature flow $\tilde\mbfM_\infty$ on $\RR^{n+1}\times \RR_{\leq 0}$, with one of the following holds,
		\begin{enumerate}[label={\normalfont(\alph*)}] 
			\item $\tilde\bM_\infty(0)$ is a translation and dilation of $\cC_{n,k}$. Moreover, let $\tilde\nu_j$ be the unit normal of $\Spt \tilde\bM_j$ pointing away from $\spine(\tilde\bM_\infty)$, then for $j\gg 1$, \[
			\tilde\nu_j(\orig, 0)\cdot (0, \by) < 0 \,.
			\] 
			\item $\tilde\bM_\infty(0) \in \mathscr{B}_{n,k}$ (see the notation in Section \ref{Subsec_Classif by Du-Zhu}) with translating direction $\by$.
		\end{enumerate}
	\end{Thm}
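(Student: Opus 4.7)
The plan is to choose the parabolic dilations $\lambda_j$ via an escape-time argument, use the $L^2$-tools of Section~\ref{Sec_L^2 Mono} to obtain uniform decay-order control backward in time, and apply the Du-Zhu classification to the limit flow. Fix a small $\epsilon_0=\epsilon_0(n)>0$ compatible with the constants in Corollaries~\ref{Cor_L^2 Mono_Discrete Growth Mono},~\ref{Cor_L^2 Mono_cN(cM^X) bd} and Lemmas~\ref{Lem_L^2 Mono_h_1 dominate in all scale},~\ref{Lem_L^2 Mono_nu cdot by <0}. Let $\cM^{p_j}$ denote the rescaled mean curvature flow of $\bM$ based at $p_j$. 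Using the change-of-basepoint formula \eqref{eq:RMCF change of base point} together with the nondegenerate normal form (Theorem~\ref{thm:NormalForm}) applied at $(\orig,0)$, I check that for $j$ large, $\cM^{p_j}$ is initially $(\epsilon_0/2)$-$L^2$-close to $\cC_{n,k}$ on a long interval, but eventually leaves the $\epsilon_0$-neighborhood because of the translation-induced unstable modes. I let $\tau_j^\ast$ be the largest time with $\mbfd_{n,k}(\cM^{p_j}(\tau_j^\ast))=\epsilon_0$, and set $\lambda_j:=e^{-\tau_j^\ast/2}$. Then the rescaled flow associated to $\tilde\bM_j:=\lambda_j^{-1}(\bM-p_j)$, namely $\tilde\cM_j(\tau):=\cM^{p_j}(\tau+\tau_j^\ast)$, satisfies $\mbfd_{n,k}(\tilde\cM_j(0))=\epsilon_0$ and $\mbfd_{n,k}(\tilde\cM_j(\tau))\leq\epsilon_0$ for all $\tau\leq 0$.

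Next I would show that, as $j\to\infty$, $\tilde\cM_j$ is graphical over $\cC_{n,k}\cap Q_R$ on $[-T,0]$ for arbitrary fixed $T,R$, and that $\cN_{n,k}(\tau,\tilde\cM_j)\leq -1/4$ uniformly on $\tau\leq 0$. Graphicality away from $\spine(\cC_{n,k})$ follows from Brakke-White regularity, the clearing-out Lemma~\ref{Lem_Isol_spt(M) subset cW}, and pseudolocality. For the decay-order bound, a direct expansion of the nondegenerate normal form after translation by $p_j$ shows that, at early (large negative) $\tau$, the graphical function of $\tilde\cM_j$ is $L^2$-dominated by the span of the linear mode $y\cdot(y_j/|y_j|)$ and the constant mode -- both eigenfunctions of $-L_{n,k}$ of eigenvalue $\leq -1/2$; the $x_j$ contribution is negligible thanks to the clearing-out $|x_j|\ll |y_j|$. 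Corollary~\ref{Cor_L^2 Mono_cN(cM^X) bd}, compared against $\cM$ centered at $(\orig,0)$, then gives an initial decay-order upper bound, and the discrete almost-monotonicity (Corollary~\ref{Cor_L^2 Mono_Discrete Growth Mono} with Remark~\ref{Rem_L^2 Mono_cN<gamma pass to next scale}) propagates $\cN_{n,k}(\tau,\tilde\cM_j)\leq -1/4$ to all $\tau\leq 0$.

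With these uniform bounds, the $\al$-noncollapsing assumption passes to limits and a diagonal subsequence of $\tilde\bM_j$ converges locally smoothly on $\RR^{n+1}\times\RR_{\leq 0}$ to an $\al$-noncollapsing ancient flow $\tilde\bM_\infty$. The $L^2$-nonconcentration (Corollary~\ref{Cor_L^2 Mono_Weighted L^2 Mono and Est for RMCF}) combined with $\cN_{n,k}\leq -1/4$ yields $\mbfd_{n,k}(\tilde\cM_\infty(\tau))\lesssim e^{\tau/4}\to 0$ as $\tau\to -\infty$, so the tangent flow of $\tilde\bM_\infty$ at $-\infty$ is the multiplicity-one round cylinder $\cC_{n,k}$. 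The exponential decay forces the quadratic matrix $Q$ in the Du-Zhu asymptotic Theorem~\ref{ThmDuZhu22_quantization} to vanish (any nonzero eigenvalue would produce a slower polynomial leading term), and their classification (\cite[Theorem 1.10]{DuZhu22_quantization}) places $\tilde\bM_\infty(0)\in\overline{\mathscr{B}_{n,k}}$. To identify the direction, the initial $h_1$-domination together with Lemma~\ref{Lem_L^2 Mono_h_1 dominate in all scale} gives $\inf_{c>0,\,c'\in\RR}\|c^{-1}\tilde u_j(\cdot,\tau)-c'-y\cdot\by\|_{L^2}\to 0$ uniformly on compact $\tau$-intervals, where $\tilde u_j(\cdot,\tau)$ is the graphical function of $\tilde\cM_j(\tau)$ over $\cC_{n,k}$. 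In case $\tilde\bM_\infty(0)$ is a cylinder, applying Lemma~\ref{Lem_L^2 Mono_nu cdot by <0} to $\tilde\cM_j$ at some rescaled time $\tau$ large enough that $-e^{-\tau}$ lies in a smoothness neighborhood of $(\orig,0)$ (available since $\tilde\bM_j$ converges smoothly to the smooth $\tilde\bM_\infty(0)$ there), followed by continuity of the normal to $\tilde\bM_j$ in the time variable, yields (a); in case $\tilde\bM_\infty(0)\in\mathscr{B}_{n,k}$, Lemma~\ref{Lem_Pre_Compare h_1 dominate direction with transl direction} forces the translating direction to be $\by$, yielding (b).

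The hardest part is Step~2: the escape time $\tau_j^\ast$ must be defined robustly so that (i) pseudolocality and clearing-out give graphical representation over arbitrarily large spatial balls at arbitrarily negative rescaled times, and (ii) the decay order remains uniformly $\leq -1/4$ despite the off-center base points. The nondegeneracy assumption enters crucially here: it guarantees that the unstable linear and constant modes produced by translation do dominate, excluding the degenerate scenario in which only the slower neutral ($0$-eigenvalue) modes of the normal form govern the departure from the cylinder, which would render the choice of $\tau_j^\ast$ ill-defined and allow a $C_{n,k}$-static limit.
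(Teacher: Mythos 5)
Your overall architecture—choose an escape-time blow-up scale, use the decay-order machinery of Section~\ref{Sec_L^2 Mono} to get a uniform backward bound $\cN_{n,k}\leq -1/4$, obtain exponential backward $L^2$-decay, and invoke Du-Zhu with $Q=0$—matches the paper's strategy closely, and the final identification steps via Lemma~\ref{Lem_L^2 Mono_h_1 dominate in all scale}, Lemma~\ref{Lem_L^2 Mono_nu cdot by <0}, and Lemma~\ref{Lem_Pre_Compare h_1 dominate direction with transl direction} are also the ones the paper uses. However, there is a genuine gap in the way you handle the $\RR^{n-k+1}$-direction translation $x_j$.

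You blow up directly at $p_j=(x_j,y_j,t_j)$, working with $\cM^{p_j}$, and dismiss the $x_j$-contribution as negligible because the clearing-out Lemma~\ref{Lem_Isol_spt(M) subset cW} gives $|x_j|\ll|y_j|$. That comparison is the wrong one. Writing the nondegenerate normal form $u(\theta,y;\tau')\approx c_{n,k}(\tau')^{-1}(|y|^2-2)$ and translating as in Lemma~\ref{Lem_App_Graph over Cylinder}(iv), the $y_j$-translation injects a linear $y\cdot\by$ mode with coefficient on the order of $e^{\tau/2}|y_j|/\tau'$, carrying a crucial extra factor $(\tau')^{-1}$ from the normal form, whereas the $x_j$-translation injects a $\theta$-mode with coefficient on the order of $e^{\tau/2}|x_j|$, with no such damping. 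Their ratio at a reference time $a_j\sim 2\log(\mbfL/|y_j|)$ is therefore $\sim a_j|x_j|/|y_j|$, and clearing-out only yields $|x_j|/|y_j|\to 0$ at an unquantified rate—not $a_j|x_j|/|y_j|\to 0$. Indeed, for $t_j=0$ the profile in Proposition~\ref{prop:0 time profile} gives $|x_j|/|y_j|\sim (-\log|y_j|)^{-1/2}$, so $a_j|x_j|/|y_j|\sim\sqrt{-\log|y_j|}\to\infty$: the $\theta$-mode actually dominates the $y$-linear mode in this regime. This breaks the $h_1(y)$-domination hypothesis needed to apply Lemma~\ref{Lem_L^2 Mono_h_1 dominate in all scale} and, downstream, Lemma~\ref{Lem_L^2 Mono_nu cdot by <0} and Lemma~\ref{Lem_Pre_Compare h_1 dominate direction with transl direction}, so the direction identification (the ``moreover'' part of (a), and the translating-direction claim in (b)) is not established. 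The bound $\cN_{n,k}\leq -1/4$ survives because $\theta$-modes also have eigenvalue $-1/2$, but the theorem's refined content about $\by$ does not.

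The paper circumvents this by never introducing the $x_j$-translation into the rescaled flow analysis: it bases the RMCF at $(0,y_j,t_j)$, so only $y$-linear and constant modes appear, the $h_1(y)$-domination at time $a_j$ is immediate from the normal form, and the escape time $b_j$ is defined for this purely spine-translated flow. The $x_j$-translation is then handled after the blow-up as a separate Euclidean shift $-(\lambda_j^{-1}x_j,0,0)$, whose boundedness is the content of a dedicated contradiction argument (Claim~\ref{Claim_x_j/lambda_j bounded}): if $\lambda_j^{-1}|x_j|\to\infty$, rescale by $\Lambda_j=\lambda_j^{-1}|x_j|$, which corresponds to an intermediate rescaled time in $(a_j-b_j,0)$ where the exponential backward decay (item (ii) in the paper's proof) forces $\mbfd_{n,k}\to 0$, so the limit is $\mbfC_{n,k}+(\tilde x_\infty,0,0)$ for a unit vector $\tilde x_\infty$—but this does not contain $(\orig,0)$, contradicting $p_j\in\spt\bM$. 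You should incorporate this decoupling; simply invoking the clearing-out magnitude comparison is not enough.

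A smaller point: your escape time $\tau_j^\ast$ is declared to be ``the largest time with $\mbfd_{n,k}(\cM^{p_j}(\tau_j^\ast))=\epsilon_0$'', from which you claim $\mbfd_{n,k}\leq\epsilon_0$ for all $\tau\leq 0$ after recentering. Taking the last crossing does not give the claimed one-sided bound on $(-\infty,\tau_j^\ast]$ (the distance could cross $\epsilon_0$ multiple times). You want the supremum of the interval over which the flow has stayed below threshold with a buffer, as in the paper's definition of $b_j$, which is tailored so that the decay-order Corollaries~\ref{Cor_L^2 Mono_Discrete Growth Mono} and~\ref{Cor_L^2 Mono_cN(cM^X) bd} (which use windows of length $1$ and $2$) apply at all times up to the escape.
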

	Applying Brakke-White epsilon regularity for mean curvature flow, a direct consequence of Theorem \ref{Thm_Isol_Blowup Model} is that $(\orig, 0)$ is an isolated singularity of $\bM$ in a forward neighborhood.
	\begin{Rem} \label{Rem_Topo_Blow rate uniq}
		The scaling factor $\lambda_j$ in the Theorem above are unique up to a finite multiple. More precisely, if $(\lambda_j^\pm)_{j\geq 1}$ are two sequences of positive real numbers such that for each $i\in \{\pm\}$, $(\lambda_j^i)^{-1}(\mbfM-p_j)$ converges in $C^\infty_{loc}$ to $\bM_\infty^i$ in $\RR^{n+1}\times \RR_{\leq 0}$, and that $\bM_\infty^i(0) \in \overline{\mathscr{B}_{n,k}}$ (see the notation in Section \ref{Subsec_Classif by Du-Zhu}). Then the following limit exists, \[
		0< \lim_{j\to \infty} (\lambda_j^-)^{-1}\cdot\lambda_j^+ <+\infty \,.
		\]
		And $\bM_\infty^+(0)$ is a rescaling of $\bM_\infty^-(0)$. 
		
		To see this, since there's no scaling invariant element in $\overline{\mathscr{B}_{n,k}}$, by possibly flipping $\lambda_j^+$ and $\lambda_j^-$, it suffices to show that \[
		\lim_{j\to \infty} (\lambda_j^-)^{-1}\cdot\lambda_j^+ < +\infty\,.
		\] 
		Suppose for contradiction that, after passing to a subsequence,  $(\lambda_j^-)^{-1}\cdot\lambda_j^+\to +\infty$ as $j\to \infty$. Then for every $T>1$, 
		\begin{align*}
			\cF[\bM_\infty^+(-1)] = \lim_{j\to \infty} \cF[((\lambda_j^+)^{-1} (\bM - p_j))(-1)] & = \lim_{j\to \infty} \Theta_{p_j}((\lambda_j^+)^2; \bM) \\
			& \geq \liminf_{j\to \infty} \Theta_{p_j}((\lambda_j^-\cdot T)^2; \bM) = \cF[T^{-1}\bM_\infty^-(-T^2)]\,.
		\end{align*}
		Sending $T\to \infty$, we find \[
		\cF[\cC_{n,k}] = \lim_{T\to +\infty}\cF[T^{-1}\bM_\infty^-(-T^2)] \leq \cF[\bM^+_\infty(-1)]\leq \lim_{\tilde T\to +\infty} \cF[\tilde T^{-1}\bM_\infty^+(-\tilde T^2)] = \cF[\cC_{n,k}]\,.  \] 
		Then by the rigidity of Huisken's monotonicity formula, $\bM_\infty^+$ is a self-shrinker, contradicting that $\bM_\infty^+(0)$ is a smooth hypersurface in $\overline{\mathscr{B}_{n,k}}$.
	\end{Rem}
	
	\begin{proof}[Proof of Theorem \ref{Thm_Isol_Blowup Model}.]
		Let $\eps_1\in (0, 1/4)$ be fixed for the moment such that $\lambda[\mbfM]\leq \eps_1^{-1}$. The main goal is to find a sequence of blow-up factors $\{\lambda_j\}_{j\geq 1}$ (possibly depending on $\eps_1$) such that the subsequent blow-up limit $\tilde\bM_\infty$ as in Theorem \ref{Thm_Isol_Blowup Model} satisfies either (a) or the following 
		\begin{enumerate}
			\item [(b)'] $\tilde\bM_\infty(0) \in \mathscr{B}_{n,k}$ with translating direction $C_n\eps_1$-close to $\by$ in $\RR^k$.
		\end{enumerate}
		This, together with Remark \ref{Rem_Topo_Blow rate uniq}, proves Theorem \ref{Thm_Isol_Blowup Model} immediately by sending $\eps_1\to 0$. 
		
		Let $\bM,\ p_j=(x_j, y_j, t_j)$ be as in the Theorem. By Lemma \ref{Lem_Isol_spt(M) subset cW}, when $j\gg 1$, $y_j\neq \orig$. Let $\tau\mapsto \cM(\tau)$ be the rescaled mean curvature flow of $\bM$ at $(\orig, 0)$. Then by \eqref{eq:RMCF change of base point}, the rescaled mean curvature flow of $\mbfM$ at $(0, y_j, t_j)$ is $\tau\mapsto \cM_j(\tau):=\cM^{(0, y_j, t_j)}(\tau)$, where 
		\begin{align}
			\cM_j(\tau) = \sqrt{1-t_je^\tau}\cdot \cM \left(\tau-\log(1-t_je^\tau) \right)-e^{\tau/2}(0, y_j) \,, \label{Equ_Isol_RMCF w new base point}        
		\end{align}
		whenever $t_je^{\tau}<1$. We fix $\mbfL 
		\gg 2n$ to be determined. For $j\gg 1$, let \[
		a_j:= 2\log(|y_j|^{-1}\mbfL) \,.
		\] 
		\begin{Claim} \label{Claim_Time Transl cM_j small d_n,k and bd cN}
			There exists $\eps_2(n,\eps_1)\in (0, \eps_1)$ such that for every $\eps\in (0, \eps_2]$, we have 
			\begin{align}
				\limsup_{j\to \infty} & \sup_{|\tau-a_j|\leq 3, |s|\leq \eps e^{-a_j}} \mbfd_{n,k}((\cM_j)^{(\orig, s)}(\tau)) \leq C(n)\eps \,; \label{Equ_Isol_Time Transl cM_j has d_n,k small} \\
				\limsup_{j\to \infty} & \sup_{|\tau-a_j|\leq 3, |s|\leq \eps_2 e^{-a_j}} \cN_{n,k}(\tau, (\cM_j)^{(\orig, s)}) < \eps_2^{-1}\,. \label{Equ_Isol_Time Transl cM_j has bded cN}
			\end{align}
		\end{Claim}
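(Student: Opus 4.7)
The plan is to use the nondegenerate cylindrical asymptotic (Theorem \ref{thm:NormalForm}) to expand the graphical function $\tilde u_{j,s}$ of $(\cM_j)^{(\orig,s)}(\tau)$ over $\cC_{n,k}$ into eigenmodes of $-L_{n,k}$ at times $\tau\in [a_j-3,a_j+4]$, and to observe that for $\mbfL\gg 2n$ the expansion is dominated by the constant mode (eigenvalue $-1$ of $-L_{n,k}$), from which both bounds follow directly. Composing \eqref{eq:RMCF change of base point} twice yields
\[
(\cM_j)^{(\orig, s)}(\tau) = \mu_{j,s,\tau}\cdot \cM(\tau_{j,s,\tau}) - e^{\tau/2}(0,y_j),
\]
where $\mu_{j,s,\tau}=\sqrt{1-(t_j+s)e^\tau}$ and $\tau_{j,s,\tau}=\tau-\log\mu_{j,s,\tau}^2$. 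By Lemma \ref{Lem_Isol_spt(M) subset cW}, $p_j\in \cW$ forces $|t_j|=o(|y_j|^2)=o(\mbfL^2 e^{-a_j})$, so for $|\tau-a_j|\leq 4$ and $|s|\leq \eps_2 e^{-a_j}$ one has $\mu_{j,s,\tau}=1+O(\eps_2)+o(1)$ and $\tau_{j,s,\tau}=a_j+O(1)\to+\infty$ as $j\to\infty$.

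Writing $\mathbf{Y}_j := e^{\tau/2}(0,y_j)$ so that $|\mathbf{Y}_j|=\mbfL e^{(\tau-a_j)/2}$ is bounded, Theorem \ref{thm:NormalForm} with $\cI=\{1,\ldots,k\}$ gives, in $H^1$ on any fixed bounded region, $u(\theta,y,\tau_{j,s,\tau})=\tfrac{\varrho}{4a_j}(|y|^2-2k)+o(1/a_j)$. Applying Lemma \ref{Lem_App_Graph over Cylinder}(iv) to convert the graph over the dilated cylinder $\mu_{j,s,\tau}\cC_{n,k}$ back to a graph over $\cC_{n,k}$ and translating the argument by $-\mathbf{Y}_j$, one obtains on any bounded subset of $\cC_{n,k}$ and in $H^1$,
\[
\tilde u_{j,s}(\theta,\tilde y,\tau) = C_{0}(\tau) + \frac{\varrho\,\tilde y\cdot\mathbf{Y}_j}{2a_j} + \frac{\varrho(|\tilde y|^2-2k)}{4a_j} + o(1/a_j),
\]
where $C_{0}(\tau):= (\mu_{j,s,\tau}-1)\varrho + \tfrac{\varrho(|\mathbf{Y}_j|^2-2k)}{4a_j}$, and the three displayed summands are respectively the constant, linear, and quadratic Hermite eigenmodes of $-L_{n,k}$ (eigenvalues $-1$, $-1/2$, $0$). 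The $L^2$-tails outside the graphical region of $\cM(\tau_{j,s,\tau})$ are controlled by the non-concentration Corollary \ref{Cor_L^2 Mono_Weighted L^2 Mono and Est for RMCF} together with $\lambda[\cM]\leq \eps_1^{-1}$.

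Integrating $\tilde u_{j,s}^2$ against $e^{-|X|^2/4}$ on $\cC_{n,k}$ and using \eqref{Equ_L^2 Mono_Def L^2 dist to C_(n,k)}, for $|\tau-a_j|\leq 3$ and $|s|\leq \eps e^{-a_j}$ with $\eps\leq \eps_2$,
\[
\mbfd_{n,k}\big((\cM_j)^{(\orig,s)}(\tau)\big) \leq C(n)\big(|\mu_{j,s,\tau}-1|+\mbfL^2/a_j\big) + o(1) \leq C(n)\eps+o(1),
\]
whose $\limsup$ in $j$ gives \eqref{Equ_Isol_Time Transl cM_j has d_n,k small}. For the decay order, direct expansion shows $C_0(\tau+1)= e\,C_0(\tau)+o(|C_0(\tau)|)$ as $j\to\infty$, since both summands scale as $e^\tau$ (using $\mu_{j,s,\tau}-1\approx -(t_j+s)e^\tau/2$ and $|\mathbf{Y}_j|^2=\mbfL^2 e^{\tau-a_j}$). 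For $\mbfL\gg 2n$ and $\eps_2$ small, $|C_0(\tau)|$ dominates the $L^2$-norms of the linear and quadratic modes at both $\tau$ and $\tau+1$, so $\cN_{n,k}(\tau,(\cM_j)^{(\orig,s)})\to -1<\eps_2^{-1}$, giving \eqref{Equ_Isol_Time Transl cM_j has bded cN}; alternatively one invokes Corollary \ref{Cor_L^2 Mono_RMCF w graphical eigenfunc has cN = spectrum} with $\gamma=-1$.

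The main technical obstacle is the rigorous passage from ``graph over the dilated cylinder $\mu_{j,s,\tau}\cC_{n,k}$'' to ``graph over $\cC_{n,k}$'', ensuring that the $O(\eps)$ deviation $\mu_{j,s,\tau}-1$ contributes precisely to the constant mode as $(\mu_{j,s,\tau}-1)\varrho$ rather than leaking into higher eigenmodes (which would spoil the decay-order upper bound). This is where Lemma \ref{Lem_App_Graph over Cylinder}(iv) is essential, together with the non-concentration estimate to discard $L^2$-tails outside the graphical region.
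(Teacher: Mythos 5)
The paper proves \eqref{Equ_Isol_Time Transl cM_j has bded cN} abstractly: by Example~\ref{Eg_Nondegen sing has cN = 0} the decay order of the centered flow satisfies $\cN_{n,k}(\tau;\cM)\to 0$, and Corollary~\ref{Cor_L^2 Mono_cN(cM^X) bd} then transfers the decay-order bound to the translated flow $(\cM_j)^{(\orig,s)}$. Your route is genuinely different: you attempt to expand the graphical function of $(\cM_j)^{(\orig,s)}(\tau)$ into eigenmodes of $-L_{n,k}$ using the normal form and to read off the decay order from the dominant eigenvalue. The paper's route is more robust because it entirely avoids tracking how individual modes interact under the translation and dilation.

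There are two genuine gaps in the argument for \eqref{Equ_Isol_Time Transl cM_j has bded cN}. First, the claim that $|C_0(\tau)|$ dominates the linear mode at both $\tau$ and $\tau+1$ is false in general. After correcting the small arithmetic slip in $C_0$ (the $-2k$ coming from $|y'|^2-2k$ is already absorbed by the substitution, so $C_0\approx \varrho(\mu_{j,s,\tau}-1)+\varrho|\mathbf{Y}_j|^2/(4a_j)\approx c_1 e^\tau$ with $c_1 = -\tfrac{\varrho(t_j+s)}{2}+\tfrac{\varrho|y_j|^2}{4a_j}$), one sees that $c_1$ can vanish: the choice $s = |y_j|^2/(2a_j)-t_j$ lies in the range $|s|\leq\eps_2 e^{-a_j}$ for $j\gg1$, and for that $s$ the linear mode $y\cdot\mathbf{Y}_j$, which is bounded below since $|\mathbf{Y}_j|\approx\mbfL$, dominates instead. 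The conclusion is still fine (one gets $\cN\approx-1/2$ rather than $\approx-1$, both well below $\eps_2^{-1}$), but the claim as stated and the ``$C_0(\tau+1)=e\,C_0(\tau)+o$'' step that depends on it are incorrect. Second, the alternative you offer --- invoking Corollary~\ref{Cor_L^2 Mono_RMCF w graphical eigenfunc has cN = spectrum} --- is circular: that corollary's hypotheses explicitly require $\cN_{n,k}(0;\cM)\leq\eps^{-1}$ for the translated flow, which is precisely the bound being established. Relatedly, controlling the $L^2$-tail of $(\cM_j)^{(\orig,s)}(\tau)$ via Corollary~\ref{Cor_L^2 Mono_Weighted L^2 Mono and Est for RMCF} requires an a priori control of $\mbfd_{n,k}$ of the translated flow at an earlier time, so a careful bootstrap is needed there as well. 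The paper's use of Corollary~\ref{Cor_L^2 Mono_cN(cM^X) bd} (which in turn relies on Lemma~\ref{Lem_L^2 Mono_Bd d(aS+X) by d(S)}) handles all of this without requiring the mode bookkeeping. Your argument for \eqref{Equ_Isol_Time Transl cM_j has d_n,k small} is sound, though the paper reaches it more economically by the observation that $(\cM_j)^{(\orig,s)}(\tau)$ converges, modulo a spine translation, to a small dilation $\sqrt{1-O(\eps)}\cdot\cC_{n,k}$ of the cylinder.
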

		\begin{proof}
			First note that $a_j\to +\infty$ as $j\to \infty$; and by Lemma \ref{Lem_Isol_spt(M) subset cW}, since $p_j\in\cW$ when $j\gg1$, we have 
			\begin{align}
				e^{a_j/2}|x_j| + e^{a_j}t_j \to 0\,, \quad \text{ as }j\to \infty\,. \label{Equ_Isol_e^(a_j)(|x_j|^2+ |t_j|) to 0}
			\end{align}
			Also note that \[
			(\cM_j)^{(\orig, s)}(\tau) = \cM^{(0, y_j, t_j+s)}(\tau) = \sqrt{1-(t_j+s)e^\tau}\cdot  \cM \left(\tau-\log(1-(t_j+s)e^\tau) \right)-e^{\tau/2}(0, y_j) \,,  \]
			and when $|\tau-a_j|\leq 3$, $|e^{\tau/2}y_j|\leq 10\mbfL$. Since $\cM$ has finite entropy and $\cM(\tau)\to \cC_{n,k}$ in $C^\infty_{loc}$ as $\tau\to +\infty$, we have \[
			\limsup_{j\to \infty} \sup_{|\tau-a_j|\leq 3, |s|\leq \eps e^{-a_j}} \mbfd_{n,k}((\cM_j)^{(\orig, s)}(\tau)) \leq \sup_{|s|\leq 50\eps}\mbfd_{n,k}(\sqrt{1-s}\,\cC_{n,k}) \leq C_n \eps \,,
			\]  
			when $\eps<1/100$. This proves \eqref{Equ_Isol_Time Transl cM_j has d_n,k small}. 
			
			To prove \eqref{Equ_Isol_Time Transl cM_j has bded cN}, first recall that since $(\orig, 0)$ is a non-degenerate singular point of $\bM$, by the Example \ref{Eg_Nondegen sing has cN = 0}, there exists $\tau_\circ\gg 1$ such that 
			\begin{itemize}
				\item $\cM$ is $\delta_4$-$L^2$ close to $\cC_{n,k}$ over $[\tau_\circ, +\infty)$, where $\delta_4(n, \eps_1)$ is determined by Corollary \ref{Cor_L^2 Mono_cN(cM^X) bd};
				\item $|\cN_{n,k}(\tau, \cM)|\leq \eps_1$ for $\tau\geq \tau_\circ$.
			\end{itemize}
			In particular, \[
			\delta_4 \mbfd_{n,k}(\cM(\tau_\circ)) > 0 = \lim_{j\to \infty} \sup_{|s|\leq  e^{-a_j}} \left(e^{\tau_\circ/2}|y_j| + e^{\tau_\circ}|t_j+s| \right) \,.
			\]
			Thus by Corollary \ref{Cor_L^2 Mono_cN(cM^X) bd} and setting $\eps_2(n,\eps_1)\ll 1$ such that $C(n)\eps<\delta_4$ in \eqref{Equ_Isol_Time Transl cM_j has d_n,k small}, \eqref{Equ_Isol_Time Transl cM_j has bded cN} holds with an even smaller $\eps_2(n, \eps_1)>0$.
		\end{proof}
		
		\begin{Claim} \label{Claim_u_j dominated by h_1}
			Let $u_j(\cdot, \tau)$ be the graphical function of $\cM_j(\tau)$ in $Q_\mbfL$ and $0$-extended to an $L^\infty$ function on $\cC_{n,k}$. Then 
			
			\begin{align}
				\limsup_{j\to \infty} \inf_{c>0, c'\in \RR} \|c^{-1}u_j(\cdot, a_j) - c' - y\cdot \by\|_{L^2} & \leq \Psi(\mbfL^{-1}|n)\,; \label{Equ_Isol_u_j dominated by h_1} \\
				\limsup_{j\to \infty} \|u_j(\cdot, a_j)\|_{L^2}^{-1}\cdot \|\Pi_{\leq -1/2} (u_j(\cdot, a_j))\|_{L^2} & \geq 1-\Psi(\mbfL^{-1}|n) \,. \label{Equ_Isol_u_j dominated by leq -1/2 modes}
			\end{align}
		\end{Claim}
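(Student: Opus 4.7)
The plan is to combine the translation-and-dilation formula \eqref{Equ_Isol_RMCF w new base point} relating $\cM_j$ and $\cM$ with the explicit quadratic normal form of Theorem \ref{thm:NormalForm}, to write $u_j(\cdot, a_j)$ asymptotically as a quadratic polynomial in $y$ plus small error, from which both \eqref{Equ_Isol_u_j dominated by h_1} and \eqref{Equ_Isol_u_j dominated by leq -1/2 modes} follow by expanding in eigenmodes of $-L_{n,k}$.

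First I would apply Lemma \ref{Lem_App_Graph over Cylinder} (iv) to \eqref{Equ_Isol_RMCF w new base point} to obtain, for $(\theta,y)\in\cC_{n,k}\cap Q_\mbfL$,
\[
u_j(\theta, y, a_j) = \lambda_j\, u\bigl(\theta,\,\lambda_j^{-1}(y+\bar{y}_j),\,\tau^\star_j\bigr) + (\lambda_j - 1)\varrho,
\]
where $\lambda_j := \sqrt{1-t_je^{a_j}}$, $\tau^\star_j := a_j - \log(1-t_je^{a_j})$, and $\bar{y}_j := e^{a_j/2}y_j$. By the choice $a_j = 2\log(|y_j|^{-1}\mbfL)$, one has $|\bar{y}_j| = \mbfL$ and $\bar{y}_j/\mbfL \to \by$; Lemma \ref{Lem_Isol_spt(M) subset cW} forces $t_je^{a_j}\to 0$, so $\lambda_j\to 1$ and $\tau^\star_j\to +\infty$ with $\sqrt{\tau^\star_j}\gg\mbfL$ for $j$ large.

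Next I would plug in the nondegenerate normal form \eqref{Equ_Pre_H^1 asymp in B_sqrt(tau)} (with $\cI=\{1,\ldots,k\}$). Since $\lambda_j^{-1}(y+\bar y_j)\in \BB^k_{3\mbfL}\subset \BB^k_{\sqrt{\tau^\star_j}}$ for $j\gg 1$, this yields
\[
u_j(\theta, y, a_j) = \tfrac{\varrho}{4\tau^\star_j}\bigl(|y+\bar{y}_j|^2 - 2k\bigr) + r_j(\theta,y) \quad\text{on } \cC_{n,k}\cap Q_\mbfL,
\]
with $\|r_j\|_{H^1(\cC_{n,k}\cap Q_\mbfL)} = o(1/\tau^\star_j)$ as $j\to\infty$, absorbing the $\lambda_j$-rescaling contribution (the $(\lambda_j-1)\varrho$ piece is a constant and so lives entirely in the $-1$-eigenspace). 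Expanding
\[
|y+\bar{y}_j|^2 - 2k = (\mbfL^2 - 2k) + 2\mbfL\,y\cdot\hat{y}_j + (|y|^2 - 2k), \qquad \hat{y}_j := \bar{y}_j/\mbfL,
\]
splits the main term into a constant mode (in $\Pi_{=-1}$) of size $\mbfL^2/\tau^\star_j$, a linear-in-$y$ mode (in $\Pi_{=-1/2}$) of size $\mbfL/\tau^\star_j$, and a quadratic mode (in $\Pi_{=0}$) of size $1/\tau^\star_j$.

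For \eqref{Equ_Isol_u_j dominated by h_1}, taking $c := \tfrac{\varrho\mbfL}{2\tau^\star_j}$ and $c' := \mbfL/2$ to cancel the linear coefficient and the constant leaves $y\cdot(\hat{y}_j - \by) + (|y|^2 - 2k)/(2\mbfL) + c^{-1}r_j$, whose $L^2$-norm is bounded by $|\hat y_j - \by|\cdot C_n + C_n/\mbfL + o(1)$, i.e.\ by $\Psi(\mbfL^{-1}|n)$ after taking $\limsup_{j\to\infty}$. For \eqref{Equ_Isol_u_j dominated by leq -1/2 modes}, the $\Pi_{\leq -1/2}$ component has $L^2$-norm at least $c_n\mbfL/\tau^\star_j$ from the linear mode, whereas the $\Pi_{>-1/2}$ component comes only from the quadratic $0$-eigenmode plus $r_j$, contributing $O(1/\tau^\star_j)$; the ratio is thus $\Psi(\mbfL^{-1}|n)$. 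The only technical subtlety is that $u_j(\cdot,a_j)$ is $0$-extended outside $Q_\mbfL$ whereas the polynomials are not, but the Gaussian weight bounds the tail of each polynomial in $\cC_{n,k}\setminus Q_\mbfL$ by $e^{-c\mbfL^2}$, which is absorbed into the $\Psi(\mbfL^{-1}|n)$ error already tracked.
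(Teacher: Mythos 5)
Your proposal is correct and follows essentially the same route as the paper's proof: rewrite $\cM_j(a_j)$ as a dilation-and-translation of $\cM(a_j')$ via \eqref{Equ_Isol_RMCF w new base point} and Lemma \ref{Lem_App_Graph over Cylinder}(iv), substitute the nondegenerate normal form from Theorem \ref{thm:NormalForm}, expand the shifted quadratic $|y+\bar y_j|^2$ into constant/linear/quadratic modes, and bound the residual once the linear coefficient is normalized. The only substantive difference is that the paper upgrades the $H^1$ normal form to a pointwise statement on $Q_{3\mbfL}$ via interior parabolic estimates, while you work directly with \eqref{Equ_Pre_H^1 asymp in B_sqrt(tau)}; both suffice since only $L^2(\cC_{n,k}\cap Q_\mbfL)$ control is needed. (Two harmless arithmetic slips: the expansion should read $|y+\bar y_j|^2-2k=\mbfL^2 + 2\mbfL\, y\cdot\hat y_j + (|y|^2-2k)$ rather than double-subtracting $2k$, and correspondingly $c'$ should be $(\mbfL^2-2k)/(2\mbfL)$; neither affects the $\Psi(\mbfL^{-1}|n)$ conclusion.) Your explicit treatment of the $0$-extension tail via Gaussian decay is a small clarification over the paper's presentation.
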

		\begin{proof}
			\eqref{Equ_Isol_u_j dominated by leq -1/2 modes} is a direct consequence of \eqref{Equ_Isol_u_j dominated by h_1}. We now focus on proving \eqref{Equ_Isol_u_j dominated by h_1}.
			
			Since $(\orig, 0)$ is a nondegenerate singularity of $\bM$, by Theorem \ref{thm:NormalForm} and interior parabolic estimate, for $\tau\geq \tau_0(n, \mbfL, \bM)\gg 1$, $\cM(\tau)$ is graphical over $\cC_{n,k}\cap Q_{3\mbfL}$, with the graphical function $u(\cdot, \tau)$ satisfying pointwisely \[
			u(\theta, y; \tau) = c_{n,k}\tau^{-1}(|y|^2 - 2) + o(\tau^{-1})\,, \quad \text{ in }\ \cC_{n,k}\cap Q_{3\mbfL}
			\]
			for some $c_{n,k}>0$. While by \eqref{Equ_Isol_RMCF w new base point}, $\cM_j(a_j) = \lambda_j\cdot \cM(a_j') - (0, \hat y_j)$, where by \eqref{Equ_Isol_e^(a_j)(|x_j|^2+ |t_j|) to 0},
			\begin{align*}
				\lambda_j:= \sqrt{1-t_j e^{a_j}} \to 1\,, & &
				a_j':= a_j -\log(1-t_je^{a_j}) \to +\infty\,, & &
				\hat y_j:= \lambda_je^{a_j/2}y_j
			\end{align*}
			as $j\to \infty$, and by definition of $a_j$, $|\hat y_j|\to \mbfL$.
			
			By Lemma \ref{Lem_App_Graph over Cylinder} (iv), in $\cC_{n,k}\cap Q_\mbfL$, for $j\gg 1$, 
			\begin{align*}
				u_j(\theta, y; a_j) & = \lambda_j u(\theta, \lambda_j^{-1}(y+\hat y_j); a_j') + \sqrt{2(n-k)}(\lambda_j - 1) \\
				& = \underbrace{\lambda_j \cdot c_{n,k} (a_j')^{-1}(\left|\lambda_j^{-1}(y+\hat y_j)\right|^2-2) + \sqrt{2(n-k)}(\lambda_j - 1)}_{=:\ q_j(\theta, y)} + o((a_j')^{-1}) \,.
			\end{align*}
			Here $q_j$ is a quadratic polynomial in $y$ and is invariant in $\theta$. Explicitly, 
			
			\[ q_j(\theta, y) = \alpha_{j,2}|y|^2 + \alpha_{j,1} \ y\cdot\frac{y_j}{|y_j|} + \al_{j,0}, \]
			where 
			\begin{align*}
				\al_{j,2} & :=  \lambda_j^{-1}c_{n,k}(a_j')^{-1}, \\ 
				\al_{j,1} & := 2\lambda_j^{-1} \cdot c_{n,k}(a_j')^{-1}|\hat y_j|\,, \\
				\al_{j,0} & := \lambda_j^{-1}c_{n,k}(a_j')^{-1}(|\hat y_j|^2-2\lambda_j^2) + \sqrt{2(n-k)}(\lambda_j-1).
			\end{align*}
			
			Since $y_j/|y_j|\to \by$, we then have as $j\to \infty$,
			\begin{align*}
				\inf_{c>0, c'\in \RR}\|c^{-1}u_j(\cdot, a_j) - c' - y\cdot\by\|_{L^2} & \leq \|\al_{j,1}^{-1}(q_j - \al_{j,0}) - y\cdot \by\|_{L^2(\cC_{n,k}\cap Q_\mbfL)} + o(\al_{j,1}^{-1}(a_j')^{-1}) \\
				& \leq \al_{j, 1}^{-1}\,\al_{j,2}\||y|^2-2\|_{L^2(\cC_{n,k}\cap Q_\mbfL)} + o(1)  = \Psi(\mbfL^{-1}|n) + o(1)\,.
			\end{align*}
			This proves \eqref{Equ_Isol_u_j dominated by h_1}. 
			
		\end{proof}
		With this Claim, let $R_1(n, \eps_2)\gg 1$, $\bar{\delta}:= \min\{\delta_j(n, \delta_7(n, \eps_2)): 1\leq j\leq 7\}\in (0, \eps_2)$ be specified throughout Section \ref{Sec_L^2 Mono}, $\mbfL(n, \eps)\gg1$ be such that $\mbfL\geq R_1(n, \eps_2)$ and $\Psi(\mbfL^{-1}|n)< \bar\delta$. Define \[
		b_j:= \sup\{\tau\geq a_j: \mbfd_{n,k}(\cM_j(\tau'))\leq \bar{\delta}, \; \forall\, \tau'\in [a_j, \tau+2]\}
		\]
		Then when $j\to \infty$, we have $b_j-a_j\to +\infty$ (a priori, $b_j$ might be $+\infty$.) 
		
		We first conclude from \eqref{Equ_Isol_Time Transl cM_j has bded cN}, \eqref{Equ_Isol_u_j dominated by leq -1/2 modes}, Corollary \ref{Cor_L^2 Mono_RMCF w graphical eigenfunc has cN = spectrum} and \ref{Cor_L^2 Mono_Discrete Growth Mono} that \[
		\limsup_{j\to \infty} \sup_{\tau\in [a_j, b_j],\ |s|\leq \eps_2e^{-\tau}} \cN(\tau; \cM_j^{(\orig, s)}) \leq -1/2+\eps_2 < -1/4\,,
		\]
		Hence when $j\gg 1$, by definition of $\cN_{n, k}(\cdot, \cM_j)$ and Corollary \ref{Cor_L^2 Mono_Weighted L^2 Mono and Est for RMCF}, $\forall\,\tau_1<\tau_2\in [a_j, b_j)$, we have \[
		\mbfd_{n,k}(\cM_j(\tau_2)) \geq C(n)^{-1}e^{(\tau_2-\tau_1)/4}\mbfd_{n,k}(\cM_j(\tau_1))\,.
		\]
		Together with the definition of $b_j$, this in particular implies $b_j<+\infty$ and
		\begin{align}
			\mbfd_{n,k}(\cM_j(\tau)) \leq C(n)e^{(\tau- b_j)/4}\mbfd_{n,k}(\cM_j(b_j)) \,, \quad \forall\, \tau \in [a_j, b_j]\,.  \label{Equ_Isol_d(tau) < e^(tau/4) backward in time}
		\end{align}
		
		Also by \eqref{Equ_Isol_Time Transl cM_j has bded cN} and Lemma \ref{Lem_L^2 Mono_h_1 dominate in all scale}, when $j\gg1$, 
		\begin{align}
			\limsup_{j\to \infty} \inf_{c>0, c'\in \RR}\|c^{-1}u_j(\cdot, b_j) - c' - y\cdot\by\|_{L^2} \leq \delta_7(n, \eps_2)\,.  \label{Equ_Isol_In Pf_|(u_j)_y(b_j) - y|<eps}
		\end{align}
		where $\delta_7$ is specified in Lemma \ref{Lem_L^2 Mono_nu cdot by <0}.
		
		Let $\lambda_j:= e^{-b_2/2}$ so that $\hat{\mbfM}_j:= \lambda_j^{-1}(\mbfM - (0, y_j, t_j))$ has its rescaled mean curvature flow based at $(\orig, 0)$ to be $\tau\mapsto \hat{\cM}_j = \cM_j(\tau + b_j)$.  We collect properties of $\hat{\cM}_j$ from the analysis above:
		\begin{enumerate}[label={\normalfont(\roman*)}] 
			\item By definition of $b_j$, we must have $\mbfd_{n,k}(\hat{\cM}_j(2)) = \bar{\delta}$. 
			
			\item When $\tau\in [a_j - b_j, 2]$, by \eqref{Equ_Isol_d(tau) < e^(tau/4) backward in time} 
			\begin{align}
				\mbfd_{n,k}(\hat{\cM}_j(\tau)) \leq \min\{\bar{\delta}, C(n)e^{\tau/4}\bar{\delta}\}\, \label{Equ_Isol_d_n,k(hat cM_j(tau)) exp decay near -infty} . 
			\end{align}
			\item Let $\hat{u}_j(\cdot, \tau)$ be the graphical function of $\hat{\cM}_j(\tau)$ over $\cC_{n, k}\cap Q_\mbfL$, $\tau\in [a_j-b_j, 0]$. Then by \eqref{Equ_Isol_In Pf_|(u_j)_y(b_j) - y|<eps}, 
			\begin{align*}
				\limsup_{j\to \infty} \inf_{c>0, c'\in \RR}\|c^{-1}\hat{u}_j(\cdot, 0) - c' - y\cdot\by\|_{L^2} \leq \delta_7(n, \eps_2)\,.  \label{Equ_Isol_In Pf_|(u_j)_y(b_j) - y|<eps}
			\end{align*}
		\end{enumerate}
		
		Hence let $j\to \infty$, $\hat\bM_j$ subconverges to some Brakke motion $\hat\bM_\infty$ with rescaled mean curvature flow $\hat\cM_\infty$ based at $(\orig, 0)$.  (i) guarantees that $\hat{\cM}_\infty\neq \cC_{n,k}$. While by (ii), (iii) and Remark \ref{Rem_Pre_exp decay near -infty implies bowl or cylinder}, $\hat{\bM}_\infty$ is one of the following
		\begin{enumerate}[label={\normalfont\alph*)}] 
			\item a space-time translation of round shrinking cylinder $\mbfC_{n,k}: \tau\mapsto \sqrt{-t}\cdot\cC_{n,k}$, $\tau\leq 0$, which doesn't agree with any spacial translation of $\mbfC_{n,k}$;
			\item a mean curvature flow generated by an element in $\mathscr{B}_{n,k}$, i.e. a translation, rotation and dilation of a bowl soliton $\times \RR^{k-1}$.
		\end{enumerate}
		
		We finally address the translation in $\RR^{n-k+1}$-direction. Let $\tilde\bM_j:= \lambda_j^{-1}(\bM - p_j) = \hat\bM_j - (\lambda_j^{-1}x_j, 0, 0)$. Note that since $p_j\in \Spt\bM$, we have $(\orig, 0)\in \tilde\bM_j$.
		\begin{Claim} \label{Claim_x_j/lambda_j bounded}
			We have $\ \limsup_{j\to \infty} \lambda_j^{-1}|x_j| <+\infty$.
		\end{Claim}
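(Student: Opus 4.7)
The plan is a proof by contradiction. Suppose along some subsequence $|\lambda_j^{-1}x_j|\to +\infty$, and set $Y_j := (\lambda_j^{-1}x_j, \orig) \in \RR^{n-k+1}\times \RR^k$. Since $p_j\in \spt\bM$, under the rescaling $\hat\bM_j = \lambda_j^{-1}(\bM-(\orig,y_j,t_j))$ the point $(Y_j,0)$ lies in $\spt\hat\bM_j$ with Gaussian density $\Theta(\hat\bM_j;(Y_j,0)) = \Theta(\bM;p_j)\geq 1$. Huisken's monotonicity formula applied at the basepoint $(Y_j,0)$ then yields the uniform lower bound
\[
\cF[\tilde\bM_j(-1)] \;=\; \int_{\RR^{n+1}} (4\pi)^{-n/2}\, e^{-|X-Y_j|^2/4}\, d\hat\bM_j(-1)(X) \;\geq\; 1,
\]
for all $j\gg 1$.

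I will now show that the right-hand side must tend to $0$ under the assumption $|Y_j|\to\infty$. The classification of $\hat\bM_\infty$ established earlier in the proof gives that $\Spt\hat\bM_\infty(-1)\cap(\RR^{n-k+1}\times \{\orig\})$ is a bounded compact subset of $\RR^{n-k+1}$: in case (a) it is the round sphere $\{|x - x_0|=r_0\}$ obtained by slicing the translated cylinder with $\{y=\orig\}$, and in case (b) it is a bounded level set of the convex bowl defining function $U$. Since $Y_j$ has vanishing $y$-coordinate but $|x$-part$|\to\infty$, every point of $\Spt\hat\bM_\infty(-1)$ with bounded $y$-coordinate lies at distance $\sim |Y_j|$ from $Y_j$, while large $|y|$ is suppressed by the Gaussian factor, yielding
\[
\int_{\RR^{n+1}} (4\pi)^{-n/2}\, e^{-|X-Y_j|^2/4}\, d\hat\bM_\infty(-1)(X) \;=\; O(e^{-c|Y_j|^2}) \;\longrightarrow\; 0.
\]

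The main obstacle is transferring this decay from $\hat\bM_\infty$ to the prelimit $\hat\bM_j$, because $Y_j$ escapes every fixed compact set in $\hat\bM_j$-coordinates, so the $C^\infty_{loc}$-convergence $\hat\bM_j\to\hat\bM_\infty$ does not directly control $\hat\bM_j$ near $Y_j$. The plan is to split the Gaussian integral into a "near-cylindrical" region of $\hat\bM_j(-1)$ concentrated in a bounded tube about $\Spt\hat\bM_\infty$ (where the weight centered at $Y_j$ is exponentially suppressed) and a complementary tail. For the tail I will combine the $L^2$ bound $\mbfd_{n,k}(\hat\cM_j(0))\leq C(n)\bar\delta$, the non-concentration estimate of Corollary \ref{Cor_L^2 Mono_Weighted L^2 Mono and Est for RMCF}, the entropy bound $\lambda[\bM]\leq \eps_1^{-1}$, and the $\alpha$-noncollapsing, mean-convex structure of $\hat\bM_j$ to show that any mass of $\hat\bM_j(-1)$ away from the near-cylindrical tube contributes only $o(1)$ to the Gaussian integral centered at $Y_j$. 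This yields $\cF[\tilde\bM_j(-1)]\to 0$, contradicting the lower bound above and forcing $\lambda_j^{-1}|x_j|$ to remain uniformly bounded along every subsequence. The hard part is this quantitative global tail control, where only entropy and noncollapsing are available a priori and must be combined with the $L^2$ monotonicity framework of Section \ref{Sec_L^2 Mono}.
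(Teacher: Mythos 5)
Your strategy contains a logical gap that is not merely technical: the very lower bound you establish in the first paragraph shows that the quantity you want to drive to zero \emph{cannot} go to zero. You correctly note that $(Y_j,0)\in\spt\hat\bM_j$ with $\Theta(\hat\bM_j;(Y_j,0))\geq 1$, and that Huisken's monotonicity therefore gives
\[
\cF[\tilde\bM_j(-1)] \;=\; \int_{\RR^{n+1}} (4\pi)^{-n/2}\, e^{-|X-Y_j|^2/4}\, d\hat\bM_j(-1)(X) \;=\; \Theta_{(Y_j,0)}(1;\hat\bM_j) \;\geq\; 1
\]
for \emph{every} $j$. But this is exactly the integral you then attempt to bound by $o(1)$. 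The mass of $\hat\bM_j(-1)$ responsible for the lower bound sits at distance $O(1)$ from $Y_j$ (since $(Y_j,0)$ is in the spacetime support), and no tail estimate — $L^2$-non-concentration, entropy bounds, or noncollapsing — can make its Gaussian-weighted contribution small, precisely because Huisken's monotonicity forces it to be at least $1$. The geometric observation that $\Spt\hat\bM_\infty(-1)$ stays at distance $\gtrsim|Y_j|$ from $Y_j$ is correct, but it concerns only the limit flow; the prelimit $\hat\bM_j(-1)$ has additional mass near $Y_j$ that escapes the $C^\infty_{loc}$ convergence, and this escaping mass is irreducible.

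The paper resolves this by a \emph{second} blow-down: set $\Lambda_j := \lambda_j^{-1}|x_j|$, $\tilde x_j := \Lambda_j^{-1}\lambda_j^{-1}x_j$ (a unit vector), and consider $\tilde\bM_j' := \Lambda_j^{-1}\tilde\bM_j$, so that the escaping point $Y_j$ is brought to the fixed unit vector $\tilde x_j$, and $(\orig,0)\in\spt\tilde\bM_j'$. The crucial extra input — which your proposal does not use — is the uniform exponential decay estimate \eqref{Equ_Isol_d_n,k(hat cM_j(tau)) exp decay near -infty}, valid for \emph{all} $\tau\in[a_j-b_j,2]$; because $-2\log\Lambda_j\in(a_j-b_j,0)$ when $j\gg 1$, this forces $\mbfd_{n,k}(\hat\cM_j'(\tau))\to 0$ locally uniformly, hence $\hat\bM_j'\to\mbfC_{n,k}$ in the Brakke sense, and $\tilde\bM_j'$ converges to a \emph{unit translation} of the round shrinking cylinder. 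That limit has Gaussian density $0$ at $(\orig,0)$ because the spine is at distance $1$ from the origin, while each $\tilde\bM_j'$ has density $\geq 1$ there; upper semicontinuity of density under Brakke convergence yields the contradiction. In short, your argument tries to compare a density ratio at scale $1$ with the limit flow, but the information lost near $Y_j$ is precisely what keeps the integral at $\geq 1$; the paper instead zooms out so that the decay estimate controls the flow globally at the new scale, and only then passes to the limit.
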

		We first finish the proof of Theorem \ref{Thm_Isol_Blowup Model} assuming this Claim. Clearly, Claim \ref{Claim_x_j/lambda_j bounded} guarantees that the subsequent limit $\tilde\bM_\infty$ of $\lambda_j^{-1}(\bM - p_j)$ is a spacial translation of $\hat\bM_\infty$, and then also satisfies one of a) and b). Since $(\orig, 0)\in \Spt\tilde\bM_\infty$, in case a), $\tilde\bM_\infty(0) \neq 0$, and hence is a smooth translation and dilation of $\cC_{n,k}$. Then by (iii) and Lemma \ref{Lem_L^2 Mono_nu cdot by <0}, we must have $\tilde\nu_j(\orig, 0)\cdot (0, \by)<0$ for $j\gg 1$.
		While in case b), still by (iii) and Lemma \ref{Lem_Pre_Compare h_1 dominate direction with transl direction}, the translating direction of $\tilde\bM_\infty(0)$ is $C(n)\delta_7$-close to $\by$ in $\RR^k$. This proves (b)'.
		
		\begin{proof}[Proof of Claim \ref{Claim_x_j/lambda_j bounded}.]
			Suppose for contradiction that, after passing to subsequences, $\Lambda_j:= \lambda_j^{-1}|x_j|\to +\infty$. Let 
			\begin{align*}
				\tilde x_j:= \Lambda_j^{-1}\cdot \lambda_j^{-1}x_j\,, & &
				\hat\bM_j':= \Lambda_j^{-1}\cdot \hat\bM_j\,, & &
				\tilde\bM_j':= \Lambda_j^{-1}\cdot \tilde\bM_j = \hat\bM_j' - (\tilde x_j, 0, 0)\,.
			\end{align*}
			Note that  $|\tilde x_j| = 1$, $(\orig, 0)\in \Spt\tilde\bM_j'$ and the rescaled mean curvature flow of $\hat\bM_j'$ at $(\orig, 0)$ is $\tau\mapsto \hat\cM_j'(\tau) = \hat\cM_j(\tau - 2\log \Lambda_j)$. Also note that $\Lambda_j\to +\infty$ and by \eqref{Equ_Isol_e^(a_j)(|x_j|^2+ |t_j|) to 0}, \[
			-2\log\Lambda_j = 2\log\lambda_j - 2\log|x_j| = -b_j - 2\log(e^{-a_j/2}o(1)) = (a_j - b_j) - 2\log (o(1)) \,.
			\]
			In particular, for $j\gg 1$, $-2\log \Lambda_j \in (a_j-b_j, 0)$. Hence by \eqref{Equ_Isol_d_n,k(hat cM_j(tau)) exp decay near -infty}, $\mbfd_{n,k}(\hat\cM_j'(\tau)) \to 0$ in $C^0_{loc}(\RR)$ and therefore, $\hat \bM_j' \to \mbfC_{n,k}$ in the Brakke sense as $j\to \infty$. Suppose that $\tilde x_j$ subconverges to some unit vector $\tilde x_\infty\in \RR^{n-k+1}$. Then, $\tilde\bM_j'$ subconverges to $\mbfC_{n,k} + (\tilde x_\infty, 0, 0)$, whose support does not contain $(\orig, 0)$. This is a contradiction.
		\end{proof}

		The following topological consequence is an implication of Theorem \ref{Thm_Isol_Blowup Model}.
		\begin{Cor} \label{Cor_Isol_<nu, y><0}
			Let $\mbfM$ be as in Theorem \ref{Thm_Isol_Main}, $\nu_t$ be the outward unit normal field of $\bM(t)$. Then there exist $r_\circ, t_\circ>0$ depending on $\mbfM$ such that for every $t\in (0, t_\circ]$, \[
			\phi(x, y; t):= \nu_t(x, y)\cdot (\orig, y)<0\,,  \]
			for every $(x, y)\in \bM(t)\cap \overline{\BB_{r_\circ}^{n-k+1}\times \BB_{r_\circ}^k}$. 
		\end{Cor}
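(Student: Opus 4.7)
The plan is to argue by contradiction directly from the blow-up classification just established in Theorem \ref{Thm_Isol_Blowup Model}. If the conclusion failed, I would extract a sequence $p_j=(x_j,y_j,t_j)\in\spt\bM$ with $p_j\to(\orig,0)$, $t_j>0$, and $\nu_{t_j}(x_j,y_j)\cdot(0,y_j)\geq 0$ for every $j$. The first step is to note that Lemma \ref{Lem_Isol_spt(M) subset cW} combined with $r^{-1}\zeta(r)\to +\infty$ forces $|y_j|\geq \zeta(|x_j|+\sqrt{t_j})>0$ for $j$ large (because $t_j>0$), so $y_j/|y_j|$ is well defined and, after passing to a subsequence, converges to a unit vector $\by\in\{\orig\}\times\RR^k$. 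Note also that the isolation of $(\orig,0)$ provided by Theorem \ref{Thm_Isol_Blowup Model} (via Brakke--White regularity) guarantees $(x_j,y_j)$ is a regular point of $\bM(t_j)$ for $j$ large, so the normal $\nu_{t_j}(x_j,y_j)$ is genuinely defined.

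Next, I would apply Theorem \ref{Thm_Isol_Blowup Model} with this $\by$ to produce blow-up factors $\lambda_j\to 0$ such that $\tilde\bM_j:=\lambda_j^{-1}(\bM-p_j)$ subconverges locally smoothly on $\RR^{n+1}\times\RR_{\leq 0}$ to an ancient flow $\tilde\bM_\infty$ satisfying either (a) or (b) of that theorem. The critical observation is that the translation--dilation $X\mapsto \lambda_j^{-1}(X-(x_j,y_j))$ preserves tangent planes, hence unit normal vectors, so $\tilde\nu_j(\orig,0)=\nu_{t_j}(x_j,y_j)$ as vectors in $\RR^{n+1}$. The two orientation conventions agree, because ``outward'' for $\bM$ near the cylindrical singularity coincides with ``pointing away from the spine'' of the cylinder model (the bounded mean convex domain contains the spine), which is exactly the convention adopted in Theorem \ref{Thm_Isol_Blowup Model}(a) and preserved by the rescaling. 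The contradiction hypothesis thus translates to
\[
\tilde\nu_j(\orig,0)\cdot \frac{(0,y_j)}{|y_j|}\geq 0\quad\text{for every }j,
\]
and passing to the limit $y_j/|y_j|\to\by$ gives $\liminf_{j\to\infty}\tilde\nu_j(\orig,0)\cdot(0,\by)\geq 0$.

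The final step rules out each of the two cases. In case (a) Theorem \ref{Thm_Isol_Blowup Model} directly asserts $\tilde\nu_j(\orig,0)\cdot(0,\by)<0$ for $j\gg 1$, a direct contradiction. In case (b), $\tilde\bM_\infty(0)\in\mathscr{B}_{n,k}$ is a bowl soliton times $\RR^{k-1}$ with translating direction $(0,\by)$; the final assertion of Lemma \ref{Lem_Pre_Compare h_1 dominate direction with transl direction} (which is a straightforward consequence of convexity of the bowl and its advancing in direction $(0,\by)$) forces its outward unit normal field to satisfy $\tilde\nu_\infty\cdot(0,\by)<0$ everywhere, in particular at the regular point $(\orig,0)\in\spt\tilde\bM_\infty(0)$. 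Local smooth convergence $\tilde\bM_j\to\tilde\bM_\infty$ near $(\orig,0)$ then upgrades this to $\tilde\nu_j(\orig,0)\cdot(0,\by)<0$ for $j\gg 1$, again contradicting the displayed inequality.

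The only minor technical obstacle I anticipate is verifying that the ``outward'' orientation convention for $\bM$ transports correctly through both the rescalings and the two limit models, but this is essentially automatic once one notes that the translation--dilation preserves the enclosed mean convex domain (up to translation--dilation) and that the limit in case (b) is a convex translator. With the orientation matched, the corollary follows from Theorem \ref{Thm_Isol_Blowup Model} in two short lines.
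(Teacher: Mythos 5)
Your proposal is correct and follows essentially the same route as the paper's own proof: contradiction sequence $p_j\to(\orig,0)$ with $\nu_{t_j}\cdot(\orig,y_j)\geq 0$, use Lemma \ref{Lem_Isol_spt(M) subset cW} to get $y_j\neq\orig$ and extract $\by$, apply Theorem \ref{Thm_Isol_Blowup Model}, then rule out case (a) by its built-in normal estimate and case (b) via the normal-direction assertion of Lemma \ref{Lem_Pre_Compare h_1 dominate direction with transl direction}. The extra remarks you include (regularity of the points $(x_j,y_j)$ so the normal is defined, the invariance of unit normals under translation--dilation so $\tilde\nu_j(\orig,0)=\nu_{t_j}(x_j,y_j)$, and the consistency of the ``outward'' convention) are all sound and merely spell out what the paper leaves implicit.
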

		\begin{proof}
			Suppose for  contradiction, there exists $p_j = (x_j, y_j, t_j)\in \spt\mbfM$ approaching $(\orig, 0)$ such that $t_j\geq 0$ but
			\begin{align}
				\nu_{t_j}(x_j, y_j)\cdot (\orig, y_j) \geq 0\,.  \label{Equ_Isol_In Pf_<nu, y>geq 0}
			\end{align}
			Also by Lemma \ref{Lem_Isol_spt(M) subset cW}, $y_j\neq 0$ for $j\gg1$. Hence by possibly passing to a subsequence, there exists a unit vector $\by\in \RR^k$ such that $y_j/|y_j|\to \by$. By Theorem \ref{Thm_Isol_Blowup Model}, there exists $\lambda_j\searrow 0$ such that $\tilde\mbfM_j:= \lambda_j^{-1}(\mbfM - p_j)$ locally smoothly subconverges to some $\tilde\mbfM_\infty$ in $\RR^{n+1}\times \RR_{\leq 0}$ satisfying either (a) or (b) in Theorem \ref{Thm_Isol_Blowup Model}. But \eqref{Equ_Isol_In Pf_<nu, y>geq 0} suggests that case (a) can't happen;  While if case (b) happens, then by (\ref{Equ_Isol_In Pf_<nu, y>geq 0}), $\hat{\nu}_0\cdot (\orig, \by) \geq 0$, where $\hat{\nu}_0$ denotes the outward unit normal field of $\tilde\bM_\infty(0)$, which is a bowl soliton $\times \RR^{k-1}$ translating in $\by$-direction. This contradicts to Lemma \ref{Lem_Pre_Compare h_1 dominate direction with transl direction},
		\end{proof}
		
	\end{proof}

	We end this section by the following topological lemma, which is used in proving item \ref{Enum_MainThm_t>0} of Theorem \ref{Thm_Isol_Main}.
	\begin{Lem} \label{Lem_Isol_Abstract lemma of diffeo}
		Let $M$ be a connected compact $n$ manifold with nonempty boundary, $S$ be a closed simply connected $n$ manifold, $\rmp:M\to S$ be a local diffeomorphism onto its image, and restricted to a bijection near $\partial M$. If $S\setminus \rmp(\partial M) = S_+\sqcup S_-$, and $\rmp$ maps a collar neighborhood of $\partial M$ to a collar neighborhood of $\partial S_+$, then $\rmp$ is a diffeomorphism onto $S_+$. 
	\end{Lem}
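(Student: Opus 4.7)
The plan is to build a closed topological manifold $\hat M$ by gluing $\overline{S_-}$ to $M$ along the boundary diffeomorphism $\rmp|_{\partial M}\colon \partial M \to \partial S_+$, extend $\rmp$ to a map $\hat\rmp\colon \hat M \to S$ by the identity on $\overline{S_-}$, and then argue that $\hat\rmp$ is a covering of the simply connected space $S$ forced to be a homeomorphism.

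First observe that the hypotheses give a diffeomorphism $\rmp|_{\partial M}\colon \partial M \to \partial S_+$, and that a collar neighborhood $U$ of $\partial M$ in $M$ maps diffeomorphically onto a one-sided collar $V$ of $\partial S_+$ lying inside $\overline{S_+}$. Using the boundary diffeomorphism, form the closed topological $n$-manifold
\[
\hat M := \bigl(M \sqcup \overline{S_-}\bigr)\big/\bigl(x \sim \rmp(x) \text{ for all } x\in\partial M\bigr),
\]
and define $\hat\rmp\colon \hat M \to S$ by $\hat\rmp|_M = \rmp$ and $\hat\rmp|_{\overline{S_-}} = $ (inclusion into $S$); this is consistent on the gluing locus precisely because $\rmp|_{\partial M}$ and the inclusion agree there.

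Next I would verify that $\hat\rmp$ is a local homeomorphism. Away from the gluing locus this is immediate, since $\rmp$ is a local diffeomorphism on $\mathrm{Int}(M)$ and the identity is one on $\mathrm{Int}(\overline{S_-}) = S_-$. At a gluing point $p \in \partial M \sim \partial S_+$, the $M$-side of a small neighborhood of $p$ in $\hat M$ maps diffeomorphically onto the $\overline{S_+}$-side collar $V$ (by $\rmp|_U$), while the $\overline{S_-}$-side of that neighborhood maps identically to the $\overline{S_-}$-side collar of $\partial S_+$; these two halves fit together into a bijection of a full two-sided tubular neighborhood of $p$ in $\hat M$ onto a full two-sided tubular neighborhood of $\hat\rmp(p)$ in $S$. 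Thus $\hat\rmp$ is a proper (since $\hat M$ is compact) local homeomorphism, and its image is clopen and nonempty in the connected space $S$, so $\hat\rmp$ is a surjective covering map. Because $M$ and $\overline{S_-}$ are each connected and are glued along the nonempty $\partial M$, the space $\hat M$ is connected; combined with $\pi_1(S) = 0$, this forces $\hat\rmp$ to be a single-sheeted cover, hence a homeomorphism.

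The bijectivity of $\hat\rmp$ now delivers the conclusion: for $q\in S_-$, the unique preimage $\hat\rmp^{-1}(q) = \{q\}$ lies in $\overline{S_-}\subset \hat M$, so $\rmp^{-1}(q) = \emptyset$ in $M$; for $q\in \partial S_+$, the unique preimage lies in $\partial M$ and equals $(\rmp|_{\partial M})^{-1}(q)$; for $q\in S_+$, the unique preimage lies in $\mathrm{Int}(M)$. Hence $\rmp\colon M \to \overline{S_+}$ is a bijective smooth local diffeomorphism, therefore a diffeomorphism. The main obstacle is the verification at the gluing locus that $\hat\rmp$ is a local homeomorphism onto a full two-sided neighborhood of $\partial S_+$; this is precisely where the hypothesis that $\rmp$ sends a collar of $\partial M$ to a collar on the $\overline{S_+}$-side is used, so that the $\overline{S_-}$-piece of the gluing provides the missing half of the tubular neighborhood.
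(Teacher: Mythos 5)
Your argument is correct and follows essentially the same route as the paper: glue $\overline{S_-}$ to $M$ along $\rmp|_{\partial M}$ to form a closed manifold, extend $\rmp$ by the inclusion, observe that the result is a local homeomorphism between closed manifolds (hence a covering map), and invoke $\pi_1(S)=0$ to conclude it is a homeomorphism, from which the claim follows by restriction. You spell out the one point the paper leaves implicit, namely that the collar hypothesis is exactly what makes the gluing locus a genuine two-sided manifold neighborhood so that $\hat\rmp$ is a local homeomorphism there.
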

	\begin{proof}
		Notice that the glued map $\rmp\cup_{\partial M}\id_{S_-}: M\cup_{\partial M} S_- \to S $ is a local homeomorphism between closed manifolds, hence a covering map. Since $M$ is connected and $S$ is simply connected, this is a bijection, so is $\rmp$. 
	\end{proof}

	\section{Proof of Main Theorem}\label{S:Application} 
	The goal of this section is to complete the proof of the main theorem \ref{Thm_Isol_Main}. 
	\begin{proof}[Proof of Theorem \ref{Thm_Isol_Main}.]  
		Let $t\mapsto \bM(t)$ be the mean curvature flow as in Theorem \ref{Thm_Isol_Main}. 
		
		The backward in time cases of item \ref{Enum_MainThm_Iso} (isolatedness), \ref{Enum_MainThm_MeanConvex} (mean convexity) and item \ref{Enum_MainThm_t<0} (graphical before singular time) have been proved in \cite[Theorem 1.6 \& 1.7]{SunXue2022_generic_cylindrical}
		
		Item \ref{Enum_MainThm_BdyEvol} (boundary evolution) and \ref{Enum_MainThm_t=0} (profile at the singular time) will be proved in the following section \ref{SSSec_PseudoLoc} using pseudolocality.
		
		The forward in time case of item \ref{Enum_MainThm_MeanConvex} (mean convexity) and \ref{Enum_MainThm_Noncollap} (noncollapsing) will be proved in section \ref{SSSec_Ellip Reg}, with a review of elliptic regularization construction.
		
		The forward in time case of item \ref{Enum_MainThm_Iso} (isolatedness) and \ref{Enum_MainThm_t>0} (graphical after singular time) are then direct consequences of Theorem \ref{Thm_Isol_Blowup Model}, Lemma \ref{Lem_Isol_Abstract lemma of diffeo} and the proved item \ref{Enum_MainThm_MeanConvex} and \ref{Enum_MainThm_Noncollap} above. 
		
		More precisely, notice that Theorem \ref{Thm_Isol_Blowup Model} can be applied since the mean convexity and noncollapsing have been established, to prove item \ref{Enum_MainThm_Iso} (isolatedness) in a sufficiently small forward neighborhood $\BB^{n+1}_{r_\circ}\times [0, t_\circ]$, suppose for contradiction that there exists $p_j:= (x_j, y_j, t_j)\in \Sing(\mbfM)$ approaching $(\orig, 0)$ with $t_j\geq 0$ for $j\gg1$; By Lemma \ref{Lem_Isol_spt(M) subset cW}, $y_j \neq \orig$ for $j\gg1$. Then by possibly passing to a subsequence, there exists a unit vector $\by\in \RR^k$ such that $y_j/|y_j|\to \by$. By Theorem \ref{Thm_Isol_Blowup Model}, there exists $\lambda_j\searrow 0$ such that $\mbfM_j:= \lambda_j^{-1}(\mbfM - p_j)$ subconverges to some $\mbfM_\infty$ in the Brakke sense satisfying either (a) or (b) in Theorem \ref{Thm_Isol_Blowup Model}.
		Since in both cases, $\bM(0)$ is a smooth hypersurface with multiplicity $1$, by Brakke-White's epsilon Regularity Theorem \cite{White05_MCFReg}, when $j\gg1$, $\mbfM_j$ is regular near $(\orig, 0)$, and hence $p_j$ can't be a singular point of $\mbfM$. This is a contradiction.
		
		To prove item \ref{Enum_MainThm_t>0} (graphical after singular time), let $\phi$ be defined as in Corollary \ref{Cor_Isol_<nu, y><0}. Notice that by a direct calculation, the smooth map $\bP_t$ in \ref{Enum_MainThm_t>0} is non-degenerate at $(x, y)\in \spt\bM(t)$ if and only if $\phi(x, y; t)\neq 0$. Hence by Corollary \ref{Cor_Isol_<nu, y><0}, $\bP_t$ is a local diffeomorphism on $\bM(t)\cap \overline{\BB_{r_\circ}^{n-k+1}\times \BB_{r_\circ}^k}$ onto its image. And since by \ref{Enum_MainThm_BdyEvol}, near \[
		\partial \left(\bM(t)\cap \overline{\BB_{r_\circ}^{n-k+1}\times \BB_{r_\circ}^k} \right) = \bM(t)\cap (\BB_{r_\circ}^{n-k+1}\times \SSp_{r_\circ}^{k-1})  \,, \]
		$\bP_t$ is a diffeomorphism, it must be a global diffeomorphism by Lemma \ref{Lem_Isol_Abstract lemma of diffeo}.
		
		Finally, item \ref{Enum_MainThm_Top} (topology change) is a direct consequence of \ref{Enum_MainThm_t<0} and \ref{Enum_MainThm_t>0} proved above.      
	\end{proof}

	\subsection{Pseudo Locality} \label{SSSec_PseudoLoc}
	We first review a consequence of the pseudolocality theorem. Recall that the pseudolocality of mean curvature flow, first proved by Ecker-Huisken \cite{EckerHuisken91_Int_Reg} (see also \cite{ChenYin07_PseudoMCF, IlmanenNevesSchulze19_network}), showing that if two hypersurfaces with bounded entropy are (Lipschitz) close to each other in a ball, then under the evolution of mean curvature flow in a short amount of time, they are still close to each other in a possibly smaller ball. Using pseudolocality to the annulus region of the rescaled mean curvature flow that is very close to a cylinder with radius $\varrho'$ inside this annulus region gives the following lemma, which was proved in \cite{SunXue2022_generic_cylindrical}.
	\begin{lemma}[Lemma 6.1 in \cite{SunXue2022_generic_cylindrical}]\label{lem:PseudoLocalityAnnuli}
		Suppose $t\mapsto \mathbf{M}(t)$ is a unit regular cyclic Brakke flow, $t\in(-1,1)$, with a nondegenerate singularity modeled by $\cC_{n,k}$ at $(\orig, 0)$. 
		
		Then for any $\epsilon'\in (0, 1]$, there exists $\tau' = \tau'(\eps', \bM)>0$ and $R>0$ such that for any $\tau_0\geq\tau'$ and $t\in [t_0, -t_0/10]$, where $t_0:=-e^{-\tau_0}$, 
		$$\spt\mbfM(t)\cap \left(\BB_{\sqrt{-t_0}\sqrt{-\log(-t_0)}}\backslash \BB_{\sqrt{-t_0}(\sqrt{-\log(-t_0)}-R)} \right)$$ 
		is a smooth hypersurface $\epsilon'$-close to the homothetically shrinking mean curvature flow  $\mathbf{N}(t)$ in $C^1$-norm, and $\sqrt{-t_0}\epsilon'$-close in $C^0$-norm, where
		\[
		\mathbf N(t):=\sqrt{(-t_0/2-t)}\,\cC_{n,k}\,.
		\]
	\end{lemma}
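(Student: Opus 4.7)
The plan is to combine the normal form Theorem \ref{thm:NormalForm} with the Ecker--Huisken pseudolocality theorem, carrying out the argument on the parabolic scale $\lambda = \sqrt{-t_0}$. Consider the parabolic rescaling $\tilde\bM(\tilde t) := \sqrt{-t_0}^{-1}\,\bM(t_0 - t_0\tilde t)$ centered at $(\orig, t_0)$, so that $\tilde\bM(0) = \cM(\tau_0)$ and the target time interval $t\in [t_0, -t_0/10]$ becomes $\tilde t\in [0, 11/10]$. Under the same rescaling, $\mathbf N(t)$ becomes $\tilde{\mathbf N}(\tilde t) = \sqrt{3/2 - \tilde t}\,\cC_{n,k}$, a shrinking cylinder of radius $\varrho':=\varrho\sqrt{3/2}$ at $\tilde t = 0$ that stays bounded away from its own singular time $\tilde t = 3/2$ on $[0, 11/10]$. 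Hence it suffices to show that $\tilde\bM(\tilde t)$ is $\Psi(\tau_0^{-1}, R^{-1}|n)$-$C^1$-close to $\tilde{\mathbf N}(\tilde t)$ in the annular region $\SSp^{n-k}(\varrho)\times(\BB^k_{\sqrt{\tau_0}} \setminus \BB^k_{\sqrt{\tau_0} - R})$ uniformly in $\tilde t\in [0, 11/10]$, and then unscale by $\sqrt{-t_0}$.

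The input at $\tilde t = 0$ is immediate from nondegeneracy and \eqref{Equ_Pre_C^1 asymp in B_sqrt(tau)}: the graphical function of $\cM(\tau_0)$ over $\cC_{n,k}$ in the annular region is $C^1$-close to $\varrho\bigl(\sqrt{1 + \tau_0^{-1}(|y|^2 - 2k)/2} - 1\bigr)$, whose value when $|y|\approx \sqrt{\tau_0}$ tends to $\varrho(\sqrt{3/2} - 1) = \varrho' - \varrho$. Geometrically this says $\tilde\bM(0)$ is $C^1$-close to the round $\varrho'$-cylinder $\tilde{\mathbf N}(0)$ in the annulus, with error $\Psi(\tau_0^{-1}, R^{-1}|n)$.

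The main step is then to propagate this closeness forward in $\tilde t$. I would cover the annular region in $\tilde{\mathbf N}(0)$ by balls of a small but fixed radius centered at points $p\in\tilde{\mathbf N}(0)$, observe that $\tilde\bM(0)$ is $C^1$-close to the tangent hyperplane $T_p\tilde{\mathbf N}(0)$ in each such ball, and invoke the pseudolocality theorem of \cite{EckerHuisken91_Int_Reg} (or the formulation of \cite{IlmanenNevesSchulze19_network}) to obtain a spacetime neighborhood of $(p, 0)$ on which $\tilde\bM(\tilde t)$ is smooth and $C^1$-close to $T_p\tilde{\mathbf N}(0)$. Since $\tilde{\mathbf N}(\tilde t)$ is itself smooth and $C^1$-close to $T_p\tilde{\mathbf N}(0)$ in a slightly smaller ball by its explicit evolution, the triangle inequality yields the desired $C^1$-closeness between $\tilde\bM(\tilde t)$ and $\tilde{\mathbf N}(\tilde t)$. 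Unscaling by $\sqrt{-t_0}$ produces the $C^1$-estimate and, since distances scale by $\sqrt{-t_0}$, the $\sqrt{-t_0}\,\epsilon'$ $C^0$-estimate claimed in the lemma.

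The main obstacle is the bookkeeping of how much the annular region contracts under each application of pseudolocality, since the ball where $C^1$-control holds at $\tilde t = 11/10$ is strictly smaller than the ball where the $C^1$-hypothesis holds at $\tilde t = 0$. To absorb this one chooses $R$ in the lemma a fixed amount larger than the effective buffer required by pseudolocality (which depends only on $n$ and on the entropy bound $\lambda[\bM]<\infty$), and then takes $\tau_0$ large enough that the normal form is accurate throughout the slightly enlarged annulus. A minor technical point is that pseudolocality requires smoothness in a forward parabolic neighborhood; unit-regularity of $\bM$ together with Brakke--White $\eps$-regularity \cite{White05_MCFReg} supplies this in the $C^1$-graphical annular region where we apply the argument.
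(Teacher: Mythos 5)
Your reduction via the parabolic rescaling $\tilde\bM(\tilde t) = \sqrt{-t_0}^{-1}\bM(t_0(1-\tilde t))$ centered at $(\orig, t_0)$, and the translation of the conclusion into $C^1$-closeness between $\tilde\bM(\tilde t)$ and $\tilde{\mathbf N}(\tilde t) = \sqrt{3/2-\tilde t}\,\cC_{n,k}$ for $\tilde t\in[0,11/10]$, is exactly the right setup, and your use of \eqref{Equ_Pre_C^1 asymp in B_sqrt(tau)} to produce the $\tilde t=0$ hypothesis in the annulus is correct.

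However, the propagation step has a genuine gap. You claim that $\tilde{\mathbf N}(\tilde t)$ ``is itself smooth and $C^1$-close to $T_p\tilde{\mathbf N}(0)$ in a slightly smaller ball by its explicit evolution,'' and then use the triangle inequality with the pseudolocality output. This is false over the full time range: the radius of $\tilde{\mathbf N}(\tilde t)$ drops from $\sqrt{3/2}\,\varrho$ to $\sqrt{2/5}\,\varrho$ as $\tilde t$ goes from $0$ to $11/10$, an $O(\varrho)$ displacement, so $\tilde{\mathbf N}(\tilde t)$ leaves every fixed small neighborhood of $T_p\tilde{\mathbf N}(0)$. Correspondingly, pseudolocality in the Ecker--Huisken/INS form only gives persistence of graphicality over a fixed plane together with gradient and curvature bounds; it gives a \emph{bounded} (not small) height, and it does not assert that $\tilde\bM$ tracks the specific vertical displacement of $\tilde{\mathbf N}$. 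Even if you iterate with re-centered tangent planes $T_{p_j}\tilde{\mathbf N}(\tilde t_j)$, the inductive hypothesis you need at step $j+1$ is precisely that $\tilde\bM(\tilde t_j)$ has \emph{small} height over $T_{p_j}\tilde{\mathbf N}(\tilde t_j)$, and that small-height claim is not a consequence of the pseudolocality you invoked at step $j$.

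What is missing is a separate $C^0$-tracking argument showing $\tilde\bM(\tilde t)$ stays within $O(\eps')$ of $\tilde{\mathbf N}(\tilde t)$, not merely that it stays smooth and graphical. One way to supply it is by barriers/avoidance: since at $\tilde t=0$ the flow in the annulus is sandwiched between the shrinking-cylinder flows of radii $\varrho'\pm\Psi(\tau_0^{-1},R^{-1})$, and since those shrinking cylinders (suitably capped off, or combined with the known boundary behavior from item \ref{Enum_MainThm_BdyEvol}) serve as inner/outer barriers, the avoidance principle keeps $\tilde\bM(\tilde t)$ between cylinders of radius $\sqrt{3/2-\tilde t}\,\varrho \pm \Psi$. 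Alternatively one can write $\tilde\bM(\tilde t)$ as a graph $u(\cdot,\tilde t)$ over $\cC_{n,k}$ and use the quasilinear graphical MCF equation (Lemma \ref{Lem_App_RMCF equ}-type) plus Ecker--Huisken interior estimates to show $\|u(\cdot,\tilde t)-(\varrho\sqrt{3/2-\tilde t}-\varrho)\|_{C^1}$ stays small. Either way, only after establishing the $C^0$-tracking does the pseudolocality/interior-estimate machinery yield the $C^1$-closeness to $\tilde{\mathbf N}(\tilde t)$; your proposal as written conflates ``the flow stays a nice graph over a plane'' with ``the flow stays $C^1$-close to the chosen comparison flow.'' The final unscaling step and the bookkeeping of the spatial buffer into $R$ are otherwise fine.
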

	We remark that although in \cite{SunXue2022_generic_cylindrical} we only proved the $C^1$ closeness for the rescaled mean curvature flow, the $C^1$ and $C^0$ closeness for the mean curvature flow is just obtained directly from rescaling.
	
	Item \ref{Enum_MainThm_BdyEvol} of Theorem \ref{Thm_Isol_Main} follows directly from this Lemma by taking $r_\circ<<1$. Another direct consequence of Lemma \ref{lem:PseudoLocalityAnnuli} is the shape of the flow at $t= 0$, which proves Theorem \ref{Thm_Isol_Main} \ref{Enum_MainThm_t=0}: 
	\begin{proposition}\label{prop:0 time profile}
		Under the assumption of Lemma \ref{lem:PseudoLocalityAnnuli}, and let $t':= -e^{-\tau'}$, the hypersurfaces $\bM(t)$ will converge to a smooth hypersurface as $t\nearrow 0$, denoted by $\bM(0)$, in $C^1$-norm in $\BB_{\sqrt{-t'}\sqrt{-\log(-t')}}\ \backslash\{\orig\}$, and we can write $\bM(0)$ as a graph of function $v(\theta,y) - \varrho$ over $\cC_{n,k}$, satisfying the following $C^0$-expansion \[
		v(\theta,y) =  \frac{\varrho|y|}{2\sqrt{-\log|y|}}(1 + o_{y}(1)),
		\]
		where recall $\varrho = \sqrt{2(n-k)}$.
	\end{proposition}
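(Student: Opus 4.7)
The plan is to apply Lemma \ref{lem:PseudoLocalityAnnuli} with the parameter $t_0$ chosen adaptively as a function of the radius $|y|$ at which we wish to read off the profile of $\bM(0)$, so that $|y|$ falls inside the annular region produced by the lemma. Concretely, for $|y|$ small set $t_0 = t_0(|y|) \in (-1, 0)$ by the equation $|y| = \sqrt{-t_0}\,(\sqrt{-\log(-t_0)} - R/2)$, placing $|y|$ at the middle of the annulus. Strict monotonicity of $s \mapsto \sqrt{s(-\log s)}$ near $s = 0$ makes $t_0(|y|)$ well-defined, continuous and tending to $0$ as $|y| \to 0$. Taking logarithms yields $-\log(-t_0) = -2\log|y|\,(1+o(1))$, whence
\begin{equation*}
-t_0(|y|) = \frac{|y|^2}{-2\log|y|}\,(1+o(1)) \qquad \text{as } |y| \to 0.
\end{equation*}

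With this choice of $t_0$, Lemma \ref{lem:PseudoLocalityAnnuli} gives, for any prescribed $\epsilon' > 0$ (attainable by taking $|y|$ small enough), that $\spt\bM(t)$ on the annulus is $\epsilon'$-close in $C^1$ to the shrinking cylinder flow $\mathbf{N}(t) = \sqrt{-t_0/2 - t}\,\cC_{n,k}$ for $t \in [t_0, -t_0/10]$. To extend the closeness up to $t = 0$, I would re-apply Ecker--Huisken pseudolocality on the short interval $[-t_0/10, 0]$, comparing $\bM$ with $\mathbf{N}$: the latter persists smoothly until $t = -t_0/2 > 0$, and the initial data at $t = -t_0/10$ are graphically $C^1$-close with second fundamental form of order $(-t_0)^{-1/2}$, so standard parabolic estimates propagate the closeness in a slightly shrunk annulus. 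Taking $t \to 0^-$ yields $\bM(0)$ as a smooth hypersurface $C^1$-close to $\mathbf{N}(0) = \sqrt{-t_0/2}\,\cC_{n,k}$, a round cylinder of $x$-radius $\sqrt{-t_0/2}\,\varrho$. Writing $\bM(0)$ locally as the graph of $v - \varrho$ over $\cC_{n,k}$ and substituting the asymptotic for $-t_0(|y|)$ from the first paragraph then gives
\begin{equation*}
v(\theta, y) \;=\; \sqrt{-t_0(|y|)/2}\,\varrho\,(1 + O(\epsilon')) \;=\; \frac{\varrho|y|}{2\sqrt{-\log|y|}}\,(1 + o_y(1)).
\end{equation*}
Ranging $|y|$ over small positive values and patching the local $C^1$ estimates gives $\bM(t) \to \bM(0)$ in $C^1$ on $\BB_{\sqrt{-t'}\sqrt{-\log(-t')}} \setminus \{\orig\}$.

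The main obstacle is the extension of the pseudolocality closeness from $[t_0, -t_0/10]$ to $[t_0, 0]$, since $t = 0$ lies outside the time interval furnished by Lemma \ref{lem:PseudoLocalityAnnuli}. Although the reference cylinder flow $\mathbf{N}$ remains smooth well past $t = 0$, one must verify that $\bM(t)$ does not deviate significantly from $\mathbf{N}(t)$ during the remaining time of length $|t_0|/10$, over which the reference radius itself changes by $O(\sqrt{-t_0})$. The cleanest way is via inner/outer cylindrical barriers: fattening $\mathbf{N}(t)$ by $O(\epsilon'\sqrt{-t_0})$ in radius produces super- and subsolutions of MCF that enclose $\bM(-t_0/10)$ and, by the avoidance principle, continue to enclose $\bM(t)$ for all $t \in [-t_0/10, 0]$. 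This yields a $C^0$ bound on the radius of $\bM(0)$ with error $o(\sqrt{-t_0})$ as $\epsilon' \to 0$, and the $C^1$ upgrade then follows from interior parabolic Schauder estimates given the uniformly bounded geometry of $\bM$ in this region.
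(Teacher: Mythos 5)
Your approach is essentially the paper's: choose $t_0 = t_0(|y|)$ so that $|y|$ lands in the annulus of Lemma~\ref{lem:PseudoLocalityAnnuli}, read off the cylinder radius $\sqrt{-t_0/2}\,\varrho$ at $t = 0$, and invert the relation $|y| \approx \sqrt{-t_0}\sqrt{-\log(-t_0)}$ to get the stated asymptotic. Your computation $-t_0(|y|) = \frac{|y|^2}{-2\log|y|}(1+o(1))$ is equivalent to the paper's, which routes through the auxiliary function $W(r) = r\sqrt{-\log(r^2)}$ and computes $W^{-1}(s) = \frac{s}{\sqrt{-2\log s}}(1+o_s(1))$. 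The choice of putting $|y|$ in the middle rather than at the outer edge of the annulus is immaterial.

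However, the entire extension step in your second and third paragraphs is unnecessary and comes from a sign slip. Since $t_0 = -e^{-\tau_0} < 0$, the time window $[t_0, -t_0/10]$ in the lemma has its right endpoint $-t_0/10 = e^{-\tau_0}/10 > 0$. In particular $0 \in [t_0, -t_0/10]$, so the lemma already gives the $C^1$ and $C^0$ closeness to $\mathbf{N}(0) = \sqrt{-t_0/2}\,\cC_{n,k}$ at $t = 0$ directly, which is what the paper invokes. There is no gap to bridge: the interval you write, $[-t_0/10, 0]$, is actually backward in time when $t_0 < 0$, which is the tell. This does not invalidate your argument — the barrier construction would just confirm what the lemma already asserts — but it adds superfluous machinery and signals a misreading of the lemma's hypotheses that would matter in a setting where the reference flow genuinely did expire before $t = 0$.
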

	
	\begin{proof}
		Let $W(r):= r\sqrt{-\log(r^2)}$, note that $W$ is increasing on $[0, e^{-1})$ and $W(0) = 0$. 
		
		By the Pseudo-locality Lemma above, for every $t_0\in [t', 0)$ and $|y| = \sqrt{-t_0}\sqrt{-\log(-t_0)} = W(\sqrt{-t_0})$, we have $\spt\bM(0)$ is a graph over $\cC_{n,k}$ of some $v - \sqrt{2(n-k)}$ near $\partial \BB_{W(\sqrt{-t_0})}$, and  \[
		v(\theta, y) = \varrho\sqrt{-t_0/2} + o(\sqrt{-t_0}) = \frac{\varrho}{\sqrt{2}}W^{-1}(|y|)\left(1 + o_{t_0}(1) \right) \,,
		\]
		where $o_{t_0}(1)$ represents some function approaching $0$ when $t_0\to 0$. 
		
		We now study the asymptotics of $W^{-1}(|y|)$ when $|y|\sim 0$: As $r\searrow 0$, we have $W(r)/r\nearrow +\infty$ and $W(r)^2/r \searrow 0$. Hence if we let $g(s):= W^{-1}(s)/s$, then $g(s)\searrow 0$ and $g(s)/s\nearrow +\infty$ as $s\to 0$. Moreover, \[
		s = W(s\cdot g(s)) = s\cdot g(s)\sqrt{-2\log(s) - 2\log(g(s))} = (1+o_s(1))s\cdot g(s)\sqrt{-2\log(s)}\,;
		\] 
		And hence, \[
		W^{-1}(s) = s\cdot g(s) = \frac{s}{\sqrt{-2\log (s)}}\cdot (1+o_s(1))
		\]
	\end{proof}
	
	\subsection{Elliptic Regularization and Noncollapsing} \label{SSSec_Ellip Reg}
	We first recall the setting of elliptic regularization by Ilmanen \cite{Ilmanen94_EllipReg}. While this method works for a general hypersurface, we will be focused on mean convex case. Suppose $\cK_0$ is a closed mean convex hypersurface in $\R^{n+1}$, and we assume $\cK_0=\pr\Omega$ where $\Omega$ is a smooth domain. For every $\lambda>0$, let $N_\lambda$ be a minimizer (as a $n+1$-flat chain mod $2$ in $\RR^{n+1}\times \RR$) of the Ilmanen's functional \[ 
	\cI_\lambda(N):=\int_{N} e^{-\lambda x_{n+2}}d\cH^{n+1}(x) 
	\]
	in $\overline{\Omega}\times\R$ subject to $\pr N_\lambda=[\cK_0\times\{0\}]$. Here, given an $n$-dimensional submanifold (or more generally, an $n$-rectifiable set) $K$, we use $[K]$ to denote the mod $2$ $n$-current generated by $K$, and use $|K|$ to denote the integral $n$-varifold associated to $K$. By \cite[Appendix A]{White15_SubseqSing_MeanConvex}, $N_\lambda$ is the graph of a smooth function, and $t\mapsto \bN_\lambda(t) := N_\lambda -\lambda t\, \vec{e}_{n+2}$ is a mean curvature flow. Moreover, as $\lambda\to\infty$, $t\mapsto \bN_\lambda(t)$ subconverges in the Brakke sense to $t\mapsto \mbfM(t)\times\R$ in $\RR^{n+1}\times \RR$ over $t\in (0, +\infty)$, where $t\mapsto\mbfM(t)$ is a unit regular cyclic mod $2$ Brakke flow with $\mbfM(0)=[\cK_0]$. This is a method to construct Brakke flow with prescribed initial data.
	
	In \cite[Section 5]{White15_SubseqSing_MeanConvex}, White discussed how to use the elliptic regularization to construct mean convex mean curvature flow with prescribed boundary $t\to\Gamma_t$. Following the same idea, we shall show that in a special case which is sufficient for our use, the given Brakke flow coincides with the Brakke flow from the elliptic regularization and therefore shares the favorable noncollapsing property.

	\begin{proposition}\label{Prop:MCF with boundary}
		Suppose $0<r_1<r_0<r_2$. Let $t\mapsto \Sigma_t$ be a unit-regular cyclic mod $2$ Brakke flow in $\BB^{n+1}_{r_2}$ over $[0,T]$. Suppose 
		\begin{enumerate}[label={\normalfont(\alph*)}] 
			\item \label{Item_EllReg_M_0MeanConv} $\Sigma_0 = \|\mbfM_0\|$, where $\mbfM_0 = \partial \Omega \cap \BB_{r_2}$ and $\Omega$ is a bounded smooth strictly mean convex domain in $\RR^{n+1}$ which meets $\partial\BB_{r}$ transversely for every $r\in [r_1, r_2]$;
			\item \label{Item_EllReg_BdyMonot} In $\BB_{r_2}\backslash \BB_{r_1}$, $\Sigma_t = \|\mbfM_t\|$ for all $t\in[0,T]$, where $t\mapsto \mbfM_t$ is a non-empty classical strictly mean convex mean curvature flow (and hence, $t\mapsto \bM_t$ moves monotone inward $\Omega$);
			\item \label{Item_EllReg_BdyFamilyExt} There exist an integer $1\leq k\leq n-1$, a constant $r_0'\in (0, \varsigma_{n,k}\,r_0)$, where $\varsigma_{n,k}\in (0,1]$ is determined by Lemma \ref{Lem_App_HSFoliation}, and a smooth monotonic deformation $\{\Gamma_t\}_{t\in [0, +\infty)}$ of hypersurfaces in $\partial \BB_{r_0}\cap \Omega$ such that 
			\begin{align*}
				\Gamma_t & = \spt\bM_t\cap \partial \BB_{r_0}\,, & & \forall\, t\in [0, T]\,; \\
				\Gamma_t & = (\SSp^{n-k}(r_0')\times \R^k)\cap \partial\BB_{r_0}\,, & & \forall\, t\geq T+1\,.
			\end{align*}
			Here monotonic is in the sense that $\Gamma_t$ is the boundary of $(\Omega\cap \partial \BB_{r_0})\setminus \cup_{s\leq t}\ \Gamma_s$, $\forall\, t\geq 0$.
		\end{enumerate}
		Then 
		\begin{enumerate} [label={\normalfont(\roman*)}] 
			\item \label{Item_EllReg_LSFnonFat} The level set flow (see Definition \ref{def:WSF}, Remark \ref{Rem:LSF} and discussion after it) $\cK$ generated by \[
			(\mbfM_0\cap \BB_{r_0})\times \{0\}\cup \bigcup_{t\in[0,T]}(\mbfM_t\cap \pr \BB_{r_0})\times \{t\}  \]
			is non-fattening and mean convex, with each time slice $\cK(t)$ $n$-rectifiable; 
			\item \label{Item_EllReg_BrakUniq} Inside $\BB_{r_0}$, $\Sigma_t=\|\cK(t)\|$ for all $t\in[0,T]$;
			\item \label{Item_EllReg_noncollap} For every $\eps\in (0, T)$, $\exists\,\alpha>0$ depending on $\eps,\ \mbfM_0$ and $(\Gamma_t)_{t\in[0,+\infty)}$ such that $\cK\times \RR$ is a $C^\infty_{loc}$-limit of a sequence of $\alpha$-noncollapsing classical mean curvature flow in $\BB_{r_0}\times \RR$ over $[\eps, T]$.
		\end{enumerate}
	\end{proposition}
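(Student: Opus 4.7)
\medskip

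\noindent\textbf{Proof proposal.} The plan is to construct a comparison flow via Ilmanen's elliptic regularization with prescribed lateral boundary, identify it with $\Sigma_t$ inside $\BB_{r_0}$ using uniqueness of mean convex weak flow, and then transfer noncollapsing from the smooth approximators to the limit.

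\smallskip

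\emph{Step 1 (Elliptic regularization with prescribed boundary).} Following \cite{Ilmanen94_EllipReg} and \cite[\S 5, App.~A]{White15_SubseqSing_MeanConvex}, for each $\lambda>0$ minimize $\cI_\lambda(N)=\int_N e^{-\lambda x_{n+2}}\,d\cH^{n+1}$ over $(n{+}1)$-flat chains mod $2$ in $\overline{\Omega\cap \BB_{r_0}}\times\RR$ with prescribed boundary $\pr N_\lambda=[\mbfM_0\cap\BB_{r_0}]\times\{0\}\cup\bigcup_{t\geq 0}[\Gamma_t]\times\{t\}$. Existence and regularity hinge on barriers: assumption \ref{Item_EllReg_BdyMonot} provides a strictly mean convex classical flow adjacent to $\pr\BB_{r_0}$, and Lemma \ref{Lem_App_HSFoliation} applied with $r_0'<\varsigma_{n,k}r_0$ yields a foliation of a neighborhood of the ``ends'' of the extended boundary data by strictly mean convex shrinking hypersurfaces. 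Together with the eventual cylindrical structure of $\Gamma_t$ for $t\geq T+1$, these barriers confine the minimizer, so $N_\lambda$ is smooth and strictly mean convex, and $\bN_\lambda(t):=N_\lambda-\lambda t\,\vec e_{n+2}$ is a smooth classical mean curvature flow in $\overline{\Omega\cap\BB_{r_0}}\times\RR$ for all $t\in\RR$, defined for all $t\in\RR$.

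\smallskip

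\emph{Step 2 (Passage to the limit; non-fattening).} Sending $\lambda\to\infty$, standard elliptic regularization theory gives $\bN_\lambda\to\bM_\infty\times\RR$ subsequentially as Brakke flows, where $\bM_\infty$ is a unit-regular cyclic mod-$2$ Brakke flow in $\overline{\BB_{r_0}}$ over $[0,\infty)$ which is mean convex (since each $\bN_\lambda$ is strictly mean convex) and which matches $\bM_t$ near $\pr\BB_{r_0}$ by the prescribed boundary. Set $\cK:=\Spt\bM_\infty\subset\RR^{n+1}\times[0,\infty)$. Because $\cK$ has a mean convex neighborhood at every spacetime point, Hershkovits-White \cite{HershkovitsWhite20_Nonfattening} imply that the level set flow generated by the heat boundary in \ref{Item_EllReg_LSFnonFat} coincides with $\cK$ and does not fatten, with each time slice $n$-rectifiable as the support of a unit-regular Brakke flow; this proves \ref{Item_EllReg_LSFnonFat}. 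For \ref{Item_EllReg_BrakUniq}, both $\Sigma_t$ and $\bM_\infty$ are unit-regular cyclic mod-$2$ Brakke flows with identical initial data in $\BB_{r_0}$ and matching smooth boundary evolutions on $\pr\BB_{r_0}$ by \ref{Item_EllReg_M_0MeanConv}–\ref{Item_EllReg_BdyMonot}. Non-fattening then forces $\Spt\Sigma_t=\cK(t)=\Spt\bM_\infty(t)$ within $\BB_{r_0}$; multiplicity-one (hence equality as measures) follows from unit regularity together with the fact that the classical flow $\bM_t$ in the annular region $\BB_{r_2}\setminus\BB_{r_1}$ is simple and smooth.

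\smallskip

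\emph{Step 3 (Noncollapsing).} Each $\bN_\lambda$ is a smooth, strictly mean convex classical MCF; by Andrews' theorem \cite{Andrews12_Noncollapsing} (cf.\ \cite{ShengWang09_SingMCF}), it is $\alpha_\lambda$-noncollapsing, with $\alpha_\lambda$ controlled by the initial/boundary data. The crucial point — which I expect to be the main obstacle — is producing a uniform constant $\alpha>0$ independent of $\lambda$ on the relevant slab $\{t\in[\eps,T]\}$. The strategy is to observe that on any fixed slab $\{|x_{n+2}|\leq C\}$, the rescalings $\lambda^{-1}\cdot\bN_\lambda$ converge (after appropriate parametrization) to the translating extension of $\bM_t$ inside $\BB_{r_0}$, which is smooth and strictly mean convex away from the heat boundary in $\{t\geq\eps\}$ by assumption \ref{Item_EllReg_M_0MeanConv} and the smoothness of $(\Gamma_t)_{t\in[0,T+1]}$. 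Hence the Andrews constant of $N_\lambda$ on that slab has a uniform positive lower bound depending only on $\eps$, $\mbfM_0$, and the extension $(\Gamma_t)$. Alternatively, one can invoke the Haslhofer-Kleiner compactness \cite{HaslhoferKleiner17} for $\alpha$-noncollapsing flows to pass a uniform $\alpha$ through the Brakke limit, once an initial uniform estimate at one time slice is established. Either route produces a sequence of $\alpha$-noncollapsing classical flows $\bN_\lambda$ in $\BB_{r_0}\times\RR$ with $\bN_\lambda\to\cK\times\RR$ in $C^\infty_{loc}$ on $[\eps,T]$, proving \ref{Item_EllReg_noncollap}.
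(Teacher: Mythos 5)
Your overall strategy — elliptic regularization with prescribed lateral boundary, identification with $\Sigma_t$, and transfer of noncollapsing from the smooth minimizers — is the paper's strategy. But there are three genuine gaps.

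First, in Step 1 you prescribe $\bigcup_{t\geq 0}[\Gamma_t]\times\{t\}$ as lateral boundary, which is noncompact; $\cI_\lambda$-minimizers over such a domain are not directly controlled. The paper handles this by observing that the level set flow generated by the infinite boundary data converges as $t\to\infty$ to a minimal variety with boundary $\Gamma_{T+1}$, and it is precisely Lemma~\ref{Lem_App_HSFoliation} (for $r_0'<\varsigma_{n,k}r_0$) that forces this limit to be \emph{smooth}, which then lets one choose a finite cap time $T'$ after which $\tilde\cK(t)$ is smooth and strictly mean convex, and prescribe a \emph{compact} boundary $S_\lambda = (\mbfM_0\times\{0\}) \cup (\tilde\cK(T')\times\{\lambda T'\}) \cup \bigcup_{t\leq T'}\Gamma_t\times\{\lambda t\}$. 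You instead invoke Lemma~\ref{Lem_App_HSFoliation} as a ``foliation barrier,'' which is not what that lemma provides.

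Second, and more seriously, your Step 3 rests on the claim that ``the rescalings $\lambda^{-1}\cdot\bN_\lambda$ converge to the translating extension of $\bM_t$ inside $\BB_{r_0}$, which is smooth and strictly mean convex away from the heat boundary in $\{t\geq\eps\}$.'' This is false in the setting of interest: $\cK(t)$ has interior singularities in $\BB_{r_0}$ (that is the whole point of the proposition), so the convergence cannot be smooth there, and the Andrews quantity is a global infimum that can degenerate at interior singular points as $\lambda\to\infty$. The mechanism you need, and which the paper uses, is the Andrews--Langford--McCoy viscosity inequalities for $Z^*/H$ and $Z_*/H$ on the static translator $N_\lambda$: these quantities satisfy a strong maximum principle and therefore attain their extrema \emph{on $\partial N_\lambda$}. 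Smooth convergence of $\bN_{\lambda_i}$ to $\tilde\cK\times\RR$ holds only in a spacetime neighborhood $\cU$ of the boundary data (by Brakke--White interior regularity plus White's boundary regularity), but that is enough: it gives a uniform two-sided bound on $Z^*/H$, $Z_*/H$ near $\partial N_{\lambda_i}$, and the maximum principle then propagates this to all of $N_{\lambda_i}\cap\{\lambda_i\eps\le x_{n+2}\le\lambda_i T'\}$, yielding a uniform $\alpha$. Haslhofer--Kleiner compactness, which you offer as an alternative, presupposes a uniform $\alpha$ and so cannot establish it.

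Third, in Step 2 your assertion ``non-fattening then forces $\Spt\Sigma_t = \cK(t)$, multiplicity one follows from unit regularity'' skips the actual uniqueness argument. A general unit-regular cyclic Brakke flow is not a priori known to coincide with the level set flow; the paper proves $\Sigma_t=\|\cK(t)\|$ by combining: (1) closure of support of $\Sigma_t$ is a weak set flow, hence contained in $\cK$; (2) White's stratification and tangent-flow classification plus Chodosh--Choi--Mantoulidis--Schulze's characterization of Brakke flows with small singular set to get spacetime connectedness of $\Reg(\cK)$; (3) uniqueness of Brakke flows along the regular part; and (4) smallness of $\Sing(\cK)$ to conclude equality of measures. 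Your argument as stated does not close this loop.
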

	The technical assumption \ref{Item_EllReg_BdyFamilyExt} is only used to prove \ref{Item_EllReg_noncollap}, and we conjecture that it can be dropped.
	
	It's easy to check that based on Theorem \ref{Thm_Isol_Main} \ref{Enum_MainThm_BdyEvol} (boundary evolution), \ref{Enum_MainThm_t<0} (graphical before singular time) and \ref{Enum_MainThm_t=0} (profile at singular time), this proposition implies the item \ref{Enum_MainThm_MeanConvex} (mean convexity) and \ref{Enum_MainThm_Noncollap} (noncollapsing) of Theorem \ref{Thm_Isol_Main}. 
	\begin{proof}[Proof of Proposition \ref{Prop:MCF with boundary}.]
		By assumption \ref{Item_EllReg_M_0MeanConv} and \ref{Item_EllReg_BdyMonot}, item \ref{Item_EllReg_LSFnonFat} follows from \cite[Theorem 4]{White15_SubseqSing_MeanConvex}, where we take $W=\BB_{r_0}\cap \Omega$, $\Sigma=\mbfM_0\cap \BB_{r_0}$, and $\Sigma'=\pr \BB_{r_0}\cap \Omega$, $\Gamma_t=\mbfM_t\cap \pr \BB_{r_0}$, and because we only care about the behavior of the flow in a finite amount of time, we do not need to consider $\Gamma_\infty$. As a by-product, the singular set of $\cK$ has parabolic Hausdorff dimension $\leq n-1$. 
		
		To prove item \ref{Item_EllReg_BrakUniq}, it remains to prove that $\Sigma_t=\|\cK(t)\|$. This is the consequence of a uniqueness theorem. By \cite{Ilmanen94_EllipReg, HershkovitsWhite23_Avoidance}, the closure of the support of $\Sigma_t$ is a weak set flow, hence it is contained in $\cK(t)$. By White's stratification theorem \cite{White97_Stratif} and White's classification of tangent flows of mean convex mean curvature flows \cite{White00, White03}, together with Chodosh-Choi-Mantoulidis-Schulze's characterization of Brakke flow with small singular sets \cite[Corollary G.5]{CCMS20_GenericMCF}, we know that the regular part $\Reg(\cK)$ is connected in spacetime. Together with the uniqueness of Brakke flow of regular mean curvature flow, e.g. \cite[Appendix C]{CCMS20_GenericMCF}, we know that the regular part of $\Sigma_t$ coincides with the regular part of $\cK(t)$. Finally, since the singular set of $\cK$ is small as proved in \ref{Item_EllReg_LSFnonFat}, we derive $\Sigma_t=\|\cK(t)\|$ for every $t\in [0, T]$. 
		
		Finally, we prove item \ref{Item_EllReg_noncollap} by revisiting the elliptic regularization construction, since the noncollapsing estimates in \cite{ShengWang09_SingMCF, Andrews12_Noncollapsing}, as well as some later works such as \cite{AndrewsLM13, Brendle15_inscribed} seem to rely on the parabolic maximum principle, and therefore the smoothness assumption of the flow. Our argument closely follows the process in White \cite[Section 5]{White15_SubseqSing_MeanConvex} and Haslhofer-Kleiner \cite[Section 4]{HaslhoferKleiner17}. First note that since $\cK(0) = \bM_0\cap \BB_{r_0}$ is smooth and strictly mean convex, by taking $\eps\ll 1$, we may further assume that $\cK(\eps)$ is a smooth strictly mean convex hypersurface with boundary.
		
		In the construction below, for every $\lambda>0$, $N_\lambda$ will be a smooth mean convex minimizer of the functional $\cI^\lambda$ with certain prescribed boundary to be specified, and suppose $V$ is the tangential projection of the time vector field $-\lambda \vec{e}_{n+2}$. Following \cite{AndrewsLM13}, we define
		\begin{align*}
			Z(x,y) = 2\langle x-y,\bn(x)\rangle|x-y|^{-2}, & & 
			Z^*(x) := \sup_{x\neq y\in N_\lambda}Z(x,y), & & 
			Z_*(x) := \inf_{x\neq y\in N_\lambda}Z(x,y).        
		\end{align*}
		Then \cite[Theorem 2]{AndrewsLM13} (see also \cite[(4.7)]{HaslhoferKleiner17}) shows the following inequalities in the viscosity sense, where $H>0$ is the (scalar) mean curvature of $N_\lambda$,
		\begin{align}
			\begin{split}
				\Delta \frac{Z^*}{H}
				+2\left\langle\nabla 
				\log H,\nabla \frac{Z^*}{H}\right\rangle
				-\nabla_V\frac{Z^*}{H} & \geq 0,
				\\
				\Delta \frac{Z_*}{H}
				+2\left\langle\nabla 
				\log H,\nabla \frac{Z_*}{H}\right\rangle
				-\nabla_V\frac{Z_*}{H} & \leq 0.
			\end{split}
		\end{align}
		Thus, $\frac{Z^*}{H}$ attains its maximum over $N_\lambda$ at the boundary $\pr N_\lambda$, and $\frac{Z_*}{H}$ attains its minimum over $N_\lambda$ at the boundary $\pr N_\lambda$. Therefore, it suffices to show that there's a sequence $\lambda_i\to+\infty$ such that, $\frac{Z^*}{H}$ and $\frac{Z_*}{H}$ associated to $N_{\lambda_i}$ above has a uniform two-sided bounded near $\pr N_{\lambda_i}$. 
		
		Now we give some detailed descriptions of $N_{\lambda}$ and its boundary. Let $\tilde\cK$ be the level set flow generated by \[
		S:= (\mbfM_0\cap \BB_{r_0}) \times \{0\}\cup \bigcup_{t\in[0, +\infty)}\Gamma_t\times \{t\}  \]
		Again by White \cite[Theorem 4]{White15_SubseqSing_MeanConvex}, using the boundary regularity of Brakke flow \cite{White21_MCFBoundary}, $\tilde\cK(t) = \cK(t)$ for $t\leq T$, and as $t\to +\infty$, $\tilde\cK(t)$ converges to some minimal variety $\tilde\cK_\infty$ which is smooth near $\partial \tilde\cK_\infty = \Gamma_{T+1}$, and then by Lemma \ref{Lem_App_HSFoliation}, must be a smooth minimal hypersurface with boundary. Hence there exists $T'\geq T+1$ such that $\tilde\cK(t)$ is smooth and strictly mean convex in $\BB_{r_0}$ for all $t\geq T'-1$.
		
		Now we prescribe the boundary for $N_\lambda$: \[
		S_\lambda := (\mbfM_0\times\{0\}) \cup (\tilde\cK(T')\times\{\lambda T'\}) \cup \bigcup_{t\in[0,{T'}]}(\Gamma_t\times\{\lambda t\})
		\]
		And let $N_\lambda$ be a minimizer of the functional $\cI^\lambda$ among all flat chain mod $2$ with boundary $[S_\lambda]$. Then by White \cite[Theorem 10]{White15_SubseqSing_MeanConvex}, $N_\lambda$ is the flat chain associated to a smooth strictly mean convex hypersurface in $\RR^{n+1}\times [0, \lambda T']$. By the process of Elliptic Regularization introduced above, the Brakke flow $\bN_\lambda: t\mapsto N_\lambda - \lambda t\, \vec{e}_{n+2}$ subconverges to a unit-regular cyclic Brakke flow $t\mapsto \mu_t\times \RR$ over $(0, T']$, where the support of $\{\mu_t\}_{t\in [0, T']}$ is a weak set flow generated by $S\cap \{0\leq t\leq T'\}$. Then by item \ref{Item_EllReg_BrakUniq}, $\mu_t = \|\tilde\cK(t)\cap \BB_{r_0}\|$ for every $t\in [0, T']$, and in particular, $\{\mu_t\}_{t\in [0, T']}$ is smooth and strictly mean convex in a neighborhood $\cU$ of $(S\cap \{0\leq t\leq T'\} )\cup (\cK(T')\times \{T'\})$. Hence in $\cU$, the flow $t\mapsto \tilde\cK(t)$ is $2\alpha$-noncollapsing for some $\al>0$. Then by Brakke-White interior Regularity \cite{White05_MCFReg} and White's boundary regularity \cite{White21_MCFBoundary}, there's a subsequence $\lambda_i\to +\infty$ such that $\bN_{\lambda_i}$ converges locally smoothly in $\cU\times \RR$ to $t\mapsto \tilde\cK(t)\times \RR$ over $(0, T']$, which implies a uniform two-sided bound of $\frac{Z^*}{H}$ and $\frac{Z_*}{H}$ near \[
		S_{\lambda_i}^\eps := (\cK(\eps)\times\{\lambda\eps\}) \cup (\tilde\cK(T')\times\{\lambda T'\}) \cup \bigcup_{t\in[\eps,{T'}]}(\Gamma_t\times\{\lambda t\})  \]
		for $N_{\lambda_i}$. Then by strong maximum principle as discussed above, $|\frac{Z^*}{H}|$ and $|\frac{Z_*}{H}|$ are uniformly bounded by $\frac{1}{\alpha}<+\infty$ on the whole $N_{\lambda_i}\cap \{\lambda_i\eps\leq t\leq \lambda_i T'\}$, which passes to limit as $\lambda_i\to +\infty$ and implies that $\tilde\cK\times \RR$ is a $C^\infty_{loc}$ limit of $\alpha$-noncollapsing classical mean curvature flow in $\BB_{r_0}\times \RR$ over $[\eps, T']$. This finishes the proof of \ref{Item_EllReg_noncollap}.  
	\end{proof}
	
	We remark that Brendle-Naff \cite{BrendleNaff21_LocalNoncollapsing} proved a local noncollapsing estimate for smooth mean convex mean curvature flows. If their proof can be adapted to mean convex mean curvature flow with singularities, one can get an alternative proof of Proposition \ref{Prop:MCF with boundary} \ref{Item_EllReg_noncollap}.

	\appendix
	\section{Analysis of Jacobi field equation on $\cC_{n,k}$.}\label{app_Jacobi}
	\begin{Lem} \label{Lem_App_Analysis of Parab Jacob field}
		Let $a+1< b$, $v\in C^2_{loc}(\cC_{n,k}\times (a,b))$ be a non-zero function so that $\|v(\cdot, \tau)\|_{L^2}<+\infty$ for every $\tau\in (a, b)$ and it solves
		\begin{align*}
			(\partial_\tau - L_{n,k})v = 0
		\end{align*}
		on $\cC_{n,k}\times (a,b)$. Define the \textbf{linear decay order} of $v$ at time $\tau$ by \[
		N_{n,k}(\tau; v) := \log\left( \frac{\|v(\cdot, \tau)\|_{L^2}}{\|v(\cdot, \tau + 1)\|_{L^2}}\right)
		\]
		Then we have,
		\begin{enumerate}[label={\normalfont(\roman*)}] 
			\item $N_{n,k}(\tau; v)\geq -1$ and is monotone non-increasing in $\tau\in (a, b-1)$.
			\item If for some $\tau_0\in (a, b)$, $\gamma\in \RR$ and $\sim\in \{\geq, >, =, <, \leq\}$, we have $\|\Pi_{\sim \gamma} (v(\cdot, \tau_0))\|_{L^2} = \|v(\cdot, \tau_0)\|_{L^2}$, where $\Pi_{\sim \gamma}$ is defined in \eqref{Equ_Pre_Proj onto sum of eigenspace}. Then for every $\tau\in (a,b-1)$, \[
			N_{n,k}(\tau; v)\sim \gamma\,.
			\] 
			In particular, if
			\[
			\int_{\cC_{n,k}} v(X, \tau_0) e^{-\frac{|X|^2}{4}}\ dX = 0\,.
			\]
			Then $N_{n,k}(\tau; v)\geq -1/2$, $\forall\,\tau\in (a,b-1)$, with equality holds for some $\tau'$ if and only if \[
			v(X, \tau) = e^{\tau/2}\psi(X)
			\]
			for some non-zero eigenfunction $\psi$ of $-L_{n,k}$ with eigenvalue $-1/2$.
			\item If for some $\tau_1<\tau_2\in (a, b-1)$, $N_{n,k}(\tau_1; v) = N_{n,k}(\tau_2; v) = \gamma$, then \[
			v(X, \tau) = e^{-\gamma\tau}\psi(X)
			\] 
			for some non-zero eigenfunction $\psi\in L^2(\cC_{n,k})$ of $-L_{n,k}$ with eigenvalue $\gamma$.
		\end{enumerate}
	\end{Lem}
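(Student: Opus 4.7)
The plan is to reduce everything to the spectral decomposition of $v(\cdot, \tau)$ with respect to the orthonormal basis $\{\psi_\alpha\}$ of $L^2(\cC_{n,k}, e^{-|X|^2/4})$ consisting of eigenfunctions of $-L_{n,k}$ with eigenvalues $\lambda_\alpha \in \sigma(\cC_{n,k}) \subset [-1, +\infty)$ (as listed in \eqref{Equ_Pre_sigma(C_n,k)}). Since $v(\cdot, \tau)\in L^2$ and $(\partial_\tau - L_{n,k})v = 0$ with $L_{n,k}$ self-adjoint with discrete spectrum, one can expand
\[
v(\cdot,\tau)=\sum_\alpha c_\alpha\, e^{-\lambda_\alpha(\tau-\tau_0)}\psi_\alpha, \qquad c_\alpha=\langle v(\cdot,\tau_0),\psi_\alpha\rangle_{L^2},
\]
convergent in $L^2$ for every $\tau\in (a,b)$ by the hypothesis $\|v(\cdot,\tau)\|_{L^2}<+\infty$. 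Set
\[
f(\tau):=\log\|v(\cdot,\tau)\|_{L^2}^2=\log\sum_\alpha c_\alpha^2 e^{-2\lambda_\alpha(\tau-\tau_0)}.
\]
The key observation is that $f$ is \emph{convex} on $(a,b)$: writing $w_\alpha(\tau):=c_\alpha^2 e^{-2\lambda_\alpha(\tau-\tau_0)}/\sum_\beta c_\beta^2 e^{-2\lambda_\beta(\tau-\tau_0)}$ as a probability distribution on the index set, one computes that $f''(\tau)$ equals the variance of $-2\lambda_\alpha$ under $w_\alpha(\tau)$, hence is $\geq 0$, and is $>0$ unless $v(\cdot,\tau)$ has spectral support on a single eigenvalue.

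For part (i), since $N_{n,k}(\tau;v) = -\tfrac{1}{2}(f(\tau+1)-f(\tau))$ is $-\tfrac{1}{2}$ times a difference of $f$ over a fixed-length interval, the monotonicity of $N_{n,k}$ in $\tau$ follows from convexity of $f$. The lower bound $N_{n,k}\geq -1$ is equivalent to $f(\tau+1)-f(\tau)\leq 2$, which reduces to the weighted-average bound
\[
\frac{\sum_\alpha c_\alpha^2 e^{-2\lambda_\alpha(\tau+1-\tau_0)}}{\sum_\alpha c_\alpha^2 e^{-2\lambda_\alpha(\tau-\tau_0)}} = \sum_\alpha w_\alpha(\tau)\,e^{-2\lambda_\alpha}\leq e^{2}
\]
since $\lambda_\alpha\geq -1$. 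For part (ii), the assumption $\|\Pi_{\sim\gamma}v(\cdot,\tau_0)\|_{L^2}=\|v(\cdot,\tau_0)\|_{L^2}$ means $c_\alpha=0$ unless $\lambda_\alpha\sim \gamma$, and this property is preserved by the flow (the evolution acts diagonally on eigenspaces). Then the same weighted-average identity gives $N_{n,k}(\tau;v)\sim \gamma$ directly. The ``in particular'' clause is the special case where $\int v\cdot 1\, e^{-|X|^2/4}=0$ forces $c_\alpha=0$ for the single $-1$-eigenvalue, leaving $v$ in the $\geq -1/2$ spectral part; the equality case follows because equality in the average bound demands that every $\alpha$ with $w_\alpha>0$ realize $\lambda_\alpha=-1/2$.

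For part (iii), $N_{n,k}(\tau_1;v)=N_{n,k}(\tau_2;v)=\gamma$ combined with monotonicity of (i) forces $N_{n,k}(\cdot;v)\equiv \gamma$ on $[\tau_1,\tau_2]$, hence $f(\tau+1)-f(\tau)$ is constant on $[\tau_1,\tau_2]$. By strict convexity of $f$ (and real-analyticity of $f$ coming from the discrete spectrum), this forces $f''\equiv 0$ on $(\tau_1,\tau_2+1)$, so the spectral distribution is concentrated on a single eigenvalue which must equal $\gamma$ to match the rate, yielding $v(X,\tau)=e^{-\gamma\tau}\psi(X)$ for some $\gamma$-eigenfunction $\psi$. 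The main technical care needed will be justifying the spectral expansion and its $L^2$-convergence for all $\tau\in(a,b)$ purely from the hypothesis $\|v(\cdot,\tau)\|_{L^2}<+\infty$ pointwise in $\tau$ --- this will be handled by a standard approximation: truncate to the first $N$ eigenmodes, apply the arguments above, and pass to the limit using monotone convergence of the partial sums and the fact that all comparisons are between non-negative quantities.
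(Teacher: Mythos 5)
Your proposal is correct and takes essentially the same approach as the paper: both expand $v$ in the $L^2$ eigenbasis of $-L_{n,k}$ and reduce all three parts to elementary inequalities for the resulting sums of exponentials. The paper proves monotonicity in (i) by a Cauchy--Schwarz argument giving $V V'' - (V')^2 \geq 0$ for $V(\tau)=\|v(\cdot,\tau)\|_{L^2}^2$, which is exactly your convexity of $f=\log V$ via the variance interpretation of $f''$; the lower bound $N\geq -1$, part (ii), and the equality/rigidity analysis in (ii)--(iii) are handled identically. One small caution: in (iii) you invoke "strict convexity and real-analyticity," but $f$ is not strictly convex in general; what you actually need (and what your variance picture already gives you) is that $f''(\tau_*)=0$ at a single time $\tau_*$ forces all nonzero coefficients $c_\alpha$ to share one eigenvalue, since the weights $w_\alpha(\tau)$ are supported on $\{\alpha : c_\alpha\neq 0\}$ independently of $\tau$ --- no analyticity argument is required.
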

	\begin{proof}
		(i) By the $L^2$-spectral decomposition, we can write 
		\begin{equation}\label{eq:Fourier}
			v(\cdot,\tau)=\sum_{k=1}^\infty e^{-\lambda_k\tau} \phi_k,
		\end{equation}
		where $\lambda_1<\lambda_2\leq \cdots$ are eigenvalues of $-L_{n,k}$ and $\phi_k$ are corresponding eigenfunctions. By Plancherel identity, we have \[
		V(\tau):= \|v(\cdot,\tau)\|_{L^2}^2=\sum_{k=1}^\infty e^{-2\lambda_k\tau}\|\phi_k\|_{L^2}^2.  \] 
		Thus \[
		N_{n,k}(\tau;v)
		= \frac{1}{2}\log \left(\frac{\sum_{k=1}^\infty e^{-2\lambda_k\tau}\|\phi_k\|_{L_2}^2}{\sum_{k=1}^\infty e^{-2\lambda_k(\tau+1)}\|\phi_k\|_{L_2}^2}\right) 
		\geq \frac{1}{2}\log \left(\frac{\sum_{k=1}^\infty e^{-2\lambda_k\tau}\|\phi_k\|_{L_2}^2}{e^{-2\lambda_1}\sum_{k=1}^\infty e^{-2\lambda_k \tau}\|\phi_k\|_{L_2}^2}\right)=\lambda_1=-1.
		\]
		This shows the lower bound of $N_{n,k}(\tau;v)$. To obtain the monotonicity, we compute that 
		\begin{align*}
			&  V(\tau)^2\cdot \pr_\tau(\log V(\tau)) = V(\tau)V''(\tau) - V'(\tau)^2 \\
			=\; & \left( \sum_{k=1}^\infty (-2\lambda_k)^2 e^{-2\lambda_k\tau}\|\phi_k\|_{L^2}^2 \right)\left( \sum_{k=1}^\infty  e^{-2\lambda_k\tau}\|\phi_k\|_{L^2}^2 \right)  - \left( \sum_{k=1}^\infty (-2\lambda_k) e^{-2\lambda_k\tau}\|\phi_k\|_{L^2}^2 \right)^2 \\
			\geq\; & 0\,.
		\end{align*}
		Therefore, $\pr_\tau(\log \|v(\cdot,\tau)\|_{L^2}^2)$ is a non-decreasing function in $\tau$, and hence $\pr_\tau N_{n,k}(\tau;v)\leq 0$. So $N_{n,k}(\tau;v)$ is monotone non-increasing.
		
		The first part of (ii) is a direct consequence of the spectral decomposition \eqref{eq:Fourier} and Plancherel identity. If $\int_{\cC_{n,k}} v(X, \tau_0) e^{-\frac{|X|^2}{4}}\ dX = 0$, by the classification of eigenfunctions of $-L_{n,k}$ with small eigenvalues in Section \ref{SS:PreCylinder}, we have $\|\Pi_{\geq  -1/2} (v(\cdot, \tau_0))\|_{L^2} = \|v(\cdot, \tau_0)\|_{L^2}$, and hence $N_{n,k}(\tau;v)\geq -1/2$. In this case, $v(\cdot,\tau)=\sum_{k=2}^\infty e^{-\lambda_k\tau} \phi_k$, and  \[
		N_{n,k}(\tau;v) 
		= \frac{1}{2}\log \left(\frac{\sum_{k=2}^\infty e^{-2\lambda_k\tau}\|\phi_k\|_{L^2}^2}{\sum_{k=2}^\infty e^{-2\lambda_k(\tau+1)}\|\phi_k\|_{L^2}^2}\right)
		\geq \frac{1}{2}\log \left(\frac{\sum_{k=2}^\infty e^{-2\lambda_k\tau}\|\phi_k\|_{L^2}^2}{e^{-2\lambda_2}\sum_{k=2}^\infty e^{-2\lambda_k \tau}\|\phi_k\|_{L^2}^2}\right)=\lambda_2=-1/2.
		\]
		Therefore, the equality holds if and only if all the nonzero terms in \eqref{eq:Fourier} have to have eigenvalue $-1/2$, which is equivalent to $v(X,\tau) = e^{\tau/2}\psi(X)$ for some eigenfunction $\psi$ with eigenvalue $-1/2$.
		
		(iii) From the proof of (i), we know that $N_{n,k}(\tau_1; v) = N_{n,k}(\tau_2; v)$ if for any $\tau\in(\tau_1,\tau_2)$, $\lambda(\tau) v= L_{n,k}v$ for some constant $\lambda(\tau)\in\R$. This implies that $v(\cdot,\tau)$ is an eigenfunction of $L_{n,k}$, and by the spectral decomposition, and $N_{n,k}(\tau_1; v) = N_{n,k}(\tau_2; v)=\gamma$, we have $\lambda(\tau)=\gamma$, and hence $\gamma\in \sigma(\cC_{n,k})$ (defined in \eqref{Equ_Pre_sigma(C_n,k)}) and $v(X, \tau) = e^{-\gamma\tau}\psi(X)$ for some non-zero eigenfunction $\psi\in L^2(\cC_{n,k})$ of $-L_{n,k}$ with eigenvalue $\gamma$.
	\end{proof}
	
	\section{Graph over a round cylinder} \label{Append_Graph over Cylind}
	Throughout this section, we parametrize $\cC_{n,k}=\SSp^{n-k}(\sqrt{2(n-k)})\times \RR^k \subset \RR^{n+1}$ by $(\theta, y)$ as before. Note that for every $(\theta, y)\in \cC_{n,k}$, $|\theta|^2=2(n-k)$. Let $\hat\theta := \theta/|\theta|$.
	
	Let $\Omega = \SSp^{n-k}(\sqrt{2(n-k)})\times \Omega_\circ\subset \cC_{n,k}$ be a subdomain, $u\in C^1(\Omega)$ such that $\inf u>-\sqrt{2(n-k)}$. We use $\nabla_\theta u$ and $\nabla_y u$ to denote the components of $\nabla u$ parallel to $\SSp^{n-k}(\sqrt{2(n-k)})$ factor and $\RR^k$ factor correspondingly. For later reference, we also denote 
	\begin{align*}
		\hat\nabla_\theta u := \left(1+\frac{u}{|\theta|}\right)^{-1}\cdot\nabla_\theta u\,, & &
		\hat\nabla u := \hat\nabla_\theta u + \nabla_y u \,.
	\end{align*}
	
	\begin{Lem} \label{Lem_App_Graph over Cylinder}
		Let $\Omega, u$ be specified as above, let \[
		\Sigma = \graph_{\cC_{n,k}}(u) = \{ ( \theta + u(\theta, y)\hat\theta, y): (\theta, y)\in \Omega \} \,.
		\]
		And we parametrized $\Sigma$ by \[
		\Phi_u: \Omega \to \Sigma\,, \quad (\theta, y)\mapsto ( \theta + u(\theta, y)\hat\theta, y) \,.
		\]
		Then $\Sigma$ is a hypersurface in $\RR^{n+1}$, $\Phi_u$ is a diffeomorphism and we have the following.
		\begin{enumerate} [label={\normalfont(\roman*)}]
			\item For every $(\theta, y)\in \Omega$, \[
			\odist_{n,k}(\Phi_u(\theta, y)) = \chi(u(\theta, y)),   \]
			\item The unit normal field of $\Sigma$ pointing away from $\{\orig\}\times \RR^k$ is \[
			\nu_\Sigma|_{\Phi_u(\theta, y)} = \left.(1+|\hat\nabla u|^2)^{-1/2}\cdot \left( \hat\theta - \hat\nabla u\right) \right|_{(\theta, y)} \,,
			\] 
			\item For every function $f\in C^0(\Sigma)$, \[
			\int_\Sigma f(x)\ dx = \int_\Omega f\circ\Phi_u(X) \cA[u]\ dX\,,
			\]
			where \[
			\cA[u](\theta, y) := \left(1+\frac{u}{|\theta|}\right)^{n-k}\cdot\sqrt{1+|\hat\nabla u|^2} \,.
			\]
			In particular, there exists $\kappa_n\in (0, 1/2),\ \bar c_{n,k} >0$ such that if $\|u\|_{C^1}\leq \kappa_n$, then \[
			\left|\mbfd_{n,k}(\Sigma)^{-1}\cdot \|u\|_{L^2} - \bar c_{n,k}\right| \leq C_n\|u\|_{C^1} \,.
			\]
			\item There exists $\kappa_n'\in (0, 1/2)$ such that if $(\bx, \by)\in \RR^{n-k+1}\times \RR^k$, $\lambda>0$ satisfy, \[
			\|u\|_{C^1(\Omega)} + |\bx| + |\lambda - 1| \leq \kappa_n' \,,
			\]
			then $\lambda\Sigma - (\bx, \by)$ is also a graph over some subdomain $\bar\Omega\subset \cC_{n,k}$, and the graphical function $\bar u$ satisfies for every $(\theta', y')\in \Omega'$, 
			\begin{align*}
				\Big|\bar u(\theta', y') + \bx\cdot \hat\theta' - \sqrt{2(n-k)}(\lambda - 1) & - \lambda u(\theta', \lambda^{-1}(y'+\by)) \Big|  \\
				& \leq C_n \left(\|\nabla_\theta u(\cdot, \lambda^{-1}(y'+\by))\|_{C^0}  + |\bx|\right)\cdot |\bx|\,.  
			\end{align*}
		\end{enumerate}
	\end{Lem}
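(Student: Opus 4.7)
The plan is to handle the four parts in order, with (i)–(iii) reducing to explicit computations in the coordinates $(\theta, y)$, and (iv) being the only place where one must carefully track a change of spherical coordinates.

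For (i), write $\Phi_u(\theta,y) = ((|\theta|+u)\hat\theta, y)$, so the $x$-component has length $\sqrt{2(n-k)}+u$; the definition $\odist_{n,k}(X) = \chi(|x|-\sqrt{2(n-k)})$ in \eqref{Equ_L^2 Mono_odist to C_(n,k)} yields $\odist_{n,k}(\Phi_u) = \chi(u)$ immediately. For (ii), pick a local orthonormal frame $\{e_i\}$ of $\SSp^{n-k}(\sqrt{2(n-k)})$ at $\theta$ and standard coordinates on $\RR^k$; then
\[
d\Phi_u(e_i) = (1+u/|\theta|)e_i + u_{e_i}\hat\theta, \qquad d\Phi_u(\partial_{y_j}) = \partial_{y_j} + u_{y_j}\hat\theta,
\]
using $\partial_{e_i}\hat\theta = e_i/|\theta|$. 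Seeking a normal of the form $c(\hat\theta + \sum a_i e_i + \sum b_j \partial_{y_j})$ and imposing orthogonality gives $a_i = -u_{e_i}/(1+u/|\theta|)$ and $b_j = -u_{y_j}$, i.e.\ $\nu_\Sigma = c(\hat\theta - \hat\nabla u)$; normalizing yields $c = (1+|\hat\nabla u|^2)^{-1/2}$. The sign choice is correct because $\nu_\Sigma\cdot \hat\theta = c > 0$.

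For (iii), the induced metric in the frame above is $g = G_0 + (\nabla u)(\nabla u)^T$ where $G_0 = \mathrm{diag}((1+u/|\theta|)^2 I_{n-k}, I_k)$. The matrix-determinant lemma gives $\det g = (1+u/|\theta|)^{2(n-k)}(1 + |\hat\nabla u|^2)$, whose square root is $\cA[u]$. For the $L^2$-comparison, when $\|u\|_{C^1}\le \kappa_n$ with $\kappa_n$ small, $\chi(u)=u$ and $\cA[u] = 1+O(\|u\|_{C^1})$; also $|\Phi_u|^2 - |X|^2 = 2\sqrt{2(n-k)}\,u + u^2$ so $e^{-|\Phi_u|^2/4}/e^{-|X|^2/4} = 1 + O(\|u\|_{C^0})$. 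Substituting into $\mbfd_{n,k}(\Sigma)^2 = \int_\Omega u^2 e^{-|\Phi_u|^2/4}\cA[u]\,dA$ and comparing with $\|u\|_{L^2}^2 = \int_\Omega u^2 e^{-|X|^2/4}\,dA$ gives the stated bound with an explicit $\bar c_{n,k}$ (which equals $1$ in these conventions).

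Item (iv) is the main technical point. For $(\theta',y')\in \cC_{n,k}$, a point of $\lambda\Sigma - (\bx,\by)$ above it satisfies $y' = \lambda y - \by$, hence $y = \lambda^{-1}(y'+\by)$, and the spherical component equation
\[
(\sqrt{2(n-k)}+\bar u)\hat\theta' = \lambda(\sqrt{2(n-k)}+u(\theta,y))\hat\theta - \bx.
\]
When $\|u\|_{C^1}+|\bx|+|\lambda-1| \le \kappa_n'$ is small enough, the implicit function theorem produces a unique solution $\theta = \theta(\theta')$ close to $\theta'$, with $|\theta - \theta'| = O(|\bx|)$. Taking the inner product with $\hat\theta'$ gives
\[
\sqrt{2(n-k)}+\bar u = \lambda(\sqrt{2(n-k)}+u(\theta,y))\,\hat\theta\cdot\hat\theta' - \bx\cdot\hat\theta'.
\]
Since $1 - \hat\theta\cdot\hat\theta' = O(|\theta-\theta'|^2/|\theta|^2) = O(|\bx|^2)$, and $u(\theta,y) - u(\theta',y) = O(\|\nabla_\theta u(\cdot,y)\|_{C^0}\cdot|\bx|)$, rearranging yields the claimed estimate with error $C_n(\|\nabla_\theta u(\cdot,y)\|_{C^0}+|\bx|)\cdot |\bx|$. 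The only delicate point is checking that the implicit function theorem can be applied uniformly, which reduces to a direct norm estimate on the derivative with respect to $\theta$ of the right-hand side of the spherical equation; this is straightforward once $\kappa_n'$ is chosen small relative to $\sqrt{2(n-k)}$.
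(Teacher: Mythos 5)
Your proof is correct and follows the same route the paper takes: (i)--(iii) by direct computation (the paper simply cites \cite{CM15_Lojasiewicz} for these), and (iv) by taking the inner product of the spherical-component equation with $\hat\theta'$, then bounding $|(\hat\theta-\hat\theta')\cdot\hat\theta'|=\tfrac12|\hat\theta-\hat\theta'|^2=O(|\bx|^2)$ and $|u(\theta,y)-u(\theta',y)|\leq C_n\|\nabla_\theta u(\cdot,y)\|_{C^0}|\bx|$. The only cosmetic difference is that you solve for $\theta=\theta(\theta')$ via the implicit function theorem, whereas the paper runs the argument forward from $(\theta,y)$ to its nearest projection $(\theta',y')$ and simply records $|\theta'-\theta|\leq C_n|\bx|$; these are equivalent.
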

	\begin{proof}
		(i)-(iii) follows by standard calculations, see also \cite[Appendix A]{CM15_Lojasiewicz}. We now prove (iv): Given $(\theta,y)\in \Omega$, it corresponds to a point $(\theta+u(\theta,y)\hat\theta,\, y)$ in the graph, and after the dilation and translation, the point becomes $\lambda(\theta+u(\theta,y)\hat\theta,\, y)-(\bx,\by)$. Suppose its nearest projection to $\cC_{n,k}$ is the point $(\theta',y')$, then we can write \[
		\left(\theta'+\bar u(\theta',y')\hat\theta',\, y'\right) = \lambda\left(\theta + u(\theta, y)\hat\theta,\, y \right)-(\bx,\by) .
		\]
		Taking the projection to the spine and the orthogonal complement to the spine gives \[
		y = \lambda^{-1} (y'+\by), \qquad 
		\theta'+\bar u(\theta',y')\hat\theta' = \lambda \left(\theta+ u(\theta,y)\hat\theta \right)-\bx,
		\]
		projecting to $\SSp^{n-k}$, the latter implies that \[
		|\theta'-\theta| \leq C_n|\bx|\,.
		\]
		Taking the inner product with $\hat\theta'$ gives
		\begin{align*}
			\bar u(\theta',y')+\bx\cdot\hat\theta' & - \sqrt{2(n-k)}(\lambda-1) - \lambda u(\theta',\lambda^{-1}(y'+\by)) \\ 
			& = \lambda \left[(\theta - \theta')\cdot \hat \theta' + u(\theta,y)\hat\theta\cdot \hat\theta' - u(\theta',y) \right] \\ 
			& = \lambda \left(\sqrt{2(n-k)} + u(\theta,y)\right)(\hat\theta - \hat\theta')\cdot \hat\theta' + \lambda \left(u(\theta, y) - u(\theta',y)\right) \,. 
		\end{align*}
		By mean value theorem, \[
		|u(\theta,y)-u(\theta',y)|\leq C_n\|\nabla_\theta u(\cdot,y)\|_{C^0}|\bx|\,; \]
		Since $\hat\theta,\ \hat\theta'$ are unit vectors, we also have \[
		|(\hat\theta- \hat\theta')\cdot\hat\theta'| = \frac12 |\hat\theta - \hat\theta'|^2 \leq C_n|\bx|^2\,.
		\]
		Combining them proves (iv).
	\end{proof}
	
	\begin{Lem} \label{Lem_App_RMCF equ}
		There exists $\kappa_n''\in (0, 1/2)$ with the following significance. Let $\Omega\subset \cC_{n,k}$ be as above, $u\in C^2(\Omega\times (a, b])$ with $\|u\|_{C^2}\leq \kappa_n''$ such that $\tau\mapsto \graph_{\cC_{n,k}}(u(\cdot, \tau))$ is a rescaled mean curvature flow in $\RR^{n+1}$. Then $u$ satisfies the following nonlinear parabolic equation, 
		\begin{align}
			\partial_\tau u - L_{n,k} u = \cQ(u, \nabla u, \nabla^2 u)\,; \label{Equ_App_RMCF equ}
		\end{align}
		where $\cQ$ is a smooth function in $(z, \xi, \eta)\in \RR\times T\cC_{n,k}\times T^{\otimes 2}\cC_{n,k}$ with \[
		\cQ(0, 0, 0) = \cQ_z(0, 0, 0) = \cQ_\xi(0,0,0) = \cQ_\eta(0,0,0) = 0\,.
		\]
	\end{Lem}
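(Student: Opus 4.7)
\medskip

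\noindent\textbf{Proof proposal for Lemma \ref{Lem_App_RMCF equ}.} The plan is to derive the PDE by writing the rescaled mean curvature flow equation in normal form using the parametrization $\Phi_u: (\theta,y)\mapsto (\theta+u\hat\theta, y)$ from Lemma \ref{Lem_App_Graph over Cylinder}, and then extracting the linearization at $u=0$.

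First I would recall that under the parametrization $\Phi_u(\cdot,\tau)$, the time derivative is $\partial_\tau \Phi_u = (\partial_\tau u)\hat\theta$, which is parallel to the unit normal
$$\nu_\Sigma=(1+|\hat\nabla u|^2)^{-1/2}(\hat\theta-\hat\nabla u)$$
in the sense that $(\partial_\tau \Phi_u)\cdot\nu_\Sigma = \partial_\tau u\cdot(1+|\hat\nabla u|^2)^{-1/2}$. Setting $X=\Phi_u$ and writing the rescaled mean curvature flow equation for the normal velocity as
$$(\partial_\tau X)^\perp = \vec H_\Sigma + \tfrac{1}{2}X^\perp,$$
I would pair both sides with $\nu_\Sigma$ to obtain a scalar equation for $u$. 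The right-hand side decomposes as $-H_\Sigma + \tfrac{1}{2}X\cdot\nu_\Sigma$, where $H_\Sigma$ is the scalar mean curvature with respect to $\nu_\Sigma$, and $X\cdot\nu_\Sigma = (1+|\hat\nabla u|^2)^{-1/2}(|\theta|+u - y\cdot\hat\nabla u)$ can be written explicitly using $|\theta|=\sqrt{2(n-k)}$.

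The next step is to compute $H_\Sigma$ using the area form $\cA[u]$ from Lemma \ref{Lem_App_Graph over Cylinder}(iii) and the first variation formula $H_\Sigma = -\cA[u]^{-1}\mathrm{div}_{\cC_{n,k}}\big((1+u/|\theta|)^{n-k}(1+|\hat\nabla u|^2)^{-1/2}\hat\nabla u\big) + \text{terms from}\ (1+u/|\theta|)^{n-k}$. Multiplying the resulting equation through by $\sqrt{1+|\hat\nabla u|^2}$ absorbs the denominator on the left and gives an equation of the schematic form
$$\partial_\tau u = a^{ij}(u,\hat\nabla u)\nabla^2_{ij}u + b(u,\hat\nabla u,y) + (\text{$C^1$-small corrections}),$$
where $a^{ij}$ and $b$ are smooth functions of their arguments, with $a^{ij}(0,0)=\delta^{ij}$ (the Euclidean/cylinder Laplacian) and with $b$ producing both the drift $-\tfrac{1}{2}\langle y,\nabla_y u\rangle$ and the zeroth-order term $+u$ after linearization. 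The bound $\|u\|_{C^2}\le\kappa_n''$ with $\kappa_n''$ sufficiently small keeps us in the regime where the area formula and the expression for $\nu_\Sigma$ are smoothly invertible.

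The final step is the Taylor expansion at $(u,\nabla u,\nabla^2 u)=(0,0,0)$. The linear part, by direct verification on $\cC_{n,k}$, coincides with $L_{n,k}u=\Delta u -\tfrac{1}{2}\langle y,\nabla_y u\rangle + u$ (the factor $u$ arises from differentiating $(1+u/|\theta|)^{n-k}$ and from the $\tfrac{1}{2}X\cdot\nu_\Sigma$ term together with the cylinder's second fundamental form $|A_{\cC_{n,k}}|^2=1/|\theta|^2\cdot(n-k)$). Collecting all higher-order contributions into $\cQ(u,\nabla u,\nabla^2 u):= \partial_\tau u - L_{n,k}u$ yields a smooth function of its arguments on a neighborhood of the origin (by the smooth dependence of $a^{ij},b,\cA[u],\nu_\Sigma$ on $(u,\nabla u)$ with $\|u\|_{C^1}\le\kappa_n''\ll 1$) that vanishes to second order at the origin, verifying the stated vanishing of $\cQ$ and its first derivatives. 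The only real subtlety, and the step I would spend the most care on, is bookkeeping the linearized contributions from the two nonlinear factors $(1+u/|\theta|)^{n-k}$ and $(1+|\hat\nabla u|^2)^{-1/2}$ together with the $\tfrac{1}{2}X\cdot \nu_\Sigma$ term so as to recover exactly $L_{n,k}$; this amounts to a careful but routine use of the shrinker equation $\vec H_{\cC_{n,k}}+\tfrac{1}{2}X^\perp=0$ on $\cC_{n,k}$.
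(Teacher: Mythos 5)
Your derivation is correct and is exactly the standard computation carried out in the reference the paper cites for this lemma (Colding--Minicozzi, \emph{Lojasiewicz}, Appendix~A); the paper itself writes no proof beyond that citation. You parametrize the graph by $\Phi_u$, take the $\nu_\Sigma$-component of $\partial_\tau X = \vec H + \tfrac12 X$, clear the $(1+|\hat\nabla u|^2)^{-1/2}$ factor, and identify the linear part with $L_{n,k}$ via $|A_{\cC_{n,k}}|^2 + \tfrac12 = \tfrac12 + \tfrac12 = 1$ together with the cancellation of $-H_{\cC_{n,k}}+\tfrac12|\theta|=0$; the remainder $\cQ$ is then smooth in $(u,\nabla u,\nabla^2 u)$ and vanishes to second order by construction. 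The one intermediate identity that should be stated with more care is your first-variation formula for $H_\Sigma$: the correct relation is
\[
H_\Sigma\,(1+|\hat\nabla u|^2)^{-1/2}\,\cA[u] \;=\; \partial_u\cA[u] - \Div_{\cC_{n,k}}\!\big(\partial_{\nabla u}\cA[u]\big),
\]
since the normal component of the graph variation $v\hat\theta$ is $v(1+|\hat\nabla u|^2)^{-1/2}$, not $v$; the factor $\sqrt{1+|\hat\nabla u|^2}$ you dropped only affects terms quadratic and higher in $(u,\nabla u,\nabla^2 u)$, so the linearization and the second-order vanishing of $\cQ$ are unaffected, but the displayed formula as you wrote it is not an equality.
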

	\begin{proof}
		See \cite[Appendix A]{CM15_Lojasiewicz}.
	\end{proof}
	
	\section{A regularity lemma of some minimal hypersurface}
	
	\begin{Lem}\label{Lem_App_HSFoliation}
		For every $1\leq k\leq n-1$, there exists some $\varsigma=\varsigma(n,k)\in (0, 1)$ such that for every $r'\in (0, \varsigma)$, there's a smooth minimal hypersurface $\Sigma(r')$ with boundary \[
		\Gamma(r'):= \SSp^n\cap (\SSp^{n-k}(r')\times \RR^k)\,,
		\]  
		such that if $M$ is a minimal hypersurface in $\overline{\BB_1^{n+1}}$ (possibly with singularities $\Subset \BB^{n+1}_1$) with boundary $\Gamma(r')$, then $M=\Sigma(r')$.
	\end{Lem}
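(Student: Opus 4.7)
\medskip

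The plan is to exploit the $SO(n-k+1)\times SO(k)$ symmetry of the ball and of $\Gamma(r')$ to reduce both existence and uniqueness to a one-dimensional variational problem, and then run a foliation-plus-maximum-principle argument for the general (non-symmetric, possibly singular) uniqueness.

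\medskip

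\textbf{Step 1 (ODE reduction and construction).} Writing $s=|x|$, $\rho=|y|$, an $SO(n-k+1)\times SO(k)$-invariant hypersurface in $\RR^{n+1}$ corresponds to a curve in the quadrant $\{(s,\rho):s,\rho\geq 0\}$, and its $n$-volume equals a constant multiple of the weighted length $\int_\gamma s^{n-k}\rho^{k-1}\,dl$. Minimality becomes an ODE (geodesic equation for the conformal metric $s^{2(n-k)}\rho^{2(k-1)}(ds^2+d\rho^2)$) on the open quadrant. The point $(r',\sqrt{1-(r')^2})$ lies on $\partial\BB_1$, and the natural boundary condition for a critical point of weighted length with a free interior end is that the curve hit $\partial\BB_1$ orthogonally and close smoothly on one of the axes. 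For every sufficiently small $r'$ I will produce, by a shooting/perturbation argument from $r'=0$, an embedded solution curve $\gamma_{r'}$ emanating orthogonally from $(r',\sqrt{1-(r')^2})$ and closing smoothly on $\{s=0\}$ (where the $\SSp^{n-k}$-fibre degenerates to a point — regular because $n-k\geq 1$); when $k=1$, one may take $\gamma_{r'}$ to be a horizontal segment and $\Sigma(r')$ the pair of flat disks $\BB^{n-k+1}_{r'}\times\{\pm\sqrt{1-(r')^2}\}$. Rotating yields the desired smooth minimal hypersurface $\Sigma(r')$ with $\partial\Sigma(r')=\Gamma(r')$.

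\medskip

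\textbf{Step 2 (Foliation).} Smooth dependence of the ODE on $r'$, combined with the fact that the family of curves $\gamma_{r'}$ has nowhere vanishing transverse derivative in $r'$ (checked at the degenerate limit $r'\searrow 0$), shows that after further shrinking $\varsigma$ the curves $\{\gamma_{r'}\}_{r'\in(0,\varsigma)}$ fill a smooth foliation of an open region of the quadrant. Rotating gives a smooth foliation $\mathcal{F}$ of an open region $\tilde{\cR}\subset\overline{\BB_1}$ by the leaves $\Sigma(r')$.

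\medskip

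\textbf{Step 3 (Uniqueness via the strong maximum principle).} Let $M$ be any stationary integral varifold in $\overline{\BB_1}$ with $\partial M=\Gamma(r')$ and $\Sing(M)\Subset\BB_1$. The boundary $\Gamma(r')$ lies in the leaf $\Sigma(r')$ of $\mathcal{F}$; using $\partial\BB_1$ as a convex barrier and $\Sigma(r'')$ for $r''$ close to $0$ and close to $\varsigma$ as inner/outer barriers (together with the convex hull property), I will show $\spt M\subset\tilde{\cR}$. Now consider the smallest and largest values $r''_\pm$ in $(0,\varsigma)$ for which $\Sigma(r'')\cap \spt M\neq\emptyset$. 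At either extremum the leaf $\Sigma(r''_\pm)$ touches $M$ from one side at an interior point, so the strong maximum principle for stationary integral varifolds (Ilmanen, Solomon–White, Wickramasekera) forces $\Sigma(r''_\pm)\subset\spt M$ on the connected component of contact, and by iteration on connectedness gives $\spt M=\Sigma(r''_\pm)$. Since $\partial M=\Gamma(r')$ agrees with the boundary of a leaf only for $r''_\pm=r'$, we conclude $\spt M=\Sigma(r')$; the constancy theorem (together with the multiplicity-one boundary) then upgrades this to $M=\Sigma(r')$ as varifolds.

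\medskip

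\textbf{Main obstacle.} The delicate point is Step~3: applying the strong maximum principle across a possibly large singular set of $M$. The resolution is the stratified strong maximum principle for stationary integral varifolds touching a smooth minimal hypersurface, combined with the fact that $\Sigma(r'')$ is smooth, embedded and mean-convex-foliated, so that one-sided contact propagates along the regular part and reaches $\partial M$ by connectedness of $\Sigma(r''_\pm)$. A parallel obstacle is ensuring the foliation genuinely sweeps through $\spt M$; this is handled by the explicit barrier estimates from Step~2 together with monotonicity at the boundary $\Gamma(r')$ via standard boundary regularity.
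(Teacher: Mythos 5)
Your overall strategy---sweep a one-parameter family of minimal hypersurfaces past $M$ and propagate a first one-sided contact via the strong maximum principle for stationary varifolds---is the same as the paper's. But there are two genuine gaps. First, Steps~1--2 are stated rather than proved: producing the solution curve by shooting from $(r',\sqrt{1-(r')^2})$ and then verifying that the transverse derivative in $r'$ is nowhere zero (hence that the $\gamma_{r'}$ actually foliate) is nontrivial ODE analysis, especially as $r'\searrow 0$ where $\Gamma(r')$ degenerates to a point. The paper never has to do this from scratch: for $k=1$ the leaves are flat disks; for $k\geq 2$ it invokes the Hardt--Simon foliation when the Simons cone $\mbfC^{n-k,k-1}$ is area-minimizing (exactly the dimension range it records), and otherwise it dilates the complete embedded $SO(n-k+1)\times SO(k)$-invariant minimal hypersurface of Alencar--Barros--Palmas--Reyes--Santos asymptotic to that cone, whose dilates form a local foliation by construction.

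The more serious gap is in Step~3, where you need $\spt M\subset\tilde{\cR}$, the region actually swept by your leaves. You assert this via ``the convex hull property'' and by treating $\Sigma(r'')$ as inner/outer barriers, but neither works as stated: the convex hull of $\Gamma(r')$ need not be contained in $\tilde{\cR}$, and you cannot use $\Sigma(r'')$ as a barrier for $M$ without already knowing $M$ lies on one side of it---that is circular. This containment is exactly where the paper does real work. When the Hardt--Simon foliation is global (minimizing cone), containment is automatic. When the foliation is only local (non-minimizing cone), the paper proves $\spt M\subset E$ by a contradiction via the isoperimetric inequality for minimal surfaces: if a sequence $M_i$ with $r_i\to 0$ had points escaping the foliated region, the monotonicity formula would force $\vol(M_i)\geq C_n d^n$, while $\area(\Gamma(r_i))\to 0$. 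For $k=1$ there is an additional wrinkle (the cone degenerates to a hyperplane), and the paper instead uses a catenoid as a barrier to disconnect $M$ into two components, each then controlled by the global foliation by parallel hyperplanes. Your proposal needs a comparably precise mechanism in place of the convex-hull assertion; as written, the foliation argument cannot get started on the part of $M$ that might lie outside $\tilde{\cR}$.
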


	\begin{proof}
		When $k=1$, consider a catenoid $S$, which is an $SO(n)\times \ZZ_2$-invariant smooth embedded minimal hypersurfaces in $\RR^n\times \RR$. Under dilations. The dilations of $S$ sweep out a solid cone region $\rmC$ in $\R^{n+1}$: \[
		\bigcup_{\lambda>0} \lambda\cdot S = \rmC := \{(x, y)\in \RR^{n}\times \RR: |x|\geq \al_n|y|\}\setminus \{\orig\}
		\]
		for some $\al_n>0$. We take $\varsigma(n,1) \in (0, 1)$ by setting $\al_n^2=\varsigma^2/(1-\varsigma^2)$, which implies for every $r'\in (0, \varsigma)$, $\Gamma(r')\cap \rmC = \emptyset$. Therefore, we can slightly translate $\mbfC$ up and down a little bit to obtain a region $U$ so that $\overline{U}\cap \Gamma(r') = \emptyset$, $\Int(\overline{U})$ is connected, and $\overline{\BB_1^{n+1}}\backslash \overline{U}$ has two connected components.  Using the maximum principle for minimal hypersurfaces possibly with singularities, if $M$ is a minimal hypersurface in $\overline{\BB_1^{n+1}}$ with boundary $\Gamma(r')$, then $M$ is disjoint from $\overline{U}$. In particular, $M$ can be decomposed into two components $M_\pm$, where $M_+$ has boundary $\Gamma(r')\cap(\R^{n}\times\{s\}\}_{s>0})$ and $M_-$ has boundary $\Gamma(r')\cap(\R^{n}\times\{s\}\}_{s<0})$. Then applying the maximum principle to each one of them, with the foliation $\{\BB_1^{n+1}\cap(\R^{n}\times\{s\})\}_{s\in\R}$ of minimal hypersurfaces, we know that $M=\Sigma(r')$ where $\Sigma(r')$ is the union of two disconnected flat disks $\BB^n_{r'}\times\{\pm\sqrt{1-(r')^2}\}$. 
		
		When $2\leq k\leq n-1$, we can take $\varsigma = \sqrt\frac{n-k}{n-1}$ if one of the following holds: 
		\begin{itemize}
			\item $n+1\geq 9$, 
			\item $n+1 = 8$ and $k\in \{3, 4, 5\}$.
		\end{itemize}
		To see this, we recall the famous Hardt-Simon foliation of minimal hypersurfaces \cite{HardtSimon85}: given a minimizing hypercone $\mbfC\subset \RR^{n+1}$ with isolated singularity $\orig$, there exists a unique foliation by minimal hypersurfaces $(\Sigma_\lambda)_{\lambda\in\R}$ of $\R^{n+1}$ such that $\Sigma_0 = \mbfC$ and for any $a>0$, $\Sigma_{\pm a}$ is a dilation of $\Sigma_{\pm 1}$, which are smooth minimizing hypersurfaces. In particular, when \[
		\mbfC = \mbfC^{n-k, k-1}:= \{(x,y)\in \RR^{n-k+1}\times \RR^k: (k-1)|x|^2 = (n-k)|y|^2\}
		\]
		and $2\leq k\leq n-1$ has the constraint above, by \cite{Simoes74_MinCone}, $\mbfC$ is an $SO(n-k+1)\times SO(k)$-invariant minimizing hypercone, and hence the foliation described above is also $SO(n-k+1)\times SO(k)$-invariant, which implies $\Gamma(r')$ is the intersection of $\SSp^n$ with one leaf $\Sigma_{\lambda'\neq 0}$. By the strong maximum principle for minimal hypersurfaces with possibly singularities \cite{white10_Maxim}, the only minimal hypersurface in $\overline{\BB^{n+1}_1}$ with boundary $\Gamma(r')$ must be $\Sigma(r'):= \Sigma_{\lambda'}\cap \overline{\BB_1}$. 
		
		Finally, we consider the remaining cases that $k\geq 2$, $n+1\leq 7$ or $n+1=8$ and $k\in\{2,6\}$. In all these cases, $\mbfC^{n-k, k-1}$ is minimal but not minimizing, and then there is no global Hardt-Simon foliation by minimal hypersurfaces as above. However, a Hardt-Simon type foliation still exists within a subdomain.   
		In fact, from \cite{AlencarBarrosPalmasReyesSantos05_OmOnMin}, there exists a smooth embedded complete minimal hypersurface $\Sigma$ that is $SO(n-k+1)\times SO(k)$-invariant, asymptotic to $\mbfC^{n-k,k-1}$, and intersects the subspace $\{0\}\times\R^{k}$ orthogonally. As a consequence, there's an $SO(n-k+1)\times SO(k)$-invariant tubular neighborhood of $\Sigma\cap\{0\}\times\R^{k}$ within $\Sigma$, denoted by $\Sigma_1$, which is still a radial graph. By a rescaling, we may assume that $\partial \Sigma_1\subset \SSp^n$. Then, $\{\Sigma_\lambda := \lambda\cdot \Sigma_1\}_{\lambda\geq 1}$ is a foliation by minimal hypersurfaces with boundary $\subset \RR^{n+1}\setminus\BB_1$ of some closed domain $E$. 
		
		We claim that there exists $\varsigma(n,k)>0$ such that when $r'\in(0,\varsigma)$, $\Gamma(r')\subset E$, and if $M$ is a minimal hypersurface with possibly singularities whose boundary is $\Gamma(r')$, then $M\subset E$. We prove the claim by contradiction. Suppose the claim is false, then there exists $r_i\to 0$ such that $\Gamma(r_i)$ bounds a minimal hypersurface $M_i$ possibly with singularities, such that $M_i\backslash E \neq \emptyset$. Suppose $p_i \in M_i\backslash E$. Then $2d:=\liminf_{i\to\infty}\dist(p_i,\Gamma(r_i))>0$, and by monotonicity formula of minimal hypersurfaces, this implies that $\vol(M_i)\geq C_nd^n$ for $i\gg 1$. On the other hand, $\lim_{i\to\infty}\area(\Gamma(r_i))= 0$. This contradicts the isoperimetric inequality of minimal hypersurfaces \cite{Almgren86_Isop}.
		
		With this claim, when $r'<\varsigma(n,k)$, if $M$ is a minimal hypersurface with possibly singularities whose boundary is $\Gamma(r')$, then $M\subset E$. Since $E$ is foliated by minimal hypersurfaces $\{\Sigma_\lambda\}_{\lambda\geq 1}$ and $\Gamma(r')\subset \Sigma_{\lambda(r')}$ for some $\lambda(r')>0$, the strong maximum principle of minimal hypersurfaces shows that $M$ must be $\BB_1\cap \Sigma_{\lambda(r')}$. This completes the proof of the Lemma for the remaining cases.
	\end{proof}

	\bigskip

	\bibliographystyle{alpha}
	\bibliography{GMT}
\end{document}